\newcommand{\A}{{\mathbb A}}
\newcommand{\C}{{\mathbb C}}
\newcommand{\F}{{\mathbb F}}
\newcommand{\G}{{\mathbb G}}
\newcommand{\Q}{{\mathbb Q}}
\newcommand{\R}{{\mathbb R}}
\newcommand{\DeligneS}{{\mathbb S}}
\newcommand{\Z}{{\mathbb Z}}
\newcommand{\Qbar}{{\overline{\Q}}}
\newcommand{\kbar}{{\overline{k}}}
\newcommand{\ksep}{{k^{\operatorname{sep}}}}
\newcommand{\red}{{\operatorname{red}}}
\newcommand{\opp}{{\operatorname{opp}}}
\newcommand{\nr}{{\operatorname{nr}}}
\newcommand{\injects}{\hookrightarrow}
\newcommand{\isom}{\simeq}
\newcommand{\plim}[1][]{\mathop{\varprojlim}\limits_{#1}}
\DeclareMathOperator{\tr}{tr} 
\DeclareMathOperator{\Spec}{Spec}
\DeclareMathOperator{\ch}{ch}
\DeclareMathOperator{\lcm}{lcm}
\DeclareMathOperator{\Hom}{Hom}
\DeclareMathOperator{\Isom}{Isom}
\DeclareMathOperator{\Gal}{Gal}
\DeclareMathOperator{\End}{End}
\DeclareMathOperator{\Aut}{Aut}
\DeclareMathOperator{\GL}{GL}
\DeclareMathOperator{\SL}{SL}
\DeclareMathOperator{\Lie}{Lie}
\DeclareMathOperator{\Nrd}{Nrd}
\DeclareMathOperator{\Trd}{Trd}
\DeclareMathOperator{\Frob}{Frob}
\newtheorem{theorem}{Theorem}[section]
\newtheorem{proposition}[theorem]{Proposition}
\newtheorem{lemma}[theorem]{Lemma}
\newtheorem{corollary}[theorem]{Corollary}
\theoremstyle{definition}
\newtheorem{definition}[theorem]{Definition}
\newtheorem{example}[theorem]{Example}
\newtheorem*{acknowledgements}{Acknowledgements}
\theoremstyle{remark}
\newtheorem{remark}[theorem]{Remark}
\crefname{theorem}{Theorem}{Theorems}
\crefname{proposition}{Proposition}{Propositions}
\crefname{lemma}{Lemma}{Lemmata}
\crefname{corollary}{Corollary}{Corollaries}
\crefname{conjecture}{Conjecture}{Conjectures}
\crefname{definition}{Definition}{Definitions}
\crefname{example}{Example}{Examples}
\crefname{remark}{Remark}{Remarks}
\crefname{section}{Section}{Sections}
\crefname{equation}{the equation}{the equations}
\Crefname{equation}{The equation}{The equations}
\begin{document}

\title{Rational points on Shimura varieties classifying abelian varieties with quaternionic multiplication} 
\author{Koji Matsuda\thanks{The University of Tokyo}}
\date{}

\maketitle

\begin{abstract}
We study the characters induced by suitable level structures of abelian varieties with quaternionic multiplication following the methods of Mazur, Momose, who studied the characters induced by elliptic curves, and Arai--Momose, who studied the characters induced by abelian surfaces with quaternionic multiplication.
Also using results on such characters we show that Shimura varieties parametrizing them
have rarely rational points.
\end{abstract}

\section{Introduction}

In 1977 Mazur \cite{MazurX1} showed that for a sufficiently large positive integer $N$ the modular curve $X_1(N)$, which classifies elliptic curves with level-$\Gamma_1(N)$ structures, has no non-cuspidal $\Q$-rational points.
The corresponding results for larger number fields are studied by many authors, for example Kenku--Momose \cite{KenkuMomose} and Kamienny \cite{Kamienny} showed it for quadratic number fields, and Merel \cite{Merel} showed it for number fields of a given arbitrary degree.
These are proven by studying the Mordell--Weil groups of the modular Jacobian varieties $J_0(N)$ over the rationals.

On the other hand, in 1978 Mazur \cite{MazurX0} studied a different generalization of his result of \cite{MazurX1}.
Namely, he showed that for a sufficiently large prime $p$, the modular curve $X_0(p)$, which classifies the elliptic curves with their level-$\Gamma_0(p)$ structures, has no non-cuspidal $\Q$-rational points.
It is also generalized to larger number fields.
For example Momose \cite{Momose} showed it for quadratic number fields which are not imaginary quadratic number fields of class number one.
These results are proven by a similar method of \cite{MazurX1} and by studying the characters obtained from elliptic curves and their level-$\Gamma_0(p)$ structures.

The modular curves can be considered as the Shimura varieties associated with the algebraic group $\GL_2 \Q$, which are one of the most classical and simple Shimura varieties.
Replacing the group $\GL_2 \Q$ by the algebraic group induced by the unit group of an indefinite quaternion algebra over $\Q$, which is a twist of $\GL_2 \Q$, we obtain certain types of Shimura varieties, which are called the Shimura curves.
Mazur's theorem about the rational points of $X_0(p)$ is generalized to these Shimura curves.
More precisely, let $B$ be an indefinite division quaternion algebra of discriminant $\Delta$ over the rationals, $p$ a prime not dividing $\Delta$, and $M_0^B(p)$ be the Shimura curve of level-$\Gamma_0(p)$ associated with $B$, which is the coarse moduli scheme classifying the QM-abelian surfaces with their level-$\Gamma_0(p)$ structures.
(For precise definitions see below.)
Then in 2014 Arai--Momose \cite{AraiI} showed, for quadratic number fields $k$ which are not imaginary quadratic number fields of class number one and for a sufficiently large prime $p$, that $M^B_0(p)$ has no non-CM rational points over $k$.
Arai \cite{AraiII,AraiIII} generalized it for more general $k$.
These are proven by studying the characters induced from QM-abelian surfaces with their level-$\Gamma_0(p)$ structures, which is similar to the methods of \cite{MazurX0,Momose}.

In this paper we generalize it in a different direction.
Namely, let $F$ be a totally real number field of degree $d$, $B$ a totally indefinite division quaternion algebra over $F$, and for a prime $p_F$ of $F$ not dividing the discriminant of $B/\Q$, let $M^B_i(p_F)$ ($i = 0$ or $1$) be the Shimura variety associated with $B$ of level-$\Gamma_i(p_F)$, which is a normal projective variety of dimension $d$ over $\Q$.
This is the coarse moduli scheme classifying $*$-polarized QM-abelian varieties with their level-$\Gamma_i(p_F)$ structures.
We show, for a certain type of a number field $k$ and for many primes $p_F$, that $M^B_i(p_F)$ has no rational points over $k$ following the methods of \cite{MazurX0,Momose,AraiI}.

First by studying local behavior of the Galois characters induced by QM-abelian varieties with their level-$\Gamma_0(p_F)$ structures, we show a theorem about the non-existence of rational points of the Shimura varieties of level-$\Gamma_1(p_F)$.
This seems to be analogous to showing the non-existence of certain kinds of modular curves over local fields by studying the rational points of elliptic curves over finite fields.

\begin{theorem}[\cref{M_1^B(p)(k)_is_empty}] \label{intro:M_1^B(p)(k)_is_empty}
Let $k$ be a finite extension of $\Q_\ell$ of inertia degree $f$.
Then there exists a finite set $V(\ell^f)$ of primes of $F$ depending only on $\ell$, $f$, and on $F$, but not on $B$, such that for every $p_F \not\in V(\ell^f) \cup \operatorname{Ram}(B/F)$ we have $M_1^B(p_F)(k) = \varnothing$.
\end{theorem}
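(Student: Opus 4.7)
The plan is to attach to any prospective $k$-rational point of $M_1^B(p_F)$ a characteristic polynomial of Frobenius whose reduction modulo $p_F$ is forced to have $1$ as an eigenvalue, and then to show that this forces $p_F$ into a finite set depending only on $\ell$, $f$, $F$.

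Suppose $(A,\iota,P)$ represents a point of $M_1^B(p_F)(k)$, so $A$ is a $*$-polarized QM-abelian variety over $k$ with $\mathcal{O}_B$-action $\iota$, and $P$ is a $k$-rational $\Gamma_1(p_F)$-structure. Since $p_F \notin \operatorname{Ram}(B/F)$ we have $\mathcal{O}_B/p_F \simeq M_2(\F_{p_F})$, and Morita equivalence identifies the $G_k$-module $A[p_F]$ with the direct sum of two copies of a $2$-dimensional $\F_{p_F}$-representation $M$; the $\Gamma_1$-structure $P$ then corresponds to a $G_k$-fixed nonzero vector in $M$. Writing $\bar\rho: G_k \to \GL(M)$ for this representation and $\varphi := \Frob_k$ for the arithmetic Frobenius, $\bar\rho(\varphi)$ then has $1$ as an eigenvalue.

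Assume first that $A$ has good reduction over $k$ and that $p_F$ does not lie above $\ell$ (the finitely many primes above $\ell$ can be placed into $V(\ell^f)$ a priori). Then $\bar\rho$ is unramified, and $\varphi$ acts on the Morita-reduced $p_F$-adic Tate lattice $L$, a free rank-$2$ module over $\mathcal{O}_{F,p_F}$, with characteristic polynomial $T^2 - aT + b \in \mathcal{O}_{F,p_F}[T]$. Using that $\varphi$ commutes with $\iota$ (so that its reduced trace and reduced norm relative to the quaternionic structure on $\End^0(\tilde A)$ are well-defined) and that compatibility of Tate modules across varying $p_F$ forces these invariants to be global, one checks that $a,b \in \mathcal{O}_F$ depend only on $A$ and not on $p_F$. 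Weil's bounds give $|\tau(a)| \le 2\sqrt{\ell^f}$ and $\tau(b) = \ell^f$ for every archimedean place $\tau$ of $F$, so $(a,b)$ lies in a finite set $\Sigma(\ell^f,F)$. The eigenvalue-$1$ condition reads $1-a+b \equiv 0 \pmod{p_F}$, and $1-a+b$ must be nonzero in $\mathcal{O}_F$: otherwise the full characteristic polynomial of Frobenius on $H^1(\tilde A_{\Fbar_{\ell^f}},\Q_\ell)$ factors as $\prod_{\tau}(T^2 - \tau(a)T + \tau(b))^2$ and yields $\#\tilde A(\F_{\ell^f}) = N_{F/\Q}(1-a+b)^2 = 0$, contradicting the existence of the identity point. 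The finite union, over $(a,b) \in \Sigma(\ell^f,F)$, of the prime divisors of $1-a+b$ in $\mathcal{O}_F$ (together with the primes above $\ell$) then provides the required $V(\ell^f)$.

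The main obstacle is the bad reduction case. By Grothendieck's semistable reduction theorem, after a finite extension $k'/k$ of degree bounded in terms of $\dim A = 2[F:\Q]$, $A$ acquires semistable reduction, and the $\mathcal{O}_B$-action should then restrict the reduction type to either good or purely toric, analogously to the QM-abelian-surface analysis in Arai--Momose. In the toric case the Galois representation on $A[p_F]$ can be described via Mumford's $p$-adic uniformization, yielding an analogous characteristic-polynomial constraint that still carves out only finitely many admissible $(a,b)$. The hardest points will be uniformly controlling $[k':k]$, handling the toric case, and verifying the analog of the non-vanishing $1-a+b \neq 0$, so that the argument survives all reduction types and the enlarged $V(\ell^f)$ remains finite and independent of $B$.
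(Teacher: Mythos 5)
Your analysis of the good-reduction case captures the paper's core computation: the Weil bounds carve out a finite set of possible Weil numbers $\beta$ (the paper's set $FR(\ell^f)$), the trivial $\Gamma_1$-eigenvector forces $1 \equiv \beta \pmod{\mathfrak{r}}$, i.e.\ $p_F \mid N_{F(\beta)/F}(1-\beta)$, and these norms are nonzero (indeed $|\beta| = \ell^{f/2} > 1$, so $\beta \neq 1$ automatically, even without invoking $\#\tilde A(\F_{\ell^f}) > 0$). That part is sound and essentially the paper's \cref{nu_q_equals_to_beta:fin} combined with \cref{Weil_number}.

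However, the ``main obstacle'' you flag in the last paragraph does not exist, and this is precisely the structural simplification the paper emphasizes. A QM-abelian variety by $O_B$ with $B$ a \emph{division} algebra always has potentially good reduction: the toric part of a semistable reduction would carry an action of an order in $B$, which is impossible for dimension reasons, so it must vanish. The paper cites this (\cite[III]{BreenLabesse}) and remarks explicitly in the introduction that, unlike for modular curves, there are no cusps to deal with --- this is exactly why the level-$\Gamma_1$ statement here is elementary, whereas Merel's theorem for elliptic curves is deep. Your proposed appeal to Mumford uniformization is therefore unnecessary, and in fact would never activate.

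What you instead need to handle --- and do not address --- is the following subtlety in the potentially-good-reduction case: you must pass to an extension $k'/k$ over which $A$ has good reduction, but if that extension changes the residue field, the inertia degree $f$ changes and the set $FR(\ell^{f'})$ is no longer the one in your target statement. The paper's \cref{good_reduction} shows the extension can be chosen \emph{totally ramified} (the group $\Phi(A/k)$ is a finite quotient of $I_k$, so realized by a totally ramified $k'/k$), which keeps the residue field and hence the Frobenius unchanged. This is the actual key step that makes \cref{nu_q_equals_to_beta:local} work, and your plan omits it. Finally, you silently assume that a $k$-rational point of $M_1^B(p_F)$ yields an actual QM-abelian variety with level structure over $k$; this requires knowing that $M_1^B(p_F)$ is a \emph{fine} moduli scheme (\cref{M_1^B(n)_is_a_fine_moduli}), and the exceptional set $V(\ell^f)$ must be enlarged by the finitely many $p_F$ with $N(p_F) < 4^d$ to guarantee this.
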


This is essentially equivalent to the following theorem:

\begin{theorem}[\cref{uniform_torsion_points_theorem}] \label{intro:uniform_torsion_points_theorem}
Let $\ell$ be a rational prime and $f$ a positive integer.
Then there exists an integer $N$ depending only on $\ell$, $f$, and on $F$, but not on $B$, such that for every QM-abelian variety $A$ over a local field $k$ whose residue characteristic is $\ell$ and whose absolute inertia degree is $f$, if $A$ has a $k$-rational point of prime order $p$, then $p$ divides $N \Delta$.
\end{theorem}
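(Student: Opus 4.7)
The plan is to adapt the Mazur--Momose--Arai--Momose strategy: extract from the existence of a $k$-rational $p$-torsion point a strongly constrained two-dimensional Galois character, and then rule out all but finitely many primes $p$ by analyzing this character in each reduction type of $A$. Let $P \in A(k)$ have prime order $p$ and decompose $A[p] = \bigoplus_{p_F \mid p} A[p_F]$ via the action of $\mathcal{O}_F$. Fix $p_F$ so that the $p_F$-component of $P$ is nonzero; I may assume $p_F \nmid \Delta$, since otherwise $p \mid \Delta$ and the conclusion is immediate. For such $p_F$ we have $\mathcal{O}_B / p_F \mathcal{O}_B \cong M_2(k_F)$, where $k_F = \mathcal{O}_F / p_F$, and Morita equivalence presents $A[p_F]$ as $k_F^{2} \otimes_{k_F} V$ for a two-dimensional $k_F$-vector space $V$ carrying a linear $G_k$-action. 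The $k$-rational $\mathcal{O}_B$-submodule generated by $P$ yields a $G_k$-fixed line $L \subset V$, and compatibility of the $*$-polarization with the QM (via the Rosati involution) forces $\det V = \chi_{p_F}$, the mod $p_F$ cyclotomic character $G_k \to k_F^{\times}$. Thus $V$ fits in a short exact sequence
\[
0 \to \mathbf{1} \to V \to \chi_{p_F} \to 0.
\]

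Next I apply Grothendieck's semistable reduction theorem to pass to a totally ramified extension $k'/k$ over which $A$ becomes semistable; the degree of this extension is bounded purely in terms of $\dim A = 2d$, and $\ell$ and $f$ are unchanged by such an extension, so I may assume $A$ itself is semistable over $k$. In the good reduction case, let $\tilde A$ be the special fibre of the Néron model: if the specialization of $P$ to $\tilde A(\F_{\ell^f})$ is nonzero then $p \mid \#\tilde A(\F_{\ell^f}) \leq (\sqrt{\ell^f}+1)^{4d}$ by the Weil bounds. Otherwise $P$ reduces to zero and hence lies in the formal group of $A$, which forces $\ell = p$; Raynaud's classification of finite flat group schemes of type $(p, \dots, p)$, applied to the subgroup scheme generated by $P$, confines $\chi_{p_F}$ on wild inertia to a product of fundamental characters whose level is bounded in terms of $d$, bounding $p$ in terms of $\ell, f, d$. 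In the semistable-but-not-good case I analyze Grothendieck's monodromy filtration on $V_\ell A$ together with the QM action, as in \cite{AraiI}, to rule out extensions of the above shape for $p$ outside a finite set depending only on $\ell$, $f$, and $F$. Taking $N$ to be the product of all primes up to the resulting bound then gives the theorem.

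The main obstacle is uniformity in $B$: each step must depend on $B$ only through $\dim A = 2d$ or through the condition $p_F \nmid \Delta$. The Morita reduction already isolates the dependence on $B$ into the latter condition; Grothendieck's theorem depends only on $\dim A$; and Raynaud's classification sees only the two-dimensional Morita quotient and the abelian-variety dimension. The most delicate step will be the monodromy analysis in the bad semistable case, where reconciling the QM action with the shape of the monodromy filtration requires care in order to extract a bound on $\chi_{p_F}$ that is independent of the local ramification data of $B$ at the residue characteristic $\ell$.
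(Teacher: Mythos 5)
Your good-reduction subcase is fine and close in spirit to the paper's argument: after passing to a totally ramified extension over which $A$ has good reduction, the residue field is still $\F_{\ell^f}$, and the specialization of $P$ becomes a $p$-torsion point of the reduction $\tilde A(\F_{\ell^f})$, whose order is Hasse--Weil bounded in terms of $d$, $\ell$, $f$. (The paper makes the finer observation that the Frobenius eigenvalue $\beta$ on the $\Gamma_0(p_F)$-subgroup $C$ lies in $FR(\ell^f)$ and that $\beta \equiv 1 \pmod{p_F}$, which gives a more explicit $N$; your coarser group-order bound also works.) The $p=\ell$ branch via Raynaud is a red herring: once $p = \ell$ one simply absorbs $\ell$ into $N$, so no further constraint is needed; and in any case the rational point specializing to zero does not by itself force $p = \ell$ without an argument.

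The genuine gap is the ``semistable-but-not-good'' case. You write that you would ``analyze Grothendieck's monodromy filtration on $V_\ell A$ together with the QM action, as in \cite{AraiI}, to rule out extensions of the above shape'' --- but you neither carry out this analysis nor observe that the case is in fact vacuous. The central structural fact, stated explicitly in \cref{good_reduction} and flagged in the introduction of the paper, is that QM-abelian varieties \emph{always} have potentially good reduction (\cite{BreenLabesse}, Serre--Tate): the inertia representation has finite image, so after a \emph{totally ramified} extension of bounded degree (bounded by $\#\Phi(A/k) \leq 20 n_{\lcm}$, which depends only on $F$) one already has good reduction, with no change to the residue field or the absolute inertia degree $f$. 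There is no bad semistable reduction type, and hence no monodromy filtration to analyze. Your proposal invokes Grothendieck's semistable reduction theorem as a black box and then posits a non-existent bad case; as a result it does not actually close the argument for an unspecified set of primes, which is precisely where the bound $N$ has to come from. It is also worth noting that \cite{AraiI} likewise does not have a bad-reduction branch --- it too relies on potential good reduction for QM-abelian surfaces --- so the citation does not supply what you are leaning on it to supply. Replacing the semistable-reduction step by the Serre--Tate potential-good-reduction step (and deleting the phantom bad case entirely) repairs the proof, and is exactly what makes the theorem as elementary as it is; indeed the paper highlights that this circumvents all of the modular-curve machinery that Merel's analogous theorem requires.
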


This theorem is analogous to Merel's theorem on the torsion points of elliptic curves \cite[Th\'eor\`eme]{Merel}.
While Merel's theorem requires a very deep theory of modular curves, our theorem does not require geometry nor arithmetic of Shimura varieties itself, but is derived from standard arguments on QM-abelian varieties.
This is due to the fact that our Shimura varieties have no cusps, i.e., QM-abelian varieties always have potentially good reduction, whereas we need to deal with the cusps in the case of modular curves.

By studying more about the character over a number field, we also show theorems about the non-existence of rational points of the Shimura varieties of level-$\Gamma_0(p_F)$,
following the methods of \cite{MazurX0,Momose,AraiI}.
Let $k$ be a Galois number field containing $F$ and assume that $F$ is Galois.
The results for $d = 1$ is due to \cite{AraiII,AraiIII}.

\begin{theorem}[\cref{M_0^B(p)(k)_is_empty}] \label{intro:M_0^B(p)(k)_is_empty}
Assume the following four conditions:
\begin{enumerate}
\item The class number $h_k$ of $k$ is relatively prime to $d$.

\item The field $k$ does not contain the Hilbert class fields of any imaginary quadratic number fields.

\item There exists a rational prime $\ell$ not dividing $n_{\lcm} \Delta$ whose inertia degree in $k$ is odd such that
the field $F(\sqrt{-\ell})$ does not split $B$.

\item Either $d=1$, or $d = 2$ and the exponent of $\Gal(k/\Q)$ is $2$.
\end{enumerate}
Then there exists a finite set $N_3(k)$ of primes of $F$, which also depends on B, such that
for every $p_F \not\in N_3(k)$ which splits totally over $\Q$, we have that $M^B_0(p_F)(k) = \varnothing$.
\end{theorem}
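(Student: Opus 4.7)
The plan is to follow the Mazur--Momose--Arai paradigm: a hypothetical $k$-rational point of $M_0^B(p_F)$ produces a mod-$p_F$ Galois character of $G_k$, and the arithmetic constraints on this character, together with hypotheses (1)--(4), force $p_F$ into an explicit finite set $N_3(k)$.

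\emph{Step 1 (Construction of the character).} A $k$-point $x$ gives, after the standard coarse-moduli adjustment, a QM-abelian variety $(A,\iota)$ over $k$ together with an $\mathcal{O}_B$-stable and $G_k$-stable subgroup $C \subset A[p_F]$ realizing the $\Gamma_0(p_F)$-structure. Since $p_F \nmid \Delta$ and $p_F$ splits totally in $F/\Q$, we have $\mathcal{O}_B/p_F \cong M_2(\F_p)$ where $p$ is the rational prime below $p_F$. Thus $C$ is a simple $M_2(\F_p)$-module, and by Schur's lemma the Galois action factors through a character $\chi\colon G_k \to \F_p^\times$. The analogous character $\chi'$ on $A[p_F]/C$ satisfies $\chi\chi' = \omega$ (the mod-$p$ cyclotomic character) via the Weil pairing associated with the $*$-polarization.

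\emph{Step 2 (Local analysis).} Next I would classify $\chi$ at each prime of $k$. At a prime $\mathfrak{q}$ of $k$ above $p$, the reduction of $A$ is either ordinary or supersingular, and a Raynaud-style analysis of the finite flat group scheme $C$ over the local ring forces $\chi|_{I_{\mathfrak{q}}}$ to be either a controlled power of the cyclotomic character (ordinary case) or a power of the fundamental character of level two (supersingular case). At a prime $\lambda$ not above $p$, the variety $A$ has potentially good reduction --- a feature of QM-abelian varieties emphasized in the remark after \cref{intro:uniform_torsion_points_theorem} --- so after a bounded base change $\chi$ is unramified at $\lambda$ and $\chi(\Frob_\lambda)$ is a root of the mod-$p_F$ reduction of the characteristic polynomial of Frobenius on the Tate module of $A$.

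\emph{Step 3 (Global control and conclusion).} Combining the local data I would show that, for a controlled integer $N$ depending only on $d$, the character $\chi^N$ equals a power of $\omega$ times a finite-order character factoring through the ray class group of $k$. Hypothesis (1) kills the class-group contribution after a further power coprime to $d$; hypothesis (2) eliminates CM points by the standard argument that the associated imaginary quadratic CM field would have to have Hilbert class field embedded in $k$. Evaluating the remaining controlled form of $\chi$ at $\Frob_\mathfrak{l}$ for the prime $\ell$ of hypothesis (3) yields a polynomial congruence modulo $p_F$ between powers of $\ell$ and roots of the characteristic polynomial of Frobenius on $A_\mathfrak{l}$. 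The condition that $F(\sqrt{-\ell})$ does not split $B$, combined with the odd inertia degree of $\ell$ in $k$, forbids supersingular reduction of $A$ at $\mathfrak{l}$ with CM by $\Q(\sqrt{-\ell})$, so the polynomial is separable and the congruence pins $p_F$ down as a divisor of one of finitely many explicit nonzero integers. These integers assemble into $N_3(k)$.

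\emph{Anticipated main obstacle.} The hard step is Step 3 in the genuinely new regime $d=2$. When $d>1$ the mod-$p_F$ Galois module $A[p_F]$ must be analyzed simultaneously at all $d$ primes of $F$ above $p$, and the descent of $\chi$ to Hecke-character data requires killing characters of order dividing $d$; this explains hypothesis (4), since only for $d \le 2$ with $\Gal(k/\Q)$ of exponent two does the relevant cohomology of $F/\Q$ and $k/F$ collapse enough to reduce the multi-prime problem to the classical one-prime Mazur--Momose configuration handled for $d=1$ in \cite{AraiII,AraiIII}. A secondary but delicate point is that conditions (2) and (3) must cooperate to exclude CM characters that arise only after passing to a quadratic extension of $k$, which is why (3) is phrased via the splitting behaviour of $B$ over $F(\sqrt{-\ell})$ rather than purely in terms of $k$.
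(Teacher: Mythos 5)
Your proposal captures the right overall strategy (construct a mod-$p_F$ character, analyze it locally, piece together globally, contradict hypotheses (1)--(4)), and your ``anticipated main obstacle'' correctly identifies that hypothesis (4) is needed precisely because a certain system of Galois representatives must assemble into a coset of a subgroup (this is \cref{I_is_a_group} in the paper). However, there are two genuine gaps.

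First, Step 1 assumes that the $k$-point $x$ yields a QM-abelian variety \emph{over $k$} ``after the standard coarse-moduli adjustment.'' In general $x$ need not have a model over $k$; \cref{thm:field_of_moduli} shows a model over $k$ exists exactly when $B$ splits over $k$, which is not assumed. The paper sidesteps this by constructing a character $\varphi_x \colon G_k \to (\F_{p_F}^*)^{n_{\lcm}}$ directly from the $\kbar$-model via a cocycle argument in which the automorphism group $\Aut X/\kbar$ is killed by raising to the $n_{\lcm}$-th power; the lemmata \cref{model_over_knr} and \cref{model_over_totally_ramified_extension}, about descending models over suitable local extensions, are then needed to recover the local behavior of $\varphi_x$ from the $\Gamma_0$-character of an actual model. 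Without this, the ramification and Frobenius computations of Step 2 cannot literally be performed over $k$.

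Second, your Step 3 treatment of what the paper calls the ``type 2'' case does not match and, as stated, would not close the argument. You invoke hypothesis (3) to ``forbid supersingular reduction \ldots{} so the polynomial is separable,'' and then try to pin $p_F$ down as a divisor of explicit integers. The paper's argument runs in the opposite logical direction and is not a divisibility statement: one shows that if $\varphi = \chi^{n_{\lcm}/2}$, then for the prime $\mathfrak{q}\mid \ell$ the Weil number $\beta \in FR(N(\mathfrak{q}))$ satisfies $\beta^{n_{\lcm}} \equiv \ell^{f n_{\lcm}/2} \pmod{\mathfrak{p}_0}$; an archimedean estimate and the product formula (\cref{F(beta)=F(sqet{-l})}, valid once $N(\mathfrak{q})^{d n_{\lcm}/12} < p/4^d$, which gives the bound defining $N_2(k)$) then force $F(\beta) = F(\sqrt{-\ell})$. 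Since the reduction at $\mathfrak{q}$ is a CM QM-abelian variety over a finite field, $F(\beta)$ must split $B$ by \cref{End:ch0:CM}, and this contradicts hypothesis (3). The archimedean/product-formula step and the use of the splitting fact from \cref{End:ch0:CM} are essential and are absent from your sketch, so this case remains open in your argument.
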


\begin{theorem}[\cref{M_0^B(p)(k)_is_empty:variant}] \label{intro:M_0^B(p)(k)_is_empty:variant}
Assume the following two conditions:
\begin{enumerate}
\item There exists a rational prime $\ell_1$ not dividing $n_{\lcm} h_k$ which splits in $k$ totally such that $k$ does not contain any elements of the finite set $FR(\ell_1)$.

\item There exists a rational prime $\ell_2$ not dividing $n_{\lcm} \Delta$ whose inertia degree in $k$ is odd such that the field $F(\sqrt{-\ell_2})$ does not split $B$.
\end{enumerate}
Then there exists a finite set $N_4(k)$ of primes of $F$, which also depends on B, such that
for every $p_F \not\in N_4(k)$ which splits totally over $\Q$, we have that $M^B_0(p_F)(k) = \varnothing$.
\end{theorem}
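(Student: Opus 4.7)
The plan is to imitate the character-theoretic argument for \cref{intro:M_0^B(p)(k)_is_empty}, replacing the global hypotheses there (on $h_k$, on Hilbert class fields, and on the degree $d$) by the local control provided by the totally split prime $\ell_1$. Suppose for contradiction that $P \in M^B_0(p_F)(k)$ for some totally split $p_F$ outside a finite exceptional set to be determined. Such a point corresponds to a $\ast$-polarised QM-abelian variety $A/k$ together with an $\mathcal{O}_B$-stable subgroup scheme $C \subset A[p_F]$, and the Galois action on $C$ produces a continuous character
$$\chi \colon \Gal(\bar{k}/k) \longrightarrow (\mathcal{O}_F/p_F)^{\times},$$
whose global properties are furnished by the paper's earlier sections: ramification only at primes dividing $p_F \cdot \Delta$, local shape at places above $p_F$ described by products of fundamental characters, and a relation $\chi \cdot \chi^{\sigma} = \omega$ with the cyclotomic character coming from the $\ast$-polarisation.

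Next I would exploit condition~(2) exactly as in the proof of \cref{intro:M_0^B(p)(k)_is_empty}. Let $\mathfrak{l}_2$ be a prime of $k$ above $\ell_2$ of odd residue degree $f_2$. By potential good reduction and the standard analysis at a place of good reduction, $\Frob_{\mathfrak{l}_2}$ acts on the $p_F$-adic Tate module of $A$ with a quadratic characteristic polynomial over $\mathcal{O}_F$ whose splitting field is $F(\sqrt{-\ell_2})$ up to a unit; the hypothesis that this field does not split $B$ rules out one of the two possible scalar actions of a power of $\Frob_{\mathfrak{l}_2}$ on $C$ and forces a congruence
$$\chi(\Frob_{\mathfrak{l}_2})^{2} \equiv \ell_2^{m} \pmod{p_F}$$
for a fixed exponent $m$ depending only on $f_2$ and the polarisation. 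For all but finitely many $p_F$ this congruence already restricts $\chi(\Frob_{\mathfrak{l}_2})$ to a small, $k$-independent set of values.

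The new ingredient is condition~(1). Since $\ell_1$ splits totally in $k$, a prime $\mathfrak{l}_1$ of $k$ above $\ell_1$ has residue degree one, and the value $\chi(\Frob_{\mathfrak{l}_1})$ is the reduction modulo $p_F$ of an algebraic integer in a CM extension of $F$ whose trace and norm are controlled by $\ell_1$; the finite set $FR(\ell_1)$ should be precisely the list of candidates for this algebraic integer that could arise from a QM-abelian variety. The assumption that $k$ contains no element of $FR(\ell_1)$ then says that no such candidate can be defined over~$k$, yet the $k$-rationality of the subgroup $C$ forces $\chi(\Frob_{\mathfrak{l}_1})$ itself to be $k$-rational. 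As soon as $p_F$ is large enough for reduction modulo $p_F$ to be injective on $FR(\ell_1)$, this is a contradiction; combined with the exclusion coming from $\ell_2$ and the ramification bounds coming from $\operatorname{Ram}(B/F)$, the surviving primes form the finite set $N_4(k)$.

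The main obstacle is to check that conditions~(1) and~(2) together really do the work of the four global hypotheses of \cref{intro:M_0^B(p)(k)_is_empty}. In particular, one must verify that the enumeration defining $FR(\ell_1)$ captures every algebraic integer that could arise as a Frobenius trace on $C$ for \emph{some} hypothetical QM-abelian variety $A$ (not only those arising over $k$), and that the totally split hypothesis on $\ell_1$, together with $\gcd(\ell_1, h_k) = 1$, suffices to replace the class-number-coprimality and Hilbert-class-field inputs used previously to rule out CM-type obstructions. Once these compatibility checks are in place, the proof assembles from the local arguments at $\mathfrak{l}_1$ and $\mathfrak{l}_2$ and the standard global ramification bounds already developed earlier in the paper.
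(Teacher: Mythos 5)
Your high-level strategy matches the paper's: reduce to the dichotomy ``type 2 vs.\ type 3'' for the exponent of the $M_0$-character above $p$, use $\ell_1$ to exclude one branch, and use $\ell_2$ to exclude the other. Indeed the paper's actual proof of this theorem is almost one line: enlarge the generating set $S$ of $Cl_k$ to include the primes of $k$ above $\ell_1$, then substitute \cref{varphi_classification:variant} for \cref{varphi_classification} in the proof of \cref{M_0^B(p)(k)_is_empty}. But the mechanism you give for how condition~(1) excludes the CM branch is not correct, and this is the one new step.

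You write that ``the $k$-rationality of the subgroup $C$ forces $\chi(\Frob_{\mathfrak{l}_1})$ itself to be $k$-rational,'' and then argue via injectivity of reduction modulo $p_F$ on $FR(\ell_1)$. This is a category error: the value $\varphi_x(\Frob_{\mathfrak{l}_1})$ lives in $\F_{p_F}^\times$, a residue field, so ``being $k$-rational'' has no content, and no amount of enlarging $p_F$ turns a residue class into an algebraic number of $k$. The assumption $FR(\ell_1)\cap k = \varnothing$ does not touch the single Frobenius value directly. What actually happens, in \cref{varphi_classification} and its variant, is global: using \cref{nu_above_p}, the ramification of $\varphi_x$ above $p$ is packaged into an element $\varepsilon = \sum_\sigma b_\sigma\sigma\in\Z[\Gal(k/\Q)]$ with $0\le b_\sigma\le n_{\lcm}$, and \cref{varphi_q_equals_to_beta} combined with the exclusion $p_F\notin N_0(k)$ upgrades the congruence to an exact identity $\alpha_\mathfrak{q}^{\varepsilon} = \beta_\mathfrak{q}^{n_{\lcm}h_k}$ in $O_{\Qbar}$, where $\alpha_\mathfrak{q}\in O_k$ generates $\mathfrak{q}^{h_k}$. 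Since $\alpha_\mathfrak{q}^\varepsilon\in k$, Lemma \cref{type_of_varepsilon} forces either $\alpha_\mathfrak{q}^\varepsilon\in F$ (type~2) or $F(\beta_\mathfrak{q})\subseteq k$ (type~3). It is this ideal-theoretic comparison, not a reduction-injectivity argument, that places $\beta_\mathfrak{q}$ in $k$ in the type~3 case, and it is precisely here that $FR(\ell_1)\cap k = \varnothing$ is used. Without invoking (or reproving) \cref{type_of_varepsilon} and \cref{varphi_classification:variant}, your sketch has a genuine gap at the central step, which you yourself flag in your final paragraph.

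One smaller note: you do not need ``reduction modulo $p_F$ to be injective on $FR(\ell_1)$'' as a separate hypothesis; the paper builds the corresponding finiteness into the set $N_0(k)\subseteq N_1(k)$, excluding primes $p_F$ dividing any of the nonzero differences $N_{k(\beta_{\mathfrak{q}})/F}(\alpha_\mathfrak{q}^\varepsilon - \beta_\mathfrak{q}^{n_{\lcm}h_k})$ over the finite index set $M_1(k)$. This is what lets one pass from a congruence modulo $\mathfrak{r}\mid p_F$ to an exact algebraic identity.
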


For precise definitions of notations see below.

Note that by \cite[Theorem 0]{Shimura}, the varieties $M_0^B(p_F)$ have no real points,
and hence the theorems are trivial for real number fields.
(We explain it in \cref{section:quaternionic_Shimura_varieties}.)

In \cref{section:QM-abelian_varieties:lemmata} we define QM-abelian varieties and show fundamental lemmata about the conditions defining them.
In \cref{section:quaternionic_Shimura_varieties}, we recall the basic facts about Shimura varieties, and define the moduli stacks classifying QM-abelian varieties, which induce certain types of Shimura varieties as their coarse moduli spaces.
In \cref{section:QM-abelian_varieties:general_theory}, we discuss some standard properties about QM-abelian varieties and Shimura varieties classifying them.
In \cref{section:local}, we define the character induced from a $*$-polarized QM-abelian variety and its level-$\Gamma_0(p)$ structure, and study its local behavior.
At last of this section using its local information we deduce \cref{intro:M_1^B(p)(k)_is_empty}.
In \cref{section:global}, piecing together its local information, we study global behavior of the character.
Finally using it we deduce \cref{intro:M_0^B(p)(k)_is_empty,intro:M_0^B(p)(k)_is_empty:variant}.

\textit{Notation.}
Unless otherwise stated, we use the following notations:
For a number field or a local field $K$ we denote the ring of integers of $K$ by $O_K$.
For a prime ideal (or for a nonarchimedean valuation) $\mathfrak{p}$ of $K$,
the symbol $\F_\mathfrak{p}$ denotes the residue field of $K$ at $\mathfrak{p}$.
For a local field $K$ let $\F_K$ be the residue field.
For an extension of global fields $L/K$ we denote the discriminant and the different by $\mathfrak{d}_{L/K}$ and $\mathfrak{D}_{L/K}$ respectively, and for a place (or for a prime) $v$ of $K$, we denote $L \otimes_K K_v$ by $L_v$.
$F$ is a totally real number field of degree $d$,
$B$ is a totally indefinite division quaternion algebra over $F$,
$\Nrd$ and $\Trd$ is the reduced norm and the reduced trace of $B/F$ respectively,
$b \mapsto b^*$ is a positive involution on $B$,
i.e., an involution on $B$ satisfying for all $b \in B$
that $\tr_{B/\Q} (bb^*) > 0$.
In this case by the theorem of Albert (for example see \cite[Chapter IV, Section 21, Theorem 2]{MumfordAV}) we have that
$F$ is fixed by $*$ and that $(B, *) \otimes_\Q \R \isom (M_2(\R), t)^d$ as $\R$-algebras with involutions, where $t$ is the transpose of matrices.
We fix it throughout this paper.
$O_B$ is a maximal order of $B$ which is invariant under the involution $*$.
Note that obviously we have $O_B \cap F = O_F.$
$\operatorname{Ram}(B/F)$ is the set of ramified primes of $B/F$.
$\Delta$ is the product of the rational prime numbers $p$ at which $B$ is ramified, i.e., the rational prime numbers $p$ such that $F$ is ramified at $p$ or that $B$ is ramified at a place of $F$ above $p$.
Also $\Delta'$ is the product of the rational prime numbers $p$ satisfying that $B$ is ramified at a place of $F$ above $p$.
Throughout this paper, for each prime $\mathfrak{p}$ of $F$ which splits $B$, we fix an isomorphism from $O_B \otimes_{O_F} O_{F,\mathfrak{p}}$ to $M_2(O_{F,\mathfrak{p}})$, and identify them.

For an algebra $R$ and its subset $S$, the symbol $C_R(S)$ denotes the centralizer of $S$ in $R$.

We denote the ring of adeles (respectively the ring of finite adeles) over the rationals $\Q$
by $\A$ (respectively by $\A_f$).
For an abelian variety $A$ over a field $k$ and for a prime $\ell$, we denote $V_\ell A = T_\ell A \otimes_{\Z_\ell} \Q_\ell$,
$\hat{T} A :=  \plim A[m](\kbar) = \prod_p T_p A$, and similar for $\hat{V} A$.

All rings have an identity element, and all ring homomorphisms take the identity element to the identity element.

We denote the Deligne torus, i.e. the Weil restriction of $\G_m$ from $\C$ to $\R$, by $\DeligneS$.

\section{Definition of QM-abelian varieties} \label{section:QM-abelian_varieties:lemmata}

For an abelian scheme $A$ over a scheme $S$,
the symbol $\Lie(A/S)$ denotes the pullback of the tangent sheaf $\mathscr{T}_{A/S} = (\Omega_{A/S}^1)^\vee$ along with the $0$-section,
which is a locally free $\mathscr{O}_S$-module of rank $\dim A/S$.
On $\Lie(A/S)$ the ring $\End(A/S)$ acts from left canonically.
Let $A^\vee$ denote the dual abelian scheme of $A$ over $S$.
Note that in general the dual abelian scheme does exist as a scheme even if the abelian scheme is not projective,
see \cite[Chapter I, \S 1]{FaltingsChai}.

\begin{definition} \label{def:QMAV}
Let $S$ be a scheme.
\begin{enumerate}
\item A QM-abelian scheme by $O_B$ over $S$ is a pair $(A, i)$, where $A$ is an abelian scheme over $S$ and
$i \colon O_B \to \End(A/S)$ is a ring homomorphism,
such that $\Lie(A/S)$ is locally free of rank $2$ over $O_F \otimes_\Z \mathscr{O}_S$.

\item Let $A/S$ be an abelian scheme and $i \colon O_B \to \End(A/S)$ a ring homomorphism.
A $*$-polarization on $(A,i)$ by $O_B$ over $S$ or, to be more precise,
a $*$-polarization on $(A,i)$ of type $(O_B, b \mapsto b^*)$ over $S$ is a polarization over $S$
which commutes with the action of $O_B$, i.e., a polarization $\lambda$ of $A$ such that for every $b \in O_B$ the diagram
\begin{equation*}
\begin{aligned}
\xymatrix{
A \ar[r]^\lambda \ar[d]_{i(b^*)} & A^\vee \ar[d]^{i(b)^\vee} \\
A \ar[r]^\lambda & A^\vee
}
\end{aligned}
\end{equation*}
is commutative.

\item A $*$-polarized QM-abelian scheme by $O_B$ or, to be more precise,
a $*$-polarized QM-abelian scheme of type $(O_B, b \mapsto b^*)$ over $S$ is a pair $(A, i , \lambda)$,
where $(A,i)$ is a QM-abelian variety and $\lambda$ is a $*$-polarization on it.

\item Let $A, A'$ be abelian schemes and $i \colon O_B \to \End_S A, i' \colon O_B \to \End_S A'$ be ring homomorphisms.
Let $X = (A,i)$ and $X' = (A', i')$.
We define
\begin{equation*}
\Hom_S( X, X' ) = \{ f \in \Hom_S (A,A') | i'(b) \circ f = f \circ i(b) \text{ for every } b \in O_B \}.
\end{equation*}
We define $\End_S X, \Isom_S ( X, X' )$, and $\Aut_S X$ in the same way.
Let $\lambda$ and $\lambda'$ be polarizations on $A$ and on $A'$ respectively,
and let $Y = (A,i,\lambda)$ and $Y' = (A',i',\lambda')$.
We define
\begin{equation*}
\Isom_S( Y, Y' ) = \{ f \in \Isom_S (X,X') | \lambda = f^\vee \circ \lambda' \circ f \},
\end{equation*}
and define $\Aut_S Y$ in the same way.

\end{enumerate}
We often simply call them a QM-abelian scheme and a $*$-polarized QM-abelian scheme,
and also write $A$ instead of $(A,i)$ and $(A,i,\lambda)$ when no confusion can arise.
\end{definition}

Note that by the condition on Lie algebras, QM-abelian schemes are of dimension $2d$.
Also note that it is easy to see that if $F = \Q$ then every abelian scheme of dimension $2$ with an action of $O_B$
is a QM-abelian scheme.
Such abelian schemes are called QM-abelian surfaces.

For later use we need fundamental lemmata, especially we need \cref{lf_iff_matrices,lf_iff_reduced_norm} for later sections.
More concretely, we use \cref{lf_iff_matrices} in \cref{section:QM-abelian_varieties:general_theory}, and we use \cref{lf_iff_reduced_norm} in \cref{section:quaternionic_Shimura_varieties}.
In the rest of this section we give their proofs for the sake of completeness.

First we consider a more general setting than in the introduction.

\begin{lemma} \label{fiber_by_fiber}
Let $r$ be a nonnegative integer, $R$ a flat commutative ring over $\Z$, $S$ a scheme, $\mathscr{M}$ a locally free $\mathscr{O}_S$-module of finite rank,
and $i \colon R \to \End_{\mathscr{O}_S} (\mathscr{M})$ be a ring homomorphism.
Then $\mathscr{M}$ is locally free of rank $r$ over $R \otimes_\Z \mathscr{O}_S$ if and only if
for every point $s \in S$, the module $\mathscr{M} \otimes k(s)$ is locally free of rank $r$ over $R \otimes_\Z k(s)$,
where $k(s)$ is the residue field of $S$ at $s$.
\end{lemma}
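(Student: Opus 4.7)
The forward direction is immediate: any local trivialization $\mathscr{M}|_U \isom (R \otimes_\Z \mathscr{O}_U)^r$ restricts on each fibre to $\mathscr{M} \otimes k(s) \isom (R \otimes_\Z k(s))^r$, which is a fortiori locally free of rank $r$.

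For the converse, which is the content of the lemma, I reduce (the conclusion being local on $S$) to the case $S = \Spec A$ affine and $\mathscr{M} = \widetilde{M}$ for a finitely generated projective $A$-module $M$ carrying a ring homomorphism $R \to \End_A(M)$; equivalently, $M$ is a finitely generated module over the $A$-algebra $B := R \otimes_\Z A$, and $B$ is flat over $A$ by base change of $\Z \to R$. The plan is first to show that $M$ is $B$-flat via the fibral flatness criterion (EGA IV 11.3.10, or Stacks Project Tag 039A) and then to upgrade $B$-flatness to local freeness of rank $r$ via Nakayama at each prime of $B$. For flatness, $M$ is $A$-flat since it is $A$-projective, and for each $\mathfrak{p} \in \Spec A$ the fibre $M \otimes_A k(\mathfrak{p})$ is locally free, hence flat, over $R \otimes_\Z k(\mathfrak{p}) = B \otimes_A k(\mathfrak{p})$ by hypothesis; fibral flatness then yields $M$ flat over $B$. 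For local freeness of rank $r$, fix a prime $\mathfrak{q}$ of $B$ above $\mathfrak{p} \in \Spec A$; the inclusion $\mathfrak{p}B \subset \mathfrak{q}$ identifies $M_\mathfrak{q}/\mathfrak{q}M_\mathfrak{q}$ with the stalk of $M \otimes_A k(\mathfrak{p})$ at the point $\bar{\mathfrak{q}} \in \Spec(R \otimes_\Z k(\mathfrak{p}))$, which has $k(\mathfrak{q})$-dimension $r$ by hypothesis. Lifting a basis yields, by Nakayama, a surjection $B_\mathfrak{q}^r \twoheadrightarrow M_\mathfrak{q}$ whose kernel $K$ satisfies $K \otimes_{B_\mathfrak{q}} k(\mathfrak{q}) = 0$ by $B_\mathfrak{q}$-flatness of $M_\mathfrak{q}$ and a dimension count; a second application of Nakayama gives $K = 0$, so $M_\mathfrak{q} \isom B_\mathfrak{q}^r$, and finite generation of $M$ over $B$ assembles these stalkwise trivializations into a genuine Zariski-local trivialization over $B$.

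The main technical subtlety is the appeal to fibral flatness and, in particular, the use of Nakayama on the submodule $K \subset B_\mathfrak{q}^r$; both are cleanest under finite-presentation or Noetherianness assumptions. These hypotheses will be met in all intended applications, because $R = O_F$ is finite flat over $\Z$ (so that $B$ is finite over $A$ and inherits Noetherianness from $A$) and the base schemes of the relevant moduli problems are locally Noetherian.
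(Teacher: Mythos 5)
Your proof is correct and follows the same route as the paper's: both apply the fibral flatness criterion to $\mathscr{O}_S \to R \otimes_\Z \mathscr{O}_S$ (the paper cites Matsumura's Application 3 of the local criterion of flatness, you cite EGA IV 11.3.10 / Stacks 039A — the same result) and then pass from flatness to local freeness of rank $r$ using finite generation over $R \otimes_\Z \mathscr{O}_S$. You are a bit more explicit than the paper in the final double-Nakayama step pinning down the rank, and a bit less explicit in the reduction to $S$ locally Noetherian, which the paper asserts at the outset by invoking finite presentation of $\mathscr{M}$ together with the Zariski-local nature of the statement, whereas you defer it to the intended applications.
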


\begin{proof}
First since $\mathscr{M}$ is a finitely presented $\mathscr{O}_S$-module and since the statement is Zariski local, we may assume that $S$ is locally noetherian.
Applying \cite[Chapter 8, Section 20, Application 3]{MatsumuraCA} to $\mathscr{O}_S \to R \otimes_\Z \mathscr{O}_S$, 
since $\mathscr{M}$ is flat over $\mathscr{O}_S$ and is also finitely generated as an $R \otimes \mathscr{O}_S$-module,
we have that the flatness of the $R \otimes_\Z \mathscr{O}_S$-module $\mathscr{M}$ is equivalent to the one of
the $R \otimes_\Z k(s)$-module $\mathscr{M} \otimes k(s)$ for every $s \in S$.
Now the finitely generation of the $R \otimes \mathscr{O}_S$-module $\mathscr{M}$ implies that its locally freeness is equivalent to the flatness.
This shows the statement.
\end{proof}

Using this lemma we can check the condition on Lie algebras as in \cref{def:QMAV} fiber-by-fiber.
Next we show some fundamental equivalent conditions related to the condition on Lie algebras as in \cref{def:QMAV}.

For a topological space $X$, for a sheaf of rings $\mathscr{A}$ on $X$,
and for a sheaf of $\mathscr{A}$-modules $\mathscr{M}$ on $X$,
let the symbol $\mathscr{E}nd_\mathscr{A} (\mathscr{M})$ denote the sheaf on $X$ which takes $U$ to
$\End_{\mathscr{A}|_U} (\mathscr{M}|_U)$.

\begin{lemma} \label{lf_iff_matrices}
Let $F$ be a number field of degree $d$, $B$ a central simple algebra of degree $r$ over $F$, $O_B$ a maximal order of $B$,
$S$ a scheme, $\mathscr{M}$ a locally free $\mathscr{O}_S$-module of finite rank, $i \colon O_B \to \End_{\mathscr{O}_S} (\mathscr{M})$ a ring homomorphism,
and  $\Delta'$ be the product of the prime numbers $p$ satisfying that there exists a place $v$ of $F$ above $p$
such that $B$ does not split at $v$.
Then $\mathscr{M}$ is locally free of rank $r$ over $O_F \otimes \mathscr{O}_S$ if
the canonical map $O_B \otimes_\Z \mathscr{O}_S \to \mathscr{E}nd_{O_F \otimes_\Z \mathscr{O}_S} \mathscr{M}$ is an isomorphism.
When $\Delta'$ is invertible on $S$, then the converse also holds.
\end{lemma}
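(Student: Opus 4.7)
The plan is to reduce via \cref{fiber_by_fiber} to the case $S = \Spec k$ with $k$ a field, and then decompose everything along the (finitely many) local Artinian factors of $O_F \otimes_\Z k$. Writing $O_F \otimes_\Z k = \prod_j R_j$ with residue fields $K_j = R_j/\mathfrak{m}_j$ induces matching decompositions $M = \prod_j M_j$ and $O_B \otimes_\Z k = \prod_j C_j$, where $C_j := O_B \otimes_{O_F} R_j$ is a free $R_j$-module of rank $r^2$ with center $R_j$, and the canonical map splits as $\phi_j \colon C_j \to \End_{R_j}(M_j)$ on each factor. The hypothesis passes to fibers because both sides of the canonical map are finite locally free $\mathscr{O}_S$-modules of the same rank $dr^2$ (the target via the iso itself), so tensoring with $k(s)$ preserves the isomorphism; the conclusion passes to fibers by \cref{fiber_by_fiber}. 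Thus the lemma reduces to proving, on each factor $j$, that $\phi_j$ is an isomorphism iff $M_j \cong R_j^r$ (with $\Delta'$-invertibility required only for the converse).

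For the forward direction, the iso $\phi_j$ identifies $\End_{R_j}(M_j)$ with $C_j$ as $R_j$-modules, so $\End_{R_j}(M_j)$ is $R_j$-free of rank $r^2$. The key step I would use is the following sub-lemma: \emph{if $R$ is a local Artinian ring and $N$ is a finitely generated $R$-module whose endomorphism ring $\End_R(N)$ is free as an $R$-module, then $N$ itself is $R$-free.} I would prove this using Krull--Schmidt: decomposing $N = \bigoplus_i N^{(i)}$ into indecomposables gives the corresponding block decomposition $\End_R(N) = \bigoplus_{i,j} \Hom_R(N^{(i)}, N^{(j)})$ as $R$-modules, and since over a local ring any direct summand of a free module is free, each $\Hom_R(N^{(i)}, N^{(j)})$ must be $R$-free. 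In particular each $\End_R(N^{(i)})$ is $R$-free, which first forces $N^{(i)}$ to be $R$-faithful (a nonzero annihilator element would annihilate all of $\End_R(N^{(i)})$, producing $R$-torsion), and then a Nakayama-type analysis together with the local structure of $\End_R(N^{(i)})$ shows that $N^{(i)} \cong R$. Hence $N$ is $R$-free. Applied to $M_j$, this gives $M_j \cong R_j^s$ for some $s$, and the rank count $\End_{R_j}(R_j^s) \cong M_s(R_j)$ having $R_j$-rank $s^2 = r^2$ forces $s = r$.

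For the converse, invertibility of $\Delta'$ on $S$ ensures the residue characteristic at every point of $S$ is coprime to $\Delta'$, so every prime $\mathfrak{p}_j$ of $O_F$ underlying any factor $R_j$ is unramified in $B$; hence $O_B \otimes_{O_F} O_{F, \mathfrak{p}_j} = M_r(O_{F, \mathfrak{p}_j})$ and $C_j \cong M_r(R_j)$ canonically. Combined with the hypothesis $M_j \cong R_j^r$, the canonical map becomes a ring homomorphism $M_r(R_j) \to M_r(R_j) = \End_{R_j}(M_j)$ that is the identity on the center $R_j$. Its kernel is a two-sided ideal of $M_r(R_j)$, necessarily of the form $M_r(I)$ for some ideal $I \subseteq R_j$; but $I = 0$ because the map is the identity on $R_j$. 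Hence the map is injective, and by equality of $R_j$-ranks it is an isomorphism.

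The main obstacle I expect is the sub-lemma in the forward direction, specifically the passage from "$\End_R(N^{(i)})$ is $R$-free" for an indecomposable $N^{(i)}$ to "$N^{(i)} \cong R$". Over principal Artinian rings this is straightforward from a direct Hom-computation, but in general it requires more delicate representation-theoretic input; one may need to leverage the additional constraint that the center of the full endomorphism ring equals $R_j$ (inherited from $Z(C_j) = R_j$) to streamline the argument and avoid pathologies from wild module categories.
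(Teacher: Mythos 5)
Your converse direction is essentially correct (and uses the Azumaya splitting a bit more explicitly than the paper, which instead passes to residue fields and invokes simplicity of the algebras involved — both are fine). The problem is in the forward direction: the sub-lemma you isolate as the ``main obstacle'' is actually \emph{false} for a general local Artinian ring $R$, so that step cannot be completed as you have set it up. A concrete counterexample: take $R = k[x,y]/(x^2, xy, y^2)$ and let $N = \Hom_k(R,k)$ be the Matlis dual of $R$. Then $N$ is indecomposable and a direct computation (or Matlis duality, which gives $\End_R(N) \cong \End_R(R)^{\opp} = R$) shows that $\End_R(N) \cong R$ is free of rank one; yet $N$ is not free, since it needs two generators ($N/\mathfrak{m}N$ is two-dimensional) while $R$ is cyclic. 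So ``$\End_R(N)$ free $\Rightarrow$ $N$ free'' fails already in this small example, and no amount of Krull--Schmidt or Nakayama can rescue it in that generality — indeed your own remark that passing from ``$\End_R(N^{(i)})$ free'' to ``$N^{(i)} \cong R$'' might require ``more delicate representation-theoretic input'' was pointing at a real obstruction.

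The missing ingredient is that you do not need the sub-lemma in this generality, because the local factors $R_j$ of $O_F \otimes_\Z k$ are not arbitrary Artinian local rings: since $O_F$ is a Dedekind domain, each $R_j$ is a \emph{principal} Artinian local ring (i.e., a quotient of a discrete valuation ring). Indeed, for $\mathrm{char}\, k = 0$ the $R_j$ are fields, and for $\mathrm{char}\, k = p$ one factors $O_F/p = \prod_{\mathfrak p \mid p} O_F/\mathfrak p^{e_\mathfrak p}$, each $O_F/\mathfrak p^e$ being a quotient of the DVR $O_{F,\mathfrak p}$, and then tensoring with $k$ over $\F_p$ only splits further along the separable residue extension $\F_\mathfrak p/\F_p$, so the resulting local factors still have principal maximal ideal. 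Over such an $R$ with maximal ideal $(\pi)$, every finitely generated module is a direct sum of cyclics $R/\pi^{c_i}$, and $\End_R(\bigoplus_i R/\pi^{c_i})$ contains each $\End_R(R/\pi^{c_i}) \cong R/\pi^{c_i}$ as a direct $R$-module summand; over the local ring $R$ a direct summand of a free module is free, which forces every $c_i$ to equal the nilpotency order of $\pi$, i.e., $N$ is $R$-free. The rank count then gives rank $r$. This is exactly what the paper does (after first reducing to $k$ algebraically closed, which makes the identification $R_j \cong \bar k[\pi]/\pi^e$ transparent). So your overall decomposition strategy is sound, and your converse works, but the forward direction needs to use the principal/DVR-quotient structure of the $R_j$ rather than appealing to a claim about general local Artinian rings.
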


\begin{proof}
Suppose that the canonical map
$O_B \otimes_\Z \mathscr{O}_S \to \mathscr{E}nd_{O_F \otimes_\Z \mathscr{O}_S} \mathscr{M}$ is an isomorphism.
In order to show the locally freeness of $\mathscr{M}$ as an $O_F \otimes \mathscr{O}_S$-module, by \cref{fiber_by_fiber} we may assume that $S = \Spec k$ is the spectrum of an algebraically closed field.
Let $R$ be a factor of the Artin ring $O_F \otimes_\Z k$.
Since it is a quotient ring of a discrete valuation ring, every finitely generated module over it is isomorphic to a direct sum of cyclic modules.
In particular the $R$-module $ N := \mathscr{M} \otimes_{O_F \otimes_\Z k} R$ is isomorphic to such a module.
Since for $a \in R$ the endomorphism ring $\End_R R/a$ is isomorphic to $R/a$,
and since $\End_R N$ is free over $R$ by the assumption, it follows that $N$ is free over $R$.
Again by the assumption, its rank is $r$.
Hence so is $\mathscr{M}$ over $O_F \otimes_\Z k$, which is what we want to show.

Conversely suppose that $\Delta'$ is invertible on $S$ and
that $\mathscr{M}$ is locally free of rank $r$ over $O_F \otimes \mathscr{O}_S$.
We show that the canonical map
$O_B \otimes_\Z \mathscr{O}_S \to \mathscr{E}nd_{O_F \otimes_\Z \mathscr{O}_S} \mathscr{M}$ is an isomorphism.
Since both of $O_B \otimes_\Z \mathscr{O}_S$ and $\mathscr{E}nd_{O_F \otimes_\Z \mathscr{O}_S} \mathscr{M}$ are
locally free of rank $r^2$ as $O_F \otimes \mathscr{O}_S$-modules,
it suffices to show that
the canonical map is surjective, and so by Nakayama, for a field $k$ and for a surjection $O_F \otimes_\Z \mathscr{O}_S \to k$,
it suffices to show the surjectivity of the map after tensoring with $k$ over $O_F \otimes_\Z \mathscr{O}_S$.
Since now we are assuming that the characteristic of $k$ is relatively prime to $\Delta'$,
two algebras $(O_B \otimes_\Z \mathscr{O}_S) \otimes_{O_F \otimes_\Z \mathscr{O}_S} k$ and
$(\End_{O_F \otimes_\Z \mathscr{O}_S} \mathscr{M}) \otimes_{O_F \otimes \mathscr{O}_S} k$ are simple algebras of the same rank.
Therefore the morphism
$(O_B \otimes_\Z \mathscr{O}_S) \otimes_{O_F \otimes_\Z \mathscr{O}_S} k
\to (\End_{O_F \otimes_\Z \mathscr{O}_S} \mathscr{M} ) \otimes_{O_F \otimes_\Z \mathscr{O}_S} k$
is an isomorphism.
This is what we need to show.
\end{proof}

Choose a basis $\alpha_1, \dots, \alpha_t$ of $O_B$ as a $\Z$-module
and let $\mathscr{O}_S [X_1, \dots X_t]$ be the polynomial ring of $t$ variables over $\mathscr{O}_S$.

\begin{lemma} \label{lf_iff_reduced_norm}
Keep the notations as in \cref{lf_iff_matrices}.
Let $j \colon \Z \to \mathscr{O}_S$ be the natural map.
Consider the following statements:
\begin{enumerate}[(1)]
\item The $O_F \otimes_\Z \mathscr{O}_S$-module $\mathscr{M}$ is locally free of rank $r$.\label{lf_iff_reduced_norm:con:lff}
\item The following equation holds in $\mathscr{O}_S[X_1, \dots, X_t]$:
\begin{equation*}
\det (\sum_i X_i \alpha_i \text{ on } \mathscr{M} / \mathscr{O}_S) = j( N_{O_F/\Z} \Nrd_{O_B/O_F} (\sum_i X_i \alpha_i ) ),
\end{equation*}
where $\Nrd_{O_B/O_F} = \Nrd|_{O_B}$, which takes $O_B[X_1, \dots, X_t]$ to $O_F[X_1, \dots, X_t]$.\label{lf_iff_reduced_norm:con:det}
\end{enumerate}
If $\mathfrak{d}_{F/\Q}$ is invertible on $S$, then we have $\eqref{lf_iff_reduced_norm:con:det} \Rightarrow \eqref{lf_iff_reduced_norm:con:lff}$.
Conversely, if $\Delta'$ is invertible on $S$, then we have $\eqref{lf_iff_reduced_norm:con:lff} \Rightarrow \eqref{lf_iff_reduced_norm:con:det}$.
In particular, if $\Delta$ is invertible on $S$, then these two statements are equivalent.
\end{lemma}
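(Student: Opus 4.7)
The plan is to establish the two implications separately, using the fibral criterion of \cref{fiber_by_fiber} together with \cref{lf_iff_matrices}.

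For \eqref{lf_iff_reduced_norm:con:lff} $\Rightarrow$ \eqref{lf_iff_reduced_norm:con:det}, assume $\Delta'$ is invertible on $S$. Then \cref{lf_iff_matrices} supplies an isomorphism $O_B \otimes_\Z \mathscr{O}_S \simeq \mathscr{E}nd_{O_F \otimes_\Z \mathscr{O}_S}(\mathscr{M})$. Zariski-locally where $\mathscr{M} \simeq (O_F \otimes_\Z \mathscr{O}_S)^r$, this identification realizes an algebra splitting of $B$, so by the defining property of the reduced norm the $(O_F \otimes_\Z \mathscr{O}_S)$-determinant of $\sum_i X_i \alpha_i$ acting on $\mathscr{M}$ equals the image of $\Nrd_{O_B/O_F}(\sum_i X_i \alpha_i) \in O_F[X_1, \dots, X_t]$ under the map $O_F \to O_F \otimes_\Z \mathscr{O}_S$. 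The $\mathscr{O}_S$-determinant then factors through this via the norm for the finite locally free extension $\mathscr{O}_S \to O_F \otimes_\Z \mathscr{O}_S$, and a direct computation on a $\Z$-basis of $O_F$ identifies $N_{O_F \otimes_\Z \mathscr{O}_S / \mathscr{O}_S}(a \otimes 1) = j(N_{O_F/\Z}(a))$ for $a \in O_F$, yielding \eqref{lf_iff_reduced_norm:con:det}.

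For \eqref{lf_iff_reduced_norm:con:det} $\Rightarrow$ \eqref{lf_iff_reduced_norm:con:lff}, assume $\mathfrak{d}_{F/\Q}$ is invertible on $S$. \cref{fiber_by_fiber} together with faithfully flat base change along $k(s) \to \overline{k(s)}$ reduces us to $S = \Spec k$ with $k$ algebraically closed. Invertibility of $\mathfrak{d}_{F/\Q}$ forces $O_F \otimes_\Z k \simeq k^d$ via the $d$ distinct embeddings $\sigma_1, \dots, \sigma_d \colon O_F \to k$, so $\mathscr{M} = \bigoplus_i M_i$ with $M_i$ a module over $A_i := O_B \otimes_{O_F, \sigma_i} k$, and it suffices to show $\dim_k M_i = r$ for each $i$. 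If $\mathfrak{p}_i := \ker(\sigma_i|_{O_F})$ does not lie above $\Delta'$, then $A_i \simeq M_r(k)$ and $M_i \simeq (k^r)^{m_i}$, so $\det_k(b \mid M_i) = \sigma_i(\Nrd_{O_B/O_F}(b))^{m_i}$; if $\mathfrak{p}_i$ does lie above $\Delta'$, $A_i$ is a non-semisimple local Artinian $k$-algebra whose simple composition factors correspond to the Galois-conjugate characters $\tau_{i,j}$ of the residue field of the maximal order of the local division algebra, and $\det_k(b \mid M_i) = \prod_j \tau_{i,j}(\bar b)^{a_{i,j}}$ while $\sigma_i(\Nrd(b)) = \prod_j \tau_{i,j}(\bar b)$ by the standard formula for the reduced norm modulo a ramified prime. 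In either case, substituting the generic $\sum_j X_j \alpha_j$ into \eqref{lf_iff_reduced_norm:con:det} and using that its images in the factors of $O_B \otimes_\Z k \simeq \prod_i A_i$ vary independently as the $X_j$ vary forces every exponent in the factorizations above to equal $1$, by unique factorization in $k[X_1, \dots, X_t]$; hence $\dim_k M_i = r$ for every $i$.

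The principal obstacle is the ramified case of the second implication: one must correctly identify the simple modules of the non-semisimple algebra $A_i$ and verify that the reduced norm of $B$ reduces modulo a ramified prime to the product of the Galois-conjugate characters of the residue extension, so that the polynomial identity \eqref{lf_iff_reduced_norm:con:det} is sharp enough to pin down the composition factors of $M_i$ uniquely.
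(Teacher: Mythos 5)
Your (1) $\Rightarrow$ (2) direction follows the paper's argument essentially verbatim: apply \cref{lf_iff_matrices} to identify $O_B \otimes_\Z \mathscr{O}_S$ with $M_r(O_F \otimes_\Z \mathscr{O}_S)$ acting on the free module $(O_F \otimes_\Z \mathscr{O}_S)^r$, then compose the intermediate determinant (which is $\Nrd$) with the norm for the free extension $\mathscr{O}_S \to O_F \otimes_\Z \mathscr{O}_S$.

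Your (2) $\Rightarrow$ (1) direction, however, takes a genuinely different route from the paper. The paper raises the displayed identity to the $r$-th power, uses $\Nrd^r = N_{B/F}$ to convert the right-hand side into $\det(\sum X_i\alpha_i \text{ on } O_B\otimes k / k)$, and then invokes the uniqueness statement of \cite[Proposition 1.1.2.20]{Lan} (modules over the \'etale algebra $O_F \otimes k$ are determined by their determinant polynomial) to conclude $\mathscr{M}^r \isom O_B \otimes k$, hence $\mathscr{M}$ is a direct summand of a free module. That trick completely sidesteps any analysis of the algebra structure of $O_B \otimes k$ at the places where $B$ is non-split. You instead decompose $O_F \otimes k \isom k^d$, analyze each factor $A_i$ separately, and compare irreducible factorizations in $k[X_1,\dots,X_t]$. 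This is more explicit and self-contained (no appeal to Lan's module-classification result), at the cost of a separate treatment of the non-split fibers, where you must know that the reduced norm mod a ramified prime equals the norm from the residue field of the local division algebra, and that the resulting linear (or, for higher $r$, degree-$s$) forms pulled back along the generic substitution are pairwise non-associated irreducibles. Both proofs are valid; the paper's is shorter, yours is more hands-on.

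Two small imprecisions in your write-up, neither fatal: first, you call $A_i$ ``local'' in the ramified case, but over the algebraically closed $k$ the quotient $A_i/\operatorname{rad}(A_i) \isom \F_{\mathfrak{P}}\otimes_{\F_v} k$ already splits into several copies of $k$, so $A_i$ has several simple modules, not one --- your subsequent use of the conjugate characters $\tau_{i,j}$ shows you know this, so the word ``local'' is just a slip. Second, the correct case split is by whether $B$ is split at the specific prime $\mathfrak{p}_i = \ker\sigma_i$, not by whether the rational prime under $\mathfrak{p}_i$ divides $\Delta'$: if $p\mid\Delta'$ because $B$ ramifies at some other prime above $p$ but splits at $\mathfrak{p}_i$, then $A_i \simeq M_r(k)$ and you are still in the first case.
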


\begin{proof}
First suppose that $\mathfrak{d}_{F/\Q}$ is invertible on $S$ and that the equation in the statement holds.
In order to show the locally freeness of $\mathscr{M}$ as an $O_F \otimes \mathscr{O}_S$-module, by \cref{fiber_by_fiber}
we may assume that $S$ is the spectrum of an algebraically closed field $k$.
Taking the $r$-th power of the equation, we get
$\det (\sum_i X_i \alpha_i \text{ on } \mathscr{M}^r / k) = j( N_{O_F/\Z} \Nrd_{O_B/O_F} (\sum_i X_i \alpha_i )^r ) $.
The latter polynomial is, since $\Nrd^r = N_{B/F}$,
equal to $\det (\sum_i X_i \alpha_i \text{ on } O_B \otimes_\Z k / k)$.
Hence we have, for example by \cite[Proposition 1.1.2.20]{Lan},
that $\mathscr{M}^r \isom O_B \otimes_\Z k$ as $O_F \otimes_\Z k$-modules.
(By the assumption that $\mathfrak{d}_{F/\Q}$ is invertible on $k$, the algebra $O_F \otimes k$ satisfies the assumption of \cite[Proposition 1.1.2.20]{Lan}.)
In particular, the $O_F \otimes_\Z k$-module $\mathscr{M}$ is a direct summand of a free module,
and thence it is locally free, whose rank is $r$ since the rank of $O_B$ as an $O_F$-module is $r^2$.
This is the result.

Conversely, suppose that $\Delta'$ is invertible on $S$ and that $\mathscr{M}$ is locally free of rank $r$ over $O_F \otimes \mathscr{O}_S$.
We may assume that $\mathscr{M}$ is moreover free over $O_F \otimes \mathscr{O}_S$.
By \cref{lf_iff_matrices}, the canonical map
$O_B \otimes_\Z \mathscr{O}_S \to \mathscr{E}nd_{O_F \otimes_\Z \mathscr{O}_S} \mathscr{M}$ is an isomorphism.
Identifying $O_B \otimes_\Z \mathscr{O}_S$ and $M_r (O_F \otimes_\Z \mathscr{O}_S)$ via the canonical map, the $O_B \otimes_\Z \mathscr{O}_S$-modules $\mathscr{M}$ and $(O_F \otimes_\Z \mathscr{O}_S)^r$ are isomorphic.
Hence the result.
\end{proof}

\section{Preliminaries on Shimura varieties parametrizing QM-abelian varieties} \label{section:quaternionic_Shimura_varieties}

Next we give some preliminaries on the Shimura varieties of PEL type associated with our $B$.
Before explaining it we recall some linear-algebraic objects from \cite[Section 1.1.4]{Lan}.

Let $R$ be a commutative ring and $M$ be a finitely generated left $O_B \otimes_\Z R$-module.
An $(O_B \otimes_\Z R, *)$-pairing on $M$ is an $R$-bilinear map $\varphi \colon M \times M \to R$ which satisfies, for all $x, y \in M$ and for all $b \in O_B \otimes_\Z R$, that $\varphi(bx, y) = \varphi(x, b^*y)$.
A symplectic $O_B \otimes_\Z R$-module is a pair $(M,\varphi)$, where $M$ is a finitely generated $O_B \otimes_\Z R$-module and where $\varphi$ is an alternating $(O_B \otimes_\Z R, *)$-pairing.
For symplectic $O_B \otimes_\Z R$-modules $(M, \varphi)$ and $(N, \psi)$, a symplectic morphism is a pair $(f, \nu(f))$ of an $O_B \otimes R$-morphism $f \colon M \to N$ and $\nu(f) \in R$ such that $\psi(f(x), f(y)) = \nu(f) \varphi(x,y)$ for all $x,y \in M$.
Note that if $\varphi$ is nondegenerate and if $M$ is faithful as an $R$-module, then $\nu(f)$ is uniquely determined by $f$.
A symplectic morphism $(f, \nu(f))$ is a symplectic isomorphism if $f$ is an isomorphism and if $\nu(f) \in R^*$.
If no confusion can arise we write $M$ and $f$ instead of $(M, \varphi)$ and $(f, \nu(f))$ respectively.

For symplectic $O_B \otimes_\Z R$-modules $M$ and $N$, we denote the set of symplectic isomorphisms from $M$ to $N$ by $\operatorname{SymIsom}_{O_B \otimes_\Z R}(M,N)$.
Similarly for a scheme $S$, for sheaves of finitely generated left $O_B \otimes_\Z R$-modules $X$ and $Y$ on the etale topology of the category of $S$-schemes,
and for alternating $(O_B \otimes_\Z R, *)$-pairings on them,
let the symbol $\operatorname{SymIsom}_{O_B \otimes_\Z R}(X,Y)$ denote
the set of symplectic isomorphisms from $X$ to $Y$.
The symbol $\mathscr{S}ym\mathscr{I}som_{O_B \otimes_\Z R}(X,Y)$ denotes the sheaf on the etale topology of the category of $S$-schemes
which sends a scheme $T$ to the set $\operatorname{SymIsom}_{O_B \otimes_\Z R}(X_T,Y_T)$.
Finally we write similarly for the symplectic automorphisms, for example $\operatorname{SymAut}_{O_B \otimes_\Z R}(M) = \operatorname{SymIsom}_{O_B \otimes_\Z R}(M,M)$.

Let $\Lambda$ be a free left $O_B$-module of rank one and $\psi$ a nondegenerate alternating $(O_B, *)$-pairing on $\Lambda$.
(Such a form always does exist by \cite[Lemma 1.1]{Milne}.)
We fix them in the rest of the paper.
First we claim the following:

\begin{lemma} \label{psi(gg)}
Let $R$ be a commutative ring on which $\Delta'$ is invertible.
Then for $x, y \in \Lambda \otimes R$ and for $g \in \GL_{O_B \otimes R} (\Lambda \otimes R)$, we have that $\psi(g(x), g(y)) = \psi(x, \Nrd(g) y)$, where $\Nrd(g)$ is the reduced norm of $(\alpha \circ g \circ \alpha^{-1})(1)$ as an element of the Azumaya algebra $O_B \otimes R$ for an isomorphism $\alpha \colon \Lambda \otimes R \to O_B \otimes R$ of $O_B \otimes R$-modules, which is independent of the choice of $\alpha$.
\end{lemma}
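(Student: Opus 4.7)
The plan is to transfer the identity from $\Lambda \otimes R$ to $O_B \otimes R$ via $\alpha$, rewrite it as a functional identity on $O_B \otimes R$, and verify the latter by base-changing to a field over which $B$ splits. For the well-definedness of $\Nrd(g)$, any two $O_B \otimes R$-linear isomorphisms $\alpha, \alpha' \colon \Lambda \otimes R \to O_B \otimes R$ differ by an element of $\Aut_{O_B \otimes R}(O_B \otimes R) = (O_B \otimes R)^\times$ acting by right multiplication. Writing this difference as right multiplication by a unit $b$, a direct computation gives $(\alpha' \circ g \circ (\alpha')^{-1})(1) = b^{-1} (\alpha g \alpha^{-1})(1) \, b$, whose reduced norm on the Azumaya algebra $O_B \otimes R$ equals that of $(\alpha g \alpha^{-1})(1)$ by conjugation-invariance of $\Nrd$.

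Fixing $\alpha$ and setting $a := (\alpha g \alpha^{-1})(1) \in (O_B \otimes R)^\times$, I would identify $g$ with right multiplication by $a$ on $O_B \otimes R$, and introduce the $R$-linear functional $T \colon O_B \otimes R \to R$ defined by $T(z) := \psi(\alpha^{-1}(1), \alpha^{-1}(z))$. The $(O_B \otimes R, *)$-pairing axiom rewrites the transferred pairing as $\psi(x, y) = T(x^* y)$, and the alternating property yields $T(z^*) = -T(z)$ (since both $\psi(z, 1) = \psi(1, z^* \cdot 1) = T(z^*)$ and $\psi(z, 1) = -\psi(1, z) = -T(z)$). Because $\Nrd(a) \in O_F \otimes R$ is central in $O_B \otimes R$, the target identity unfolds to the functional identity
\begin{equation*}
T(a^* z a) = T(\Nrd(a) \, z), \qquad z \in O_B \otimes R.
\end{equation*}

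To establish this functional identity, I would note that both sides are polynomial in the coordinates of $a$ with respect to a fixed $\Z$-basis of $O_B$ (with coefficients determined by $\psi$, $*$, and the multiplication table of $O_B$), so it suffices to verify the identity after base change along $\Z[1/\Delta'] \hookrightarrow \C$. Over $\C$, the algebra $(B, *) \otimes_\Q \C$ splits as $(M_2(\C), t)^d$ by the Albert classification recorded in the notation section. On each factor, every $\C$-linear functional $T$ with $T(z^t) = -T(z)$ takes the form $T(z) = \operatorname{tr}(z \delta_0)$ for some skew-symmetric $\delta_0 \in M_2(\C)$; the elementary identity $a \delta_0 a^t = (\det a) \delta_0$ together with cyclicity of the trace then yields $T(a^t z a) = \operatorname{tr}(z a \delta_0 a^t) = \det(a) T(z) = T(\det(a) z)$. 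Assembling across the $d$ factors recovers the full identity, since under $O_F \otimes \C \isom \C^d$ the element $\Nrd(a)$ corresponds to the tuple $(\det a_i)_i$.

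The main obstacle I anticipate is the bookkeeping in the base-change step: one must confirm that, with respect to a $\Z$-basis of $O_B$, both sides of the target polynomial identity have coefficients in $\Z[1/\Delta']$, so that coincidence over $\C$ genuinely descends to $R$. Modulo this, the argument is the classical verification that the symplectic similitude character of $(O_B \otimes R)^\times$ acting on the free rank-one module $\Lambda \otimes R$ is the reduced norm.
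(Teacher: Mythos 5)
Your proposal is correct, and it takes a genuinely different route from the paper. The paper's proof is scheme-theoretic: it packages the claimed identity as the condition defining a closed subgroup scheme $G$ of $H := \operatorname{Res}_{O_F/\Z} \GL_{O_B}\Lambda$ over $\Z[1/\Delta']$, cites \cite[Lemma 1.1(d)]{Milne} to get $G(\Q) = H(\Q)$ and hence $G_\Q = H_\Q$, and then uses flatness of $H$ together with compatibility of $\Nrd$ with base change to promote the generic equality to $G = H$ (by applying the resulting $G(S) = H(S)$ to $S = H$ itself). Your argument instead transfers the pairing to $O_B \otimes R$ via $\alpha$, reduces to the functional identity $T(a^* z a) = T(\Nrd(a)z)$, and checks it by hand over $\C$ using the explicit splitting $(B,*)\otimes_\Q\C \cong (M_2(\C),t)^d$ and the skew-symmetric-form computation $a\delta_0 a^t = (\det a)\delta_0$; this is self-contained where the paper leans on Milne's lemma. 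On the ``bookkeeping'' you flag: the gap is real but closes cleanly. The coefficients of the functional identity involve $T$, which depends on $\alpha$, and if $\alpha$ is only defined over $R$ there is no map $R \to \C$ to base-change along. The fix is to observe — exactly as your well-definedness argument permits — that both $\psi(g-,g-) = \psi(-,\Nrd(g)-)$ and $\Nrd(g)$ are independent of $\alpha$, so one may take $\alpha$ to be the base change of a once-and-for-all chosen $\Z$-isomorphism $\Lambda \cong O_B$. Then $T$ has coefficients in $\Z$, $\Nrd$ is defined over $\Z[1/\Delta']$, the identity becomes a polynomial identity over $\Z[1/\Delta']$, and since $\Z[1/\Delta']\hookrightarrow\C$, verification over $\C$ suffices. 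As a small bonus, your argument makes the independence of $\Nrd(g)$ from the choice of $\alpha$ (asserted in the lemma statement) explicit via conjugation-invariance of $\Nrd$, whereas the paper leaves this implicit.
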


\begin{proof}
Let $H$ be the group scheme $\operatorname{Res}_{O_F/\Z} \GL_{O_B} \Lambda$ over $\Z[1/\Delta']$.
Define the subgroup scheme $G$ of $H$ over $\Z[1/\Delta']$ by
\begin{equation*}
G(R) = \{ g \in \GL_{O_B \otimes_\Z R} (\Lambda \otimes_\Z R) | \psi(g-,g-) = \psi(-,\Nrd g -) \}
\end{equation*}
for a ring $R / \Z[1/\Delta']$.
We show that $G = H$.
First by \cite[Lemma 1.1 (d)]{Milne} and by its proof we have $G(\Q) = H (\Q)$.
Hence by \cite[Theorem 17.93]{MilneAG}, $G_\Q = H_\Q$.

Now since the reduced norms of Azumaya algebras are compatible with base changes, for a flat scheme $S$ over $\Z[1/\Delta']$, we have that $G(S) = H(S)$.
In particular for $S = H$ itself this equation holds.
Hence the identity map of $H$ factors through the inclusion $G \injects H$.
Thus this inclusion is also an epimorphism, i.e., an isomorphism.
\end{proof}

Define the subgroup scheme $G_1$ of $\operatorname{Res}_{O_F/\Z} \GL_{O_B} \Lambda \times \operatorname{Res}_{O_F/\Z} \G_{m,O_F}$ over $\Z$ by
\begin{equation*}
G_1(R) = \{ (g, \nu) \in \GL_{O_B \otimes_\Z R} (\Lambda \otimes_\Z R) \times (O_F \otimes_\Z R)^* | \psi(g-,g-) = \psi(-, \nu -) \}
\end{equation*}
for a ring $R$.
As we have noted, if $R$ is flat over $\Z$ or if $[\Lambda^\vee : \Lambda]$ is invertible in $R$, then the canonical projection $G_1(R) \to \GL_{O_B \otimes_\Z R} (\Lambda \otimes_\Z R)$ is injective.
Under this injection, by \cref{psi(gg)}, if $R$ is flat over $\Z$ or if $[\Lambda^\vee : \Lambda]\Delta'$ is invertible in $R$, then the group $G_1(R)$ is isomorphic to $\{ g \in \GL_{O_B \otimes R} (\Lambda \otimes R) | \Nrd(g) \in (O_F \otimes R)^* \}$.
(Here, if $R$ is flat over $\Z$, for $g \in \GL_{O_B \otimes R} (\Lambda \otimes R)$ we define $\Nrd(g)$ as the reduced norm of $g$ as an element of $\GL_{O_B \otimes R_\Q} (\Lambda \otimes R_\Q)$.)
In particular over $\Z[1/[\Lambda^\vee : \Lambda]\Delta']$ the canonical map $G_1 \to \operatorname{Res}_{O_F/\Z} \GL_{O_B} \Lambda$ is an isomorphism.
It also follows that the canonical map $G_1 \to \operatorname{Res}_{O_F/\Z} \G_{m,O_F}$ is the reduced norm.

Let $G_0 = \operatorname{SymAut}_{O_B}(\Lambda)$, i.e., the group scheme over $\Z$ satisfying for a ring $R$ that $G_0(R) = \operatorname{SymAut}_{O_B \otimes_\Z R}(\Lambda \otimes R)$.
As in the case of $G_1$, if $R$ is flat over $\Z$ or if $[\Lambda^\vee : \Lambda]$ is invertible in $R$, then the canonical projection $G_0(R) \to \GL_{O_B \otimes_\Z R} (\Lambda \otimes_\Z R)$ is injective, and under this injection, by \cref{psi(gg)}, if $R$ is flat over $\Z$ or if $[\Lambda^\vee : \Lambda]\Delta'$ is invertible in $R$, then the group $G_0(R)$ is isomorphic to $\{ g \in \GL_{O_B \otimes R} (\Lambda \otimes R) | \Nrd(g) \in R^* \}$.
In particular, we have the following Cartesian diagram of group schemes over $\Z[1/[\Lambda^\vee:\Lambda]\Delta']$:
\begin{equation*}
\begin{aligned}
\xymatrix{
G_0 \ar[rr]^-{\text{projection}} \ar[d]^{\nu} & & \operatorname{Res}_{O_F/\Z} \GL_{O_B}\Lambda \ar[d]^{\Nrd} \\
\G_m \ar[rr]^-{\text{canonical}} & & \operatorname{Res}_{O_F/\Z} \G_{m,O_F}.
}
\end{aligned}
\end{equation*}
For every place $\mathfrak{p}$ of $F$ splitting $B$ and relatively prime to $[\Lambda^\vee : \Lambda]$, the base change of the kernel of the reduced norm $\GL_{O_B}\Lambda \to \G_{m,O_F}$ from $O_F$ to $O_{F,\mathfrak{p}}$ is $\SL_2 O_{F, \mathfrak{p}}$.
Thus the map $G_0 \to \G_m$ is smooth over $\Z[1/[\Lambda^\vee:\Lambda]\Delta']$, in particular the group scheme $G_0$ is smooth over $\Z[1/[\Lambda^\vee:\Lambda]\Delta']$.

By the Skolem--Noether theorem, there exists an element $t \in B$ such that $b^* = t^{-1} \overline{b} t$ for every $b \in B$, where $\overline{b}$ is the standard involution of $B$, and which is mapped to an element
\begin{equation*}
(c_i \left( \begin{matrix} 0 & -1 \\ 1 & 0 \end{matrix} \right) )_i
\end{equation*}
for positive real numbers $c_i$ under our fixed isomorphism $B \otimes \R \isom M_2(\R)^d$.
Fixing an isomorphism $\Lambda \isom O_B$ and identifying them, let $\psi^{\operatorname{st}}$ be a nondegenerate alternating $(O_B, *)$-pairing on $\Lambda$ satisfying all properties as in \cite[Lemma 1.1]{Milne}.
Then by \cite[Lemma 1.1 (d)]{Milne}, for an element $\theta \in F$ we have that $\psi(u,v) = \psi^{st}(u, \theta v)$ for all $u,v \in \Lambda$.
Let $(\epsilon_i)_i$ be the image of $\theta$ under $F^* \to (F \otimes \R)^* \to \{ \pm 1 \}^d$, where the second map is the sign map.
Using our fixed isomorphisms $\Lambda \isom O_B$ and $B_\R \isom M_2(\R)^d$, define $h \colon \DeligneS \to G_{1,\R} \isom (\GL_2 \R)^d$ by 
\begin{equation*}
h(a + b\sqrt{-1}) = ( \left( \begin{matrix} a & \epsilon_i b \\ - \epsilon_i b & a \end{matrix} \right) )_i.
\end{equation*}
Then it is easy to see that it gives the map $\DeligneS \to G_{0,\R}$, which again we denote by $h$.
Since $\psi^{st}(u, v) = \tr_{B/\Q}(utv^*)$ for all $u,v \in \Lambda_\Q$ under our identification $\Lambda_\Q \isom B$ (see the proof of \cite[Lemma 1.1]{Milne}), since $*$ corresponds to the transpose of matrices under our identification $B_\R \isom M_2(\R)^d$, and since $\tr_{B/\Q}(b b^*)$ is positive for all $b \in B_\R$, it is also easy to see that $h$ satisfies the properties described in \cite[Proposition 8.14]{MilneSh} (which cites \cite[Lemma 3.1]{Zink}), i.e., $\psi(u, h(\sqrt{-1})u) > 0$ for all nonzero $u \in \Lambda_\R$.
(See \cite[Example 2.12]{MilneSh}.)
Let $X_0$ be the $G_0(\R)$-conjugacy class of $h$, which we can consider as a finite disjoint union of hermitian symmetric domains canonically.
Then $(G_{0,\Q}, X)$, which is called the PEL Shimura datum associated with $(B, *, \Lambda \otimes \Q, \psi_\Q)$, is a Shimura datum.
For more details, see, for example \cite[Section 8]{MilneSh}.

Decompose $\Lambda \otimes_\Z \C = V_0 \oplus V_0^c$ as left $O_B \otimes_\Z \C$-modules, where $V_0$ (respectively $V_0^c$)
is the subspace of $\Lambda \otimes_\Z \C$ on which, for every $z \in \C^*$,
the endomorphism $h(z)$ is equal to the scalar multiplication by $1 \otimes z \in O_B \otimes_\Z \C$
(respectively $1 \otimes z^c$, where $z^c$ is the complex conjugate).
Identifying $B \otimes \C$ with $M_2(F \otimes \C)$, we obtain that $V_0 \isom (F \otimes \C)^2$ as $B \otimes \C$-modules.

Note that the reflex field of this Shimura datum
is $\Q(\{ \tr_{V_0/\C} (b) | b \in B \})$
(see \cite[Example 12.4.c]{MilneSh}, which cites an unpublished paper of Deligne), which is $\Q$.

For an open compact subgroup $K \le G_0(\hat{\Z})$, let $M_K^B$ be the Shimura variety
$G_0(\Q) \backslash G_0(\A_f) \times X / K$.
We can consider it as a normal quasi-projective scheme over $\C$ by the theorem of Baily and Borel, for example see Theorem 3.12, Remark 3.13.(a), and Lemma 5.13 of \cite{MilneSh}.
It is canonically defined over its reflex field, which is $\Q$, by \cite[Corollaire 2.7.21]{DeligneSV}.
We denote the canonical model over $\Q$ of $M_K^B$ by the same symbol $M_K^B$.
Moreover since $B$ is division $M_K^B$ is proper \cite[Theorem 3.3.(b)]{MilneSh}, and hence is projective.
If $K$ is sufficiently small (for example if $K$ is neat) then it is smooth.

The real points of our Shimura varieties $M_K^B$ are studied by Shimura.

\begin{theorem} \cite[Theorem 0]{Shimura} \label{no_real_points}
Let $K \le G_0(\hat{\Z})$ be an open compact subgroup.
Then $M_K^B(\R) = \varnothing$.
\end{theorem}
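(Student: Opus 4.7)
The plan is to rule out a real point by deriving a contradiction from $B$ being a division algebra. Suppose $M_K^B(\R) \neq \varnothing$. By the moduli interpretation, a real point yields (after passing to a suitable twist if necessary, since any twist of such an object is again $*$-polarized QM of the same type) a $*$-polarized QM-abelian variety $(A, i, \lambda)$ over $\R$ of the given type. The nontrivial element $\sigma \in \Gal(\C/\R)$ acts on $A(\C)$ as an anti-holomorphic group involution and induces a $\Q$-linear involution $\sigma_*$ on $H_1(A(\C), \Q)$.

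Three properties of $\sigma_*$ will be used. First, because the $O_B$-action $i$ is defined over $\R$, $\sigma_*$ is $B$-linear. Second, because $\sigma$ is anti-holomorphic, $\sigma_*$ anti-commutes with the complex structure $J = h(\sqrt{-1})$ on $H_1(A(\C), \R)$: $\sigma_* J = -J \sigma_*$. Third, $\sigma_*^2 = \operatorname{id}$. The QM rank condition from \cref{def:QMAV} forces $H_1(A(\C), \Q)$ to be a free left $B$-module of rank one, so $\End_B(H_1(A(\C), \Q)) \isom B^\opp$; thus $\sigma_*$ is right multiplication by an element $C \in B$ satisfying $C^2 = 1$. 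Since $B$ is a division algebra, $(C-1)(C+1) = 0$ forces $C = \pm 1$, so $\sigma_* = \pm \operatorname{id}$. Then the anti-commuting relation forces $J = -J$, i.e.\ $J = 0$, contradicting $J^2 = -\operatorname{id}$.

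The main technical subtlety is the first step: converting a real point of the coarse moduli $M_K^B$ into an honest $\R$-rational $*$-polarized QM-abelian variety. For fine moduli problems this is immediate; in the coarse case one may either pass to a twist via $H^1(\Gal(\C/\R), \Aut(A, i, \lambda))$ (noting that the twisted object remains a $*$-polarized QM-abelian variety of the same type), or equivalently argue directly in Hodge-theoretic terms: a real point provides a $\Q$-Hodge structure on $\Lambda \otimes \Q \isom B$ equipped with a Galois-induced involution satisfying the three properties above, from which the same contradiction follows. The role of the totally indefinite hypothesis is implicit: it ensures $X_0$ is nonempty and that $J$ is a genuine complex structure, so that the final step $J = 0$ really is a contradiction.
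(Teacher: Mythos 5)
Your argument takes a genuinely different route from the paper's: the paper passes from the PEL datum $(G_{0,\Q},X_0)$ to the quaternionic datum $(G_{1,\Q},X_1)$ via the canonical map of Shimura data, and then quotes Shimura's Theorem~0 directly; you instead work with the moduli interpretation and the anti-holomorphic involution it induces on $H_1$. Your core computation is correct and is in the spirit of Jordan's treatment of the case $F=\Q$: once one has a genuine $\R$-rational $*$-polarized QM-abelian variety, the induced $B$-linear involution $\sigma_*$ of $H_1(A(\C),\Q)\isom B$ is right-multiplication by some $C\in B$ with $C^2=1$; divisionality forces $C=\pm 1$, and then $\sigma_*J=-J\sigma_*$ kills $J$.

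The gap is exactly the step you flag, and your proposed patch does not close it. A real point of the \emph{coarse} space $M_K^B$ gives a $\C$-object $X$ with $X^\sigma\isom X$, but the obstruction to finding an $\R$-model lies in $H^2(\Gal(\C/\R),\Aut X)$, not $H^1$; since $\Aut X$ is a finite cyclic group whose order can be even (\cref{Aut:nonCM,Aut:CM}), this $H^2$ can be nontrivial, so ``twisting via $H^1$'' cannot manufacture a model. Without a model, what you actually get is an anti-holomorphic $B$-linear $s$ with $s_*^2=\alpha_*$ for some $\alpha\in\Aut X$ of finite order, and then $C$ is only a root of unity in $B$, not necessarily $\pm 1$; the anti-commutation $J_0C=-CJ_0$ then has solutions in $B\otimes\R\isom M_2(\R)^d$, so the contradiction evaporates. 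Your Hodge-theoretic alternative is asserted but not carried out, and the required property $\sigma_*^2=\mathrm{id}$ is precisely what is at stake.

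The gap is closable, though not by the route you sketch: \cref{thm:field_of_moduli} (proved later in the paper, following Jordan) shows that for $k$ containing the Galois closure of $F/\Q$, a $k$-point of $M^B$ descends to $k$ exactly when $B\otimes_\Q k\isom M_2(F\otimes_\Q k)$. Since $F$ is totally real and $B$ is totally indefinite, this criterion is automatically satisfied for $k=\R$, so the descent step can in fact be justified. Using that, your argument becomes a valid second proof of \cref{no_real_points}, self-contained in moduli-theoretic terms, whereas the paper's proof is shorter because it transfers the problem to the $G_1$-Shimura variety and invokes Shimura's theorem as a black box.
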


\begin{proof}
Let $X_1$ be the $G_1(\R)$-conjugacy class of $h$.
Then $(G_{1,\Q}, X_1)$ is a Shimura datum, and $X_1 \isom (\C - \R)^d$ as complex manifolds with the action of $G_1(\R)$.
(For example see \cite[Example 5.6]{MilneSh}.)
Since canonically there exists a map of Shimura data from $(G_{0,\Q},X)$ to $(G_{1,\Q}, X_1)$, the non-existence of the real points of $M_K^B$ follows from the one of the Shimura variety associated with $(G_{1,\Q}, X_1)$ of level $G_1(\hat{\Z})$.
This is \cite[Theorem 0]{Shimura}.
\end{proof}

For an open compact subgroup $K \le G_0(\hat{\Z})$, let $\mathscr{M}_K^B$ be the fibered category over $\Q$, which associates to each scheme $S$
the groupoid of pairs $(A, i, \lambda, \eta)$, where $(A, i, \lambda)$ is a $*$-polarized QM-abelian variety over $S$ and
$\eta$ is a level-$K$ structure.
(Note that since $V_0 \isom (F \otimes_\Q \C)^2$ as above,
by \cref{lf_iff_reduced_norm} our Lie algebra condition as in \cref{def:QMAV} is equivalent to the Kottwitz' determinantal condition \cite[Definition 1.3.4.1]{Lan}, which is introduced by \cite{Kottwitz}.)
For the definition of level-$K$ structures, see \cite[Definition 1.3.7.6]{Lan}.
This is a smooth Deligne--Mumford stack \cite[Theorem 1.4.1.12]{Lan}
of dimension $d$ (by \cite[Theorem 2.2.4.13]{Lan} and by the fact that $V_0 \isom (F \otimes \C)^2$),
and is proper by \cite[III]{BreenLabesse}.
Its coarse moduli space is a scheme by \cite[Corollary 7.2.3.10]{Lan}, and is the canonical model of $(M^B_K)_\C$ by \cite[Theorem 3.31]{MilneSVM}, i.e., $M_K^B$ is the coarse moduli space of $\mathscr{M}_K^B$.
If $K$ is sufficiently small (for example if $K$ is neat)
then $\mathscr{M}_K^B$ is represented by a scheme, which is $M_K^B$.
Note that after a slight modification,
we can consider them over a suitable localization of the ring of inters canonically,
although we do not use them in this paper.

In our case we can describe the level structures in an easier way.

\begin{lemma} \label{level-n_structures}
Let $S$ be a scheme of characteristic zero,
$X = (A, i, \lambda)$ a $*$-polarized QM-abelian variety over $S$,
$n$ a prime-to-$[\Lambda^\vee : \Lambda]\Delta$ integer,
$K$ an open compact subgroup of $G_0(\hat{\Z})$ containing
$\Gamma(n) = \ker( G_0(\hat{\Z}) \to G_0(\Z/n))$,
and let $K_n = K/\Gamma(n)$.
Fixing an isomorphism between the projective limit $\plim \mu_m(\Qbar)$ and $\hat{\Z}$,
for each geometric point $\overline{s} \to S$ and for each $\overline{s} \to \Spec \Qbar$,
consider $\hat{T} A_{\overline{s}}$ with the Weil pairing as a symplectic $O_B \otimes \hat{\Z}$-module.
Then we have the following:
\begin{enumerate}
\item If there exists a level-$K$ structure on $X$,
then for every geometric point $\overline{s} \to S$ and for every $\overline{s} \to \Spec \Qbar$
there exists a symplectic isomorphism from $\Lambda \otimes \hat{\Z}$ to $\hat{T} A_{\overline{s}}$.
\item Assume that for every geometric point $\overline{s} \to S$ and for every $\overline{s} \to \Spec \Qbar$
there exists a symplectic isomorphism from $\Lambda \otimes \hat{\Z}$ to $\hat{T} A_{\overline{s}}$.
Then we have that the level-$K$ structures on $X$ are exactly the global sections of the sheaf
$\mathscr{S}ym\mathscr{I}som_{O_B}( (\Lambda/n)_S, A[n] )/K_n$
on $S$.
\end{enumerate}
\end{lemma}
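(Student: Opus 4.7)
The plan is to unwind Lan's definition of a level-$K$ structure in \cite[Definition 1.3.7.6]{Lan}. After the parenthetical remark preceding the lemma identifies our Lie algebra condition with Kottwitz's determinantal condition, such a structure at the geometric point $\bar{s}$ becomes a $\pi_1(S,\bar{s})$-invariant $K$-orbit of symplectic $O_B \otimes_\Z \hat{\Z}$-isomorphisms $\alpha \colon \Lambda \otimes_\Z \hat{\Z} \xrightarrow{\sim} \hat{T} A_{\bar{s}}$, where the target carries the Weil pairing induced by $\lambda$. Part (1) is then immediate: the existence of such an orbit supplies in particular one symplectic $\alpha$, and the choice of geometric point is harmless because $\hat{T}A$ is locally constant.

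For Part (2) I would proceed in two steps. \emph{Step 1 (reduction to the mod-$n$ picture).} Since $\Gamma(n) = \ker(G_0(\hat{\Z}) \to G_0(\Z/n))$ is contained in $K$, two symplectic isomorphisms $\Lambda \otimes \hat{\Z} \to \hat{T}A_{\bar{s}}$ lie in the same $K$-orbit if and only if their mod-$n$ reductions lie in the same $K_n$-orbit in $\operatorname{SymIsom}_{O_B}(\Lambda/n, A[n]_{\bar{s}})$. Moreover every such mod-$n$ symplectic isomorphism lifts: fix a symplectic lift $\alpha_0$ (given by the hypothesis), and for any mod-$n$ symplectic isomorphism $\bar\beta$ write $\bar\beta = \bar\alpha_0 \cdot g_n$ with $g_n \in G_0(\Z/n)$; because $n$ is prime to $[\Lambda^\vee : \Lambda]\Delta$, the group scheme $G_0$ is smooth over $\Z_{(p)}$ for every prime $p \mid n$ by the observation preceding the Skolem--Noether paragraph, and Hensel's lemma yields $g \in G_0(\hat{\Z})$ reducing to $g_n$, so $\alpha_0 g$ lifts $\bar\beta$. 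Thus a level-$K$ structure on $X$ is the same datum as a $\pi_1(S,\bar{s})$-invariant $K_n$-orbit in $\operatorname{SymIsom}_{O_B}(\Lambda/n, A[n]_{\bar{s}})$.

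\emph{Step 2 (orbits to sheaf sections).} Because $S$ has characteristic zero and $n$ is invertible on $S$, the group scheme $A[n]$ is a finite étale symplectic $O_B \otimes \Z/n$-module sheaf, so
\[
\mathscr{F} := \mathscr{S}ym\mathscr{I}som_{O_B}((\Lambda/n)_S, A[n])/K_n
\]
is locally constant étale on $S$. The standard equivalence between locally constant étale sheaves on $S$ and $\pi_1(S,\bar{s})$-sets identifies $\Gamma(S, \mathscr{F})$ with the $\pi_1(S,\bar{s})$-invariants of its stalk $\mathscr{F}_{\bar{s}}$, i.e., with $\pi_1(S,\bar{s})$-invariant $K_n$-orbits in $\operatorname{SymIsom}_{O_B}(\Lambda/n, A[n]_{\bar{s}})$. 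Combining with Step 1 gives the claimed description.

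The main obstacle I anticipate is bookkeeping rather than substance: one must verify that Lan's formal definition of level-$K$ structure (which tracks the Weil pairing up to a similitude factor lying in $\hat{\Z}^*$) translates cleanly into the sheaf-theoretic statement, and that the reductions mod $n$ respect both the $O_B$-action and the symplectic pairing. The lifting in Step 1 is a direct application of the smoothness of $G_0$ away from $[\Lambda^\vee : \Lambda]\Delta'$ established earlier, and the sheaf-to-Galois-set dictionary in Step 2 is entirely standard, so no deep input is needed once the definitions are aligned.
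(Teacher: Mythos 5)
Your proposal is correct and follows essentially the same route as the paper: unwind Lan's definition via the dictionary between locally constant étale sheaves and $\pi_1(S,\bar s)$-sets (after reducing to $S$ connected), then use smoothness of $G_0$ over $\Z[1/[\Lambda^\vee:\Lambda]\Delta]$ — your Hensel's lemma step is exactly the surjectivity of $G_0(\hat\Z)\to G_0(\Z/n)$ that the paper invokes — to lift mod-$n$ symplectic isomorphisms to $\hat\Z$-level ones. The only small omission is the preliminary reduction (via \cite[Lemma 1.4.1.10]{Lan}) to $S$ locally noetherian and connected before fixing a geometric point, which the paper states explicitly and you leave implicit.
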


\begin{proof}
The first statement is trivial by the definition.
For the second statement, since the algebraic stack $\mathscr{M}_K$ is locally of finite presentation over the base \cite[Lemma 1.4.1.10]{Lan},
we may assume that $S$ is locally noetherian.
Hence we may assume that $S$ is connected,
since the statement is a Zariski local question.

Fix a geometric point $\overline{s} \to S$.
Then by \cite[Lemma 1.3.6.6]{Lan} and by the equivalence
between the category of smooth $\Z_\ell$-sheaves
and the category of finitely generated $\Z_\ell$-modules on which $\pi_1(S, \overline{s})$ acts continuously (for example
see \cite[A.1.8]{FK}),
the set of principal level-$n$ structures on $X$ is
the intersection of
$\operatorname{SymIsom}_{O_B, \pi_1(S, \overline{s})}(\Lambda/n, A[n]_{\overline{s}})$, which is the set of $\pi_1(S, \overline{s})$-invariant elements of
$\operatorname{SymIsom}_{O_B}(\Lambda/n, A[n]_{\overline{s}})$,
and the image of the modulo $n$ map from
$\operatorname{SymIsom}_{O_B}(\Lambda \otimes \hat{\Z}, \hat{T} A_{\overline{s}})$ to
$\operatorname{SymIsom}_{O_B}(\Lambda/n, A[n]_{\overline{s}})$.
We claim that this modulo $n$ map is surjective.
Since now we are assuming that there exists
a symplectic isomorphism from $\Lambda \otimes \hat{\Z}$ to $\hat{T} A_{\overline{s}}$,
in order to show it, it suffices to show the surjectivity
of $\operatorname{SymAut}_{O_B}(\Lambda \otimes \hat{\Z}) \to \operatorname{SymAut}_{O_B}(\Lambda/n)$, which is equal to $G_0(\hat{\Z}) \to G_0(\Z/n)$.
Thus the desired surjectivity follows from the smoothness of $G_0$ over $\Z[1/[\Lambda^\vee : \Lambda]\Delta]$.
Therefore, again using \cite[A.1.8]{FK},
it follows that the set of principal level-$n$ structures
on $X$ is equal to $\operatorname{SymIsom}_{O_B}( (\Lambda/n)_S, A[n])$.
Thus by definition of level-$K$ structures
the result follows.
\end{proof}

In some cases we can describe the level structures in a
much easier way than \cref{level-n_structures}.
Recall that $G_1$ is equal to $\operatorname{Res}_{O_F/\Z} \GL_{O_B}\Lambda$ over $\Z[1/[\Lambda^\vee : \Lambda]\Delta']$.

\begin{lemma} \label{level-K_structures}
Keep the notation as in \cref{level-n_structures}.
Assume that for every geometric point $\overline{s} \to S$
there exists a symplectic isomorphism from $\Lambda \otimes \hat{\Z}$ to $\hat{T} A_{\overline{s}}$,
and that there exists a subgroup $\Gamma$ of
$G_1(\Z/n)$ whose reduced norm is the whole of $(O_F/n)^*$ and whose preimage under the canonical map $G_0(\hat{\Z}) \to G_1(\Z/n)$ is $K$.
Then the level-$K$ structures on $X$ are exactly the global sections of the sheaf
$\mathscr{I}som_{O_B}((\Lambda/n)_S, A[n])/\Gamma$ on $S$.
\end{lemma}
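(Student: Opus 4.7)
The plan is to combine \cref{level-n_structures} with an elementary computation with torsors. By \cref{level-n_structures}, the level-$K$ structures on $X$ are the global sections of $\mathscr{S}ym\mathscr{I}som_{O_B}((\Lambda/n)_S, A[n])/K_n$, and from the defining condition that the preimage of $\Gamma$ under $G_0(\hat{\Z}) \to G_1(\Z/n)$ is $K$, one sees $K_n = G_0(\Z/n) \cap \Gamma$. It therefore suffices to show that the natural forgetful morphism induces an isomorphism of étale sheaves
\begin{equation*}
\mathscr{S}ym\mathscr{I}som_{O_B}((\Lambda/n)_S, A[n])/K_n \isom \mathscr{I}som_{O_B}((\Lambda/n)_S, A[n])/\Gamma.
\end{equation*}

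Since this is étale local on $S$, I would pass to an étale cover on which a symplectic isomorphism $f_0 \colon (\Lambda/n)_S \to A[n]$ exists; such a cover exists by the hypothesis on geometric fibers together with the smoothness of $G_0$ over $\Z[1/[\Lambda^\vee:\Lambda]\Delta']$ noted before the lemma. Using $f_0$ to trivialize both torsors, the sheaf of symplectic isomorphisms becomes the constant sheaf $G_0(\Z/n)$, while the sheaf of $O_B$-linear isomorphisms becomes $G_1(\Z/n) = \Aut_{O_B \otimes_\Z \Z/n}(\Lambda/n)$, via the identification $G_1 = \operatorname{Res}_{O_F/\Z}\GL_{O_B}\Lambda$ over $\Z[1/[\Lambda^\vee:\Lambda]\Delta']$ recalled just before the lemma together with the coprimality of $n$ with $[\Lambda^\vee:\Lambda]\Delta$. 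The problem reduces to showing that the inclusion-induced map $G_0(\Z/n)/K_n \to G_1(\Z/n)/\Gamma$ is a bijection.

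Injectivity is immediate: if $g, g' \in G_0(\Z/n)$ satisfy $g' \in g\Gamma$, then $g^{-1}g' \in G_0(\Z/n) \cap \Gamma = K_n$. For surjectivity, given any $g \in G_1(\Z/n)$, the hypothesis $\Nrd(\Gamma) = (O_F/n)^*$ lets me choose $\gamma \in \Gamma$ with $\Nrd(\gamma) = \Nrd(g)$; then $\Nrd(g\gamma^{-1}) = 1 \in (\Z/n)^*$ places $g\gamma^{-1}$ in $G_0(\Z/n)$, so that $g\Gamma = (g\gamma^{-1})\Gamma$ is in the image.

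The main step requiring care is the compatibility of the trivializations: namely, that under $f_0$, the inclusion of the sub-torsor $\mathscr{S}ym\mathscr{I}som \injects \mathscr{I}som$ corresponds to the inclusion $G_0(\Z/n) \injects G_1(\Z/n)$ of the subgroup of automorphisms of $\Lambda/n$ scaling $\psi$ by a rational factor. This is essentially a bookkeeping check on \cref{psi(gg)}. Beyond this, the existence of the étale cover itself relies on $n$ being coprime to $[\Lambda^\vee:\Lambda]\Delta$, which ensures that both $G_0$ and $G_1$ are smooth over $\Z[1/n]$.
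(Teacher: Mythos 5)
Your proof is correct and supplies exactly what the paper leaves implicit in writing that the lemma ``follows from \cref{level-n_structures} easily.'' The three points you isolate --- the identification $K_n = G_0(\Z/n)\cap\Gamma$, the \'etale-local trivialization reducing everything to the coset map $G_0(\Z/n)/K_n \to G_1(\Z/n)/\Gamma$, and the surjectivity of that map via $\Nrd(\Gamma)=(O_F/n)^*$ --- are precisely the content of the elided argument.
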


This lemma follows from \cref{level-n_structures} easily.
Using \cref{level-K_structures} we can translate some level structures into more convenient forms as in the case of modular curves.
We discuss it later in the next section.

For a prime-to-$\Delta$ integral ideal $\mathfrak{n}$ of $F$, we define the groups $\Gamma_0(\mathfrak{n})$ and $\Gamma_1(\mathfrak{n})$ to be the preimages of
\begin{equation*}
\left\{ \left( \begin{matrix} a & b \\ 0 & d \end{matrix} \right) \bigg | a,d \in (O_F/\mathfrak{n})^*, b \in O_F/\mathfrak{n} \right\}
\text{ and }
\left\{ \left( \begin{matrix} a & b \\ 0 & 1 \end{matrix} \right) \bigg | a \in (O_F/\mathfrak{n})^*, b \in O_F/\mathfrak{n} \right\}
\end{equation*}
under the canonical map $G(\hat{\Z}) \to \GL_{O_B/\mathfrak{n}} \Lambda/\mathfrak{n} \isom \GL_2(O_F/\mathfrak{n})^\opp$, where the last map is induced by our fixed isomorphisms $\Lambda \isom O_B$ and $O_B/\mathfrak{n} \isom M_2(O_F/\mathfrak{n})$, respectively.
Similarly we define the group $\Gamma(\mathfrak{n})$ to be the kernel of the canonical map $G(\hat{\Z}) \to \GL_2(O_F/\mathfrak{n})^\opp$.
For $K = G(\hat{\Z})$, we simply denote $\mathscr{M}_K^B$ and $M_K^B$ by $\mathscr{M}^B$ and $M^B$ respectively.
For a prime-to-$\Delta$ integral ideal $\mathfrak{n}$ of $F$, and for $K= \Gamma_i(\mathfrak{n})$ ($i = 0$ or $1$),
we denote $\mathscr{M}_K^B$ and $M_K^B$ by $\mathscr{M}_i^B(\mathfrak{n})$ and $M_i^B(\mathfrak{n})$ respectively.
Similarly for the group $K = \Gamma(\mathfrak{n})$, we denote $\mathscr{M}_K^B$ and $M_K^B$
by $\mathscr{M}^B(\mathfrak{n})$ and $M^B(\mathfrak{n})$ respectively.
By \cite[Section 21, Theorem 5]{MumfordAV}, for a rational integer $n \ge 3$ the algebraic stack $\mathscr{M}^B(n)$ is a scheme.

\section{General theory on QM abelian varieties} \label{section:QM-abelian_varieties:general_theory}

In this section we show fundamental statements about QM-abelian varieties.

For an abelian scheme $A$ over a scheme $S$, for a ring $R$, and for a ring homomorphism $i \colon R \to \End (A/S)$,
we write $\End_R (A/S)$ for the set of endomorphisms of $A/S$ which commute with the action of $R$
and similarly for $\Aut_R (A/S)$.
Similarly we write $\End^0_{R \otimes_\Z \Q} (A/S) = \End_R (A/S) \otimes_\Z \Q$.
When no confusion can arise,
we simply denote them by $\End_R A$, by $\Aut_R A$ and by $\End^0_{R \otimes_\Z \Q} A$ respectively.
Similar for $\Hom_R(A, B)$ and for $\Hom^0_{R \otimes_\Z \Q}(A, B)$, for two abelian schemes and for actions of $R$ on them.

For classification of endomorphism rings of abelian varieties we use the notation as in the table in \cite[Chapter IV, Section 21, p202]{MumfordAV}.
Note that although the base field is assumed to be algebraically closed in \cite{MumfordAV},
we do not need this assumption.

Let $X$ be a simple abelian variety over a field $k$,
and let $D = \End^0 X$ and $K$ the center of $D$.
Then by \cite[Chapter IV, Section 19, Corollary, p182]{MumfordAV}, we have that
$\sqrt{[D:K]} [K : \Q]$ divides $2 \dim X$.
Recall that $X$ is called of CM-type if $\sqrt{[D:K]} [K : \Q] = 2 \dim X$.
Also recall that, in general an abelian variety is called of CM-type if all of its simple factors are of CM-type.

\begin{lemma} \label{H_1/O_B}
Let $A$ be a QM-abelian variety over $\C$.
Then the first singular homology group $H_1(A(\C), \Q)$ is free of rank $1$ over $B$.
\end{lemma}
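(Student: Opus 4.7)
The plan is to exploit two facts: $B$ is a division algebra by standing hypothesis, so every finitely generated left $B$-module is free, and the dimension of $A$ is forced by the Lie algebra condition in \cref{def:QMAV}.

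First I would note that $H_1(A(\C), \Q)$ acquires the structure of a left $B$-module: the action $i \colon O_B \to \End A$ induces an action of $O_B$ on $H_1(A(\C), \Z)$ through functoriality of homology, and tensoring with $\Q$ gives the action of $B = O_B \otimes_\Z \Q$ on $H_1(A(\C), \Q)$. Since $B$ is a division ring, the $\Q$-vector space $H_1(A(\C), \Q)$, viewed as a $B$-module, is a $B$-vector space, hence free of some rank $r \ge 0$.

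Next I would pin down $r$ by comparing $\Q$-dimensions. The condition in \cref{def:QMAV}.(1) asserts that $\Lie(A/\C)$ is locally free of rank $2$ over $O_F \otimes_\Z \C \isom \C^d$; in particular $\dim_\C \Lie(A/\C) = 2d$, so $\dim_\C A = 2d$ and therefore $\dim_\Q H_1(A(\C),\Q) = 4d$. On the other hand $\dim_\Q B = [B : F][F : \Q] = 4 d$. Comparing, we get $r = 4d/4d = 1$, which is what we want.

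The argument is essentially free of obstacles; the only point that requires any attention is checking that the $O_B$-action indeed extends to a $B$-action on $H_1(A(\C),\Q)$ compatibly, but this is immediate since homology with rational coefficients converts the integral $O_B$-action into a $B$-linear one, and the hypothesis that $B$ is a \emph{division} quaternion algebra (rather than merely central simple) is exactly what makes the structure theory of $B$-modules trivial.
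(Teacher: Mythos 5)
Your proposal is correct and follows essentially the same route as the paper: both derive $\dim_\C A = 2d$ from the rank-$2$ Lie algebra condition, conclude $\dim_\Q H_1(A(\C),\Q) = 4d$, and match this against $\dim_\Q B = 4d$ to force rank one. The paper phrases the middle step via the isomorphism $H_1(A(\C),\R)\isom \Lie A$ of $B\otimes_\Q\R$-modules, which is the same dimension count you carry out, so there is no substantive difference in the argument.
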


\begin{proof}
By the definition the $F \otimes_\Q \C$-module $\Lie(A)$ is free of rank $2$.
Since $H_1(A(\C), \R) \isom \Lie A$ as $B \otimes_\Q \R$-modules,
the homology group $H_1(A(\C), \Q)$ is $4$-dimensional over $F$.
Thus it is free of rank $1$ over $B$.
\end{proof}

For QM-abelian surfaces (i.e., QM-abelian varieties in the case of $F = \Q$),
the Tate modules at a prime $\ell$ are free of rank $1$ over the completion of $O_B$ at $\ell$
(for example see \cite[Proposition 1.1 (1)]{Ohta}, which cites the unpublished master thesis of Y. Morita).
This also holds for general $F$.

\begin{proposition} \label{V_l/F_l}
Let $k$ be a field, $A$ an abelian variety over $k$ on which an order of a field $L$ acts,
and let $\ell$ be a prime.
Then the Tate module $V_\ell A$ is free over $L_\ell$.
\end{proposition}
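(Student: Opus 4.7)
The plan is to reduce freeness over $L_\ell := L \otimes_\Q \Q_\ell$ to the equality of a certain list of dimensions, and then to pin down these dimensions via a single characteristic polynomial. First, since $L$ is a number field, $L_\ell \isom \prod_{\lambda \mid \ell} L_\lambda$ is a finite product of fields, so $V_\ell A$ decomposes accordingly as $\bigoplus_\lambda V_\lambda A$ with $V_\lambda A := V_\ell A \otimes_{L_\ell} L_\lambda$ an $L_\lambda$-vector space of some dimension $n_\lambda$. Freeness of $V_\ell A$ over $L_\ell$ is equivalent to the $n_\lambda$ being a common value; counting $\Q_\ell$-dimensions forces this value, if it exists, to equal $n := 2 \dim A / [L:\Q]$.

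Next I would introduce a primitive element $\alpha \in L$ of $L/\Q$ lying in the order $O$ that acts on $A$ (any primitive element scaled by a suitable positive integer works). Writing $m_\alpha \in \Z[T]$ for its minimal polynomial and $P_\alpha := \det(T - \alpha \mid V_\ell A)$ for its characteristic polynomial, the fact that $\alpha$ acts on each $V_\lambda A$ as the scalar $\alpha_\lambda \in L_\lambda$ gives the factorization
\begin{equation*}
P_\alpha(T) = \prod_{\lambda \mid \ell} m_{\alpha,\lambda}(T)^{n_\lambda} \text{ in } \Q_\ell[T],
\end{equation*}
where $m_{\alpha,\lambda}$ is the irreducible factor of $m_\alpha$ over $\Q_\ell$ corresponding to $\lambda$, of degree $[L_\lambda : \Q_\ell]$.

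The decisive input is the classical theorem (\cite[Chapter IV, \S 19, Theorem 4]{MumfordAV}) that $P_\alpha$ in fact lies in $\Z[T]$ and is independent of $\ell$. Granting this, every root of $P_\alpha$ in $\Qbar$ is a root of $m_\alpha$ (since $m_\alpha(\alpha)$ vanishes already in $\End^0 A$, hence on $V_\ell A$), and because $m_\alpha$ is irreducible in $\Q[T]$, unique factorization over $\Q$ forces $P_\alpha = m_\alpha^n$. Re-expanding this equality in $\Q_\ell[T]$ and using that the $m_{\alpha,\lambda}$ are pairwise coprime irreducibles there, a comparison of exponents collapses all $n_\lambda$ to the single value $n$, yielding the claimed freeness.

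The main obstacle is the independence-of-$\ell$ input: it is what lets the $\Q$-irreducibility of $m_\alpha$ (rather than its possibly finer $\Q_\ell$-factorization) do the work. An alternative route via Betti cohomology — noting that $H_1(A(\C), \Q)$ is automatically free over the field $L$, as in \cref{H_1/O_B} — is available in characteristic zero but does not cover positive-characteristic base fields, so I would prefer the uniform characteristic-polynomial argument above.
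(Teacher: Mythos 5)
Your proof is correct, but it takes a genuinely different route from the paper's. The paper avoids introducing a primitive element and works with traces alone: for every $\theta \in L$ the trace of $V_\ell \theta$ on $V_\ell A$ equals $\sum_{v \mid \ell} n_v \tr_{L_v/\Q_\ell}(\theta)$ and is a rational number; since $\Hom_\Q(L,\Q)$ is one-dimensional over $L$, spanned by $\tr_{L/\Q}$, and tensoring with $\Q_\ell$ decomposes it as $\bigoplus_v \Hom_{\Q_\ell}(L_v,\Q_\ell)$, a short linear-algebra argument forces all $n_v$ to coincide. You instead fix a primitive element $\alpha$ and exploit the $\Q$-irreducibility of $m_\alpha$ together with unique factorization of the characteristic polynomial over $\Q$ and over $\Q_\ell$, which is equally valid and arguably makes the mechanism of the proof more visible. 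One small correction to your framing: you single out ``independence of $\ell$'' as the decisive input, but your argument never uses it — only the rationality of $P_\alpha$ is needed, which is the same foundational fact (\cite[Ch.\ IV, \S 19]{MumfordAV}) that underlies the paper's rationality-of-traces argument. You are right that both approaches, unlike the Betti-cohomology shortcut, apply uniformly in any characteristic.
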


\begin{proof}
We may assume that $V_\ell A \ne 0$.
The ring $L_\ell$ is the product $\prod_{v|\ell} L_v$ of the completions of $L$ at places above $\ell$.
For each place $v$ define $V_v = V_\ell A \otimes_{L_\ell} L_v$.
Write $\dim_{L_v} V_v = n_v$.
For any $\alpha \in \End^0 A$, the trace of $V_\ell \alpha$ on $V_\ell A$ is a rational number.
In particular for $\theta \in L$, the trace of $V_\ell \theta$ on $V_\ell A$, which is equal to $\sum_v n_v \tr_{L_v/\Q_\ell}(\theta)$, is a rational number.
We show that this forces that the integers $n_v$ do not depend on $v$.
Since the $L$-vector space of $\Q$-linear maps from $L$ to $\Q$ is one-dimensional and is generated by $\tr_{L/\Q}$,
we have that there exists $a \in L$ such that $\sum_v n_v \tr_{L_v/\Q_\ell} = a\tr_{L/\Q}$.
Since $\Hom(L, \Q) \otimes \Q_\ell  = \oplus \Hom(L_v, \Q_\ell)$ and each of the direct summand of the right hand side is also a one-dimensional vector space generated by $\tr_{L_v/\Q_\ell}$ over $L_v$, we have that $a = n_v$,
in particular these $n_v$ are the same.
This shows the freeness of the $L_\ell$-module $V_\ell A$.
\end{proof}

\begin{proposition} \label{V_l/B_l}
Let $k$ be a field, $A$ an abelian variety over $k$ on which an order of $B$ acts,
and let $\ell$ be a prime.
Then the Tate module $V_\ell A$ is free over $F_\ell$.
Moreover we have the following:
\begin{enumerate}
    \item
    If $\ell \ne \ch k$ and if the dimension of $A$ is divisible by $2d$, then $V_\ell A$ is free over $B_\ell$.
    \item
    If $\ell = \ch k$ and if the dimension of $A$ is $2d$,
    then $V_\ell A$ is free of rank $0$ or $2$ over $F_\ell$, and the latter case occurs only when $\ell$ does not divide $\Delta'$.
\end{enumerate}
\end{proposition}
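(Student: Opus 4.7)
The plan is to reduce everything to \cref{V_l/F_l} applied to the center $F$ of $B$: the given order of $B$ contains an order of $F$, so $V_\ell A$ is free over $F_\ell$. Call its rank $n$. Writing $F_\ell = \prod_{v \mid \ell} F_v$ and $B_\ell = \prod_{v \mid \ell} B_v$, it follows that for each place $v$ of $F$ above $\ell$ the component $V_v := V_\ell A \otimes_{F_\ell} F_v$ has $F_v$-dimension exactly $n$, \emph{uniformly} in $v$, and $V_\ell A = \bigoplus_{v \mid \ell} V_v$ decomposes compatibly with $B_\ell$. Both remaining statements will be extracted from this uniform dimension together with the $B_\ell$-action.

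For part (1), with $\ell \ne \ch k$ and $\dim A = 2dm$, comparing $\Q_\ell$-dimensions gives $n = 4m$. At each place $v \mid \ell$ the action yields a unital ring map $B_v \to \End_{F_v}(V_v)$; since $B_v$ is a central simple $F_v$-algebra and the map is unital (hence nonzero), it is injective. I would then split into cases: if $B_v \cong M_2(F_v)$, then $V_v$ is a semisimple $B_v$-module, and its $F_v$-dimension $4m$ forces it to be a direct sum of $2m$ copies of the unique simple module $F_v^2$, which is $B_v^m$; if $B_v$ is a division algebra of $F_v$-degree $4$, every $B_v$-module is free and a dimension count gives rank $m$. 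Assembling over all $v \mid \ell$ yields $V_\ell A \cong B_\ell^m$.

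For part (2), with $\ell = \ch k$ and $\dim A = 2d$, the $p$-rank bound $\dim_{\Q_\ell} V_\ell A \le \dim A$ gives $nd \le 2d$, so $n \in \{0, 1, 2\}$. To exclude $n = 1$: the unital injection of the previous paragraph would yield $B_v \hookrightarrow \End_{F_v}(V_v) = F_v$ at any $v \mid \ell$, contradicting $\dim_{F_v} B_v = 4$. For $n = 2$: the same injection gives $B_v \hookrightarrow \End_{F_v}(V_v) \cong M_2(F_v)$, and a dimension count makes it an isomorphism at every $v \mid \ell$; this is exactly the condition that $B$ splits at every place of $F$ over $\ell$, i.e., $\ell \nmid \Delta'$.

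The argument has no real obstacle beyond small linear-algebraic verifications; the only step requiring care is the injectivity $B_v \hookrightarrow \End_{F_v}(V_v)$, which rests on $B_v$ being simple and on the action being unital on each simple factor of $B_\ell$ (so that $1_{B_v}$ is not in the kernel, even though the full action of $B_\ell$ may have a nontrivial kernel among the other factors).
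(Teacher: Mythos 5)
Your argument is correct and follows essentially the same route as the paper: reduce to \cref{V_l/F_l} for freeness over $F_\ell$, decompose along the places $v \mid \ell$, and use the uniform $F_v$-dimension $n$ together with a case split on whether $B_v$ is $M_2(F_v)$ or a division algebra. The paper phrases the heart of the argument in terms of $V_v$ being a direct sum of copies of the simple $B_v$-module (so $n$ is even in the split case and $4 \mid n$ in the ramified case), whereas you phrase it via injectivity of the unital map $B_v \to \End_{F_v}(V_v)$ and a dimension count; these are equivalent bookkeepings of the same structure theory for modules over a semisimple $F_v$-algebra.
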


\begin{proof}
We may assume that $V_\ell A \ne 0$.
The freeness of $V_\ell A$ as the $F_\ell$-module is just \cref{V_l/F_l}.
Let $n$ be the rank of the $F_\ell$-module $V_\ell A$ and $r$ the dimension of $V_\ell A$ over $\Q_\ell$.
Then $n = r/d$.
Let $v$ be a place of $F$ dividing $\ell$ and let $V_v = V_\ell A \otimes_{F_\ell} F_v$.
If $v$ splits $B$, then $V_v$ is isomorphic to a direct sum of some copies of $F_v^2$, and hence $n$ is even.
Moreover in this case if $n/2$ is even then $V_v$ is free over $B_v$ since $(F_v^2)^2 \isom M_2(F_v) = B_v$ as $B_v$-modules.
If $v$ does not split $B$, then $V_v$ is free over $B_v$ and in particular $n$ must be divisible by $4$.
Since we know that $r = 2 \dim A$ if $\ell \ne \ch k$,
and that $0 \le r \le \dim A$ if $\ell = \ch k$,
this finishes the proof.
\end{proof}

In the proof above, furthermore we have shown that the dimension of $A$ is divided by $d$, and moreover that if there exists a prime $\ell \ne \ch k$ which does divide $\Delta'$, i.e., if $\ch k \ne \Delta'$, then the dimension of $A$ is divided by $2d$.
In \cref{V_l/B_l}, if the order acting on $A$ is maximal, then we have the following obvious corollary.

\begin{corollary} \label{T_l/O_B_l}
Let $k$ be a field, $A$ an abelian variety over $k$ on which $O_B$ acts,
and let $\ell$ be a prime.
Then the Tate module $T_\ell A$ is free over $O_{F,\ell}$.
Moreover we have the following:
\begin{enumerate}
    \item
    If $\ell \ne \ch k$ and if the dimension of $A$ is divisible by $2d$, then $T_\ell A$ is free over $O_{B,\ell}$.
    \item
    If $\ell = \ch k$ and if the dimension of $A$ is $2d$,
    then $T_\ell A$ is free of rank $0$ or $2$ over $O_{F,\ell}$, and the latter case occurs only when $\ell$ does not divide $\Delta'$.    
\end{enumerate}
\end{corollary}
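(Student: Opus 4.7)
The plan is to derive this corollary from \cref{V_l/B_l} by passing from the rational Tate module $V_\ell A$ to its canonical $O_{B,\ell} := O_B \otimes_\Z \Z_\ell$-stable $\Z_\ell$-lattice $T_\ell A$, invoking standard structure theory of lattices over maximal orders.

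For freeness over $O_{F,\ell}$: decompose $O_{F,\ell} = \prod_{v \mid \ell} O_{F,v}$ as a finite product of discrete valuation rings and correspondingly $T_\ell A = \bigoplus_{v \mid \ell} T_v A$, where $T_v A = T_\ell A \otimes_{O_{F,\ell}} O_{F,v}$. Each $T_v A$ is a finitely generated torsion-free module over the DVR $O_{F,v}$, hence free, and the trace argument given in the proof of \cref{V_l/F_l} shows that the local ranks all coincide. Thus $T_\ell A$ is free over $O_{F,\ell}$.

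For the refined assertion over $O_{B,\ell}$, work place by place at $v \mid \ell$. When $v$ splits $B$ we have $O_{B,v} \isom M_2(O_{F,v})$, and Morita equivalence together with the DVR case reduces freeness of an $O_{B,v}$-stable lattice to the $O_{F,v}$-case; when $v$ is ramified in $B$, the ring $O_{B,v}$ is the unique maximal order in the local division algebra $B_v$, a noncommutative discrete valuation ring, and its finitely generated torsion-free modules are again automatically free. Under the hypothesis $2d \mid \dim A$, \cref{V_l/B_l} gives the freeness of $V_\ell A$ over $B_\ell$, so in both local cases $T_\ell A$ is free over $O_{B,v}$, and assembling over $v \mid \ell$ yields freeness over $O_{B,\ell}$. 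The last case, $\ell = \ch k$ with $\dim A = 2d$, is immediate from the corresponding statement of \cref{V_l/B_l} combined with the first paragraph, since $V_\ell A$ being free of rank $0$ or $2$ over $F_\ell$ forces the same for $T_\ell A$ over $O_{F,\ell}$.

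No genuine obstacle arises; the whole content is the already-established \cref{V_l/B_l} together with the standard fact that finitely generated torsion-free modules over a maximal order in a semisimple algebra over a local field are free of the expected rank. The only minor point is keeping the local decomposition at the primes $v \mid \ell$ compatible on the integral and rational levels, which is automatic.
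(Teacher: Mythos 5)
Your proof is correct, but the key step — freeness of $T_\ell A$ over $O_{B,\ell}$ in case (1) — is argued by a genuinely different route than the paper. You invoke the general structure theory: at a split place $v \mid \ell$ you appeal to Morita equivalence to reduce to freeness of a finitely generated torsion-free module over the DVR $O_{F,v}$, and at a ramified place you use that $O_{B,v}$ is a (noncommutative) discrete valuation ring, over which every finitely generated torsion-free module is automatically free; the ranks are then pinned down by \cref{V_l/B_l}. The paper instead gives a self-contained, elementary argument: it reduces mod $\mathfrak{p}_v$ and shows directly that $T_v \otimes \F_v$ is free over $O_B \otimes \F_v$ (in the split case because that ring is simple of the right size, in the ramified case by a dimension count over the quadratic residue field $\F_\mathfrak{P}$ followed by Nakayama over the local ring $O_{B,v}/\mathfrak{P}^2$), and then lifts the freeness via Nakayama over $O_{B,v}$ and a rank comparison. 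Your version is shorter once one accepts the classical facts about lattices over maximal orders in local division algebras, while the paper's version avoids black-boxing those and relies only on Nakayama and counting dimensions over residue fields; both are valid, and the content is the same.
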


\begin{proof}
    The statements about $O_{F,\ell}$-modules are trivial by \cref{V_l/B_l}, since $O_{F,\ell}$ is a product of discrete valuation rings.
    Assume that $\ell \ne \ch k$ and that the dimension of $A$ is divided by $2d$, say $2d\delta$.
    We show that $T_\ell A$ is free over $O_{B,\ell}$.
    Let $v$ be a place of $F$ above $\ell$ and $T_v := T_\ell A \otimes_{O_{F,\ell}} O_v$.
    Note that the rank of the free $O_{F,v}$-module $T_v$ is $4\delta$.
    
    First we claim that $T_v \otimes_{O_{F,v}} \F_v$ is free over $O_B \otimes_{O_F} \F_v$.
    If $v$ splits $B$, then since $O_B \otimes \F_v$ is simple and since the rank of $T_v$ as an $O_{F,v}$-module is divided by $4$, we obtain the claim.
    If $v$ does not split $B$, then every one-sided ideal of $O_{B,v}$ is in fact two-sided and principal by \cite[13.3.10]{Voight}, $O_{B,v}$ has only one maximal ideal $\mathfrak{P}$, which satisfies $\mathfrak{P}^2 = \mathfrak{p}_v O_{B,v}$, where $\mathfrak{p}_v$ is the prime of $F$ corresponding to $v$, and $\F_\mathfrak{P} := O_{B,v}/\mathfrak{P}$ is a quadratic field of $\F_v$ by \cite[Theorem 13.3.11]{Voight}.
    Thus $\dim_{\F_v} T_v \otimes \F_v = 2 \dim_{\F_v} T_v \otimes_{O_{B,v}} \F_\mathfrak{P}$, and hence $\dim_{\F_\mathfrak{P}} T_v \otimes \F_\mathfrak{P} = \delta$.
    Therefore applying Nakayama's lemma to the noncommutative local ring $O_B \otimes \F_v$, we obtain a surjection $O_B^\delta \otimes \F_v \to T_v \otimes \F_v$ of $O_B$-modules.
    Comparing their dimensions over $\F_v$, we see that it is an isomorphism, which shows the claim.
    As a consequence, in any case the $O_B \otimes \F_v$-module $T_v \otimes \F_v$ is free of rank $\delta$.
    Hence again by Nakayama's lemma there exists a surjective $O_B$-homomorphism $O_{B,v}^\delta \to T_v$.
    Again comparing their ranks over $O_{F,v}$, we see that it is an isomorphism.
    This shows the statement.
\end{proof}

Next we show that actions of orders of quaternion algebras over totally real number fields on abelian varieties force  their dimensions to be big
by considering their endomorphism rings.
In order to show it we need a fundamental lemma about central simple algebras.

\begin{lemma} \label{C_E(B)_and_C_E(F)}
    Let $L/K$ be a field extension of degree $f$, $W/L$ and $D/K$ central simple algebras, and $E := M_m(D)$ for a positive integer $m$ divided by $f$.
    Fix a $K$-algebra homomorphism $W \to E$.
    Then $C_E(W)$ and $C_E(L)$ are central simple algebras over $L$ and we have $W \otimes_L C_E(W) \isom C_E(L) \isom M_{m/f}(D \otimes_K L)$.
\end{lemma}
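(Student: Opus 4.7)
The plan is to reduce the three structural assertions to the classical centralizer theorem for central simple algebras, and then pin down the explicit matrix form of $C_E(L)$ by a Skolem--Noether reduction followed by a direct computation.

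Since $W$ is simple, the $K$-algebra map $W \to E$ is automatically injective, so $W$ becomes a simple $K$-subalgebra of the central simple $K$-algebra $E = M_m(D)$ with $Z(W) = L$. Applying the centralizer theorem (e.g.\ Reiner, \emph{Maximal Orders}, Thm.~7.11, or Bourbaki, \emph{Alg\`ebre} VIII~\S10) to the pair $W \subset E$ yields in one stroke that $C_E(W)$ is simple with center $L$ (hence central simple over $L$), that $C_E(L)$ is central simple over $L$, and that the canonical multiplication map is an $L$-algebra isomorphism $W \otimes_L C_E(W) \xrightarrow{\sim} C_E(L)$. This settles everything except the identification of $C_E(L)$ with $M_{m/f}(D \otimes_K L)$.

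For that last identification, my plan is to invoke Skolem--Noether: any two $K$-algebra embeddings of the simple algebra $L$ into the central simple $K$-algebra $E$ are conjugate by an inner automorphism of $E$, and conjugate subalgebras have isomorphic centralizers, so it suffices to compute $C_E(L)$ for one convenient embedding. Using $f \mid m$, I would take the composite
\begin{equation*}
L \hookrightarrow \End_K(L) = M_f(K) \hookrightarrow M_{m/f}(M_f(K)) = M_m(K) \hookrightarrow M_m(D) = E,
\end{equation*}
where the first arrow is the regular representation of $L$ on itself and the middle inclusion is as scalar $(m/f) \times (m/f)$ matrices over $M_f(K)$. Under the resulting identification $E = M_{m/f}(M_f(D))$, the image of $L$ lies in the scalar matrices, so $C_E(L) = M_{m/f}(C_{M_f(D)}(L))$. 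Since $C_{M_f(K)}(L) = \End_L(L) = L$ (the regular representation is its own commutant in $\End_K(L)$), tensoring with $D$ over $K$ gives $C_{M_f(D)}(L) = L \otimes_K D$, and altogether $C_E(L) \isom M_{m/f}(D \otimes_K L)$.

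The only mildly delicate step is this final identification: the centralizer theorem alone pins down the center and dimension of $C_E(L)$ but not its Brauer class over $L$, so Skolem--Noether (or an equally explicit substitute) is genuinely needed. Everything else is purely structural and follows mechanically once the correct form of the centralizer theorem is invoked.
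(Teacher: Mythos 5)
Your proof is correct and takes essentially the same approach as the paper's: both use Skolem--Noether to reduce the computation of $C_E(L)$ to the explicit embedding $L \hookrightarrow M_f(K) \hookrightarrow M_m(D)$ (yielding $C_E(L) \isom M_{m/f}(D \otimes_K L)$), and both invoke the double centralizer theorem for the structural facts about $C_E(W)$ and the isomorphism $W \otimes_L C_E(W) \isom C_E(L)$. The only difference is the order of the two steps: the paper first identifies $C_E(L)$ explicitly and then applies the centralizer theorem to $W$ viewed as a simple subalgebra of the central simple $L$-algebra $C_E(L)$ (via $C_E(W) = C_{C_E(L)}(W)$), whereas you invoke a packaged centralizer theorem over $K$ up front and compute $C_E(L)$ second.
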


\begin{proof}
Take a $K$-algebra homomorphism $L \to M_f (K)$.
Since $f$ divides $m$, by tensoring it with $D$ we obtain a morphism $L \to M_m(D) = E$.
By the Skolem--Noether theorem, it differs from the map $L \to W \to E$, where the latter map is our fixed map,
by an inner automorphism of $E$.
Thus we can compute the centralizer $C_E(L)$ using the map $L \to M_f(K) \to E$ obtained above.
Therefore we have $C_E(L) = M_{m/f}(D \otimes_K L)$.
In particular $C_E(L)$ is a central simple algebra over $L$.
Consequently, $C_E(W) = C_{C_E(L)}(W)$ is a central simple algebra over $L$ by \cite[Section 8, Theorem 9]{SerreEx} and one gets $W \otimes_L C_{C_E(L)}(W) = C_E(L) = M_{m/f}(D \otimes_K L)$ by \cite[Section 8, Corollaire 1]{SerreEx}.
\end{proof}

Let $k$ be a field, $O$ an order of $B$, and $A$ be a nonzero $k$-abelian variety on which $O$ acts.
We say that $A$ is $B$-simple if there are no nontrivial abelian subvarieties $A'$ of $A$ on which orders $O'$ of $B$ act so that the inclusions $A' \to A$ are compatible with the actions of $O \cap O'$.
Note that since for an abelian subvariety $A'$ of $A$, the abelian variety $A$ is isogenous to the product $A' \times A/A'$, and since if an order of $B$ acts on $A'$ so that the inclusion is compatible with the actions then there exists an order of $B$ which acts on $A/A'$, we have that
there exists an isogeny between $A$ and a product $\oplus_i A_i$, where each $A_i$ is an abelian variety on which an order of $B$ acts and is $B$-simple.
We show several fundamental statements about $B$-simple abelian varieties.

\begin{lemma} \label{simple_power}
Let $k$ be a field and $A$ a nonzero abelian variety over $k$ on which an order of $B$ acts.
If $A$ is $B$-simple, then $A$ is isogenous to a power of a simple abelian variety.
\end{lemma}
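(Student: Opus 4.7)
I would argue by contradiction, using Poincar\'e complete reducibility to split $A$ into two pieces with no maps between them, and then showing that the $O$-action already descends to one of the pieces, contradicting $B$-simplicity.

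\textbf{Setup.} Grouping the simple isogeny factors of $A$ by isogeny class: if $A$ is not isogenous to a power of a single simple abelian variety, then we obtain an isogeny $\phi\colon A \to A_1 \times A_2$ with $A_1, A_2$ both nonzero and
\begin{equation*}
\Hom^0(A_1, A_2) = \Hom^0(A_2, A_1) = 0.
\end{equation*}
Set $C := \phi^{-1}(A_1 \times 0)^0 \subset A$, the identity component of the preimage; since $\phi$ is an isogeny, $C$ is a nonzero proper abelian subvariety of $A$, isogenous to $A_1$.

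\textbf{Main step: $C$ is $O$-stable.} Since ring homomorphisms preserve $1$ and $A$ is nonzero, the structure map $O \to \End A$ is injective (its kernel, rationalized, is a two-sided ideal of the simple algebra $B$, and cannot equal $B$ because multiplication by a positive integer is non-zero on $A$). Therefore $B = O \otimes_{\Z} \Q \hookrightarrow \End^0 A$. By the vanishing of cross-Hom's,
\begin{equation*}
\End^0(A_1 \times A_2) = \End^0(A_1) \times \End^0(A_2).
\end{equation*}
For any $b \in O$, the composite $\pi_2 \circ \phi \circ b|_C \colon C \to A_2$ lies in $\Hom(C, A_2) \subset \Hom^0(C, A_2) = \Hom^0(A_1, A_2) = 0$, using $C \sim A_1$ and isogeny-invariance of $\Hom^0$. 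Thus $\phi(b(C)) \subset A_1 \times 0$, so $b(C) \subset \phi^{-1}(A_1 \times 0)$, and by connectedness $b(C) \subset C$.

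\textbf{Conclusion.} The subvariety $C \subset A$ inherits the $O$-action from $A$, so with $O' := O$ the inclusion $C \hookrightarrow A$ is $(O \cap O')$-equivariant and $C$ is a nontrivial proper $B$-subvariety, contradicting $B$-simplicity. The step I expect to be the main obstacle is the $O$-stability of $C$: the isogeny $\phi$ is not $B$-equivariant, so the decomposition of $A_1 \times A_2$ cannot be transported to a decomposition of the $B$-action directly; rather, it is the Hom-vanishing between non-isogenous simple pieces that forces $C$ to be preserved.
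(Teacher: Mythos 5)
Your proof is correct, and it takes essentially the same approach as the paper: decompose $A$ up to isogeny into isotypic pieces (Poincar\'e reducibility), use the vanishing of $\Hom^0$ between non-isogenous simple factors, and conclude that the $O$-action passes to a piece, contradicting $B$-simplicity. The one substantive difference is that you explicitly construct the abelian subvariety $C = \phi^{-1}(A_1 \times 0)^0 \subset A$ and verify directly that each $b \in O$ maps $C$ into itself; the paper instead argues at the level of algebras via $\End^0 A \cong \prod_i \End^0(A_i^{n_i})$, stating that an order of $B$ acts on each factor and leaving implicit the passage from this algebraic splitting to an actual $O$-stable subvariety of $A$ (which one would realize via the central idempotents $e_i$). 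Your version spells out precisely the step the definition of $B$-simplicity requires, so it is, if anything, slightly more complete than the published proof.
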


\begin{proof}
Assume that $A$ is $k$-isogenous to $\oplus_{i=1}^r A_i^{n_i}$,
where $A_i$ are pairwise non-isogenous simple abelian varieties over $k$ and $n_i > 0$.
Then it follows that as algebras $\End^0 A \isom \prod_i \End^0 (A_i^{n_i})$.
Hence an order of $B$ acts on each of $A_i^{n_i}$.
Thus, since $A$ is $B$-simple, the result follows.
\end{proof}

\begin{lemma} \label{isogeny}
Let $k$ be a field
and $A$ and $A'$ abelian varieties of the same dimension over $k$ on which an order of $B$ acts.
Let $f$ be a nonzero homomorphism from $A$ to $A'$ which commutes with the action of an order of $B$.
Then if $A$ is $B$-simple, then $f$ is an isogeny.
\end{lemma}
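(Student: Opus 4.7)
My plan is to show that $K := (\ker f)^{0}_{\mathrm{red}}$ is zero, whence $f$ is an isogeny by a dimension count. Note first that $K$ is an abelian subvariety of $A$, and that $K \subsetneq A$ because $f \ne 0$ forces $\ker f \ne A$. If $K = 0$, then $\ker f$ is a finite group scheme; since $f(A)$ is a closed abelian subvariety of $A'$ of dimension $\dim A - \dim \ker f = \dim A = \dim A'$, we conclude $f(A) = A'$, so $f$ is surjective with finite kernel, i.e., an isogeny.

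To force $K = 0$, I intend to exhibit $K$ as a proper nonzero abelian subvariety of $A$ carrying an action of an order of $B$ compatibly with its inclusion, thereby contradicting the $B$-simplicity of $A$. Let $O \subset B$ be the order acting on both $A$ and $A'$ through which $f$ is equivariant. Because $f \circ i(b) = i(b) \circ f$ for all $b \in O$, the action of $O$ preserves $\ker f$ and hence preserves its reduced identity component $K$; thus $O$ acts on $K$ by restriction, and the inclusion $K \hookrightarrow A$ is manifestly $O$-equivariant.

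The main technical step---and the only place where anything nontrivial happens---is verifying that the induced map $O \to \End K$ is injective, so that $K$ is genuinely acted on by an \emph{order} of $B$ rather than by some proper quotient of $O$. The kernel is a two-sided ideal of $O$; tensoring with $\Q$ it becomes a two-sided ideal of the simple algebra $B$, hence is either $0$ or all of $B$. But $1 \in O$ maps to $\mathrm{id}_K$, which is nonzero as long as $K \ne 0$, so this span cannot be all of $B$; hence the kernel has trivial $\Q$-span, and being torsion-free it is itself $0$. Applying the definition of $B$-simplicity to $(K, O')$ with $O'$ the image of $O$ in $\End K$ (so that $O \cap O' = O$) and with $0 \ne K \subsetneq A$ then yields the desired contradiction, forcing $K = 0$ and completing the proof via the reduction in the first paragraph.
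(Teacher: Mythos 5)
Your proof is correct and takes essentially the same route as the paper: reduce to showing $(\ker f)^0_{\mathrm{red}} = 0$ via $B$-simplicity, then use the equality of dimensions to conclude that $f$ is surjective with finite kernel. Your extra care in verifying that $O \to \End K$ is injective (so that $K$ genuinely carries an order of $B$) is a point the paper's proof does not spell out, though it is automatic for any nonzero abelian variety with a unital ring homomorphism from an order in a simple algebra, by exactly the two-sided-ideal argument you give.
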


\begin{proof}
Since $f$ commutes with the action of an order of $B$,
an order of $B$ also acts on $\ker f$, and hence on $(\ker f)^0$, the identity component,
and thus on $H := (\ker f)^0_\red$, which is an abelian variety.
Since $f$ is nonzero, by the $B$-simplicity, we have $H = 0$.
Therefore $\ker f$ is of dimension zero, which exactly means that $f$ is an isogeny.
\end{proof}

\begin{lemma} \label{FK_is_a_field}
Let $k$ be a field and $A$ a nonzero abelian variety over $k$ on which an order of $B$ acts.
Let $K$ be the center of $\End^0 A$.
If $A$ is $B$-simple, then the composition algebra $FK$ of $F$ and $K$ in $\End^0 A$ is a field.
\end{lemma}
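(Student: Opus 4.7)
The plan is to leverage $B$-simplicity to rule out nontrivial idempotents in $FK$, while noting that $FK$ is \emph{a priori} a finite product of number fields; killing its nontrivial idempotents then forces $FK$ to be a single number field.

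First I would reduce to the case $\End^0 A \isom M_n(D)$ with $D$ a division algebra. By \cref{simple_power}, $A$ is isogenous to $X^n$ for some simple abelian variety $X$, so $\End^0 A \isom M_n(\End^0 X) = M_n(D)$. Hence $K = Z(\End^0 A) = Z(D)$ is a number field. Meanwhile $F$ embeds in $\End^0 A$ via the action of $B$, and since $K$ is central in $\End^0 A$ it commutes elementwise with $F$. Thus the subring $FK \subseteq \End^0 A$ generated by $F$ and $K$ is commutative, and is a quotient of $F \otimes_\Q K$. Since $F$ and $K$ are number fields, $F \otimes_\Q K$ is \'etale over $\Q$, hence a finite product of number fields, and so is its quotient $FK$. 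Therefore $FK$ is a field if and only if its only idempotents are $0$ and $1$.

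Next I would show $FK$ has no other idempotents using $B$-simplicity. Let $e \in FK$ be an idempotent. Since $F \subseteq Z(B)$ and $K \subseteq Z(\End^0 A)$, every element of $FK$---in particular $e$---commutes with all of $B$. Clearing denominators, pick a positive integer $m$ with $f := me \in \End A$; then $f^2 = mf$, and $f$ commutes with the acting order $O \subseteq B$. The image $A_1 := f(A) \subseteq A$ is then an $O$-stable abelian subvariety. If $e \ne 0$ then $f \ne 0$, so $A_1 \ne 0$; if $e \ne 1$ then $f$ is not surjective---otherwise $f$ would be an isogeny, making $e$ a unit in $\End^0 A$ and hence $e = 1$---so $A_1 \ne A$. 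Thus $e \notin \{0,1\}$ would exhibit a nontrivial proper $O$-stable abelian subvariety of $A$, contradicting $B$-simplicity. Hence $FK$ is a field.

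The only real subtlety is the passage from an idempotent in $\End^0 A$ to a genuine abelian subvariety stable under the order of $B$: clearing denominators, identifying $f(A)$ as an $O$-stable abelian subvariety, and ruling out the degenerate cases $A_1 = 0$ and $A_1 = A$. Each step is routine, but they must be assembled in order before $B$-simplicity can do the work.
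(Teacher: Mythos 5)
Your proof is correct, and the mechanism is the same as the paper's: $B$-simplicity forbids nonzero proper $B$-stable abelian subvarieties, which kills nontrivial $B$-equivariant idempotents. The paper gets there faster by observing that $F$ and $K$ both lie in $\End_B^0 A$, which \cref{isogeny} has already shown to be a finite-dimensional division algebra over $\Q$; a commutative subalgebra of such an algebra is automatically a field, with no need to pass through $F \otimes_\Q K$ or the $M_n(D)$ decomposition. Your route instead reproves the relevant instance of \cref{isogeny} by hand (the image of a $B$-equivariant idempotent is a $B$-stable abelian subvariety, hence $0$ or all of $A$) and combines it with the \'etale structure of $F \otimes_\Q K$ via \cref{simple_power}. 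Both are valid; but once you have noted $F, K \subseteq \End_B^0 A$, citing \cref{isogeny} lets you skip the idempotent bookkeeping entirely.
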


\begin{proof}
Both of $F$ and $K$ are contained in $\End_B^0 A$, which is a finite dimensional division algebra over $\Q$ by \cref{isogeny}, and $FK$ is commutative since $K$ is the center.
Hence $FK$ is a field.
\end{proof}

\begin{proposition} \label{dim_gt_d}
Let $k$ be a field and $A$ a nonzero abelian variety over $k$ on which an order of $B$ acts.
Then the dimension of $A$ is divisible by $d$ and not equal to $d$.
\end{proposition}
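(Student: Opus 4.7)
The plan is to establish the divisibility and non-equality claims separately.

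Divisibility by $d$ comes directly from \cref{V_l/B_l}: picking any rational prime $\ell\ne\ch k$, that lemma gives $V_\ell A$ free over $F_\ell$ of some rank $n$, and the local analysis in its proof (each $V_v$ has $F_v$-rank divisible by $2$ at split places and by $4$ at ramified ones) forces $n$ to be even, so $nd = 2\dim A$ yields $d\mid\dim A$.

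For the non-equality I would argue by contradiction. Suppose $\dim A = d$. By the divisibility just established, each $B$-simple factor of $A$ has dimension a positive multiple of $d$, so there is a unique such factor and $A$ is itself $B$-simple. By \cref{simple_power}, $A \sim X^n$ for a simple abelian variety $X$, and $n\dim X = d$. Set $D = \End^0 X$, $K = Z(D)$, $r^2 = [D:K]$, $K_0 \subset K$ the maximal totally real subfield with $e = [K_0:\Q]$, and $E = \End^0 A \cong M_n(D)$; the aim is to derive a contradiction from the embedding $F\subset B\subset E$.

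The key step is a double-centralizer computation. Since $B$ commutes with both $F$ and $K = Z(E)$, the algebra $B\otimes_F FK$ embeds as a central simple $FK$-subalgebra of $C_E(FK)$, whence $[C_E(FK):FK]\ge 4$; combined with the identity $[C_E(FK):FK] = n^2r^2/[FK:K]^2$ (from $E\otimes_K FK \cong M_{[FK:K]}(C_E(FK))$), this gives $nr \ge 2[FK:K]$. Since $F$ is totally real, $F\cap K\subset K_0$, so $[F\cap K:\Q]\le e$, hence $[FK:K] = d/[F\cap K:\Q]\ge d/e$ and $nre\ge 2d$. On the other hand, Mumford's corollary gives $rn[K:\Q]\mid 2d$: for $K$ CM (Albert Type IV, $[K:\Q]=2e$) this produces $nre\le d$, incompatible with $nre\ge 2d$, so Type IV is excluded, $K$ is totally real, $[K:\Q]=e$, and $rne = 2d$ exactly. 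Then $[F\cap K:\Q] = e$ forces $K\subset F$, $FK = F$, and a dimension count yields $C_E(F) = B$.

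Finally I would apply \cref{C_E(B)_and_C_E(F)} with $L = F$, $W = B$, $m = n$, $f = d/e$ (its hypothesis $f\mid m$ amounts to $r\mid 2$, which holds in Albert Types I, II, III) to get $B\cong M_{2/r}(D\otimes_K F)$ as $F$-algebras. Type I ($r=1$) gives $B\cong M_2(F)$, contradicting that $B$ is a division algebra. Type III ($D$ totally definite) gives $D\otimes_K F$ totally definite, contradicting $B$ totally indefinite. Type II forces $\dim X = e$ with $D$ a totally indefinite division quaternion algebra over $K$; this is the main obstacle, and I would rule it out by invoking the classical Shimura--Albert bound that a simple abelian variety whose endomorphism algebra is a totally indefinite division quaternion algebra over a totally real field of degree $e$ has dimension at least $2e$. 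An alternative is induction on $d$, with the base case $d = 1$ supplied by the classical fact that no elliptic curve has an indefinite quaternion endomorphism algebra.
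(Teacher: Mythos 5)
Your approach is essentially the paper's—reduce to $A$ $B$-simple with $A \sim X^n$, set $E = \End^0 A = M_n(D)$, $K = Z(D)$, apply the double-centralizer theorem and consult the Mumford table—but the step excluding Albert Type IV contains a genuine error. You assert $[FK:K] = d/[F\cap K:\Q]$, but in general one only has $[FK:K] \le [F:F\cap K] = d/[F\cap K:\Q]$; equality requires linear disjointness of $F$ and $K$ over $F\cap K$, which is not automatic since $F/\Q$ is not assumed Galois. Worse, even granting equality, combining with $[F\cap K:\Q]\le e$ runs in the wrong direction: it cannot yield a \emph{lower} bound on $[FK:K]$. The bound one actually has is $[FK:K]\ge [F:\Q]/[K:\Q]=d/[K:\Q]$, which with $nr\ge 2[FK:K]$ gives $nr[K:\Q]\ge 2d$. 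When $K$ is CM (so $[K:\Q]=2e$) this reads $nre\ge d$, \emph{not} $nre\ge 2d$, so your claimed incompatibility with $nre\le d$ simply does not arise; the two are consistent. The correct way to exclude Type IV—and what the paper does—is to combine $nr[K:\Q]\ge 2d$ with Mumford's divisibility $nr[K:\Q]\mid 2d$ to force $nr[K:\Q]=2d$, hence $[FK:\Q]=d$ and $FK=F$, so $K\subset F$ is totally real, which is incompatible with $K$ being CM.

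Once this is fixed, the remaining difference from the paper is a matter of economy. The paper obtains the sharper \emph{divisibility} $2[FK:K]\mid\sqrt{[E:K]}$ directly by exhibiting a quadratic extension of $FK$ inside $BK$ as a commutative subfield of the central simple $K$-algebra $E$; this yields $d[FK:F]\mid\dim A=d$ immediately, and the resulting equalities show at once that $A$ is of CM type with $K$ totally real, so the Mumford table forces $D$ to be of Type III. Your argument derives only the inequality $nr\ge 2[FK:K]$ from the double centralizer and therefore must dispose of Types I, II, and III case by case; in particular it needs an external invocation of the ``Shimura--Albert bound'' $2e\mid\dim X$ to rule out Type II, which the paper avoids. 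The final contradiction for Type III (indefinite $B$ versus definite $D\otimes_K F$) is the same in both. The divisibility part of your proof, via \cref{V_l/B_l}, matches the paper's.
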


Although the first part is shown in \cref{V_l/B_l}, in preparation for the proof of the second part, we begin with another proof of the first part.

\begin{proof}
Considering a decomposition of $A$ into a product of abelian varieties on which orders of $B$ act and which are $B$-simple,
we may assume that $A$ itself is $B$-simple.
Thus by \cref{simple_power}, $A$ is $k$-isogenous to $X^m$, where $X$ is a simple abelian variety over $k$ and $m > 0$.

Let $D = \End^0 X, E = \End^0 A = M_m D$, and $K$ be the center of $D$.
By \cref{FK_is_a_field} the composition $FK$ in $E$ is a field.
The sub algebra $BK$ of $E$ is isomorphic to $B \otimes_F FK$, in particular is also a central simple algebra over $FK$,
for, $B \otimes_F FK$ is a simple algebra and it surjects onto $BK$.
Take a quadratic extension $L$ of $FK$ contained in $BK$.
Since $L$ is commutative we have $2[FK:K] = [L:K] \mid \sqrt{[E : K]}$,
and hence $d[FK:F] \mid \dim A$ by \cite[Chapter IV, Section 19, Corollary, p182]{MumfordAV}.
It is the first part of the statement.

Next we show that $\dim A$ is greater than $d$.
Suppose the contrary, i.e., $\dim A = d$.
In this case we have $2[FK:K] = \sqrt{[E:K]}$, $[FK:F] = 1$, and that $A$ has CM.
In this case $K$ is totally real and hence $D$ is of type III by the table in \cite[Chapter IV, Section 21, p202]{MumfordAV}
(and it follows that the characteristic of $k$ must be positive, although we do not use it),
in particular $D/K$ is a quaternion.
Thus for $f = [F : K]$ we have $f = m$ and $[K:\Q] = \dim X$.
Applying \cref{C_E(B)_and_C_E(F)}, we have that $B \otimes_F C_E(B) = C_E(F) = D \otimes_K F$.
Comparing their dimensions over $F$, these isomorphisms yield that $B \isom D \otimes_K F$.
However, since $B/F$ is totally indefinite while $D/K$ is totally definite, this is a contradiction, which completes the proof.
\end{proof}

\begin{corollary} \label{B-simple}
Let $k$ be a field and $A$ an abelian variety of dimension $2d$ over $k$ on which an order of $B$ acts.
Then $A$ is $B$-simple.
\end{corollary}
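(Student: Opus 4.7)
The plan is to proceed by contradiction using the strong dimension bound of \cref{dim_gt_d}. Suppose $A$ is not $B$-simple. Then by the definition, there exists an abelian subvariety $A' \subseteq A$ with $A' \neq 0$ and $A' \neq A$, together with an order $O'$ of $B$ acting on $A'$ such that the inclusion $A' \hookrightarrow A$ is compatible with the actions of $O \cap O'$.

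Now I would apply \cref{dim_gt_d} to $A'$, which is a nonzero abelian variety on which an order of $B$ (namely $O'$) acts. The proposition asserts that $\dim A'$ is divisible by $d$ and is not equal to $d$. Since $A'$ is nonzero this means $\dim A' \geq 2d$. On the other hand $A' \subseteq A$ forces $\dim A' \leq \dim A = 2d$, so $\dim A' = 2d = \dim A$. Since $A'$ is a closed abelian subvariety of $A$ of the same dimension as $A$ (and hence the same dimension as the connected group $A$), we must have $A' = A$, contradicting the choice $A' \neq A$. Therefore no such nontrivial $A'$ exists, and $A$ is $B$-simple.

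There is no serious obstacle: the entire content is already packed into \cref{dim_gt_d}, whose conclusion ``$\dim \geq 2d$ for any nonzero abelian variety carrying an action of an order of $B$'' makes a $2d$-dimensional such $A$ automatically indecomposable in the $B$-equivariant sense. The only thing to be careful about is the interpretation of ``nontrivial'' in the definition of $B$-simplicity, which must mean $0 \neq A' \neq A$ (since otherwise $A' = A$ itself would give a trivial counterexample). With that convention, the proof is a one-line application of the preceding proposition.
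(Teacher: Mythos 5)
Your proof is correct and follows exactly the same strategy as the paper: both invoke \cref{dim_gt_d} to force any nonzero abelian subvariety carrying an action of an order of $B$ to have dimension at least $2d$, hence equal to $2d$, and therefore to coincide with $A$. The paper states this more tersely (without explicitly phrasing it as a proof by contradiction), but the mathematical content is identical.
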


\begin{proof}
If $A$ has a nonzero abelian subvariety on which an order of $B$ acts, then \cref{dim_gt_d} forces that its dimension must equal to $2d$, i.e., it is $A$ itself,
which is what we want.
\end{proof}

From now we classify the endomorphism rings of abelian varieties on which orders of $B$ act.
Before starting the classification we show a statement about the composition of $F$ and the center of the endomorphism ring.

\begin{proposition} \label{[FK:K]_divides_2}
Let $k$ be a field, $A$ a $2d$-dimensional $k$-abelian variety on which an order of $B$ acts, $K$ the center of $\End^0 A$, and let $FK$ be the composite of $F$ and $K$ in $\End^0 A$.
Then $FK$ is a field and $[FK:F] = 1$ or $2$.
Moreover it is $1$ if and only if $K$ is totally real, and it is $2$ if and only if $K$ is totally imaginary.
\end{proposition}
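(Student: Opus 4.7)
The plan is to mimic the argument at the end of the proof of \cref{dim_gt_d}, but with $\dim A = 2d$ rather than $d$, and then to translate the resulting numerical condition on $[FK:F]$ into the totally real/totally imaginary dichotomy via Albert's classification. The first step is a reduction: by \cref{B-simple}, the variety $A$ is $B$-simple, so \cref{simple_power} gives a $k$-isogeny $A \sim X^m$ for some simple abelian variety $X/k$ and some integer $m \ge 1$. Setting $D = \End^0 X$, one has $E := \End^0 A \isom M_m(D)$ with common centre $K$, and $FK$ is a field by \cref{FK_is_a_field}.

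Next I would bound $[FK:F]$ by a commutative-subfield argument. Since $B$ is central simple over $F$, the subalgebra $BK \subset E$ is a quotient of the simple algebra $B \otimes_F FK$, hence isomorphic to it, so it is a quaternion algebra over $FK$. Its maximal commutative subfields have $K$-degree $2[FK:K]$, while any commutative subfield of $E = M_m(D)$ has $K$-degree dividing $m\sqrt{[D:K]}$, so $2[FK:K] \mid m\sqrt{[D:K]}$. Multiplying by $[K:\Q]$ and using the bound $m\sqrt{[D:K]}\,[K:\Q] \mid 2\dim A = 4d$ from \cite[Chapter IV, Section 19, Corollary, p182]{MumfordAV}, this gives $2[FK:\Q] \mid 4d$, and since $[FK:\Q] = d[FK:F]$ one gets $[FK:F] \in \{1,2\}$.

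Finally I would handle the two cases. If $[FK:F] = 1$, then $K \subset F$, so $K$ is totally real. If $[FK:F] = 2$, then the chain of divisibilities above collapses to equalities, giving $m\sqrt{[D:K]}\,[K:\Q] = 2\dim A$; that is, $X$ is of CM type. By Albert's classification (the table in \cite[Chapter IV, Section 21, p202]{MumfordAV}), the centre of $\End^0 X$ for a simple abelian variety is always either totally real or a CM field, and for a simple CM abelian variety it must be a CM field; hence $K$ is a CM field and is in particular totally imaginary. The two converse implications follow formally: if $K$ is totally real then the CM case is excluded, forcing $[FK:F] = 1$; if $K$ is totally imaginary then $K \not\subset F$ (as $F$ is totally real), forcing $[FK:F] = 2$.

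The main obstacle is the CM step: one must check carefully that $[FK:F] = 2$ indeed forces the Mumford inequality to be an equality so that $X$ is of CM type, and then invoke Albert's classification over an arbitrary base field $k$ to conclude that the centre of $\End^0 X$ must be a CM field.
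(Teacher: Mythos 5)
Your proof is correct up through the bound $[FK:F] \in \{1,2\}$ and the reduction of the ``totally real'' direction to showing that $[FK:F]=2$ forces $K$ to be CM; that part matches the paper's argument step for step. The gap is exactly where you flag it: the assertion that ``for a simple CM abelian variety the centre must be a CM field'' is false over fields of positive characteristic. In Mumford's table, Type III (a totally definite quaternion algebra $D$ over a totally real centre $K$) can realize the CM equality $\sqrt{[D:K]}\,[K:\Q] = 2\dim X$ when $\mathrm{char}\,k = p$; the simplest instance is a supersingular elliptic curve over $\overline{\F}_p$, where $\End^0 X$ is a rational definite quaternion algebra with totally real centre $\Q$. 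So from $[FK:F]=2$ you can deduce that $X$ is CM, but you cannot deduce from Albert's classification alone that $K$ is totally imaginary; the Type III case remains open, and that is precisely the case you must rule out.

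The paper closes this gap with \cref{C_E(B)_and_C_E(F)}. Working in the contrapositive (assume $K$ totally real and $A$ CM, so $D$ is Type III), it sets $f = [FK:K]$, notes $f \mid m$, and applies the centralizer lemma to get $BK \otimes_{FK} C_E(BK) \isom M_{m/f}(D \otimes_K FK)$. If $[FK:F]=2$ then one computes $f = m$, so this reads $BK \isom D \otimes_K FK$; but $BK \isom B \otimes_F FK$ is totally indefinite over the totally real field $FK$, while $D \otimes_K FK$ is totally definite since $D$ is Type III. That contradiction is what actually eliminates the Type III possibility, and it is the step your proof is missing. You would need to add an argument of this shape (or some other way to use that $B$ is totally indefinite) to make your route go through.
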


\begin{proof}
At first we note that $K$ is totally real or totally imaginary since it is the center of the endomorphism ring of an abelian variety, and that $FK$ is a field by \cref{FK_is_a_field,B-simple}.
By \cref{B-simple,simple_power} we have that $A$ is $B$-simple and is $k$-isogenous to $X^m$ for a simple abelian variety $X$.
Trivially $2d = \dim A = m \dim X$.
Let $D = \End^0 X$ and $E = \End^0 A$, which is equal to $M_m(D)$.
Taking a quadratic extension $L$ of $FK$ in $BK$, we obtain that $2[FK:K] = [L:K] \mid \sqrt{[E:K]}$, and hence $2d [FK:F] \mid \sqrt{[E:K]} [K : \Q]$.
The latter divides $2 \dim A = 4d$.
It shows the first part of the statement.
It is trivial that if $[FK : F] = 1$ then $K$ is totally real.
Thus from now we assume that $K$ is totally real and show that $[FK : F] = 1$.

If $A$ has no CM, then since $\sqrt{[E:K]} [K : \Q] \ne 4d$, we have that $FK = F$.
Next assume that $A$ has CM.
Then the algebra $D$ is a totally definite quaternion algebra over $K$ by the table in \cite[Chapter IV, Section 21, p202]{MumfordAV}.
Hence $f = [FK:K]$ divides $m$.
Therefore applying \cref{C_E(B)_and_C_E(F)} to $FK/K$, $BK/FK$, and $E/K$, we obtain that
$BK \otimes_{FK} C_E(BK) = M_{m/f}(D \otimes_K FK)$.
By comparing their dimensions over $FK$ and by the fact that $BK$ is totally indefinite but $D$ is not,
this isomorphism implies that $f < m$.
Therefore $[FK:F] = 1$.
\end{proof}

First we consider the non-CM case.

\begin{proposition} \label{End:ch0:nonCM}
Let $k$ be a field and let $A$ be a non-CM $2d$-dimensional $k$-abelian variety on which an order of $B$ acts.
Then we have that $\End^0_B (A/k) = F$, $\End^0_F (A/k) = B$,
and $\End^0 (A/k)$ is a matrix algebra of a totally indefinite quaternion algebra over a subfield of $F$.
\end{proposition}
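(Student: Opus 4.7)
The plan is to pin down $\End^0 A$ via a dimension count on centralizers together with Mumford's classification of endomorphism algebras.

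By \cref{B-simple} and \cref{simple_power}, $A$ is isogenous to $X^m$ for a simple abelian variety $X$ over $k$. Put $D := \End^0 X$, $K := Z(D)$, and $E := \End^0 A \isom M_m(D)$, so $Z(E) = K$. The non-CM portion of the argument in \cref{[FK:K]_divides_2} shows $FK = F$, so $K \subseteq F$ is a totally real subfield; set $f := [F:K]$. Since $X$ is non-CM, $\sqrt{[D:K]}\,[K:\Q]$ is a proper divisor of $2\dim X$, hence at most $\dim X$; multiplying by $m$ gives $\sqrt{[E:K]}\,[K:\Q] \le \dim A = 2d$, equivalently $m^2[D:K] \le 4f^2$.

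Now the standard centralizer formula in the central simple $K$-algebra $E$, applied to the simple $K$-subalgebras $B$ (of $K$-dimension $4f$) and $F$ (of $K$-dimension $f$), yields
\begin{equation*}
\dim_F C_E(B) = \frac{m^2[D:K]}{4f^2}, \qquad \dim_F C_E(F) = \frac{m^2[D:K]}{f^2}.
\end{equation*}
The inclusion $F \subseteq C_E(B)$ makes the first quantity a positive integer, and the bound above forces it to equal $1$; hence $\End^0_B A = C_E(B) = F$. In particular $m^2[D:K] = 4f^2$, so the second quantity equals $4$, and combined with $B \subseteq C_E(F)$ and $\dim_F B = 4$ this yields $\End^0_F A = C_E(F) = B$.

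For the third assertion, note that since $K$ is totally real, Mumford's table leaves for $D$ only types I, II, III, with $[D:K] \in \{1,4,4\}$ respectively. In each case the identity $m^2[D:K] = 4f^2$ gives $f \mid m$, so \cref{C_E(B)_and_C_E(F)} applies and identifies $B = C_E(F) \isom M_{m/f}(D \otimes_K F)$. Type I would force $B \isom M_2(F)$, contradicting that $B$ is division; type III would make $D \otimes_K F$, and hence $B$, totally definite over $F$, contradicting our standing hypothesis. Only type II survives, so $D$ is a totally indefinite quaternion algebra over the subfield $K \subseteq F$, and $\End^0 A = M_m(D) = M_f(D)$ has the required shape. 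The main obstacle is this concluding case analysis; once the centralizer dimension collapses to $1$, Mumford's table does the rest.
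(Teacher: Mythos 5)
Your proof is correct and follows essentially the same route as the paper's: both reduce to a power $X^m$ of a simple abelian variety via \cref{B-simple} and \cref{simple_power}, invoke \cref{[FK:K]_divides_2} to get $K \subseteq F$ totally real, use the Mumford divisor bound in its non-CM form, apply the centralizer identity of \cref{C_E(B)_and_C_E(F)}, and close with the type I/II/III classification. The only real difference is organizational: the paper derives $\sqrt{[E:K]}=2f$ by a divisibility argument (a quadratic subextension of $F$ in $E$ forces $2f \mid m\sqrt{[D:K]}$, Mumford gives $m\sqrt{[D:K]}\mid 4f$ with non-equality) and then determines $C_E(B)=F$ and $B$ together via the case split on $\sqrt{[D:K]}$, whereas you use the double-centralizer dimension count together with $F\subseteq C_E(B)$ to force $\dim_F C_E(B)=1$ up front, so that $C_E(B)=F$, $m^2[D:K]=4f^2$, and $C_E(F)=B$ all drop out before any case-splitting — a slightly cleaner packaging of the same ingredients.
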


\begin{proof}
By \cref{B-simple,simple_power} we have that $A$ is $B$-simple and is $k$-isogenous to $X^m$ for a simple abelian variety $X$.
Trivially $2d = \dim A = m \dim X$.
Let $D = \End^0 X, E = \End^0 A$, which is equal to $M_m(D)$, $K$ the center of $D$, and $C = C_E(B)$.
Since $A$ has no CM, the center $K$ is totally real.
Thus by \cref{[FK:K]_divides_2} we have $K \subseteq F$.
Let $f = [F : K]$.
Then taking a quadratic extension of $F$ in $E$, we obtain that $2f \mid \sqrt{[E:K]} = m\sqrt{[D:K]}$.
The latter divides $4f$, and is, since now we are assuming that $A$ has no CM, not equal to $4f$ by \cite[Chapter IV, Section 19, Corollary, p182]{MumfordAV}.
Therefore $m \sqrt{[D:K]} = 2f$.
Now from the fact that $K$ is totally real it follows that $D$ is of type I, II, or III, in particular $\sqrt{[D:K]} = 1$ or $2$ by the table in \cite[Chapter IV, Section 21, p202]{MumfordAV}.
Thus $f \mid m$.
Hence we can apply \cref{C_E(B)_and_C_E(F)}, and get $B \otimes_F C = C_E(F) = M_{m/f}(D \otimes_K F)$.
Suppose that $\sqrt{[D:K]} = 1$, i.e., $m = 2f$.
In this case $B \otimes_F C = M_2(F)$.
Thus $C = F$ and $B = M_2(F)$, which is a contradiction.
Hence we have $\sqrt{[D:K]} = 2$, i.e., $m = f$.
In this case $B \otimes_F C = D \otimes_K F$.
Thus we have that $C = F$ and $B = D \otimes_K F$.
Since we are assuming that $B$ is totally indefinite over $F$,
the quaternion $D$ must be so over $K$,
i.e., $D$ is of type II.
This is the result.
\end{proof}

In the proof above, we obtain more results on the simple factor of $A$: 

\begin{corollary} \label{number_of_simple_factors:ch0:nonCM}
Assume that over $k$, the abelian variety $A$ is isogenous to $X^m$ for a simple abelian variety $X$ and for an integer $m$.
Then there exists a subfield $K$ of $F$ and there exists a quaternion algebra $D/K$ such that
$[F :K] = m$, $B = D \otimes_K F$, and $\End^0 X = D$.
\end{corollary}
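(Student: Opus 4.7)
The plan is to observe that the corollary has already been established during the proof of Proposition \ref{End:ch0:nonCM}; the proposal is thus to isolate and name the intermediate identities that appear there, after verifying that the data $(X,m)$ in the statement coincides with the data chosen in that proof.

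First, I would note that by Corollary \ref{B-simple} the variety $A$ is $B$-simple, so by Lemma \ref{simple_power} it is $k$-isogenous to a power of a simple abelian variety, and the pair $(X,m)$ is unique up to isogeny. We may therefore identify $(X,m)$ with the pair chosen at the start of the proof of Proposition \ref{End:ch0:nonCM}. In particular, put $D := \End^0 X$ and let $K$ be the center of $D$, so that $\End^0 X = D$ holds by definition.

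Next, I would replay the dimension count from that proof to pin down $K$ and $m$. Since $A$ has no CM, $K$ is totally real, so by Proposition \ref{[FK:K]_divides_2} we have $K \subseteq F$; set $f := [F:K]$. Embedding a quadratic extension of $F$ inside $E := \End^0 A = M_m(D)$ yields the divisibilities $2f \mid m\sqrt{[D:K]} \mid 4f$, and the non-CM hypothesis (via the criterion of \cite[Chapter IV, Section 19, Corollary, p182]{MumfordAV}) excludes equality with $4f$, forcing $m\sqrt{[D:K]} = 2f$. The case $\sqrt{[D:K]} = 1$ is ruled out because it would give $B \isom M_2(F)$, contradicting that $B$ is a division algebra. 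Hence $\sqrt{[D:K]} = 2$ and $m = f = [F:K]$, which is the first claimed identity.

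Finally, I would apply Lemma \ref{C_E(B)_and_C_E(F)} with centralizer $C := C_E(B)$: since $f \mid m$, the lemma gives $B \otimes_F C \isom M_{m/f}(D \otimes_K F) = D \otimes_K F$. The proof of Proposition \ref{End:ch0:nonCM} already extracts $C = F$ from this identification, so $B \isom D \otimes_K F$ as required. No genuine obstacle arises, since all of the content lives inside the parent proposition; the only point that has to be checked is the compatibility of $(X,m)$ with the $(X,m)$ chosen there, and this is delivered by $B$-simplicity together with Lemma \ref{simple_power}.
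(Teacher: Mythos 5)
The proposal is correct and takes essentially the same approach as the paper: the paper's own proof is the one-line reference ``By the proof of \cref{End:ch0:nonCM},'' and your argument faithfully reproduces the relevant steps of that proof (identifying $D=\End^0 X$, $K=Z(D)\subseteq F$, deriving $m\sqrt{[D:K]}=2f$ from the non-CM constraint, excluding $\sqrt{[D:K]}=1$ via $B$ being division, and applying \cref{C_E(B)_and_C_E(F)} to get $B\isom D\otimes_K F$). The only cosmetic remark is that the uniqueness of $(X,m)$ up to isogeny is just Poincar\'e complete reducibility rather than a consequence of $B$-simplicity per se, but this does not affect correctness.
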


\begin{proof}
By the proof of \cref{End:ch0:nonCM}.
\end{proof}

In the case of $F = \Q$, i.e., for QM-abelian surfaces, it is probably well-known to experts that
every non-CM QM-abelian surface over a field of characteristic zero is simple, but we could not find explicit proofs in the literature.
The following, which is an obvious corollary of \cref{number_of_simple_factors:ch0:nonCM}, corresponds to it for general $F$.

\begin{corollary}
Assume that $B$ does not descend to any proper subfields of $F$,
i.e., assume that there are no proper subfields $K$ of $F$
and no quaternion algebras $D$ over $K$ such that $B = D \otimes_K F$.
Then $A$ is simple over $k$ and $\End^0 A = B.$
\end{corollary}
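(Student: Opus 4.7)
The plan is to chain together the three preceding results, \cref{B-simple}, \cref{simple_power}, and \cref{number_of_simple_factors:ch0:nonCM}, and then invoke the hypothesis that $B$ does not descend to a proper subfield of $F$ to force the multiplicity $m$ to equal $1$.

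First I would note that the standing hypotheses of \cref{End:ch0:nonCM} (namely, $A$ is non-CM, $2d$-dimensional, and carries an action of an order of $B$) are in force. By \cref{B-simple}, the dimension assumption already forces $A$ to be $B$-simple. Then \cref{simple_power} yields a simple abelian variety $X$ over $k$ and a positive integer $m$ such that $A$ is $k$-isogenous to $X^m$.

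Now I would apply \cref{number_of_simple_factors:ch0:nonCM} to this isogeny decomposition: it produces a subfield $K$ of $F$ and a quaternion algebra $D$ over $K$ with $[F:K] = m$, with $B \isom D \otimes_K F$, and with $\End^0 X = D$. The hypothesis of the corollary rules out any such descent to a proper subfield, so $K$ must equal $F$. Therefore $m = [F:K] = 1$, which means $A$ is $k$-isogenous to the simple abelian variety $X$, and in particular $A$ itself is simple over $k$. Finally, $\End^0 A = \End^0 X = D = D \otimes_K F = B$, as desired.

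There is really no obstacle here beyond invoking the earlier results in the right order; the substantive work was done in the proofs of \cref{End:ch0:nonCM} and \cref{number_of_simple_factors:ch0:nonCM}, where the structure of $\End^0 A$ as a matrix algebra over a quaternion algebra descending $B$ was established. The only conceptual point worth flagging is that the assumption ``$B$ does not descend'' is used exactly once, to collapse the subfield $K \subseteq F$ to $F$ itself, and that this single collapse simultaneously gives both simplicity of $A$ and the identification $\End^0 A = B$.
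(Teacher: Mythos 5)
Your proof is correct and follows exactly the route the paper intends: it labels this an ``obvious corollary'' of \cref{number_of_simple_factors:ch0:nonCM}, and your chain (\cref{B-simple} gives $B$-simplicity, \cref{simple_power} gives $A \sim X^m$, \cref{number_of_simple_factors:ch0:nonCM} produces the descent datum $(K,D)$, and the no-descent hypothesis collapses $K = F$, hence $m = 1$) is precisely that argument made explicit.
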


\begin{corollary} \label{Aut:nonCM}
The only finite order elements of $\Aut_B A$ are $\pm 1$.
\end{corollary}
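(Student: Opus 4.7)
The plan is a one-line deduction from \cref{End:ch0:nonCM}. Since $A$ is a non-CM $2d$-dimensional abelian variety over $k$ on which an order of $B$ acts, that proposition gives $\End^0_B(A/k) = F$. Consequently
\[
\Aut_B A \subseteq \End_B(A/k)^\times \subseteq \End^0_B(A/k)^\times = F^\times.
\]
Any element of finite order in $F^\times$ is a root of unity in $F$, and since $F$ is totally real the only roots of unity it contains are $\pm 1$. Conversely $\pm 1$ clearly lie in $\Aut_B A$, so the set of finite order elements of $\Aut_B A$ is exactly $\{\pm 1\}$.

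There is no real obstacle here; the only thing to check is that the inclusion $\Aut_B A \hookrightarrow F^\times$ is legitimate, which is immediate from the definition of $\Aut_B A$ as the group of units in $\End_B(A/k)$, combined with the identification of $\End^0_B(A/k)$ with $F$ already established in the preceding proposition.
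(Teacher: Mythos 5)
Your proof is correct and follows the same route as the paper: invoke \cref{End:ch0:nonCM} to identify $\End^0_B(A/k)$ with $F$, conclude that $\Aut_B A$ sits inside $F^\times$ (the paper phrases this as being contained in an order of $F$), and then use that $F$ is totally real so $\mu(F) = \{\pm 1\}$. No differences worth noting.
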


\begin{proof}
Since $\Aut_B A$ is contained in an order of $F$ by \cref{End:ch0:nonCM}, the statement follows from $\mu(F) = \{ \pm 1 \}$.
\end{proof}

Note that, in particular, since the automorphism groups of polarized abelian varieties are finite (for example see \cite[Section 21, Theorem 5]{MumfordAV}), the automorphism group of a $*$-polarized non-CM QM-abelian variety is $\{ \pm 1 \}$ by \cref{Aut:nonCM}.

Next we classify the endomorphism rings of CM abelian varieties on which orders of $B$ act.

\begin{proposition} \label{End:ch0:CM}
Let $k$ be a field and let $A$ be a CM $2d$-dimensional $k$-abelian variety on which an order of $B$ acts.
Then we have the following possibilities:

\begin{enumerate}
\item There exists a totally imaginary quadratic extension $F'$ of $F$ such that $\End^0_B (A/k) = F'$
and $\End^0_F (A/k) = B \otimes_F F'$.
Moreover if the characteristic of $k$ is not dividing $\Delta'$, then $F'$ splits $B$,
in particular $\End^0_F (A/k) = M_2(F')$.
\item There exists a totally definite quaternion algebra $D$ over a subfield $K$ of $F$ such that
$\End^0(A/k)$ is a matrix algebra of $D$, $\End^0_F (A/k) = M_2(D \otimes_K F)$,
$\End^0_B (A/k)$ is a totally definite quaternion algebra over $F$,
and we have $\End^0_B (A/k) \otimes_F B = \End^0_F (A/k)$.
\end{enumerate}
The latter case only occurs when the characteristic of $k$ is positive.
\end{proposition}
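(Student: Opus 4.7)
The plan is to follow the pattern of \cref{End:ch0:nonCM} and split into two cases based on the center $K$ of $\End^0 X$, where $X$ is the simple factor of $A$. By \cref{B-simple} and \cref{simple_power}, $A$ is $B$-simple and $k$-isogenous to $X^m$ for some simple abelian variety $X$; set $D := \End^0 X$, $K := Z(D)$, and $E := \End^0 A = M_m(D)$, so that the CM hypothesis reads $m\sqrt{[D:K]}[K:\Q] = 2\dim A = 4d$. By \cref{[FK:K]_divides_2} the compositum $FK$ is a field with $[FK:F] \in \{1,2\}$, the values $1$ and $2$ corresponding respectively to $K$ totally real and $K$ totally imaginary.

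Suppose first that $K$ is totally imaginary, so $F' := FK$ is a totally imaginary quadratic extension of $F$. Writing $f := [FK:K]$, the CM dimension formula combined with $[FK:F] = 2$ gives $m\sqrt{[D:K]} = 2f$, whence $\dim_K E = 4f^2$ and $\dim_K BK = 4f$. Since $F = Z(B)$ centralizes $B$ and $K = Z(E)$ centralizes everything, $FK$ sits inside $C_E(B)$; the double centralizer theorem over $K$ applied to the simple subalgebra $BK$ gives $\dim_K C_E(BK) = f = \dim_K FK$, hence $C_E(BK) = FK$. Since $C_E(B) = C_E(BK)$, this yields $\End^0_B A = F'$. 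For $\End^0_F A$, double centralizer applied to the subfield $FK \subseteq E$ gives $\dim_K C_E(F) = \dim_K E/[FK:K] = 4f$, and the visible inclusion $B \otimes_F F' = BK \subseteq C_E(F)$ of algebras of the same $K$-dimension forces $\End^0_F A = B \otimes_F F'$. For the splitting statement, pick any prime $\ell \neq \ch k$. By \cref{V_l/B_l} the module $V_\ell A$ is free of rank one over $B_\ell$, and by \cref{V_l/F_l} applied to the $F'$-action it is free of rank two over $F'_\ell$; decomposing over the places $v$ of $F'$ above $\ell$, each local component is a faithful two-dimensional module over the $F'_v$-central simple algebra $B_{v|_F} \otimes_{F_{v|_F}} F'_v$, which rules out the quaternion division case and forces this algebra to be $M_2(F'_v)$. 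Thus $B \otimes_F F'$ is split at every finite place of $F'$ not above $\ch k$; at places above $\ch k$ the assumption $\ch k \nmid \Delta'$ splits $B$ itself locally, hence also $B \otimes_F F'$; at infinite places, $F'$ is totally imaginary so all are complex and splitting is automatic. The Hasse principle for the Brauer group of the number field $F'$ then gives $B \otimes_F F' \isom M_2(F')$.

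Suppose now that $K$ is totally real. Since $X$ has CM with totally real center, the classification table of endomorphism algebras of simple abelian varieties places $X$ in Type III, so $D$ is a totally definite quaternion algebra over $K$; in particular $\ch k > 0$. The equality $[FK:F] = 1$ gives $K \subseteq F$, and $\sqrt{[D:K]} = 2$ together with the dimension formula yields $m = 2[F:K]$. Applying \cref{C_E(B)_and_C_E(F)} with $L = F$ and $W = B$ gives $B \otimes_F \End^0_B A = \End^0_F A = M_2(D \otimes_K F)$, so a dimension count shows that $\End^0_B A$ is a four-dimensional $F$-central simple algebra, i.e., a quaternion algebra over $F$. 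Evaluating at a real place $v$ of $F$: the totally indefinite hypothesis gives $B \otimes_F F_v \isom M_2(\mathbb{R})$, while the totally definite hypothesis on $D/K$ gives $(D \otimes_K F) \otimes_F F_v \isom \mathbb{H}$; hence $\End^0_B A \otimes_F F_v$ must be $\mathbb{H}$ in order that its tensor product with $M_2(\mathbb{R})$ be $M_2(\mathbb{H})$. Therefore $\End^0_B A$ is a totally definite quaternion algebra over $F$, and the descriptions $\End^0 A = M_{2[F:K]}(D)$ and $\End^0_B A \otimes_F B = \End^0_F A$ follow immediately.

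The main obstacle is the splitting statement in Case 1 under the hypothesis $\ch k \nmid \Delta'$, since it requires assembling local Brauer data at every place of $F'$ and invoking the Hasse principle; the rest is essentially dimension-counting via the centralizer formula of \cref{C_E(B)_and_C_E(F)} and case analysis against the classification table of endomorphism algebras of simple CM abelian varieties.
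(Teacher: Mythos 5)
Your proof is correct and follows the paper's overall strategy (reduce to $X^m$ via \cref{B-simple} and \cref{simple_power}, split on whether $K$ is totally real or totally imaginary via \cref{[FK:K]_divides_2}, use Albert's classification table for the Type III case, and close the splitting statement with a local analysis of $V_\ell A$ together with the Hasse principle for the Brauer group). The one genuine difference is in the totally imaginary case: you compute $C_E(BK) = FK$ and $C_E(FK) = BK$ directly by the double centralizer theorem over $K$ (a pure dimension count, $\dim_K E = 4f^2$, $\dim_K BK = 4f$, etc.), whereas the paper establishes $C = FK$ by identifying $BK \otimes \Q_\ell$ with $\End_{FK_\ell} V_\ell A = M_2(FK_\ell)$ and "taking centers," and separately derives $[C_E(FK):FK] = 4$ from \cite[Th\'eor\`eme 9]{SerreEx}. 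Your route is arguably more transparent: you reserve the $V_\ell$ structure purely for the Brauer-class computation and do the algebraic bookkeeping by dimensions, while the paper lets the $V_\ell$ computation do double duty. You are also more explicit than the paper in two places it glosses over: the real-place check in the Type III case that $\End_B^0 A$ is totally definite (the paper just asserts "in particular $C$ is a totally definite quaternion algebra"), and the archimedean places in the Hasse-principle step (automatic since $F'$ is totally imaginary).
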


\begin{proof}
By \cref{simple_power} we have that $A$ is $k$-isogenous to $X^m$ for a simple abelian variety $X$.
Trivially $2d = \dim A = m \dim X$.
Let $D = \End^0 X, E = \End^0 A$, which is equal to $M_m(D)$, $K$ the center of $D$, and $C = C_E(B)$.
By \cref{[FK:K]_divides_2}, $FK$ is a field, $K$ is totally real if and only if $[FK:F] = 1$,
and $K$ is totally imaginary if and only if $[FK:F] = 2$.

We divide into two cases.
First we consider the case that $K$ is totally real, i.e., $[FK:F] = 1$.
Then since $A$ has CM, the algebra $D$ is of type III and the characteristic of $k$ must be positive by the table in \cite[Chapter IV, Section 21, p202]{MumfordAV}.
Let $f = [F : K]$.
Then since $\sqrt{[E:K]} = 4f$ we have $m = 2f$.
Hence by \cref{C_E(B)_and_C_E(F)}, we obtain that $C_E(F) = M_2(D \otimes_K F)$ and $B \otimes_F C = M_2(D \otimes_K F)$, and in particular $C$ is a totally definite quaternion algebra.

Secondly we consider the case that $K$ is totally imaginary, i.e., $[FK:F] = 2$.
We show that $C=FK$.
For a rational prime $\ell$ not equal to $\ch k$, we can embeds $BK \otimes \Q_\ell$ into $ \End_{FK \otimes \Q_\ell} V_\ell A$.
By \cref{V_l/F_l}, the $FK \otimes \Q_\ell$-module $V_\ell A$ is free of rank two.
Thus comparing their dimensions over $\Q_\ell$, we have that $BK \otimes \Q_\ell = \End_{FK \otimes \Q_\ell} V_\ell A = M_2(FK \otimes \Q_\ell)$, and hence taking their centers, we obtain that $C=FK$.

Finally we show, in this case, that $C_E(F) = BK$ and that if $\Delta'$ is invertible on $k$ then $FK$ splits $B$.
Take a quadratic extension $L$ of $FK$ in $BK$.
Then $4d = 2[FK:\Q] = [L:\Q]$ divides $\sqrt{[E:K]}[K:\Q]$, which is equal to $2 \dim A = 4d$ since now $A$ has CM,
and hence $2[FK:K] = \sqrt{[E:K]}$.
By \cite[Th\'eor\`eme 9]{SerreEx} we have that $[C_E(FK):FK] = 4$.
Since $BK \subseteq C_E(FK)$ and since $BK$ is a quaternion algebra over $FK$, it implies that $C_E(F) = BK$.
We have shown that $(BK)_\ell = M_2((FK)_\ell)$ for $\ell \ne \ch k$.
Therefore if the characteristic of $k$ does not divide $\Delta'$, then $BK$ splits at every place, i.e., $BK = M_2(FK)$.
\end{proof}

If the base field $k$ is finite, then $A$ always has CM.
Moreover in the case that $\End^0_B (A/k)$ is a quaternion algebra, using Tate's theory \cite{Tate}, we can compute its invariant at each place.
See \cite[Proposition 5.2]{Milne}.

Throughout this paper let $n_{\lcm} = \lcm \{ m : [F(\zeta_m) : F] \le 2 \}$, where $\zeta_m$ is a primitive $m$-th roof of $1$.
It divides an easy-to-compute integer $\lcm\{ m : \phi(m) \mid 2d \}$, which depends only on $d$, but not on $F$.
For example, if $d = 1$ then $n_{\lcm} = 12$, and if $d = 2$ then $n_{\lcm}$ divides $120$.
More precisely, $n_{\lcm}$ is $60$ if $F = \Q(\sqrt{5})$, is $24$ if $F = \Q(\sqrt{2})$, and is $12$ if $F$ is an other real quadratic field.
We note some elementary properties about $n_{\lcm}$.
First since $[\Q(\zeta_4) : \Q] = [\Q(\zeta_3) : \Q] = 2$, the integer $n_{\lcm}$ is divisible by $12$.
Next for a prime divisor $p$ of $n_{\lcm}$, we have that $p-1$ divides $2d$, and for such $p$, the $p$-adic value $v_p(n_{\lcm})$ of $n_{\lcm}$ is less than or equal to $v_p(2d) + 1$.

\begin{corollary} \label{Aut:CM}
Let $H$ be a finite subgroup of $\Aut_B A$.
Then its order divides $20n_{\lcm}$, and for every element $f \in H$
we have that $[F(\zeta_m) : F] \le 2$, where $m$ is the order of $f$ and $\zeta_m$ is a primitive $m$-th root of $1$.
Moreover if the characteristic of $k$ is zero then $H$ is cyclic.
\end{corollary}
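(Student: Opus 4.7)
The plan is to combine \cref{End:ch0:CM} with the classical classification of finite subgroups of $\mathrm{SO}_3(\R)$. Since $H$ is contained in $(\End^0_B A)^\times$, \cref{End:ch0:CM} forces us into one of two cases. In case (a), $\End^0_B A = F'$ is a totally imaginary quadratic extension of $F$, so $H$ is a finite subgroup of a field and therefore cyclic; any $f \in H$ of order $m$ gives $\zeta_m \in F'$, whence $[F(\zeta_m):F] \le 2$ and $m \mid n_{\lcm}$, so $|H| \mid n_{\lcm}$. Since the quaternionic case (b) does not arise in characteristic zero, this already secures the final cyclicity claim.

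In case (b), $\End^0_B A$ is a totally definite quaternion algebra $Q$ over $F$. Every commutative subfield of $Q$ containing $F$ has degree at most $2$ over $F$, so any $f \in H$ of order $m$ satisfies $[F(\zeta_m):F] \le 2$, and hence $m \mid n_{\lcm}$. For the order bound I will set $\bar H = H / (H \cap \{\pm 1\})$, using that $F$ is totally real so $H \cap F^\times \subseteq \{\pm 1\}$, and exploit the conjugation action of $Q^\times$ on the three-dimensional $F$-vector space $Q_0$ of reduced-trace-zero elements. Since $Q$ is totally definite, the reduced norm restricts to a positive definite quadratic form on $Q_0 \otimes_F F_v$ at every archimedean place $v$, so conjugation yields an injection $Q^\times/F^\times \hookrightarrow \mathrm{SO}(Q_0)$; composing with any real embedding of $F$ gives $\bar H \hookrightarrow \mathrm{SO}(Q_0)(F) \hookrightarrow \mathrm{SO}_3(\R)$.

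By the classical classification, finite subgroups of $\mathrm{SO}_3(\R)$ are cyclic $C_n$, dihedral $D_n$, or one of the exceptional groups $A_4$, $S_4$, $A_5$, so $|\bar H| \in \{n, 2n, 12, 24, 60\}$. In the cyclic and dihedral cases, the fact that every element of $H$ has order dividing $n_{\lcm}$ forces $n \mid n_{\lcm}$, giving $|H| \mid 4 n_{\lcm} \mid 20 n_{\lcm}$. In the three exceptional cases we have $|H| \le 24$, $48$, or $120$; using $12 \mid n_{\lcm}$ in general and that the presence of $A_5$ as a subquotient forces an element of order $5$ (or $10$) in $H$ and hence $5 \mid n_{\lcm}$, each of these bounds divides $20 n_{\lcm}$. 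The main obstacle is setting up the embedding $\bar H \hookrightarrow \mathrm{SO}_3(\R)$ via the totally definite conjugation representation; once that is in place, the order bound reduces to a straightforward case-by-case verification, and the first two claims are immediate from \cref{End:ch0:CM}.
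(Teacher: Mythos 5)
Your proof is correct, and it takes a genuinely different (more self-contained) route than the paper. The paper disposes of the quaternionic case by citing Vign\'eras, \emph{Arithm\'etique des alg\`ebres de quaternions}, Chapitre I, Th\'eor\`eme 3.7, which asserts directly that a finite subgroup of the unit group of a quaternion algebra over a number field is cyclic, dihedral (binary dihedral), or of order $24$, $48$, or $120$; combined with $12 \mid n_{\lcm}$ this gives $|H| \mid 20 n_{\lcm}$ immediately. You instead reprove that classification from scratch by observing that in the totally definite case conjugation embeds $\bar H = H / (H \cap \{\pm 1\})$ into $\mathrm{SO}(Q_0 \otimes_{F,v} \R) \cong \mathrm{SO}_3(\R)$ for any archimedean place $v$ (using that $\Nrd|_{Q_0}$ is positive definite there, the kernel of the conjugation representation is the center $F^\times$, and $H \cap F^\times \subseteq \{\pm 1\}$ since $F$ is totally real), then invoking the classical classification of finite subgroups of $\mathrm{SO}_3(\R)$ and lifting back by a central extension of order at most $2$. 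The paper's route is shorter; yours makes the geometric content of Vign\'eras's theorem explicit and avoids the citation. Your aside about $A_5$ forcing $5 \mid n_{\lcm}$ is correct but unnecessary for the divisibility $120 \mid 20 n_{\lcm}$, which already follows from $6 \mid n_{\lcm}$. The arguments for the other two claims (degree bound on $F(\zeta_m)/F$ via commutative subfields of the quaternion algebra, cyclicity in characteristic zero via \cref{End:ch0:CM}) coincide with the paper's.
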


\begin{proof}
By the proposition, $\End_B^0 A$ is either an imaginary quadratic extension of $F$
or a totally definite quaternion algebra over $F$,
and the latter case only occurs when the characteristic of $k$ is positive.
In the former case the statement is trivial.
In the latter case, let $f \in H$.
Then since the ring $F[f]$ is commutative and since $f$ has finite order, the order $m$ of $f$ satisfies $[F(\zeta_m) : F] \le 2$.
Moreover by \cite[Chapitre 1, Theoreme 3.7]{Vigneras}, the possibility of our group $H$ is:
a cyclic group, a dihedral group, or groups of order $24, 48$, or $120$.
Therefore since $12 \mid n_{\lcm}$ the order of $H$ divides $20n_{\lcm}$.
\end{proof}

\begin{remark} \label{descent}
By \cref{Aut:nonCM,Aut:CM}, in particular, we have, for a field $k$ of characteristic zero, for an open compact subgroup $K$ of $G(\hat{\Z})$, and for an object $X \in \mathscr{M}_K^B(k)$, that $\Aut X$ is commutative.

Let $k$ be a field of characteristic zero, $K$ an open compact subgroup of $G(\hat{\Z})$, $x \in M_K^B(k)$ a rational point, and $X \in \mathscr{M}_K^B(L)$ a model of $x$ over a Galois extension $L$ of $k$.
Assume for every $\sigma \in \Gal(L/k)$ that there exists an isomorphism $\lambda_\sigma \colon X \to X^\sigma$ over $L$.
(For example $L = \kbar$.)
Using them, we define an action of $\Gal(L/k)$ on $\Aut (X/L)$ from right by $(f, \sigma) \mapsto \lambda_\sigma^{-1} f^\sigma \lambda_\sigma$.
By the commutativity of $\Aut X$ this action is well-defined and is independent of the choice of isomorphisms $\lambda_\sigma$ up to isomorphism.
Let $\operatorname{ob}(X, L/k) \in H^2(\Gal(L/k), \Aut (X/L))$, which we call the obstruction to descent of $X$ from $L$ to $k$, be the cohomology class of the $2$-cocycle $(\lambda_\sigma^{-1} (\lambda_\tau^\sigma)^{-1} \lambda_{\tau \sigma})_{\sigma, \tau}$.
We can show that $\operatorname{ob}(X, L/k)$ is independent of the choice of isomorphisms $\lambda_\sigma$.
Also we can show that the object $X$ has a model over $k$ if and only if $\operatorname{ob}(X, L/k)$ is trivial.
\end{remark}

Let $k$ be a field, $A$ a $k$-abelian variety of dimension $2d$ on which an order of $B$ acts, and let $\ell \ne \ch k$ be a prime.
Fixing an isomorphism $\alpha$ between $V_\ell A$ and $B_\ell$ as $B_\ell$-modules by \cref{V_l/B_l}, we obtain an isomorphism $\theta \colon \End_B V_\ell A \to B_\ell^\opp$.
Using this isomorphism, for $f \in \End_B V_\ell A$,
we define the reduced trace and reduce norm of $f$ by
$\Trd (\theta(f))$ and by $\Nrd (\theta(f))$ respectively.
These definitions are independent of the choice of $\alpha$, thus we simply denote them by $\Trd f$ and by $\Nrd f$ respectively.
Note that if the order acting on $A$ is $O_B$ and if $f \in \End_{O_B} T_\ell A$, then $\Trd f$ and $\Nrd f$ are in $ O_{F,\ell}$.

For $f \in \End^0_B A$, the values $\Nrd V_\ell f$ and $\Trd V_\ell f$ a priori lie in $F_\ell$, however, we can show that these lie in $F$.

\begin{proposition} \label{Nrd_lies_in_F}
Let $A$ be an abelian variety of dimension $2d$ over a field $k$ on which an order of $B$ acts,
$f \in \End^0_B A$,
and let $\ell \ne \ch k$.
Then the dimension of $F(f)$ over $F$ is at most $2$.
Moreover $\Trd V_\ell f = 2f$ or $\tr_{F(f)/F}(f)$ and $\Nrd V_\ell f = f^2$ or $N_{F(f)/F}(f)$ if $\dim_F F(f) = 1$ or $2$ respectively.
In particular these two values lie in $F$ and are independent of $\ell$, $f$ is a root of a polynomial $T^2 - \Trd(V_\ell f) T + \Nrd(V_\ell f)$, and the characteristic polynomial of $f$ is $N_{F/\Q}(T^2 - \Trd(V_\ell f) T + \Nrd(V_\ell f))^2$.
\end{proposition}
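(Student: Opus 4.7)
The approach is to transport the question to the quaternion algebra $B_\ell^\opp$ via an isomorphism of $V_\ell A$ with $B_\ell$. Since $A$ has dimension $2d$ and $\ell\neq\ch k$, \cref{V_l/B_l} gives that $V_\ell A$ is free of rank one over $B_\ell$; fixing such a trivialisation $\alpha$ identifies $\End_{B_\ell}(V_\ell A)$ with $B_\ell^\opp$ acting by right multiplication, which is the isomorphism $\theta$ appearing in the paragraph preceding the statement. The composition $\theta\circ V_\ell\colon \End^0_B(A)\to B_\ell^\opp$ is then an injective homomorphism of $F_\ell$-algebras (injective because the Tate module is faithful on endomorphism algebras), and it sends the copy of $F$ in $\End^0_B(A)$ into the centre of $B_\ell^\opp$.

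For the bound $[F(f):F]\le 2$, I would invoke \cref{End:ch0:nonCM} and \cref{End:ch0:CM}: $\End^0_B(A)$ is either $F$, an imaginary quadratic extension of $F$, or a totally definite quaternion algebra over $F$. Only the quaternion case requires any comment, and there any commutative $F$-subalgebra is contained in a maximal commutative subfield, hence has $F$-dimension at most two. Next, $\theta(V_\ell f)\in B_\ell^\opp$ lies in the commutative $F_\ell$-subalgebra $F(f)\otimes_F F_\ell$. When $F(f)=F$, this element is a central scalar, giving $\Trd(V_\ell f)=2f$ and $\Nrd(V_\ell f)=f^2$. When $[F(f):F]=2$, the image sits inside a quadratic subfield of the quaternion algebra $B_\ell^\opp$, so its reduced trace and reduced norm agree with $\tr_{F(f)/F}(f)$ and $N_{F(f)/F}(f)$ respectively, by the standard fact that the reduced trace and norm of a quaternion algebra restrict to the field trace and norm on any maximal commutative subfield. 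In either case these quantities lie in $F$ and are visibly independent of $\ell$. The identity $\theta(V_\ell f)^2-\Trd(V_\ell f)\theta(V_\ell f)+\Nrd(V_\ell f)=0$ in $B_\ell^\opp$ then pulls back through the injective map $\theta\circ V_\ell$ to the corresponding relation for $f$ in $\End^0_B(A)$.

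Finally, for the characteristic polynomial of $f$ on $V_\ell A$ as a $\Q_\ell$-vector space I would use that the right regular representation of a quaternion algebra on itself (regarded as a rank-four module over its centre) has characteristic polynomial equal to the square of the reduced characteristic polynomial, a fact one checks easily after splitting. Applied here, $V_\ell f$ acts on $V_\ell A\cong B_\ell$ as an $F_\ell$-module with characteristic polynomial $(T^2-\Trd(V_\ell f)T+\Nrd(V_\ell f))^2\in F_\ell[T]$. Restricting scalars from $F_\ell$ to $\Q_\ell$ replaces this polynomial by $N_{F_\ell/\Q_\ell}$ applied coefficientwise, and since the coefficients already lie in $F$ this norm coincides with $N_{F/\Q}$, yielding the announced formula $N_{F/\Q}(T^2-\Trd(V_\ell f)T+\Nrd(V_\ell f))^2$. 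The main obstacle is purely bookkeeping: tracking the opposite-algebra convention in $\theta$, and confirming that the reduced trace and reduced norm on a maximal commutative subfield really equal the ordinary field trace and norm, both of which are standard facts about central simple algebras.
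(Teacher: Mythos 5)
Your proof is correct and follows the paper's overall strategy: transport $V_\ell f$ into $B_\ell^\opp$ via $\theta$ and read off reduced trace, reduced norm, and the characteristic polynomial from the quaternion structure. The one genuine difference is the step bounding $[F(f):F]\le 2$. You appeal to \cref{End:ch0:nonCM} and \cref{End:ch0:CM} (classifying $\End^0_B A$ as $F$, an imaginary quadratic extension of $F$, or a quaternion algebra over $F$) and then observe that commutative $F$-subalgebras of those have rank at most $2$. The paper gets the same bound more economically: $F(f)\otimes\Q_\ell$ injects into the commutative subalgebra $F_\ell(V_\ell f)$ of $\End_{B_\ell}V_\ell A \isom B_\ell^\opp$, and commutative $F_\ell$-subalgebras of a rank-$4$ Azumaya $F_\ell$-algebra have rank at most $2$, so $[F(f):F]\le 2$ follows directly from \cref{V_l/B_l} alone. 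Both arguments are valid (neither cited classification depends on the present proposition), but the paper's route is more self-contained, which matters if you intend this proposition to be usable before the endomorphism-ring classification.

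One wording slip worth fixing: you say restriction of scalars from $F_\ell$ to $\Q_\ell$ replaces the degree-$4$ characteristic polynomial $P(T)\in F_\ell[T]$ by "$N_{F_\ell/\Q_\ell}$ applied coefficientwise." Taken literally that produces a degree-$4$ polynomial in $\Q_\ell[T]$, whereas the $\Q_\ell$-characteristic polynomial must have degree $4d$. What is actually meant — and what you display at the end — is the polynomial norm $\prod_{\sigma\colon F\hookrightarrow\Qbar} P^\sigma(T) = N_{F/\Q}(T^2-\Trd(V_\ell f)T+\Nrd(V_\ell f))^2$, i.e.\ the norm of the whole polynomial, not of each coefficient. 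The final formula you state is correct; only the phrase describing the operation needs adjusting.
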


\begin{proof}
If $f \in F$ then it is trivial.
Thus we may assume the contrary.
Taking $V_\ell$, the algebra $F(f) \otimes \Q_\ell$ injects into $F_\ell(V_\ell f)$ since $\ell \ne \ch k$.
Since $\End_{B_\ell} V_\ell A \isom B_\ell^\opp$ by \cref{V_l/B_l}, and since $F(f) \otimes \Q_\ell$ is commutative and free of rank at least $2$ over $F_\ell$, we have that $F(f) \otimes \Q_\ell = F_\ell(V_\ell f)$, and in particular $F(f)$ is quadratic over $F$.
Thus $\Trd(V_\ell f) = \tr_{F_\ell(V_\ell f)/F_\ell}(V_\ell f) = \tr_{F(f)/F}(f)$, and the same for the reduced norm.
(For example see \cite[Proposition 2.6.3]{GS}.)
Finally since the characteristic polynomial of $f$ is $N_{B_\ell/\Q_\ell}(T - V_\ell f)$, and since $N_{B/F} = \Nrd^2$, we have the results.
\end{proof}

Applying this proposition to the relative Frobenius endomorphisms of abelian varieties over finite fields on which orders of $B$ act, we obtain a statement about the Weil numbers and the characteristic polynomials of them.

\begin{proposition} \label{Weil_number}
Let $k$ be the finite field $\F_q$ of characteristic $p$, $A$ an abelian variety of dimension $2d$ over $k$ on which an order of $B$ acts, and $\pi$ be the relative Frobenius endomorphism of $A/k$.
Then there exists a unique algebraic integer $b \in O_F$ such that $|b|_v \le 2 \sqrt{q}$ for every infinite place $v$ of $F$
and that $\pi^2 - b \pi + q = 0$.
Moreover the characteristic polynomial of $A/k$ equals to $N_{F/\Q}(T^2 - b T + q)^2$.
\end{proposition}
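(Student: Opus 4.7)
The plan is to apply \cref{Nrd_lies_in_F} directly to the Frobenius $\pi$ and then pin down the constant term of the resulting quadratic relation using Weil's theorem together with the structure result \cref{End:ch0:CM}.

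First, because $\pi$ is the $q$-Frobenius of $A/\F_q$, it commutes with every $\F_q$-endomorphism of $A$, and in particular with the action of the order on $A$. Hence $\pi \in \End_B A \subseteq \End^0_B A$, and \cref{Nrd_lies_in_F} applies. Setting $b := \Trd V_\ell \pi$ and $c := \Nrd V_\ell \pi$, both elements lie in $F$, are independent of $\ell$, and satisfy $\pi^2 - b\pi + c = 0$ in $\End^0 A$; moreover, the characteristic polynomial of $\pi$ on $A$ equals $N_{F/\Q}(T^2 - bT + c)^2$.

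Next, for any real embedding $\tau \colon F \hookrightarrow \R$ the real quadratic $T^2 - \tau(b)T + \tau(c)$ divides this characteristic polynomial, so both of its roots occur among the eigenvalues of $V_\ell \pi$. By the Weil bound for abelian varieties over finite fields, those eigenvalues have complex absolute value $\sqrt{q}$. Summing the two roots shows $|\tau(b)| \le 2\sqrt{q}$, giving the desired archimedean bound on $b$ at every infinite place of $F$; taking their product shows $|\tau(c)| = q$, so $\tau(c) \in \{q, -q\}$.

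The main obstacle is to exclude $\tau(c) = -q$, which would correspond to the degenerate scenario where the quadratic splits into two real roots of opposite signs. To handle this I will invoke \cref{End:ch0:CM}: over $\F_q$ the variety $A$ has CM, so $\End^0_B A$ is either a totally imaginary quadratic extension $F'/F$ or a totally definite quaternion algebra $D'/F$. In either case $c$ is a norm (respectively reduced norm) to $F$ of the element $\pi$ of such a ``positive'' algebra — in the first case $\tau(c)$ equals $|\sigma(\pi)|^2$ for either extension $\sigma$ of $\tau$ to $F'$, and in the second case $D' \otimes_{F,\tau} \R \isom \mathbb{H}$ so the reduced norm is visibly nonnegative — and is therefore totally nonnegative in $F$. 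Combined with $\tau(c) \in \{\pm q\}$ this forces $\tau(c) = q$ for every real $\tau$, and hence $c = q$, giving $\pi^2 - b\pi + q = 0$. Uniqueness of $b$ is then immediate: if $b' \in F$ also satisfies $\pi^2 - b'\pi + q = 0$, subtracting yields $(b - b')\pi = 0$, and since $\pi$ is an isogeny it is invertible in $\End^0 A$, forcing $b = b'$. The formula for the characteristic polynomial of $A/k$ is the one already supplied by \cref{Nrd_lies_in_F}.
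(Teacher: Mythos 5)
Your proof is correct, and its backbone—apply \cref{Nrd_lies_in_F} to $\pi$ and then use the Weil bounds—is exactly the paper's. Where you diverge is that you spell out why $\Nrd V_\ell \pi$ equals $+q$ rather than $-q$: the Weil bound alone only gives $\tau(c) \in \{q, -q\}$ at each infinite place, since a priori the quadratic $T^2 - \tau(b)T + \tau(c)$ could have two real roots $\pm\sqrt{q}$ of opposite sign. You close this off by invoking \cref{End:ch0:CM}: $\End^0_B A$ is either a totally imaginary quadratic extension of $F$ or a totally definite quaternion over $F$, and in both cases the norm (respectively reduced norm) of $\pi$ down to $F$ is totally nonnegative, forcing $\tau(c) = q$. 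The paper compresses this entire step into the single phrase ``these are just the Weil conjectures since $F$ is totally real,'' which implicitly presupposes that $F(\pi)$ cannot be a \emph{real} quadratic extension of $F$ (the scenario $\pi = \pm\sqrt{q} \notin F$); your version makes that exclusion explicit. The trade-off is small: the paper is terser, while your version is more self-contained and makes the dependence on the structure theorem for $\End^0_B A$ visible. One minor point you (like the paper) leave implicit is that $b$ is integral, i.e.\ lies in $O_F$ and not just $F$: this follows from $\pi$ being an actual endomorphism, so $\Trd V_\ell \pi \in O_{F,\ell}$ for every $\ell \neq p$, and hence $b \in F \cap \bigcap_\ell O_{F,\ell} = O_F$.
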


\begin{proof}
By \cref{Nrd_lies_in_F}, only what we need to show is, for $\ell \ne p$ and for every infinite place $v$ of $F$, that $| \Trd V_\ell \pi |_v \le 2 \sqrt{q}$ and $\Nrd V_\ell \pi = q$.
These are just the Weil conjectures since $F$ is totally real.
\end{proof}

As every elliptic curve has only one principal polarization,
it is well-known that every QM-abelian surface has only one principal polarization which commutes with the QM-structure.
A similar result also holds for general QM-abelian schemes,
and it is stated in a slightly different form in \cite[Lemma 1.1]{Milne}.

\begin{proposition} \label{unique_polarization}
Let $S$ be a scheme over $\Z[1/\Delta]$,
$A$ a QM-abelian variety over $S$, and $\lambda$ and $\mu$ be $*$-polarizations.
Then Zariski locally over $S$ there exists $c \in F$ such that $\mu = c \lambda$.
\end{proposition}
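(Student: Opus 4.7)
The plan is to introduce $f := \lambda^{-1}\mu$ as an element of the isogeny algebra $\End^0(A/S)$ (well-defined since polarizations are isogenies) and to show that it lies in the tautological subfield $F \subseteq \End^0_B(A/S)$ Zariski-locally on $S$; the conclusion $\mu = c\lambda$ for $c = f \in F$ is then immediate. First I would record two structural properties of $f$. Combining the compatibility $\lambda\circ i(b^*) = i(b)^\vee\circ \lambda$ with its analogue for $\mu$ gives $f\circ i(b) = i(b)\circ f$ for every $b\in O_B$, so $f\in \End^0_B(A/S)$. Moreover, the symmetry of polarizations under the canonical identification $A\cong A^{\vee\vee}$ shows that $f$ is fixed by the Rosati involution $\dagger$ associated to $\lambda$, which restricts to $*$ on $B$ and hence to the identity on the center $F$.

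Next I would reduce the question to geometric fibers. After shrinking $S$ we may assume it is connected. For any geometric point $\bar s\to S$, rigidity of homomorphisms between abelian schemes makes the specialization map $\End^0_B(A/S)\hookrightarrow \End^0_B(A_{\bar s})$ injective, and compatible with the tautological embeddings of $F$ on both sides. Thus if we can exhibit $c\in F$ with $f_{\bar s}=c$ inside $\End^0_B(A_{\bar s})$, then $f-c\in \End^0_B(A/S)$ vanishes at $\bar s$ and hence is zero, so $f=c$ globally. It therefore suffices to show $f_{\bar s}\in F\subseteq \End^0_B(A_{\bar s})$ for one geometric point.

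By \cref{End:ch0:nonCM,End:ch0:CM}, the algebra $E := \End^0_B(A_{\bar s})$ is one of: $F$ itself, a totally imaginary quadratic extension $F'/F$, or a totally definite quaternion algebra over $F$ (the last only in positive residue characteristic). The case $E=F$ is trivial. If $E=F'$, then $\dagger|_{F'}$ is a positive involution fixing $F$, so it must be the nontrivial element of $\Gal(F'/F)$, whose fixed subring is exactly $F$. In the quaternion case, a Skolem--Noether argument writes any first-kind involution as $x\mapsto u\bar x u^{-1}$ with $\bar u=\pm u$; positivity combined with total definiteness rules out the anti-fixed choice $\bar u=-u$ (after reducing to a real place, pure quaternions yield negative reduced trace), so $\dagger$ coincides with the standard involution, again with fixed subring $F$. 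In every case $f_{\bar s}\in F$, completing the argument.

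The main obstacle I anticipate is the quaternion case in positive characteristic: isolating the uniqueness of positive first-kind involutions on a totally definite quaternion algebra over a totally real field requires a careful archimedean-place analysis beyond what the classification statements of \cref{End:ch0:CM} supply directly. Once that uniqueness is in place, the rest is a routine assembly of the Rosati/specialization machinery.
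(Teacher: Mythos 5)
Your proof is correct but takes a genuinely different route from the paper's. The paper works at the level of Tate modules: over a geometric fiber it chooses a prime $\ell$ invertible in the residue field, uses the freeness of $T_\ell A$ over $O_{B,\ell}$ (\cref{T_l/O_B_l}) and (the proof of) Milne's Lemma~1.1 to produce $c \in F_\ell$ with $\psi_\mu(-,-) = \psi_\lambda(-,c\,-)$, and then descends $c$ from $F_\ell$ to $F$ via \cref{Nrd_lies_in_F}, identifying $2c$ with a reduced trace. Your approach instead observes that $f = \lambda^{-1}\mu$ lies in $\End_B^0(A_{\bar s})$ and is fixed by the Rosati involution of $\lambda$, then invokes the classification \cref{End:ch0:nonCM,End:ch0:CM} of $\End_B^0(A_{\bar s})$ and argues that a positive involution extending the identity on $F$ has fixed ring exactly $F$ in each case. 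Both then finish with the rigidity lemma (and you should, as the paper does, clear denominators so that $m\mu$ and $mc\lambda$ are genuine morphisms before applying rigidity). The paper's route is more hands-on and self-contained within the resources it has already assembled; your route is conceptually cleaner ("the Rosati-fixed part of the $B$-centralizer is $F$") but, as you correctly flag, needs the additional standard fact that the standard involution is the unique positive involution of the first kind on a totally definite quaternion algebra over a totally real field, which is not supplied by \cref{End:ch0:CM} and would need to be proved by the archimedean-place computation you sketch (a pure quaternion anticommuting with $u$ is $\dagger$-fixed and has negative $\tr(x x^\dagger)$), or cited from the literature on Albert's classification. With that ingredient filled in, the argument is sound.
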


\begin{proof}
First assume that $S$ is the spectrum of a field $k$.
Let $\ell$ be a prime not dividing $\Delta'$ and not equal to $\ch k$.
Then $\lambda$ and $\mu$ give nondegenerate alternating $\Q_\ell$-bilinear forms
$\psi_\lambda, \psi_\mu \colon T_\ell A \times T_\ell A \to \Q_\ell$.
Since $T_\ell A$ is a free $O_{B,\ell}$-module by \cref{T_l/O_B_l}, by the proof of \cite[Lemma  1.1]{Milne} there exists $c \in F_\ell$ such that $\psi_\mu(-,-) = \psi_\lambda(-,c-)$.
Therefore for such $c$ we have $T_\ell \mu = c T_\ell \lambda$.
Let $f = \lambda^{-1} \circ \mu \in \End^0_B A$.
Then in $F_\ell$, we have the equality $2c = \Trd V_\ell f$, which implies that $c \in F$ by \cref{Nrd_lies_in_F}.
Therefore $f = c$.

Next we consider the general case.
Since now the statement is a Zariski local question and
since abelian schemes are of finite presentation over their base schemes,
we may assume that $S$ is locally noetherian, and hence we may assume that $S$ is connected.
In this case since at a geometric point of $S$ the two maps $\mu$ and $c \lambda$ are the same for some $c \in F$ by the argument above,
by the rigidity lemma \cite[Proposition 6.1]{MumfordGIT} they coincide over the whole of $S$.
(More precisely taking an integer $m$ so that $mc \in O_F$ apply the rigidity lemma to the maps $m\mu$ and $mc\lambda$.)
\end{proof}

\begin{corollary} \label{dual_isogeny}
Let $S$ be a scheme over $\Z[1/\Delta]$, $(A,i,\lambda)$ and $(A', i', \lambda')$ $*$-polarized QM-abelian schemes over $S$, and $f \in \Hom^0_B(A,A')$.
Let $f^t := \lambda^{-1} \circ f^\vee \circ \lambda'$.
Then Zariski locally $f^t \circ f$ is the multiplication by an element $c$ of $F$.
Moreover, if $(A,i,\lambda) = (A', i', \lambda')$, then we have that $\deg f = N_{F/\Q}(c)^2$.
\end{corollary}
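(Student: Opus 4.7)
The plan is to recognize $\mu := f^\vee \circ \lambda' \circ f = \lambda \circ (f^t \circ f)$ as a $*$-polarization on $A/S$ in the nondegenerate case and invoke the uniqueness result \cref{unique_polarization}.

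First, I would verify by a direct diagram chase that $f^t \in \Hom_B^0(A',A)$: for $b \in O_B$,
\begin{equation*}
f^t \circ i'(b) = \lambda^{-1} f^\vee \lambda' i'(b) = \lambda^{-1} f^\vee i'(b^*)^\vee \lambda' = \lambda^{-1} (f \circ i(b^*))^\vee \lambda' = \lambda^{-1} i(b^*)^\vee f^\vee \lambda' = i(b) \circ f^t,
\end{equation*}
using in turn the $*$-polarization condition for $\lambda'$, the $B$-equivariance of $f$, and the $*$-polarization condition for $\lambda$. Hence $g := f^t \circ f \in \End_B^0(A/S)$, and the analogous manipulation shows that $\mu = \lambda \circ g$ satisfies the $*$-compatibility of \cref{def:QMAV}; the symmetry $\mu^\vee = \mu$ is immediate from $(\lambda')^\vee = \lambda'$ and $f^{\vee\vee} = f$.

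Next, I would reduce to the case that $S$ is locally noetherian and connected, and argue fiberwise. At each geometric point $\bar s$ of $S$, \cref{B-simple} gives that $A_{\bar s}$ and $A'_{\bar s}$ are $B$-simple of dimension $2d$, so by \cref{isogeny} the map $f_{\bar s}$ is either zero or an isogeny. After possibly shrinking $S$, either $f \equiv 0$ (in which case $g = 0$ and $c = 0$ works) or $f$ is an $S$-isogeny; in the latter case $\mu$ is the pullback of $\lambda'$ along the finite morphism $f$, hence a polarization, and together with the $*$-compatibility above this makes $\mu$ a $*$-polarization on $(A,i)$. Applying \cref{unique_polarization} then produces $c \in F$ Zariski-locally with $\mu = c \cdot \lambda$, and therefore $f^t \circ f = i(c)$.

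For the degree statement, assume $(A,i,\lambda) = (A',i',\lambda')$ and pick any prime $\ell$ invertible on $S$. The relation $\lambda \circ f^t = f^\vee \circ \lambda$ gives $\det_{\Q_\ell} V_\ell f^t = \det_{\Q_\ell} V_\ell f^\vee = \det_{\Q_\ell} V_\ell f = \deg f$, so $\det_{\Q_\ell} V_\ell g = (\deg f)^2$. On the other hand, \cref{V_l/B_l} realizes $V_\ell A$ as a free $F_\ell$-module of rank $4$ on which $V_\ell i(c)$ is scalar multiplication by $c$, so $\det_{\Q_\ell} V_\ell i(c) = N_{F_\ell/\Q_\ell}(c)^4 = N_{F/\Q}(c)^4$. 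Equating the two yields $\deg f = N_{F/\Q}(c)^2$. The main obstacle I anticipate is the global-to-local transition in the middle step—ensuring the fiberwise dichotomy and the fiberwise constants assemble into Zariski-local data on $S$—but this should follow from a standard rigidity argument for abelian schemes.
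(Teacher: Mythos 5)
Your proof is correct and takes essentially the same route as the paper: recognize $\mu = f^\vee\circ\lambda'\circ f$ as a $*$-polarization, invoke \cref{unique_polarization} to obtain $c\in F$ Zariski-locally, and then reduce the degree formula to a Tate-module determinant computation via \cref{V_l/B_l} together with $\deg f^\vee = \deg f$. The extra bookkeeping you include (the explicit $B$-equivariance diagram chase for $f^t$, the fiberwise dichotomy between $f=0$ and $f$ an isogeny via \cref{B-simple} and \cref{isogeny}, and the observation that $\mu=f^*\lambda'$ is ample as a pullback) is all left implicit in the paper's terser argument.
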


\begin{proof}
The map $f^\vee \circ \lambda' \circ f$ is a $*$-polarization on $A$ over $S$
since $f$ and $\lambda'$ commute with the action of $B$.
Hence Zariski locally over $S$ it is equal to $c \lambda$ for an element $c \in F$ by \cref{unique_polarization}.
Thus $f^t \circ f = c$ Zariski locally.
In order to show the second statement we may assume that $S$ is the spectrum of a field $k$.
For a prime $\ell$ which is invertible in $k$, taking $V_\ell$ and $N_{B/\Q}$, we obtain that $\deg f^\vee \deg f = N_{F/\Q}(c)^4$ by \cref{Nrd_lies_in_F}.
Since $\deg f^\vee = \deg f$ by \cite[Section 15, Theorem 1]{MumfordAV} we have the results.
\end{proof}

Note that since for morphisms $f, g$ of abelian varieties we have $(f+g)^\vee = f^\vee + g^\vee$,
we also have that $(f+g)^t = f^t + g^t$.

It is well-known that the modular curve $X_1(n)$ is the fine moduli of the corresponding algebraic stack
for every $n \ge 4$,
and Buzzard \cite[Lemma 2.2]{Buzzard} proved the corresponding result for QM-abelian surfaces,
i.e., that the variety $M_1^B(n)$ is the fine moduli for every prime-to-$\Delta$ integer $n \ge 4$ in the case $F = \Q$.
Using the statements above we also show the fineness of $M^B_1(\mathfrak{n})$ for general $F$ by the almost same method.
Although the proof below works over not only $\Q$ but a suitable localization of $\Z$,
we only state it over the rationals because we did not define (and do not use) the Shimura varieties over rings of integers in this paper.

\begin{proposition} \label{M_1^B(n)_is_a_fine_moduli}
Let $\mathfrak{n}$ be a prime-to-$\Delta$ integral ideal of $F$ satisfying that $N(\mathfrak{n}) \ge 4^d$,
where $N(\mathfrak{n})$ is the absolute norm of $\mathfrak{n}$.
Then the algebraic stack $\mathscr{M}_1^B(\mathfrak{n})$ is represented by a scheme.
I.e., the Shimura variety $M_1^B(\mathfrak{n})$ is the fine moduli scheme of it.
\end{proposition}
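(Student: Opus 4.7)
The plan is to show that every geometric automorphism group of an object of $\mathscr{M}_1^B(\mathfrak{n})$ is trivial; once this is established, the Deligne--Mumford stack coincides with its coarse moduli space $M_1^B(\mathfrak{n})$, which is a scheme, giving the desired representability.

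Let $(A, i, \lambda, \eta) \in \mathscr{M}_1^B(\mathfrak{n})(T)$ for some $\Q$-scheme $T$, and suppose $f \in \Aut(A, i, \lambda)$ is a non-identity automorphism preserving $\eta$. By \cref{Aut:nonCM,Aut:CM} applied fiber-by-fiber in characteristic zero, $f$ is a root of unity lying either in $F$, in which case $f = -1$, or in some totally imaginary quadratic extension $F'$ of $F$ inside $\End^0_B(A/T)$. Transporting the $O_B$-linear action of $f$ on $A[\mathfrak{n}]$ through $\eta$ produces an $O_B$-linear automorphism of $\Lambda/\mathfrak{n}$, which must equal right multiplication by a unique element $b_f \in (O_B/\mathfrak{n})^*$. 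Since the reduction map $\End_B(A/T) \to \End_{O_B/\mathfrak{n}}(A[\mathfrak{n}])$ is a ring homomorphism, $b_f$ satisfies the minimal polynomial of $f$ modulo $\mathfrak{n}$, and by \cref{Nrd_lies_in_F} we conclude $\Nrd(b_f) \equiv N_{F'/F}(f) \equiv 1 \pmod{\mathfrak{n}}$ (as $f$ is a root of unity in the CM field $F'$) and $\Trd(b_f) \equiv \tr_{F'/F}(f) \pmod{\mathfrak{n}}$ when $f \in F' \setminus F$; otherwise $b_f = fI = -I$ is scalar.

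Under our identification $O_B/\mathfrak{n} \cong M_2(O_F/\mathfrak{n})$, the condition that $f$ preserves $\eta$ amounts to $b_f \in \Gamma_1(\mathfrak{n})$, a condition well-defined on the $\Gamma_1(\mathfrak{n})$-equivalence class since $\Gamma_1(\mathfrak{n})$ is preserved under its own inner conjugation. In the case $f = -1$, the centrality of $b_f = -I$ reduces the condition to $-1 \equiv 1 \pmod{\mathfrak{n}}$, i.e., $\mathfrak{n} \mid (2)$, forcing $N(\mathfrak{n}) \leq 2^d$. In the case $f \in F' \setminus F$, a necessary condition is that $b_f$ admits $1$ as an eigenvalue, equivalently $\det(b_f - I) = \Nrd(b_f) - \Trd(b_f) + 1 = N_{F'/F}(1 - f)$ vanishes modulo $\mathfrak{n}$. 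A short cyclotomic computation shows that for a primitive $m$-th root of unity $f$ with $m \geq 3$, the integer $N_{F'/\Q}(1 - f)$ equals $p^{2d/\phi(m)}$ when $m = p^k$ is a prime power and equals $1$ otherwise; its maximum over such $m$ is $3^d$, attained at $m = 3$. Hence such $f$ preserving $\eta$ forces $N(\mathfrak{n}) \leq 3^d$. In either case $N(\mathfrak{n}) \leq 3^d < 4^d$, contradicting the hypothesis, so $\Aut(A, i, \lambda, \eta) = \{1\}$. The main bookkeeping task is identifying $b_f$ and computing its reduced norm and trace, after which the bound reduces to elementary cyclotomic arithmetic.
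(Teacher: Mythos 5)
Your proof is correct, and it proves the proposition, but it takes a genuinely different (and slightly sharper) route than the paper. Both arguments ultimately exploit the same arithmetic constraint: for $f \ne 1$ an automorphism of $(A,i,\lambda,\eta)$ over $\kbar$, one has $c := N_{F'/F}(1-f) \in O_F$ (with $F' = F(f)$; for $f=-1$ read $c = (1-f)^2 = 4$), and the level-$\Gamma_1(\mathfrak{n})$ compatibility forces $\mathfrak{n} \mid c\,O_F$, hence $N(\mathfrak{n}) \le N_{F/\Q}(c)$. The paper bounds $N_{F/\Q}(c) \le 4^d$ via the crude archimedean estimate $|1-f|_v \le 2$ at every infinite place (using only that $f$ has finite order), and must then handle the boundary case $N(\mathfrak{n})=4^d$ separately, showing $f=-1$ and deriving $2 \in \mathfrak{n}$, a contradiction. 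You instead invoke \cref{Aut:nonCM,Aut:CM} to see that $f$ is a root of unity, and then compute $N_{F'/\Q}(1-f) = \Phi_m(1)^{2d/\phi(m)}$ exactly by cyclotomy; this maxes out at $3^d$ (for $m=3$), strictly below $4^d$, so the boundary case never occurs and you in fact establish representability whenever $N(\mathfrak{n}) > 3^d$. The trade-off is that your version leans on the endomorphism-ring classification while the paper needs only Mumford's finiteness of automorphisms of polarized abelian varieties plus \cref{isogeny,dual_isogeny}. Two minor points of precision: over $O_F/\mathfrak{n}$, which is not a field, the statement ``$b_f$ admits $1$ as an eigenvalue'' should simply be replaced by the well-defined congruence $\det(b_f - I) \equiv 0 \pmod{\mathfrak{n}}$ (which does hold identically for matrices of the shape $\left(\begin{smallmatrix} a & b \\ 0 & 1 \end{smallmatrix}\right)$ and is conjugation-invariant); and rather than ``fiber-by-fiber over $T$'' it is cleanest to observe, as the paper does, that for a Deligne--Mumford stack it suffices to kill automorphisms over a single algebraically closed field of characteristic zero.
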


\begin{proof}
As we have noted in \cref{section:quaternionic_Shimura_varieties}, in any case the coarse moduli space of $\mathscr{M}_1^B(\mathfrak{n})$ is a scheme, and is $M_1^B(\mathfrak{n})$.
Thus in order to show the statement it suffices to show that the algebraic stack $\mathscr{M}_1^B(\mathfrak{n})$
is an algebraic space.
Let $k$ be an algebraically closed field of characteristic zero, $X$ an object of $\mathscr{M}_1^B(\mathfrak{n})(k)$,
and $A$ the underlying QM-abelian variety of $X$.
It suffices to show that the automorphism group $\Aut X$ is trivial.
Let $f$ be an automorphism of $X$, and suppose that $f \ne 1$.
By \cref{isogeny} the map $f - 1$ is again an isogeny commuting with the action of $B$.
Hence $c := (f-1)^t \circ (f-1) \in F$ by \cref{dual_isogeny}.
Since $f$ preserves the polarization we have $f^t = f^{-1}$.
Thus $c = (f-1)^t \circ (f-1) = 2 - (f^t + f)$ and $c \in \End_{O_B} A$, and hence $c \in O_F$.
Write $a := 2 - c$.
Then $f$ satisfies the equation $x^2 - a x + 1 = 0$ in $\End^0_B A$.
Since $f$ has finite order by \cite[Section 21, Theorem 5]{MumfordAV},
we obtain that $a^2 - 4$ is zero or totally negative in $F$,
i.e., for every infinite place $v$ of $F$ we have that $| a |_v \le 2$.

Since $f$ fixes a level $\Gamma_1(\mathfrak{n})$-structure,
by exactly the same way as in the proof of \cite[Lemma 2.2]{Buzzard} we have that $c \in \mathfrak{n}$.
Hence $|N_{F/\Q}(c)| \ge N(\mathfrak{n})$.
Now since $|c|_v \le 4$ for every infinite place $v$ we have $|N_{F/\Q}(c)| \le 4^d$,
which shows the statement for $N(\mathfrak{n}) > 4^d$.
Finally if $N(\mathfrak{n}) = 4^d$ then $|N_{F/\Q}(c)| = 4^d$, and hence $a = -2$.
It follows that $f$ satisfies $(f+1)^2 = 0$.
Since its square is zero, the map $f + 1$ must not be an isogeny, and thus is zero itself
by \cref{isogeny}.
This implies that $f = -1$, and, since now $f$ fixes a level $\Gamma_1(\mathfrak{n})$-structure, we have that $2 \in \mathfrak{n}$.
This contradicts to the assumption that $N(\mathfrak{n}) \ge 4^d$, and therefore it completes the proof of the triviality of $\Aut X$ for $N(\mathfrak{n}) \ge 4^d$.
\end{proof}

For a modular curve $X$ and a point $x$ on $X$, we have that if $x$ is defined over a field $k$,
then there exists an elliptic curve with a level over $k$ corresponding to $x$.
In the case of Shimura varieties, in general this is false.
However, if the base field $k$ splits $B$, then the corresponding result holds also for our Shimura varieties.
The following proof is essentially due to Jordan \cite[Theorem 1.1]{Jordan}, who shows the case of $F = \Q$.

\begin{theorem} \label{thm:field_of_moduli}
Let $k$ be a field of characteristic zero containing the Galois closure of $F/\Q$.
Let $x \in M^B(k)$ and $X \in \mathscr{M}^B(\kbar)$ be a model of $x$.
Then $X$ descends to $k$ if and only if $B \otimes_\Q k \isom M_2(F \otimes_\Q k)$.
\end{theorem}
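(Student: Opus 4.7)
The proof splits into two directions; the bulk of the work lies in the sufficiency direction.

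For the necessity, suppose $X$ descends to $X_0 = (A_0, i_0, \lambda_0)$ defined over $k$. The Lie algebra $\Lie(A_0/k)$ is an $O_B \otimes_\Z k$-module that, by the QM condition in \cref{def:QMAV}, is locally free of rank $2$ over $O_F \otimes_\Z k$. Since $k$ contains the Galois closure of $F/\Q$, we have $O_F \otimes_\Z k \isom \prod_{\sigma : F \injects k} k$, inducing a decomposition $\Lie(A_0/k) \isom \bigoplus_\sigma L_\sigma$ with each $L_\sigma$ a $2$-dimensional $k$-vector space on which the quaternion $k$-algebra $B \otimes_{F,\sigma} k$ acts faithfully (nonzero, so faithful by simplicity). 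A dimension count then forces $B \otimes_{F,\sigma} k \isom M_2(k)$ for every $\sigma$, whence $B \otimes_\Q k \isom M_2(F \otimes_\Q k)$.

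For the sufficiency, assume $B \otimes_\Q k \isom M_2(F \otimes_\Q k)$ and choose a finite Galois extension $L/k$ over which $X$ has a model $X_L$. By \cref{descent}, the obstruction $\operatorname{ob}(X_L, L/k) \in H^2(\Gal(L/k), \Aut(X_L/L))$ controls descent from $L$ to $k$, and by \cref{Aut:nonCM,Aut:CM} the group $\Aut(X_L/L)$ is finite abelian (indeed $\{\pm 1\}$ in the non-CM case), so we may work in ordinary Galois cohomology. We claim the hypothesis on $B \otimes_\Q k$ kills this obstruction. Following \cite[Theorem 1.1]{Jordan}, who treats $F = \Q$, the strategy is to use a rank-one idempotent $e \in B \otimes_\Q k$ produced from the splitting isomorphism and, via the QM action $i$, to manufacture enough $k$-rational data (for example by Morita equivalence, reducing to a $k$-point of a Hilbert modular variety for $F$) to write down a trivializing cochain for $\operatorname{ob}(X_L, L/k)$, thereby producing a $k$-model of $X$.

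The main obstacle is this identification of the obstruction class with a vanishing Brauer-theoretic invariant. For $F = \Q$, Jordan argues via the elliptic-curve factor of a split QM-abelian surface. For general $F$, the construction must simultaneously trivialize the splitting at each of the $d$ embeddings $\sigma : F \injects k$; this is to be carried out componentwise in $F \otimes_\Q k \isom \prod_\sigma k$, with a separate check in the CM case where $\Aut(X_L/L)$ is larger than $\{\pm 1\}$ but remains abelian by \cref{Aut:CM}. Once the cohomology class is shown to vanish, \cref{descent} immediately delivers the desired $k$-model of $X$.
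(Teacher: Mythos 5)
Your necessity direction is sound and is essentially the argument of \cref{lf_iff_matrices} specialized to the Lie algebra of a $k$-model: decompose $\Lie(A_0/k)$ along $O_F\otimes_\Z k\isom\prod_\sigma k$ into $2$-dimensional pieces $L_\sigma$, observe that a nonzero module over a $4$-dimensional division algebra has $k$-dimension divisible by $4$, and conclude that each $B\otimes_{F,\sigma}k$ splits. This matches the paper, which simply cites \cref{lf_iff_matrices}.

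For sufficiency, however, there is a genuine gap. You correctly set up the framework (pass to a finite Galois extension $L/k$, form the obstruction $\operatorname{ob}(X_L,L/k)\in H^2(\Gal(L/k),\Aut(X_L/L))$ as in \cref{descent}, note by \cref{Aut:nonCM,Aut:CM} that $\Aut(X_L/L)$ is finite abelian), but you never actually show that the obstruction vanishes: you write ``we claim the hypothesis kills this obstruction,'' describe a ``strategy,'' and then state that ``the main obstacle is this identification.'' That identification \emph{is} the theorem; the remainder of your text is scaffolding around a hole. Moreover, the route you gesture at --- Morita equivalence, reduction to a $k$-point of a Hilbert modular variety --- is not what the paper does, and would not obviously close the gap even if carried out: the field-of-moduli versus field-of-definition problem for the Morita-equivalent $O_F$-abelian variety is essentially the same obstruction question, merely transported to a different moduli space. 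What the paper actually does is different and concrete: it constructs a $\Gal(L/k)$-equivariant injection $\xi\colon\Aut(X_L/L)\injects\Aut_{B\otimes L}\Lie(A/L)\isom(F\otimes L)^*$ via the canonical isomorphism $B\otimes L\isom\End_{F\otimes L}\Lie(A/L)$ of \cref{lf_iff_matrices}; uses a rank-one idempotent $e_1$ of $B\otimes k$ (available by the splitting hypothesis) to cut out a line $T_{X/L,1}\subseteq\Lie(A/L)$ and a period cochain $c_\sigma\in(F\otimes L)^*$ for which $\xi(\alpha_{\sigma,\tau})=c_\sigma^{-1}(c_\tau^\sigma)^{-1}c_{\tau\sigma}$, so that the image of the obstruction cocycle under $\xi$ is a coboundary; and finally invokes \cref{construction_of_pi} together with Hilbert 90 on $(\kbar^*)^d$ to show that $H^2(\xi)$ is injective. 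Your sketch contains none of the last two steps, and in particular you do not produce the trivializing cochain nor prove injectivity of $H^2(\xi)$, both of which require real work (the latter is where $k\supseteq$ the Galois closure of $F$ is used, via the $\Gal(\kbar/k)$-module isomorphism $\Aut_{B\otimes\kbar}\Lie(A_\kbar/\kbar)\isom(\kbar^*)^d$). As written, the sufficiency direction is unproven.
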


\begin{proof}
The “only if" part is shown in \cref{lf_iff_matrices}.
Assuming that $B \otimes_\Q k \isom M_2(F \otimes_\Q k)$ we show that $X$ descends to $k$.
Fix an isomorphism $B \otimes_\Q k \isom M_2(F \otimes_\Q k)$.
Take a finite Galois extension $L$ of $k$ so that $X$ has a model $(A,i,\lambda)$ over $L$,
and that for every $\sigma \in \Gal(L/k)$ one has that every isomorphism from $X$ to $X^\sigma$
is defined over $L$.
(Such $L$ does exist since if $X$ has a model over $L$, then for every Galois extension $L'$ of $L$
and for every $\tau \in \Gal(L'/L)$, we have that $X^\tau \isom X$ over $L'$.)
Since we are assuming that $k$ contains the Galois closure of $F/\Q$, we have that $F \otimes_\Q k \isom k^d$,
and thus we have an isomorphism of $\Gal(L/k)$-modules $F \otimes_\Q L \isom L^d$.

For every $\sigma \in \Gal(L/k)$ fix an isomorphism $\lambda_\sigma \colon X \to X^\sigma$.
Using them define the action of $\Gal(L/k)$ on $\Aut X/L$ as in \cref{descent}.
The cohomology class of the family of automorphisms $\alpha_{\sigma, \tau} := \lambda_\sigma^{-1} (\lambda_\tau^\sigma)^{-1} \lambda_{\tau \sigma}$ is the obstruction to descent of $X$ from $L$ to $k$ as in \cref{descent}.
In this proof we show that changing the base from $L$ to $\kbar$, the obstruction is trivial.
If it is true, then $X$ descends to $k$, and we will finish the proof.

Define the action of $\Gal(L/k)$ on $\End_{B \otimes L} \Lie (A/L)$ from right using $\{ \lambda_\sigma \}$ similarly, and let $\Gal(L/k)$ act on $B \otimes_\Q L$ by $(b \otimes a, \sigma) \mapsto b \otimes a^\sigma$.
By \cref{lf_iff_matrices} the canonical map $ r_{X/L} \colon B \otimes_\Q L \to \End_{F \otimes L} \Lie (A/L)$ is an isomorphism, and hence taking their centralizers of $B \otimes L$, this induces an isomorphism $F \otimes L \to \End_{B \otimes L} \Lie (A/L)$.
This preserves the action of $\Gal(L/k)$:
For, since $\lambda_\sigma$ is a morphism of $*$-polarized QM-abelian varieties,
for every $b \in O_B$, the following diagram commutes:
\begin{equation*}
\begin{aligned}
\xymatrix{
A \ar[r]^{\lambda_\sigma} \ar[d]_{i(b)} & A^\sigma \ar[d]^{i(b)^\sigma} \\
A \ar[r]^{\lambda_\sigma} & A^\sigma.
}
\end{aligned}
\end{equation*}
On the other hand, since $L$ acts on $\Lie (A/L)$ as the scalar multiplication, it follows that the isomorphism $F \otimes L \to \End_{B \otimes L} \Lie (A/L)$ preserves the action of $\Gal(L/k)$.
Obviously the map $\Aut X/L \to \Aut_{B \otimes L} \Lie (A/L)$ also commutes with $\Gal(L/k)$.

Let $e_1$ and $e_2$ be the elements of $B \otimes k$ corresponding to
\begin{equation*}
\left( \begin{matrix}1 & 0 \\ 0 & 0 \end{matrix} \right) \text{ and }
\left( \begin{matrix}0 & 0 \\ 0 & 1 \end{matrix} \right).
\end{equation*}
under our fixed isomorphism $B \otimes_\Q k \isom M_2(F \otimes_\Q k)$,
and let $e_i \otimes 1$ be the scalar extension of $e_i \in B \otimes k$ to $B \otimes L$.
For each $i$ denote the kernel of $r_{X/L}(e_i \otimes 1)$ in $\Lie (A/L)$ by $T_{X/L,i}$.
Then by the definition of $e_i$, we get that $\Lie (A/L) = T_{X/L,1} \oplus T_{X/L,2}$ as $F \otimes L$-modules,
and each direct summand is free of rank one.
For every $\sigma \in \Gal(L/k)$ we have 
\begin{equation*}
T_{X^\sigma/L,i} = T_{X/L,i} \otimes_{L, \sigma} L.
\end{equation*}
Hence if $\Lie (\lambda_\sigma)$ denotes the map $\Lie (A/L) \to \Lie (A^\sigma/L)$ induced by $\lambda_\sigma$, then
\begin{equation*}
\Lie (\lambda_\sigma) ( T_{X/L,i} ) = T_{X/L,i} \otimes_{L, \sigma} L.
\end{equation*}
Thus for every nonzero $\omega \in T_{X/L,i}$ and for every $\sigma \in \Gal(L/k)$,
there exists a unique $c_\sigma \in (F \otimes L)^*$ such that $\Lie (\lambda_\sigma)(\omega) = c_\sigma \omega^\sigma$.
We have seen that the canonical isomorphism $(F \otimes L)^* \to \Aut_{B \otimes L} \Lie (A/L)$ commutes with the action of $\Gal(L/k)$.
Write the canonical map $\Aut X/L \to \Aut_{B \otimes L} \Lie (A/L) = (F \otimes L)^*$ as $\xi$,
which is injective since now the characteristic is zero.
Then one gets $\xi(\alpha_{\sigma, \tau}) = c_\sigma^{-1} (c_\tau^\sigma)^{-1} c_{\tau \sigma}$.
In particular the family $(\xi(\alpha_{\sigma, \tau}))_{\sigma, \tau}$ is a 2-coboundary.

We know that $\Aut X/L$ is finite cyclic by \cref{Aut:nonCM,Aut:CM}.
From this fact, elementary we can find an endomorphism $\pi$ of the $\Gal(\kbar/k)$-module $\Aut_{B \otimes \kbar} \Lie(A_\kbar/\kbar)$ making the following sequence exact
\begin{equation*}
\begin{aligned}
\xymatrix{
0 \ar[r]  & \Aut X_\kbar/\kbar \ar[r]^-\xi & \Aut_{B \otimes \kbar} \Lie(A_\kbar/\kbar) \ar[r]^\pi & \Aut_{B \otimes \kbar} \Lie(A_\kbar/\kbar) \ar[r] & 0.
}
\end{aligned}
\end{equation*}
(See \cref{construction_of_pi} below.)
Since $\Aut_{B \otimes \kbar} \Lie(A_\kbar/\kbar) \isom (\kbar^*)^d$ as $\Gal(\kbar/k)$-modules, by Hilbert 90 we have that $H^1(\Aut_{B \otimes L} \Lie(A_\kbar/\kbar)) = 0$.
Thus by the cohomology long exact sequence induced by the above sequence, the cohomology map $\xi \colon H^2(\Aut A_\kbar/\kbar) \to H^2(\Aut_{B \otimes \kbar} \Lie(A_\kbar/\kbar))$ is injective.
As the family $(\xi(\alpha_{\sigma, \tau})_\kbar)$ is a 2-coboundary, it follows that the cohomology class of $(\alpha_{\sigma, \tau})_\kbar$ is trivial.
This is what we wanted to show, and we finish the proof.
\end{proof}

\begin{lemma} \label{construction_of_pi}
Let $d$ be a positive integer, $k$ a field, $\kbar$ a separable closure of $k$, $M$ a finite cyclic group on which $\Gal(\kbar/k)$ acts whose order is invertible in $k$, and let $\xi \colon M \to (\kbar^*)^d$ be an injection of $\Gal(\kbar/k)$-modules.
Then there exists an endomorphism $\pi$ of $\Gal(\kbar/k)$-module $(\kbar^*)^d$ which makes the following sequence exact:
\begin{equation*}
\begin{aligned}
\xymatrix{
0 \ar[r]  & M \ar[r]^-\xi & (\kbar^*)^d \ar[r]^\pi & (\kbar^*)^d \ar[r] & 0.
}
\end{aligned}
\end{equation*}
\end{lemma}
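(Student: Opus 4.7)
The plan is to construct $\pi$ explicitly as a ``matrix in the exponent'' map. For any $A = (a_{ij}) \in M_d(\Z)$, the formula $\pi_A(x_1, \dots, x_d) := (\prod_j x_j^{a_{1j}}, \dots, \prod_j x_j^{a_{dj}})$ defines an endomorphism of $(\kbar^*)^d$ that is automatically $\Gal(\kbar/k)$-equivariant, since the Galois action on $(\kbar^*)^d$ is coordinate-wise and commutes with products and integer powers; moreover $\pi_A \circ \pi_B = \pi_{AB}$. Thus it will suffice to exhibit $A \in M_d(\Z)$ with $\pi_A$ surjective and $\ker \pi_A = \xi(M)$.

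Let $n = |M|$, which is invertible in $k$, so $\xi(M)$ lies in the $n$-torsion $\mu_n^d$ of $(\kbar^*)^d$. Fix a primitive $n$-th root of unity $\zeta_n \in \kbar^*$ and write a generator of $N := \xi(M)$ as $(\zeta_n^{e_1}, \dots, \zeta_n^{e_d})$. Since $|N| = n$, the vector $\mathbf{e} := (e_1, \dots, e_d) \in (\Z/n)^d$ has order $n$, equivalently $\gcd(e_1, \dots, e_d, n) = 1$. I would next invoke the standard fact that such an $\mathbf{e}$ lifts to a primitive integer vector $\mathbf{e}' \in \Z^d$ (i.e.\ with $\gcd$ of coordinates equal to $1$ and $\mathbf{e}' \equiv \mathbf{e} \pmod n$); this follows for instance from the surjectivity of $\SL_d(\Z) \to \SL_d(\Z/n)$ combined with the transitivity of these groups on primitive, respectively order-$n$, column vectors. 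A primitive integer vector extends to a $\Z$-basis of $\Z^d$, so one can find $Q \in \GL_d(\Z)$ with $Q \mathbf{e}' = \mathbf{f}_d$, the last standard basis vector.

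I would then set $A := D Q$ with $D = \operatorname{diag}(1, \dots, 1, n)$, and take $\pi := \pi_A$. Then $\pi_Q$ is an automorphism of $(\kbar^*)^d$, and $\pi_D$ is surjective because $\kbar^* \to \kbar^*$, $x \mapsto x^n$ is (as $\kbar$ is algebraically closed), so $\pi$ is surjective. Its kernel equals $\pi_Q^{-1}(\ker \pi_D) = \pi_Q^{-1}(\{1\} \times \dots \times \{1\} \times \mu_n)$, which is cyclic of order $n$. A direct check gives $\pi_Q(\zeta_n^{\mathbf{e}'}) = \zeta_n^{Q \mathbf{e}'} = (1, \dots, 1, \zeta_n) \in \ker \pi_D$, so $\zeta_n^{\mathbf{e}'} \in \ker \pi$, and comparing orders forces $\ker \pi = N$.

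The only substantive ingredient is the existence of the primitive lift $\mathbf{e}'$ in the second paragraph; everything else is formal manipulation with integer matrices together with the surjectivity of power maps on $\kbar^*$.
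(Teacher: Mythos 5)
Your construction via integer-matrix maps $\pi_A$ is sound and gives the result by a route genuinely different from the paper's. The paper reduces by induction on $|M|$ to the case $|M|=p$ prime (composing the two maps obtained for a subquotient pair) and then writes down an explicit formula $\pi(x_1,\dots,x_d)=(x_1^p,\,x_1^{a_2}x_2^{-1},\dots,x_1^{a_d}x_d^{-1})$; your argument instead treats $|M|=n$ in one step via a Smith-normal-form factorization $A=DQ$, leaning on the surjectivity of $\SL_d(\Z)\to\SL_d(\Z/n)$ and transitivity on primitive/order-$n$ vectors. Both are correct; the paper's version is lighter on input (no $\SL_d$ lifting needed), while yours is more uniform. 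There is, however, a small gap you should patch: the primitive-lift argument as stated fails for $d=1$, since $\SL_1$ is trivial and so acts transitively on nothing, and indeed an arbitrary $e_1\in(\Z/n)^*$ need not be $\equiv\pm1\pmod n$. This is harmless because for $d=1$ the image $N$ is forced to be $\mu_n$ and one may simply choose the generator $\zeta_n$ itself (so $e_1=1$), or take $\pi(x)=x^n$ outright; but you should say this explicitly rather than invoking a transitivity statement that is false at $d=1$. Also worth making explicit, though you clearly know it: surjectivity of $x\mapsto x^n$ on $\kbar^*$ uses that $n$ is invertible in $k$ (so $X^n-a$ is separable), since $\kbar$ is only assumed separably closed, not algebraically closed.
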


\begin{proof}
If for a sub $\Gal(\kbar/k)$-module $M'$ of $M$ and for $\xi|_{M'} \colon M' \to (\kbar^*)^d$ there exists an endomorphism $\pi'$ as in the statement, and if for $\pi' \circ \xi \colon M/M' \to (\kbar^*)^d$ there exists also an endomorphism $\pi''$ as in the statement, then the map $\pi'' \circ \pi'$ is a desired endomorphism for $M$.
Hence by induction on the order of $M$, we may assume that the order of $M$ is a prime $p$.

Let $u$ be a generator of $M$, $\zeta$ a primitive $p$-th root of unity in $\kbar$,
and write $\xi(u) = (\zeta^{a_i})_i$ for nonnegative integers $a_i$.
Rearranging the indices if necessary, we may assume that $a_1$ is relatively prime to $p$, and replacing $\zeta$ by $\zeta^{a_1}$ if necessary, we may assume that $a_1 = 1$.
In this case the map $\pi(x_i) = (x_1^p, x_1^{a_2} x_2^{-1}, \dots, x_1^{a_d}x_d^{-1})$ is a desired one.
\end{proof}

From this theorem we can show similar statements about the rational points of Shimura varieties with some levels.
For example we show a statement about $M^B_0(p)$ for a quadratic number field $F$.

\begin{corollary} \label{thm:field_of_moduli:Gamma_0}
Assume that $F$ is a quadratic number field.
Let $\mathfrak{n}$ be a prime-to-$\Delta$ integral ideal of $F$ and $k$ a field of characteristic zero containing $F$.
Let $x \in M^B_0(\mathfrak{n})(k)$ and $X \in \mathscr{M}^B_0(\mathfrak{n})(\kbar)$ be a model of $x$.
Assume the following three conditions:
\begin{enumerate}
\item The automorphism group $\Aut_\kbar X$ is not isomorphic to $\{ \pm 1 \}$.
\label{thm:field_of_moduli:Gamma_0:con1}

\item The algebra $B \otimes_\Q k$ is isomorphic to $M_2(F \otimes_\Q k)$.
\label{thm:field_of_moduli:Gamma_0:con2}

\item Either that $F(\sqrt{-1})$ does not split $B$, or that $F$ is not equal to $\Q(\sqrt{2})$ nor $\Q(\sqrt{3})$.
\label{thm:field_of_moduli:Gamma_0:con3}
\end{enumerate}
Then $X$ descends to $k$.
\end{corollary}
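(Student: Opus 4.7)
The plan is to mimic the proof of \cref{thm:field_of_moduli}, adapted to accommodate the $\Gamma_0(\mathfrak{n})$-structure. Since $F/\Q$ is of degree two it is automatically Galois, so $k \supseteq F$ contains the Galois closure of $F/\Q$, and condition (2) together with \cref{thm:field_of_moduli} yields a descent $(A, i, \lambda)$ to $k$ of the underlying $*$-polarized QM-abelian variety $X^\flat$ of $X$. What remains is to show that the $\Gamma_0(\mathfrak{n})$-structure on $X$ descends compatibly, i.e., that the obstruction $\operatorname{ob}(X, L/k) \in H^2(\Gal(L/k), \Aut_L X)$ of \cref{descent} vanishes for a suitable finite Galois extension $L/k$ over which $X$ has a model; the coefficient group is abelian and, in characteristic zero, finite cyclic by \cref{Aut:nonCM} and \cref{Aut:CM}.

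To establish this vanishing, I would run the Lie-algebra/Hilbert-$90$ argument from the proof of \cref{thm:field_of_moduli} with $X$ in place of $X^\flat$. Choose $L$ so that $X$ has a model over $L$ and every isomorphism $\lambda_\sigma \colon X \to X^\sigma$ for $\sigma \in \Gal(L/k)$ is defined over $L$, form the $2$-cocycle $(\alpha_{\sigma,\tau})$, decompose $\Lie(A/L) = T_1 \oplus T_2$ using the idempotents of $B \otimes_\Q k \cong M_2(F \otimes_\Q k)$ provided by condition (2), and, picking $0 \ne \omega \in T_1$, write $\Lie(\lambda_\sigma)(\omega) = c_\sigma \omega^\sigma$ for a unique $c_\sigma \in (F \otimes L)^*$. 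Exactly as in \cref{thm:field_of_moduli}, the resulting $\Gal(L/k)$-equivariant injection $\xi \colon \Aut_L X \hookrightarrow \Aut_{B \otimes L} \Lie(A_L/L) \cong (L^*)^2$ sends $\alpha_{\sigma,\tau}$ to the $2$-coboundary of the $1$-cochain $(c_\sigma)$. Fitting $\xi$ into a short exact sequence of $\Gal(\kbar/k)$-modules $0 \to \Aut_\kbar X \to (\kbar^*)^2 \xrightarrow{\pi} (\kbar^*)^2 \to 0$ via \cref{construction_of_pi} and invoking Hilbert $90$ for $(\kbar^*)^2$, the long exact sequence in cohomology then forces $\operatorname{ob}(X, L/k)$ to vanish.

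Conditions (1) and (3) should enter precisely to guarantee that \cref{construction_of_pi} is applicable to $\Aut_\kbar X$ with its $\Gal(\kbar/k)$-action. Condition (1) forces $X^\flat$ to be CM by \cref{Aut:nonCM}, so that $\End^0_B X^\flat = F'$ with $F'$ an imaginary quadratic extension of $F$ splitting $B$ by \cref{End:ch0:CM}, placing $\Aut_\kbar X$ inside the group of roots of unity of $F'$. Condition (3) should then exclude exactly the exceptional pairings $F \in \{\Q(\sqrt{2}), \Q(\sqrt{3})\}$ together with $F' = F(\sqrt{-1})$, where $F'$ would carry $8$th or $12$th roots of unity whose Galois action could prevent the construction of $\pi$ in \cref{construction_of_pi} (or, equivalently, destroy the cyclicity of $\Aut_\kbar X$ as a Galois module). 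The main technical obstacle I expect is precisely this bookkeeping in the CM case: verifying that $\Aut_\kbar X$ is a cyclic $\Gal(\kbar/k)$-module of the shape required by \cref{construction_of_pi}, and tracing the Lie-algebra coboundary calculation carefully enough that it lands inside $\Aut_\kbar X$ rather than only inside $\Aut_\kbar X^\flat$.
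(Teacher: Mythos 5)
Your proposal takes a genuinely different route from the paper's. The paper does \emph{not} re-run the Lie-algebra argument at all. Instead it lets $X'$ be the image of $X$ under $\mathscr{M}^B_0(\mathfrak{n}) \to \mathscr{M}^B$, so that $X'$ descends by condition~(2) and \cref{thm:field_of_moduli}, and then observes that the image of $\operatorname{ob}(X, \kbar/k)$ under $H^2(G_k, \Aut_\kbar X) \to H^2(G_k, \Aut_\kbar X')$ equals $\operatorname{ob}(X', \kbar/k) = 1$. To conclude, the paper shows this map of $H^2$'s is injective by checking that the cyclic group $\Aut_\kbar X$ is a \emph{direct summand} of the cyclic group $\Aut_\kbar X'$ (so that a retraction exists). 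This is exactly where conditions~(1) and~(3) do their work: once $\Aut_\kbar X' \cong \Z/m$ with $2 \le m \le 12$ (using $F$ quadratic), the only non-direct-summand pairs with $\Aut_\kbar X \ne \{\pm 1\}$ are $(\Z/4,\Z/8)$ and $(\Z/6,\Z/12)$, which force $F' = F(\zeta_8)$ or $F(\zeta_{12})$, hence $F \in \{\Q(\sqrt 2), \Q(\sqrt 3)\}$ and $F' = F(\sqrt{-1})$, ruled out by~(3). Condition~(1) is needed because, e.g., $\{\pm1\} = \Z/2$ is not a direct summand of $\Z/4$, which would derail the argument.

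Your approach instead re-runs the Lie-algebra/Hilbert-90 machinery of \cref{thm:field_of_moduli} with $X$ in place of $X'$. You flag bookkeeping in the CM case as the main obstacle and guess that conditions~(1),~(3) enter there, but I do not see where they would: once you choose the $\lambda_\sigma$ to be isomorphisms in $\mathscr{M}^B_0(\mathfrak{n})$, the cocycle $\alpha_{\sigma,\tau}$ lands in $\Aut_\kbar X$, the restricted map $\xi \colon \Aut_\kbar X \hookrightarrow (\kbar^*)^d$ is a $\Gal$-equivariant injection, and \cref{construction_of_pi} applies to \emph{any} finite cyclic $\Gal$-module in characteristic zero with such an injection — it does not care whether $\Aut_\kbar X$ is a direct summand of $\Aut_\kbar X'$. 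The coboundary identity $\xi(\alpha_{\sigma,\tau}) = c_\sigma^{-1}(c_\tau^\sigma)^{-1}c_{\tau\sigma}$ then kills the class exactly as in \cref{thm:field_of_moduli}. So if your approach is carried out carefully it seems to prove the statement \emph{without} hypotheses~(1) and~(3) — a stronger conclusion than the corollary itself. Since you yourself hedged on precisely this point, the proof as written is incomplete, but not because the idea fails: the thing you should actually verify is that (a)~$\alpha_{\sigma,\tau}$ really lies in $\Aut_\kbar X$ once you use $\Gamma_0$-respecting $\lambda_\sigma$, (b)~the Galois action on $\Aut_\kbar X$ inherited from these $\lambda_\sigma$ agrees with the restriction from $\Aut_\kbar X'$ (which holds because $\Aut_\kbar X'$ is abelian, so the action is independent of the chosen $\lambda_\sigma$), and (c)~\cref{construction_of_pi} applies — all of which appear to go through. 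In short: the paper's proof is the more economical reduction; your proof, if you commit to it, is a more general re-derivation that eliminates two hypotheses, and you should either push it to completion or fall back to the paper's direct-summand argument.
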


\begin{proof}
Let $X'$ be the image of $X$ under the canonical map $\mathscr{M}^B_0(\mathfrak{n}) \to \mathscr{M}^B$.
Then by the condition \eqref{thm:field_of_moduli:Gamma_0:con2} and by \cref{thm:field_of_moduli},
the model $X'$ descends to $k$.
We show that a model $X$ of $x$ over $\kbar$ also descends to $k$.
The canonical injection $\Aut_\kbar X \to \Aut_\kbar X'$ induces the map of the cohomology groups
$H^2(G_k, \Aut_\kbar X) \to H^2(G_k, \Aut_\kbar X')$.
The image of the obstruction $\operatorname{ob}(X, \kbar/k)$ to descent of $X$ from $\kbar$ to $k$ as in \cref{descent} under this map is $1$ since $X'$ descends to $k$.
It suffices to show that $\Aut_\kbar X$ is a direct summand of $\Aut_\kbar X'$ under the injection above:
For, in this case, the map of the cohomology groups is injective, and hence $X$ descends to $k$.
By \cref{Aut:nonCM,Aut:CM}, there exists a quadratic extension $F'$ of $F$ such that
the automorphism group $\Aut_\kbar X'$ is isomorphic to a subgroup of $\mu(F')$, the group of the roots of unity in $F'$,
hence is isomorphic to $\Z/m$, where $m$ is an even positive integer dividing $n_{\lcm}$.
Since $\Aut X'$ is larger than $\{ \pm  1\}$ by the condition \eqref{thm:field_of_moduli:Gamma_0:con1} and by the fact that the characteristic of $k$ is zero, this field $F'$ splits $B$ by \cref{Aut:nonCM,End:ch0:CM}.
Since now we are assuming that $F$ is quadratic, we have that $2 \le m \le 12$.
Thus if $\Aut_\kbar X$ is not a direct summand, then by the condition \eqref{thm:field_of_moduli:Gamma_0:con1},
we have either that $\Aut_\kbar X \isom \Z/4$ and $\Aut_\kbar X' \isom \Z/8$,
or that $\Aut_\kbar X \isom \Z/6$ and $\Aut_\kbar X' \isom \Z/12$.
In particular $F'$ contains $\zeta_8$ or $\zeta_{12}$, where $\zeta_i$ is a primitive $i$-th root of unit.
Since $F$ is real, we have that $F' = F(\zeta_8)$ or $F' = F(\zeta_{12})$.
Again by the hypothesis that $F$ is quadratic, we have that $F = \Q(\sqrt{2})$ or $\Q(\sqrt{3})$,
and hence $F' = F(\sqrt{-1})$.
This contradicts to the condition \eqref{thm:field_of_moduli:Gamma_0:con3}, hence $X$ also descends to $k$.
\end{proof}

\section{Characters of QM abelian varieties with level-$\Gamma_0$ structures over local fields} \label{section:local}

From now we define and study the Galois characters induced by level-$\Gamma_0$ structures of QM-abelian varieties, which are the main objects in this paper.
Before defining them, we translate our level structures into more convenient forms, as in the case of modular curves.
Recall the notation of \cref{section:quaternionic_Shimura_varieties}, in particular $\Lambda$ is a free left $O_B$-module of rank one, and $\psi$ is a nondegenerate alternating $\Z$-bilinear pairing $\Lambda \times \Lambda \to \Z$ satisfying that $\psi(bx,y) = \psi(x,b^*y)$ for all $x,y \in \Lambda$ and for all $b \in O_B$.

\begin{definition} \label{def:subgroup_of_type_Gamma_0(p)}
Let $\mathfrak{n}$ be a prime-to-$\Delta$ integral ideal of $F$, $k$ a field on which $N_{F/\Q}(\mathfrak{n})$ is invertible, and $A$ a QM-abelian variety over $k$.
We define a subgroup $k$-scheme of $A$ of type $\Gamma_0(\mathfrak{n})$ as a subgroup $k$-scheme $C$ of $A$ which is stable under the action of $O_B$,
and such that the group $C(\kbar)$ is isomorphic to $(O_F/\mathfrak{n})^2$ as $O_F$-modules.
\end{definition}

\begin{lemma} \label{A[n]/O_B/n}
Let $\mathfrak{n}$ be an integral ideal of $F$ whose prime factors are all unramified, $k$ a field on which $N_{F/\Q}(\mathfrak{n})$ is invertible, and $A$ an abelian variety of dimension divided by $2d$ over $k$ on which $O_B$ acts.
Let $A[\mathfrak{n}] := \cap_{a \in \mathfrak{n}} \ker(a \text{ on } A)$.
Then $A[\mathfrak{n}](\kbar)$ is a free $O_B/\mathfrak{n}$-module.
\end{lemma}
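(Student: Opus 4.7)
The plan is to use the Chinese remainder theorem to reduce to prime-power ideals and then invoke the freeness of Tate modules from Corollary \ref{T_l/O_B_l}. Writing $\mathfrak{n} = \prod_{\mathfrak{p} \mid \mathfrak{n}} \mathfrak{p}^{n_\mathfrak{p}}$, CRT gives canonical $O_B$-linear isomorphisms
\begin{equation*}
O_B / \mathfrak{n} \isom \prod_{\mathfrak{p} \mid \mathfrak{n}} O_B / \mathfrak{p}^{n_\mathfrak{p}}, \qquad A[\mathfrak{n}](\kbar) \isom \prod_{\mathfrak{p} \mid \mathfrak{n}} A[\mathfrak{p}^{n_\mathfrak{p}}](\kbar),
\end{equation*}
so the problem reduces to showing that each $A[\mathfrak{p}^{n_\mathfrak{p}}](\kbar)$ is free over $O_B/\mathfrak{p}^{n_\mathfrak{p}}$ of a common rank independent of $\mathfrak{p}$.

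Next, I would fix such a $\mathfrak{p}$ lying over a rational prime $\ell$, which satisfies $\ell \ne \ch k$ thanks to the invertibility of $N_{F/\Q}(\mathfrak{n})$ in $k$. Since $\dim A$ is divisible by $2d$, Corollary \ref{T_l/O_B_l} implies that $T_\ell A$ is free over $O_{B,\ell}$, and counting $\Z_\ell$-ranks forces its $O_{B,\ell}$-rank to be $r := \dim A / (2d)$, independent of $\ell$. Decomposing $O_{F,\ell} = \prod_{\mathfrak{q} \mid \ell} O_{F,\mathfrak{q}}$ and $O_{B,\ell} = \prod_{\mathfrak{q} \mid \ell} O_{B,\mathfrak{q}}$ via idempotents, the $\mathfrak{p}$-component $T_\mathfrak{p} A$ becomes free of rank $r$ over $O_{B,\mathfrak{p}}$.

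Then the unramifiedness of $\mathfrak{p}$ in $B$, together with the fixed isomorphism $O_B \otimes_{O_F} O_{F,\mathfrak{p}} \isom M_2(O_{F,\mathfrak{p}})$ of the notation section, gives the identifications $O_B/\mathfrak{p}^{n_\mathfrak{p}} \isom O_{B,\mathfrak{p}}/\mathfrak{p}^{n_\mathfrak{p}} \isom M_2(O_F/\mathfrak{p}^{n_\mathfrak{p}})$. Reducing $T_\mathfrak{p} A$ modulo $\mathfrak{p}^{n_\mathfrak{p}}$ therefore presents $A[\mathfrak{p}^{n_\mathfrak{p}}](\kbar)$ as a free $O_B/\mathfrak{p}^{n_\mathfrak{p}}$-module of rank $r$, and reassembly via CRT concludes.

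The one point that will require careful bookkeeping is the identification $A[\mathfrak{p}^n](\kbar) \isom T_\mathfrak{p} A / \mathfrak{p}^n T_\mathfrak{p} A$; this should follow from $A[\mathfrak{p}^n] \subseteq A[\ell^N]$ for any $N$ with $\ell^N \in \mathfrak{p}^n$, together with the standard identification $A[\ell^N](\kbar) \isom T_\ell A / \ell^N T_\ell A$ and the projection onto the idempotent component attached to $\mathfrak{p}$. Otherwise the argument is a routine assembly of CRT and Corollary \ref{T_l/O_B_l}, with no serious obstacle.
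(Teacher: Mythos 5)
Your proof is correct and takes essentially the same route as the paper: reduce via CRT to a prime power $\mathfrak{p}^{n_\mathfrak{p}}$, invoke Corollary \ref{T_l/O_B_l} for freeness of the Tate module over $O_{B,\ell}$, pass to the $\mathfrak{p}$-idempotent component, reduce modulo $\mathfrak{p}^{n_\mathfrak{p}}$, and use unramifiedness of $\mathfrak{p}/\Q$ to identify the result with $A[\mathfrak{p}^{n_\mathfrak{p}}](\kbar)$. One small remark: the appeal to ``unramifiedness of $\mathfrak{p}$ in $B$'' and the fixed isomorphism $O_{B,\mathfrak{p}}\isom M_2(O_{F,\mathfrak{p}})$ is not actually needed for the freeness claim --- reducing a free $O_{B,\mathfrak{p}}$-module modulo $\mathfrak{p}^{n_\mathfrak{p}}$ yields a free $O_B/\mathfrak{p}^{n_\mathfrak{p}}$-module regardless of whether $B$ splits at $\mathfrak{p}$; the hypothesis ``unramified'' is doing work only in the final identification $A[\mathfrak{p}^{n_\mathfrak{p}}](\kbar)\isom T_\mathfrak{p} A/\mathfrak{p}^{n_\mathfrak{p}} T_\mathfrak{p} A$, via $pO_{F,\mathfrak{p}}=\mathfrak{p}O_{F,\mathfrak{p}}$, exactly as your last paragraph indicates.
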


\begin{proof}
By the Chinese remainder theorem, without loss of generality we can assume that $\mathfrak{n} = \mathfrak{p}^m$ for a positive integer $m$ and for a unramified prime $\mathfrak{p}$ of $F$ above $p \ne \ch k$.
In this case by \cref{T_l/O_B_l}, the $O_{B,p}$-module $T_p A$ is free.
Thus dividing them by $\mathfrak{p}^m$, since $\mathfrak{p}$ is unramified, we obtain the freeness of the $O_B/\mathfrak{p}^m$-module $(T_p A / p^m) / \mathfrak{p}^m = A[p^m](\kbar)/\mathfrak{p}^m$, which is equal to $A[\mathfrak{p}^m]$ again by the Chinese remainder theorem.
Thus the result.
\end{proof}

\begin{lemma} \label{direct_summand}
Keep the notations as in \cref{def:subgroup_of_type_Gamma_0(p)}.
Let $C$ be a subgroup $k$-scheme of $A$ of type $\Gamma_0(\mathfrak{n})$.
Then $C(\kbar)$ is a direct summand of $A[\mathfrak{n}](\kbar)$ as an $O_B$-module.
\end{lemma}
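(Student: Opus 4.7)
The plan is to reduce to a question about modules over $M_2(O_F/\mathfrak n)$ via the identification $O_B/\mathfrak n \simeq M_2(O_F/\mathfrak n)$, then exploit Morita equivalence to convert everything to a statement about submodules of $(O_F/\mathfrak n)^2$.

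First I would reduce to the case $\mathfrak n = \mathfrak p^e$ by the Chinese remainder theorem, so that everything happens in the $\mathfrak p$-primary component. By \cref{T_l/O_B_l} applied at the residue characteristic $p$ of $\mathfrak p$, and using that $\dim A = 2d$ (so $T_pA$ is free of rank one over $O_{B,p}$), one gets that $A[\mathfrak n](\kbar)$ is a free module of rank one over $O_B/\mathfrak n$; a rank count in \cref{A[n]/O_B/n} confirms rank one. Since $\mathfrak p \nmid \Delta$ we have the fixed isomorphism $O_B/\mathfrak n \simeq M_2(O_F/\mathfrak n)$.

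Next I would apply Morita equivalence between $M_2(O_F/\mathfrak n)$-modules and $O_F/\mathfrak n$-modules via the idempotent $e = e_{11}$. Under this equivalence, $A[\mathfrak n](\kbar)$ corresponds to $e A[\mathfrak n](\kbar) \simeq (O_F/\mathfrak n)^2$, and the $O_B$-submodule $C(\kbar)$ corresponds to $C' := eC(\kbar) \subseteq (O_F/\mathfrak n)^2$. Direct summands correspond to direct summands under Morita, so it suffices to prove that $C'$ is a direct summand of $(O_F/\mathfrak n)^2$ as an $O_F/\mathfrak n$-module. Recovering the original module from $C'$ gives $C(\kbar) \simeq (C')^2$ as $O_F/\mathfrak n$-modules.

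The hypothesis $C(\kbar) \simeq (O_F/\mathfrak n)^2$ as $O_F$-modules therefore forces $(C')^2 \simeq (O_F/\mathfrak n)^2$. Since $O_F/\mathfrak p^e$ is local with principal maximal ideal, the structure theorem for finitely generated modules over it gives a decomposition $C'_{\mathfrak p} \simeq \bigoplus_i O_F/\mathfrak p^{a_i}$; doubling this and comparing invariant factors with $(O_F/\mathfrak p^e)^2$ forces a single summand $C'_{\mathfrak p} \simeq O_F/\mathfrak p^e$. Thus $C' \simeq O_F/\mathfrak n$ is cyclic with the full annihilator $\mathfrak n$, generated by some $v \in (O_F/\mathfrak n)^2$ whose reduction modulo $\mathfrak p$ is nonzero; such a $v$ is unimodular and so spans a direct summand of $(O_F/\mathfrak n)^2$. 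Transporting back through Morita, $C(\kbar)$ is a direct summand of $A[\mathfrak n](\kbar)$ as an $O_B$-module.

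The only step requiring real care is the passage from $(C')^2 \simeq (O_F/\mathfrak n)^2$ to $C' \simeq O_F/\mathfrak n$ and then to the direct-summand conclusion, since $C'$ is not a priori cyclic; once one localizes at each prime and invokes the invariant factor decomposition this is straightforward, but it is the heart of the argument.
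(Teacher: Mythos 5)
Your proof is correct, and it takes a genuinely different route from the paper's. The paper's argument runs as follows: after the same Chinese-remainder reduction to $\mathfrak n=\mathfrak p^m$, it uses semisimplicity of $O_B/\mathfrak p$ to choose an $O_B/\mathfrak p$-complement $\bar W$ to the image of $N/\mathfrak p$ in $M/\mathfrak p$ (where $N=C(\kbar)$, $M=A[\mathfrak p^m](\kbar)$), lifts $\bar W$ to a projective $O_B/\mathfrak p^m$-module $W$ mapping to $M$ (idempotent/projective lifting across the nilpotent ideal $\mathfrak p$), then invokes Nakayama for surjectivity of $N\oplus W\to M$ and the $O_F$-freeness of $M$ from \cref{A[n]/O_B/n} to upgrade to an isomorphism. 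Your route instead applies Morita equivalence for $M_2(O_F/\mathfrak n)$ to transport the whole problem to a submodule $C'\subseteq(O_F/\mathfrak n)^2$, uses the hypothesis $C(\kbar)\cong(O_F/\mathfrak n)^2$ together with uniqueness of invariant factors to force $C'\cong O_F/\mathfrak n$, and then observes that a cyclic submodule with trivial annihilator is generated by a unimodular vector, hence is a free direct summand. One thing your approach buys is that it makes the role of the hypothesis $C(\kbar)\cong(O_F/\mathfrak n)^2$ completely transparent: it is exactly what pins down $C'$ to a single invariant factor. In the paper's argument this hypothesis is used more implicitly — it is needed so that $N/\mathfrak p\to M/\mathfrak p$ is injective (equivalently so that $N\oplus W$ and $M$ have matching $O_F$-lengths), a point the paper glosses over but which is again settled by the same invariant-factor consideration you carry out. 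Conversely, the paper's argument avoids invoking Morita theory and would apply verbatim over more general coefficient rings where the Morita reduction is less convenient. Both arguments use \cref{A[n]/O_B/n} in the same essential way, to know that $A[\mathfrak n](\kbar)$ is free of rank one over $O_B/\mathfrak n$.
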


\begin{proof}
As in the same manner as \cref{A[n]/O_B/n}, we may assume that $\mathfrak{n} = \mathfrak{p}^m$.
Let $M = A[\mathfrak{p}^m](\kbar)$ and $N = C(\kbar)$.
Then since $O_B/\mathfrak{p}$ is a semisimple algebra, there exists a projective $O_B/\mathfrak{p}^m$-module $W$ and a map $N \oplus W \to M$ inducing $(N \oplus W)/\mathfrak{p} \isom M/\mathfrak{p}$.
Since the target $M$ is free over $O_B/\mathfrak{p}^m$ by \cref{A[n]/O_B/n}, in particular is free over $O_F/\mathfrak{p}^m$, by Nakayama's lemma we obtain that the map $N \oplus W \to M$ is an isomorphism.
\end{proof}

Note that in \cref{def:subgroup_of_type_Gamma_0(p),A[n]/O_B/n,direct_summand} we only need $O_B$, but do not need involutions $*$ of $B$, pairings $\psi$, nor lattices $\Lambda$.

\begin{proposition} \label{level_Gamma_0}
Let $\mathfrak{n}$ be a prime-to-$[\Lambda^\vee : \Lambda]\Delta$ integral ideal of $F$, $k$ a field of characteristic zero, and $(A, i, \lambda)$ a $*$-polarized QM-abelian variety over $k$.
Suppose that there exists a symplectic isomorphism $\Lambda \otimes \hat{\Z} \to \hat{T} A_\kbar$.
Then there exists a canonical bijection between the set of level-$\Gamma_0(\mathfrak{n})$ structures on $A$
and the set of subgroup $k$-schemes of $A$ of type $\Gamma_0(\mathfrak{n})$.
\end{proposition}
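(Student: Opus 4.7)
The plan is to apply \cref{level-K_structures} to reinterpret level-$\Gamma_0(\mathfrak{n})$ structures as $\Gamma_0(\mathfrak{n})$-orbits of $O_B$-linear isomorphisms $\phi\colon \Lambda/\mathfrak{n}\to A[\mathfrak{n}]$, and then send each orbit to the subgroup $\phi(N_0)$ for a fixed distinguished $O_B$-submodule $N_0\subset\Lambda/\mathfrak{n}$. To apply the cited lemma I would pick an integer $n\in\mathfrak{n}\cap\Z$, observe that $\Gamma(n)\subseteq\Gamma_0(\mathfrak{n})$, and note that the image of $\Gamma_0(\mathfrak{n})$ in $\GL_2(O_F/\mathfrak{n})^\opp$ is the upper-triangular subgroup, whose reduced norm surjects onto $(O_F/\mathfrak{n})^*$ via diagonal matrices of the form $\mathrm{diag}(a,1)$. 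This reduces the statement to producing a $\Gal(\kbar/k)$-equivariant bijection between such $\Gamma_0(\mathfrak{n})$-orbits and the set of subgroups of $A[\mathfrak{n}](\kbar)$ of type $\Gamma_0(\mathfrak{n})$.

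Next I would use our fixed splittings $O_B\otimes_{O_F}O_{F,\mathfrak{p}}\isom M_2(O_{F,\mathfrak{p}})$ for primes $\mathfrak{p}\mid\mathfrak{n}$, together with the Chinese remainder theorem, to identify $\Lambda/\mathfrak{n}\isom M_2(O_F/\mathfrak{n})$ as left $O_B/\mathfrak{n}$-module, and set $N_0$ equal to the column left ideal whose right-stabilizer in $\GL_2(O_F/\mathfrak{n})^\opp$ is precisely the upper-triangular subgroup; concretely one takes $N_0:=M_2(O_F/\mathfrak{n})\cdot e_{22}$. A direct check shows that $N_0$ is free of rank $2$ as an $O_F/\mathfrak{n}$-module, so $C_\phi:=\phi(N_0)\subseteq A[\mathfrak{n}](\kbar)$ is $O_B$-stable and of type $\Gamma_0(\mathfrak{n})$. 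Its $\Gal$-stability is inherited from $\Gal$-invariance of the orbit $[\phi]$, so $C_\phi$ descends to a $k$-subgroup scheme of $A$, and well-definedness on orbits follows from the stabilizer computation.

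For bijectivity over $\kbar$, I would decompose $M_2(O_F/\mathfrak{n})$ as a direct sum of two copies of the column module $(O_F/\mathfrak{n})^2$ factor-by-factor, and either by direct inspection or by Morita equivalence identify left $O_B$-submodules of $M_2(O_F/\mathfrak{n})$ that are free of rank $2$ over $O_F/\mathfrak{n}$ with free rank-$1$ direct summands of $(O_F/\mathfrak{n})^2$, i.e., with $\mathbb{P}^1(O_F/\mathfrak{n})$. Since $\GL_2(O_F/\mathfrak{n})^\opp$ acts transitively on the latter with $N_0$ having stabilizer the upper-triangular subgroup, every subgroup of type $\Gamma_0(\mathfrak{n})$ has the form $\phi(N_0)$ for a $\phi$ unique up to $\Gamma_0(\mathfrak{n})$. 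Taking $\Gal(\kbar/k)$-invariants on both sides then yields the bijection over $k$.

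The main obstacle I foresee is bookkeeping the $\opp$ and right-versus-left action conventions cleanly: $\GL_{O_B/\mathfrak{n}}(\Lambda/\mathfrak{n})$ is identified with $\GL_2(O_F/\mathfrak{n})^\opp$ acting by right multiplication, so one must choose $N_0$ carefully (the \emph{second} column, not the first) in order for its stabilizer to match the upper-triangular subgroup. A secondary subtlety is verifying that a free rank-$1$ $O_F/\mathfrak{n}$-submodule of $(O_F/\mathfrak{n})^2$ is automatically a direct summand—this reduces to primitivity of the generating vector and is where coprimality of $\mathfrak{n}$ with $\Delta$ enters, ensuring that $O_B/\mathfrak{n}$ really is a product of matrix algebras over local rings.
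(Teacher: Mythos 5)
Your proposal is correct and follows essentially the same route as the paper: reduce via \cref{level-K_structures} to $\Gamma_0(\mathfrak{n})$-orbits of $O_B$-linear isomorphisms, send an orbit to the image of the second-column submodule $N_0 = M_2(O_F/\mathfrak{n})e_{22}$ (the paper's $\left(\begin{smallmatrix}0&*\\0&*\end{smallmatrix}\right)$), and descend by Galois equivariance. Your surjectivity/injectivity argument via Morita equivalence and transitivity of $\GL_2(O_F/\mathfrak{n})^\opp$ on $\mathbb{P}^1(O_F/\mathfrak{n})$ is a repackaging of what the paper achieves with \cref{A[n]/O_B/n} and \cref{direct_summand} (which together show that any subgroup of type $\Gamma_0(\mathfrak{n})$ is an $O_B$-direct summand of the free rank-one $O_B/\mathfrak{n}$-module $A[\mathfrak{n}](\kbar)$); one small inaccuracy is that coprimality of $\mathfrak{n}$ with $\Delta$ enters earlier, to get $O_B/\mathfrak{n}\isom M_2(O_F/\mathfrak{n})$ at all, while the ``primitive vector is a summand'' step only needs that $O_F/\mathfrak{n}$ is a product of quotients of discrete valuation rings.
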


\begin{proof}
Take a prime-to-$[\Lambda^\vee : \Lambda]\Delta$ rational integer $n$ divided by $\mathfrak{n}$.
By \cref{level-K_structures}, the set of level-$\Gamma_0(\mathfrak{n})$ structures on $A$ is equal to
the set of the $k$-rational points of the $k$-scheme $\Isom_{O_B}( (O_B/n)_k, A[n] ) / \Gamma'$, and hence is equal to the one of $\Isom_{O_B}( (O_B/\mathfrak{n})_k, A[\mathfrak{n}] ) / \Gamma$,
where $\Gamma' \le ((O_B/n)^*)^\opp$ is the preimage of
\begin{equation*}
\Gamma = \left\{ \left( \begin{matrix} a & b \\ 0 & d \end{matrix} \right) \bigg | a,d \in (O_F/\mathfrak{n})^*, b \in O_F/\mathfrak{n} \right\} \le \GL_2(O_F/\mathfrak{n})^\opp
\end{equation*}
under the canonical map $O_B/n \to O_B/\mathfrak{n} \isom M_2(O_F/\mathfrak{n})$.
We identify them.

Let $\alpha$ be a level-$\Gamma_0(\mathfrak{n})$ structure and $\beta \colon O_B/\mathfrak{n} \to A[\mathfrak{n}]_\kbar$ an isomorphism satisfying that
$\alpha_\kbar = \beta \Gamma$.
Let $C$ be the image of the subgroup $\left( \begin{matrix} 0 & * \\ 0 & * \end{matrix} \right)$ under $\beta$.
Then this is independent of the choice of $\beta$,
is defined over $k$, and has the desired properties.

Conversely let $C$ be a subgroup $k$-scheme of $A$ of type $\Gamma_0(\mathfrak{n})$.
Then $C(\kbar)$ is isomorphic to $(O_F/\mathfrak{n})^2$ as $O_B/\mathfrak{n}$-modules, where on the latter group $O_B/\mathfrak{n}$ acts as matrices.
Choose an isomorphism between them.
By \cref{direct_summand} the group $C(\kbar)$ is a direct summand of $A[\mathfrak{n}](\kbar)$, which is free of rank one over $O_B/\mathfrak{n}$ by \cref{A[n]/O_B/n}.
Therefore choosing another direct summand $C'$ of $A[\mathfrak{n}](\kbar)$ so that $A[\mathfrak{n}](\kbar) = C \oplus C'$
and choosing an isomorphism between $C'$ and $(O_F/\mathfrak{n})^2$,
we obtain an isomorphism $\beta$ between $O_B/\mathfrak{n}$ and $A[\mathfrak{n}](\kbar)$.
We can show that after taking modulo $\Gamma$, it is independent of the choices above
and is defined over $k$.
\end{proof}

\begin{remark} \label{rmk:level_Gamma_0}
\begin{enumerate}
\item[]
\item \label{rmk:level_Gamma_0:Gamma_1}
In the situation of \cref{level_Gamma_0}, if a level-$\Gamma_0(\mathfrak{n})$ structure on $A$ comes from a level-$\Gamma_1(\mathfrak{n})$ structure on $A$, then the corresponding subgroup $k$-scheme of $A$ of type $\Gamma_0(\mathfrak{n})$ is constant.
\item \label{rmk:level_Gamma_0:weak}
\cref{level_Gamma_0} fails for $\mathfrak{n}$ which is relatively prime to $\Delta$ but not so to $[\Lambda^\vee : \Lambda]$.
For that $\mathfrak{n}$, the set of level-$\Gamma_0(\mathfrak{n})$ structures corresponds to a subset of the set of subgroup $k$-schemes of $A$ of type $\Gamma_0(\mathfrak{n})$.
The point is that, the set of level-$\Gamma_0(\mathfrak{n})$ structures is, by \cref{level-n_structures}, a subset of the set of the $k$-rational points of the $k$-scheme $\Isom_{O_B}( (O_B/\mathfrak{n})_k, A[\mathfrak{n}] ) / \Gamma$, which corresponds to the set of subgroup $k$-schemes of $A$ of type $\Gamma_0(\mathfrak{n})$ by completely the same arguments as in the proof of \cref{level_Gamma_0}, where $\Gamma$ is as in the proof of \cref{level_Gamma_0}.
\end{enumerate}
\end{remark}

Let $\mathfrak{n}$ be a prime-to-$\Delta$ integral ideal of $F$, $k$ a field on which $N_{F/\Q}(\mathfrak{n})$ is invertible,
and $A$ be a QM-abelian variety over $k$.
Then the modulo $\mathfrak{n}$ representation gives $G_k \to \Aut_{O_B} A[\mathfrak{n}](\kbar)$.
By \cref{A[n]/O_B/n} the latter group is isomorphic to $((O_B / \mathfrak{n})^*)^\opp$,
which is isomorphic to $\GL_2 (O_F / \mathfrak{n})^\opp$ since now we are assuming that $\mathfrak{n}$ and $\Delta$ are relatively prime.
Now let $C$ be a subgroup $k$-scheme of $A$ of type $\Gamma_0(\mathfrak{n})$ as in \cref{def:subgroup_of_type_Gamma_0(p)}.
Then $C(\kbar)$ is a direct summand of $A[\mathfrak{n}](\kbar)$ as an $O_B$-module by \cref{direct_summand}.
Since $C$ is a $k$-group scheme, the Galois group $G_k$ acts on $C(\kbar)$.
Hence we obtain $G_k \to \Aut_{O_B} C(\kbar)$, whose target is canonically isomorphic to $(O_F/\mathfrak{n})^*$.
We denote the obtained character by $\nu \colon G_k \to (O_F/\mathfrak{n})^*$, and call it the $\Gamma_0$-character (induced by $A$ and $C$).
Hence choosing a suitable isomorphism $O_B/\mathfrak{n} \isom M_2(O_F/\mathfrak{n})$ and identifying them,
the mod-$\mathfrak{n}$ representation $G_k \to \GL_2(O_F/\mathfrak{n})$ factors through the subgroup
\begin{equation*}
\left\{ \left( \begin{matrix} a & b \\ 0 & d \end{matrix} \right) \right\} \subseteq \GL_2(O_F/\mathfrak{n}),
\end{equation*}
where the upper left entry is the character $\nu$.
This $C$ is a constant group if and only if $\nu$ is the trivial character.
In the rest of this paper we study this character, especially for a QM-abelian variety over a number field.

First in order to study its local behavior, we show statements about the reductions of QM abelian varieties over local fields.
Let $k$ be a local field, $v$ the normalized valuation on $k$, $O_k$ the ring of integers,
and $\F_v$ be its residue field of characteristic $\ell$.
Fix a separable closure $\ksep$ of $k$ and an extension of $v$ on it.
Denote $I_k$ the inertia group of the absolute Galois group $G_k$.
Let $A$ be a QM-abelian variety over $k$.

For a rational prime $\ell' \ne \ell$, let
$R_{\ell'} \colon G_k \to \Aut_{O_B} T_{\ell'} A$ be the $\ell'$-adic representation.
Since $A$ has potentially good reduction \cite[III]{BreenLabesse}, we have that $R_{\ell'} (I_k)$ is a finite group
\cite{SerreTate}, and its kernel on $I_k$ is independent of the choice of $\ell'$.
We denote it by $N$ and define $\Phi(A/k) = I_k/N$.

Using the theory of Serre--Tate \cite{SerreTate}, we can show the following proposition.
The proof is completely the same as in \cite[Proposition 3.2]{Jordan}.
Recall that $n_{\lcm} = \lcm \{ m : [F(\zeta_m) : F] \le 2 \}$, where $\zeta_m$ is a primitive $m$-th roof of $1$, and it is divisible by $12$.

\begin{proposition} \label{good_reduction}
There exists a totally ramified extension $k'/k$ of degree $\# \Phi(A/k)$ over which
$A \times_k k'$ has good reduction.
Moreover $k' k^\nr / k^\nr$ is Galois with the Galois group isomorphic to $\Phi(A/k)$, where $k^\nr$ is the maximal unramified extension of $k$.
\end{proposition}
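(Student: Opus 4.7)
The plan is to recast the problem as a splitting question for a profinite group extension. By the N\'eron--Ogg--Shafarevich criterion combined with the definition of $N$, the desired totally ramified $k'/k$ of degree $n := \# \Phi(A/k)$ will exist as soon as one produces a closed subgroup $H \le G_k$ with $H \cdot I_k = G_k$ and $H \cap I_k = N$: the first condition makes $k' := (\ksep)^H$ totally ramified over $k$ (because $k' \cap k^\nr$ is the fixed field of $H \cdot I_k = G_k$) and forces $[k':k] = [I_k : N] = n$, while the second yields $I_{k'} = N$, which kills the action on $T_{\ell'}A$ and hence gives good reduction of $A_{k'}$.

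First I would verify that $N$ is normal not merely in $I_k$ but in all of $G_k$: for $g \in G_k$ and $\iota \in N$ the conjugate $g \iota g^{-1}$ lies in $I_k$, and its image under the representation on $T_{\ell'} A$ is the conjugate of the identity, hence trivial. Setting $L := (\ksep)^N$ therefore makes $L/k$ Galois, and passage to $\tilde G := G_k/N$ produces a short exact sequence of profinite groups
\begin{equation*}
1 \longrightarrow \Phi(A/k) \longrightarrow \tilde G \longrightarrow G_{\F_v} \longrightarrow 1.
\end{equation*}
Finding $H$ as above is then equivalent to finding a continuous section of this sequence and pulling it back to $G_k$.

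The main obstacle is producing such a section, since the orders of $\Phi(A/k)$ and $G_{\F_v} \isom \hat{\Z}$ need not be coprime as supernatural numbers, so profinite Schur--Zassenhaus does not apply. I would instead lift a topological generator of $G_{\F_v}$ to an element $\tau \in \tilde G$ and consider the procyclic closed subgroup $\tilde H := \overline{\langle \tau \rangle}$. By the structure of procyclic profinite groups, $\tilde H$ decomposes as a product $\prod_p C_p$ with each $C_p$ either $\Z_p$ or a finite cyclic $p$-group; the composed map $\tilde H \to G_{\F_v} \isom \hat{\Z}$ is surjective because its image contains the dense subgroup $\Z$, and a finite cyclic $p$-group cannot surject onto the torsion-free group $\Z_p$, which forces $C_p = \Z_p$ for every $p$. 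Hence $\tilde H \to G_{\F_v}$ is a surjection $\hat{\Z} \surjects \hat{\Z}$, which is necessarily an isomorphism (no proper closed subgroup of $\hat{\Z}$ has quotient $\hat{\Z}$), giving the desired section. Finally, with $H \subset G_k$ the preimage of $\tilde H$ and $k' := (\ksep)^H$, the identifications $G_{k' k^\nr} = H \cap I_k = N$ yield $k' k^\nr = L$ and $\Gal(L/k^\nr) = I_k/N = \Phi(A/k)$, completing both assertions.
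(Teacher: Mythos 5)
Your argument is correct, and it takes the same route as the paper's cited source (Jordan, Proposition~3.2): reduce to splitting the extension $1 \to \Phi(A/k) \to G_k/N \to G_{\F_v} \to 1$ by lifting Frobenius, then read off $k'$ as the fixed field of the preimage of the section. Your hands-on verification that the procyclic closure of a Frobenius lift maps isomorphically onto $G_{\F_v}\isom\hat{\Z}$ (using the decomposition $\prod_p C_p$ and the fact that a continuous homomorphism of procyclic groups sends the pro-$p$ part into the pro-$p$ part) is sound, though it can be replaced by a one-line appeal to the fact that $\hat{\Z}$ is a free profinite group of rank one and hence projective, so the surjection $G_k/N \surjects \hat{\Z}$ admits a continuous section; either way the key observations that $N$ is normal in all of $G_k$, that $H \cdot I_k = G_k$ gives total ramification, and that $H\cap I_k = N$ gives good reduction via N\'eron--Ogg--Shafarevich are exactly the needed ones.
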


Take $k'$ as in the proposition.
Then the residue field of $k'$ is $\F_v$, the same as the one of $k$.
Let $A'$ be the model of $A \times_k k'$ over the ring of integers $O_{k'}$,
which is a QM-abelian scheme.

As in \cite{Jordan}, we can show the following:

\begin{proposition} \label{Phi_injects_Aut}
The group $\Phi(A/k)$ is isomorphic to a subgroup of $\Aut_B (A'/\F_v)$.
\end{proposition}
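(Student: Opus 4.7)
The plan is to promote the Galois action on Tate modules to actual automorphisms of the special fiber of $A'$ via the Néron mapping property, and then use Néron--Ogg--Shafarevich to establish injectivity. The key geometric input will be that the canonical descent data for an abelian variety pulled back along an unramified base extension extends to Néron models.

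First I would pass to the completion $L$ of the maximal unramified extension $k^{\nr}$, so that by \cref{good_reduction}, $L' := k' \cdot L$ is a totally ramified Galois extension of $L$ with group $G \isom \Phi(A/k)$, and $A_{L'}$ has good reduction with Néron model $\mathscr{A} = A' \otimes_{O_{k'}} O_{L'}$ over $O_{L'}$ (by compatibility of Néron models with unramified base change). The special fiber $\mathscr{A}_s$ over $\overline{\F}_v$ is the base change of $A'_{\F_v}$, and the $O_B$-action extends to make $\mathscr{A}$ a QM-abelian scheme.

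Next, for each $\sigma \in G$ the automorphism $\sigma \colon O_{L'} \to O_{L'}$ restricts to the identity on $O_L$. Since $A_{L'}$ pulls back from $A_L$, there is a canonical $O_B$-equivariant isomorphism $c_\sigma \colon A_{L'} \to \sigma^* A_{L'}$ of QM-abelian varieties. By the Néron mapping property, $c_\sigma$ extends uniquely to an $O_B$-equivariant isomorphism $\tilde{c}_\sigma \colon \mathscr{A} \to \sigma^* \mathscr{A}$ over $O_{L'}$. Because $L'/L$ is totally ramified, $\sigma$ acts trivially on the residue field $\overline{\F}_v$, so reducing $\tilde{c}_\sigma$ modulo the maximal ideal yields a $B$-equivariant automorphism $\phi_\sigma$ of $\mathscr{A}_s$ over $\overline{\F}_v$; the assignment $\sigma \mapsto \phi_\sigma$ is easily seen to be a group homomorphism $G \to \Aut_B(\mathscr{A}_s / \overline{\F}_v)$.

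For injectivity I would invoke Néron--Ogg--Shafarevich: the canonical reduction map identifies $T_{\ell'} A \isom T_{\ell'} \mathscr{A}_s$ for any $\ell' \ne \ell$, and under this identification the action of $\sigma$ on $T_{\ell'} A$ (via its image in $I_k$) corresponds to the action of $\phi_\sigma$ on $T_{\ell'} \mathscr{A}_s$. Hence if $\phi_\sigma = 1$ then $\sigma$ acts trivially on $T_{\ell'} A$, so $\sigma \in N$, which is exactly the statement that $\sigma$ is trivial in $\Phi(A/k) = I_k/N$.

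The main obstacle will be the final step, descending these automorphisms from $\overline{\F}_v$ to $\F_v$, i.e., checking that the image of $G$ in $\Aut_B(\mathscr{A}_s/\overline{\F}_v)$ lies in the $\Gal(\overline{\F}_v/\F_v)$-invariants $\Aut_B(A'_{\F_v}/\F_v)$. Morally this should follow from the canonicity of the Néron extension, since conjugating $\tilde{c}_\sigma$ by any lift of $\tau \in G_{\F_v}$ to $\Gal(L'/k)$ produces $\tilde{c}_{\tau \sigma \tau^{-1}}$; but making this rigorous requires tracing how $G_{\F_v}$ conjugates the normal subgroup $G \le \Gal(L'/k)$, which is the point where the argument following Jordan will need the most care.
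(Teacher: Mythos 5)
Your argument is essentially the proof the paper has in mind: the paper simply cites Jordan \cite{Jordan} for this, and Jordan's proof of his Proposition 3.2 is exactly the Serre--Tate/N\'eron-model construction you describe (extend the canonical descent isomorphisms $c_\sigma$ to the N\'eron model via the mapping property, reduce, and use N\'eron--Ogg--Shafarevich to get injectivity).

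The ``obstacle'' you flag at the end, however, is not just an unverified detail --- the image genuinely need not land in $\Gal(\overline{\F_v}/\F_v)$-invariants. The map $\Phi(A/k)\to\Aut_B(A'_{\overline{\F_v}})$ is equivariant for the conjugation action of $G_{\F_v}$ on $\Phi(A/k)=I_k/N$ (which makes sense since $N$ is normal in $G_k$) on the source, and the Galois action on automorphisms on the target; so the image is $G_{\F_v}$-stable, but it is fixed pointwise only if $G_{\F_v}$ centralizes $\Phi$, and there is no reason for that. One can already see the failure for elliptic curves: for $E\colon y^2=x^3+px$ over $\Q_p$ with $p\equiv 3\pmod 4$, the group $\Phi$ is cyclic of order $4$, but the reduction is supersingular with $j=1728$ and $\End_{\F_p}(\widetilde E)=\Z[\sqrt{-p}]$, so $\Aut(\widetilde E/\F_p)=\{\pm 1\}$; the extra automorphisms only appear over $\overline{\F_p}$. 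In the QM setting the same tension is visible in the statements themselves: \cref{Aut:local} allows $\Phi$ to be non-cyclic, while over the actual finite field $\F_v$ the group $\Aut_B(A'_{\F_v}/\F_v)$ is abelian (the Frobenius endomorphism lies in $\End_B(A'_{\F_v})$ and its centralizer in the quaternion algebra $\End^0_B(A'_{\overline{\F_v}})$ is the commutative field $F(\pi)$), so a non-cyclic $\Phi$ cannot embed there. Thus the correct reading of the proposition --- and what Jordan proves --- is that $\Phi(A/k)$ embeds into $\Aut_B$ of the \emph{geometric} special fiber $A'_{\overline{\F_v}}$, and this weaker statement is all that is used downstream (\cref{Aut:local} invokes \cref{Aut:nonCM,Aut:CM}, which hold over an arbitrary field). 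So your construction is complete once you state the target as $\Aut_B(A'_{\overline{\F_v}}/\overline{\F_v})$; the attempted descent to $\F_v$ should simply be dropped.
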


With this proposition and the classification of the endomorphism rings of QM-abelian varieties over finite fields,
we classify the group $\Phi(A/k)$.

\begin{proposition} \label{Aut:local}
The order of the group $\Phi(A/k)$ divides $20n_{\lcm}$ and every element $f \in \Phi(A/k)$ has order dividing $n_{\lcm}$.
Further if $\ell \nmid 5 n_{\lcm}$ then $\Phi(A/k)$ is cyclic.
\end{proposition}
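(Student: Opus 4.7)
The plan is to deduce everything from Proposition~\ref{Phi_injects_Aut} and Corollary~\ref{Aut:CM}, together with the standard wild/tame decomposition of the inertia group.

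First I would use Proposition~\ref{Phi_injects_Aut} to realise the finite group $\Phi(A/k)$ as a subgroup of $\Aut_B(A'/\F_v)$. Since $A'/\F_v$ lives over a finite field, it is automatically of CM type, so Corollary~\ref{Aut:CM} applies directly to this subgroup. It yields the order bound $|\Phi(A/k)|\mid 20 n_{\lcm}$, and for each $f\in\Phi(A/k)$ of order $m$ the inequality $[F(\zeta_m):F]\le 2$; the latter is precisely the defining condition that forces $m\mid n_{\lcm}$. This establishes the first two assertions.

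For the cyclicity statement I would first observe that $12\mid n_{\lcm}$ (since $[\Q(\zeta_3):\Q]=[\Q(\zeta_4):\Q]=2$), so the hypothesis $\ell\nmid 5n_{\lcm}$ forces $\ell\notin\{2,3,5\}$ and $\ell\nmid n_{\lcm}$, whence $\gcd(\ell,20n_{\lcm})=1$, and in particular $\gcd(\ell,|\Phi(A/k)|)=1$ by the order bound just obtained. Since $\Phi(A/k)$ is a finite continuous quotient of $I_k$, and the wild inertia $P_k\le I_k$ is pro-$\ell$ while the tame quotient $I_k/P_k$ is pro-cyclic (being $\varprojlim_{(n,\ell)=1}\mu_n$), the image of $P_k$ in $\Phi(A/k)$ must be a finite $\ell$-group, hence trivial by the coprimality above. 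Therefore $\Phi(A/k)$ factors through the pro-cyclic tame inertia and is itself cyclic.

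There is no real obstacle: the structural work has already been done by Proposition~\ref{End:ch0:CM} and Corollary~\ref{Aut:CM}, which give the classification of $\Aut_B$ in positive characteristic, and by Proposition~\ref{Phi_injects_Aut}, which geometrises $\Phi(A/k)$. The only thing to track carefully is the arithmetic of which primes divide $n_{\lcm}$ or $20n_{\lcm}$, which comes down to the elementary observation that $12\mid n_{\lcm}$.
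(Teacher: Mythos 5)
Your proof is correct and follows the same route as the paper: inject $\Phi(A/k)$ into $\Aut_B(A'/\F_v)$ via \cref{Phi_injects_Aut}, bound it using the CM classification in \cref{Aut:CM}, and deduce cyclicity from the pro-cyclicity of the tame quotient of $I_k$ once $\ell$ is coprime to the order. The paper additionally cites \cref{Aut:nonCM} (to cover a non-CM case of the reduction) and invokes Jordan's result separately for $d=1$, both of which you correctly observe are unnecessary, since the special fibre $A'/\F_v$ over a finite field always has CM and the tame-inertia argument works uniformly in $d$.
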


\begin{proof}
The first statement follows from \cref{Aut:nonCM,Aut:CM,Phi_injects_Aut} and from the fact that $\Phi(A/k)$ is finite.
For the second statement, assume $\ell \nmid 5 n_{\lcm}$.
If $d = 1$ then the proposition is due to \cite[Proposition 3.4]{Jordan}.
Hence assume $d \ge 2$.
Let $I_\ell$ be the pro-$\ell$-part of the inertia group of $I_k$,
and $I_t = I_k / I_\ell$ the tame ramification group of $k$.
Now by the assumption our $\ell$ does not divide $20n_{\lcm}$.
Therefore the canonical surjection $I_k \to \Phi(A/k)$ induces a surjection $I_t \to \Phi(A/k)$.
Since every finite quotient of $I_t$ is cyclic, we have the statement.
\end{proof}

From these propositions it is easy to study the local behavior of the $\Gamma_0$-characters.
Let $p_F$ be a prime of $F$ not dividing $\Delta$ and $p$ the rational prime below $p_F$.
Assume that $p$ is invertible in $k$.
Let $C$ be a subgroup $k$-scheme of $A$ of type $\Gamma_0(p_F)$ as in \cref{def:subgroup_of_type_Gamma_0(p)} and
$\nu \colon G_k \to \F_{p_F}^*$ be the $\Gamma_0$-character obtained from $A$ and $C$.
First we study the ramification of $\nu$ in the case of $p \ne \ell$.

\begin{lemma} \label{nu_nr}
If $p \ne \ell$ then the character $\nu^{n_{\lcm}}$ is unramified.
\end{lemma}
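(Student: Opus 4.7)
The plan is to reduce the ramification of $\nu$ to the finite inertia quotient $\Phi(A/k)$, and then invoke \cref{Aut:local} directly. Concretely, first I apply \cref{good_reduction} to obtain a totally ramified extension $k'/k$ of degree $\#\Phi(A/k)$ over which $A\times_k k'$ acquires good reduction; let $\mathcal{A}'$ be the resulting QM-abelian scheme over $O_{k'}$. Since $I_{k'}$ is a normal subgroup of $I_k$ (because $k'k^{\nr}/k^{\nr}$ is Galois with group $\Phi(A/k)$), the quotient $I_k/I_{k'}$ is canonically identified with $\Phi(A/k)$.

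Next I show that $\nu|_{I_{k'}}$ is trivial. Because $p \ne \ell$, the finite flat group scheme $C_{k'} \subseteq A_{k'}[p_F]$ is étale, and it extends to an étale finite subgroup scheme of $\mathcal{A}'$ over $O_{k'}$ (indeed the whole $p_F$-torsion is étale, as $p$ is a unit in $O_{k'}$, and one uses the Néron--Ogg--Shafarevich criterion / the identification $A[p_F](\ksep) = \mathcal{A}'[p_F](\overline{\F_v})$ as Galois modules). Consequently $I_{k'}$ acts trivially on $C(\kbar)$, so $\nu|_{I_{k'}}$ is trivial and $\nu|_{I_k}$ factors through $I_k/I_{k'} \isom \Phi(A/k)$.

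Finally I invoke the first part of \cref{Aut:local}, which says that every element of $\Phi(A/k)$ has order dividing $n_{\lcm}$. Therefore $\nu^{n_{\lcm}}|_{I_k}$ is trivial, i.e., $\nu^{n_{\lcm}}$ is unramified. No real obstacle arises: all the analytic content is already packaged into \cref{good_reduction,Phi_injects_Aut,Aut:local}, so the proof is essentially a two-line synthesis once one identifies the inertia action on $C(\kbar)$ with the $\Phi(A/k)$-action via good reduction at a prime away from the residue characteristic.
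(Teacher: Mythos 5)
Your proof is correct and follows essentially the same route as the paper: pass to the totally ramified extension $k'$ of \cref{good_reduction} (equivalently, the Galois extension $M = k'k^{\nr}$ of $k^{\nr}$), use $p \ne \ell$ to kill $\nu$ on $I_{k'}$, and then bound the order of elements of $\Phi(A/k)$ by $n_{\lcm}$ via \cref{Aut:local}. The only cosmetic difference is that the paper first dispatches the good-reduction case separately and then phrases the bad-reduction case in terms of $\nu|_{G_M} = 1$ rather than $\nu|_{I_{k'}}$ factoring through $\Phi(A/k)$, but these are the same statement.
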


\begin{proof}
If $A/k$ has good reduction, then since the inertia group $I_k$ acts on $A[p](\kbar)$ trivially, the proposition follows.
Assume that $A/k$ has bad reduction.
By \cref{good_reduction,Aut:local}, there exists a tamely ramified Galois extension $M$ of $k^\nr$ whose exponent divides $n_{\lcm}$ and such that
$A_M$ has good reduction over $M$.
By the argument above we have that $\nu |_{G_M} = 1$,
and hence $\nu|_{I_k}^{n_{\lcm}} = 1$.
\end{proof}

Next we study the image of the Frobenius automorphism of $k$ under $\nu^{n_{\lcm}}$ in the case that $\ell \ne p$.

We prepare some symbols.
For a prime $q$ and a positive integer $f$, let
\begin{equation*}
\begin{aligned}
FR(q^f) = \left\{ \beta \in \Qbar \bigg | 
\begin{array}{c}
\beta^2 + b \beta + q^f = 0 \text{ for some } b \in O_F \text{ with } \\
| b |_v \le 2 \sqrt{q}^f \text{ for every infinite place } v \text{ of } F
\end{array}
\right\}.
\end{aligned}
\end{equation*}
Note that for every $\beta \in FR(q^f)$ and for every infinite place $v$ of $F(\beta)$,
the absolute value $| \beta |_v$ is equal to $\sqrt{q}^f$.
Since $O_F \subseteq F \otimes \R$ is discrete, the set $FR(q^f)$ is finite.

\begin{lemma} \label{F(beta)_is_totally_imaginary}
If $q$ is unramified in $F$ and if $f$ is odd, then for any $\beta \in FR(q^f)$, the fields $F(\beta)$ are
totally imaginary quadratic extensions of $F$.
\end{lemma}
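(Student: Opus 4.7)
The plan is to show two things for any $\beta \in FR(q^f)$: (i) $\beta \notin F$, so that $[F(\beta):F] = 2$; and (ii) the discriminant $b^2 - 4q^f$ is totally negative, so that $F(\beta)/F$ is totally imaginary. Step (i) will use both hypotheses (that $q$ is unramified in $F$ and that $f$ is odd), while step (ii) will be almost formal once (i) is established.

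For step (i), I would argue by contradiction. Suppose $\beta \in F$. Then $\bar{\beta} := -b - \beta \in F$ as well, and $\beta \bar{\beta} = q^f$, $\beta + \bar{\beta} = -b$. At each infinite place $v$ of $F$, both $\beta_v$ and $\bar{\beta}_v$ are real, and they have the same sign since their product is $q^f > 0$. Hence
\begin{equation*}
|b|_v = |\beta_v + \bar{\beta}_v| = |\beta_v| + |\bar{\beta}_v| \ge 2\sqrt{|\beta_v \bar{\beta}_v|} = 2\sqrt{q}^f
\end{equation*}
by AM-GM. Combined with the defining inequality $|b|_v \le 2\sqrt{q}^f$, equality forces $\beta_v = \bar{\beta}_v$, i.e., $\beta_v^2 = q^f$ at every archimedean place of $F$. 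Since $F \hookrightarrow F \otimes_\Q \R$ is injective, this lifts to the identity $\beta^2 = q^f$ in $F$. Because $f$ is odd, writing $\beta = \pm q^{(f-1)/2}\sqrt{q}$ shows that $\sqrt{q} \in F$. But then $\Q(\sqrt{q}) \subseteq F$, and since $q$ is ramified in $\Q(\sqrt{q})/\Q$ (the discriminant of $\Q(\sqrt{q})$ is $q$ or $4q$), multiplicativity of ramification indices forces $q$ to be ramified in $F/\Q$, contradicting our hypothesis. Hence $\beta \notin F$, and so $F(\beta)$ is a genuine quadratic extension of $F$.

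For step (ii), the element $D := b^2 - 4q^f \in F$ is the discriminant of the minimal polynomial of $\beta$, and $D \ne 0$ in $F$ because $F(\beta)/F$ is separable of degree $2$. At each archimedean place $v$ of $F$, the defining inequality gives $D_v = b_v^2 - 4q^f \le 0$. But a nonzero element of a number field maps to a nonzero element under any archimedean embedding, so in fact $D_v < 0$ for every infinite place $v$. Thus $F(\beta) = F(\sqrt{D})$ is totally imaginary over $F$.

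The only subtle step is (i), and within it the key move is invoking unramifiedness of $q$ in $F$ to exclude $\sqrt{q} \in F$; the parity of $f$ is what makes $\beta^2 = q^f$ produce $\sqrt{q}$ rather than a power of $q$ lying automatically in $\Q$.
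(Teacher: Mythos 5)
Your proposal is correct and rests on the same key fact as the paper's proof: if the boundary case $|b|_v = 2\sqrt{q}^f$ were attained at an archimedean place, then $\sqrt{q}^f = q^{(f-1)/2}\sqrt{q}$ would lie in (a copy of) $F$, whence $\sqrt{q}\in F$, contradicting that $q$ is unramified in $F$. The paper reaches the conclusion slightly more directly — it upgrades the nonstrict inequality $|b|_v \le 2\sqrt{q}^f$ to a strict one at every place by exactly this observation, which at once gives $b^2-4q^f$ totally negative; whereas you first argue $\beta\notin F$ via the AM--GM detour and then invoke injectivity of the archimedean embeddings to pass from $D\le 0$ to $D<0$. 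Both routes are valid and close enough in spirit that I would call them the same argument.
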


\begin{proof}
Assume that $\beta^2 + b \beta + q^f = 0$ for $b \in O_F$ as in the definition of $FR(q^f)$.
Then $b$ satisfies that $| b |_v < 2 \sqrt{q}^f$ for every infinite place $v$,
for, if $b$ satisfied $ | b |_v = 2 \sqrt{q}^f$ for some $v$, then since $f$ is odd $F$ would contain $\sqrt{q}$,
which is a contradiction.
Hence $b^2 - 4 q^f$ is a totally negative element of $F$, and so $F(\beta)$ is totally imaginary.
\end{proof}

For a local field or a finite field $L$ we denote a (or the) Frobenius element of $G_L$ by $\Frob_L$.

First we consider the case of finite fields.

\begin{lemma} \label{nu_q_equals_to_beta:fin}
Let $p_F$ be a prime of $F$ not dividing $\Delta$, $p$ the rational prime below $p_F$, $\ell$ a rational prime different from $p$ and $f$ be a positive integer.
Let $A$ a QM-abelian variety over $\F_{\ell^f}$, $C$ a subgroup $\F_{\ell^f}$-scheme of $A$ of type $\Gamma_0(p_F)$ as in \cref{def:subgroup_of_type_Gamma_0(p)},
and $\nu \colon G_{\F_{\ell^f}} \to \F_{p_F}^*$ be the $\Gamma_0$-character induced by $A$ and $C$.
Then there exists $\beta \in FR(\ell^f)$ and a prime $\mathfrak{r}$ of $F(\beta)$ above $p_F$ such that $\nu(\Frob_{\F_{\ell^f}}) \equiv \beta \mod \mathfrak{r}$.
\end{lemma}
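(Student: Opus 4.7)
The plan is to derive the lemma from Proposition~\ref{Weil_number} by reducing, modulo $p_F$, the degree-two polynomial equation satisfied by the relative Frobenius endomorphism.

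First I would consider $\pi \in \End(A/\F_{\ell^f})$, the relative Frobenius. Since the $O_B$-structure $i \colon O_B \to \End(A/\F_{\ell^f})$ is defined over $\F_{\ell^f}$, every $i(b)$ is $\F_{\ell^f}$-rational and hence commutes with $\pi$, so $\pi \in \End_B(A)$. By Proposition~\ref{Weil_number} there exists $b \in O_F$ with $|b|_v \le 2\sqrt{\ell^f}$ at every infinite place $v$ of $F$ such that $\pi^2 - b\pi + \ell^f = 0$ in $\End_B(A)$. Let $\beta \in \overline{\Q}$ be a root of $X^2 - bX + \ell^f$; then by definition $\beta \in FR(\ell^f)$.

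Next I would identify $\nu(\Frob_{\F_{\ell^f}})$ with the reduction of $\pi$ on $C$. For $A$ over the finite field $\F_{\ell^f}$, the arithmetic Frobenius $\Frob_{\F_{\ell^f}}$ acts on $A(\overline{\F_{\ell^f}})$ as the relative Frobenius endomorphism $\pi$, and hence on $C(\overline{\F_{\ell^f}}) \subseteq A[p_F](\overline{\F_{\ell^f}})$. Because $p_F \nmid \Delta$, we have $O_B/p_F \cong M_2(O_F/p_F)$, and $C(\overline{\F_{\ell^f}}) \cong (O_F/p_F)^2$ as an $O_B/p_F$-module with the standard matrix action, whose commutant is $O_F/p_F$. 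Since $\pi$ commutes with $O_B$ and preserves $C$, the restriction $\pi|_C$ is scalar multiplication by an element $\alpha \in (O_F/p_F)^*$, and by construction of the $\Gamma_0$-character one has $\nu(\Frob_{\F_{\ell^f}}) = \alpha$.

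Reducing the equation $\pi^2 - b\pi + \ell^f = 0$ modulo $p_F$ and restricting to $C$ gives $\alpha^2 - b\alpha + \ell^f \equiv 0 \pmod{p_F}$, so $\alpha$ is a root of $X^2 - bX + \ell^f \in O_F[X]$ in $O_F/p_F$. Hence over $O_F/p_F$ we have the factorization $X^2 - bX + \ell^f \equiv (X-\alpha)(X-(b-\alpha))$. By the Kummer--Dedekind correspondence applied to the monogenic order $O_F[\beta] \subseteq O_{F(\beta)}$, the factor $(X-\alpha)$ corresponds to a prime $\mathfrak{r}_0$ of $O_F[\beta]$ above $p_F$ with $\beta \equiv \alpha \pmod{\mathfrak{r}_0}$, and any prime $\mathfrak{r}$ of $O_{F(\beta)}$ extending $\mathfrak{r}_0$ satisfies $\beta \equiv \nu(\Frob_{\F_{\ell^f}}) \pmod{\mathfrak{r}}$ as required.

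The main step to justify carefully is the identification of the Galois action on $C(\overline{\F_{\ell^f}})$ with the matrix scalar coming from the Frobenius endomorphism: one needs the coincidence of arithmetic and geometric Frobenius on $\ell'$-torsion for $\ell' \ne \ell$, together with the commutant computation $\End_{O_B/p_F} C(\overline{\F_{\ell^f}}) = O_F/p_F$ and the compatibility of this scalar with the normalization chosen in the definition of $\nu$. Once this is in hand, the remaining algebra is mechanical.
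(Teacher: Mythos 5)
Your proposal is correct and follows essentially the same route as the paper: identify $\nu(\Frob_{\F_{\ell^f}})$ with the image of the relative Frobenius $\pi$ under the map $O_F[\pi]\to\End_{O_B}C(\overline{\F_{\ell^f}})\cong\F_{p_F}$, invoke \cref{Weil_number} for the degree-two relation $\pi^2-b\pi+\ell^f=0$ with $b\in O_F$ of the required archimedean size, and read off a prime $\mathfrak{r}$ of $F(\beta)$ above $p_F$ with $\beta\equiv\nu(\Frob)\pmod{\mathfrak{r}}$. The paper's proof is terser (it obtains $\mathfrak{r}$ directly from the kernel of that ring homomorphism rather than via a Kummer--Dedekind detour), but the substance and key lemmas used are identical.
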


\begin{proof}
Let $\pi$ be the relative Frobenius endomorphism of $A/\F_{\ell^f}$.
The canonical map $O_F[\pi] \to \End_{O_B} C(\overline{\F_{\ell^f}}) \isom \F_{p_F}$, where the first map associates to $\pi$ the endomorphism $\pi|_C$, induces $O_F[\pi]/p_F \to \F_{p_F}$.
The image of $\pi$ under it is $\nu(\Frob_{\F_{\ell^f}})$.
Thus for a prime $\mathfrak{r}$ of the algebraic number field $F(\pi)$ dividing $p_F$
we obtain $\nu(\Frob_{\F_{\ell^f}}) = \pi \mod \mathfrak{r}$.
Therefore by \cref{Weil_number} we get the claim.
\end{proof}

Return to the case of local fields.

\begin{lemma} \label{nu_q_equals_to_beta:local}
Let $p_F$ be a prime of $F$ not dividing $\Delta$ and $p$ the rational prime below $p_F$.
Let $k$ be a local field with the normalized valuation $v$ whose residue characteristic is $\ell$, $A$ a QM-abelian variety over $k$, $C$ a subgroup $k$-scheme of $A$ of type $\Gamma_0(p_F)$, and $\nu \colon G_k \to \F_{p_F}^*$ be the $\Gamma_0$-character induced by $A$ and $C$.
Assume that $p \ne \ell$.
Let $f$ be the absolute inertia degree of $k$, i.e., the degree of the residue field of $k$ over $\F_\ell$.
Then there exists $\beta \in FR(\ell^f)$ and a prime $\mathfrak{r}$ of $F(\beta)$ above $p_F$ such that $\nu(\Frob_k) \equiv \beta \mod \mathfrak{r}$.
\end{lemma}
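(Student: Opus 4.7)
The plan is to reduce the statement to its finite-field analogue, Lemma \ref{nu_q_equals_to_beta:fin}, via passage to an extension over which $A$ has good reduction.

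First I would invoke Proposition \ref{good_reduction} to produce a totally ramified finite extension $k'/k$ such that $A_{k'}$ has good reduction; denote the corresponding abelian scheme model over $O_{k'}$ by $\mathcal{A}$ and its special fiber by $A'/\F_v$. Since $k'/k$ is totally ramified, its residue field is still $\F_v = \F_{\ell^f}$, and the $O_B$-action extends uniquely to $\mathcal{A}$ by the N\'eron mapping property, making $A'$ a QM-abelian variety over $\F_{\ell^f}$.

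Next I would transfer $C$ to the special fiber. Since $p \ne \ell$, the scheme $\mathcal{A}[p_F]$ is finite \'etale over $O_{k'}$, so the closed $O_B$-stable subgroup $C_{k'} \subset A_{k'}[p_F]$ extends uniquely to an $O_B$-stable finite \'etale subgroup scheme $\mathcal{C} \subset \mathcal{A}[p_F]$, whose special fiber $\bar C \subset A'$ is again of type $\Gamma_0(p_F)$ in the sense of Definition \ref{def:subgroup_of_type_Gamma_0(p)}, because specialization furnishes an $O_B$-equivariant bijection $C(\bar k) \cong \bar C(\overline{\F_v})$ that intertwines the $G_{k'}$-action on the left with the $G_{\F_{\ell^f}}$-action on the right through the quotient $G_{k'} \twoheadrightarrow G_{\F_{\ell^f}}$. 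Consequently, $\nu|_{G_{k'}}$ is unramified and, viewed as a character on $G_{\F_{\ell^f}}$, coincides with the $\Gamma_0$-character $\bar\nu$ attached to $(A', \bar C)$.

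Finally I would match up Frobenii. Because $k'/k$ is totally ramified with common residue field $\F_{\ell^f}$, any Frobenius element $\Frob_{k'} \in G_{k'}$, viewed inside $G_k$, is also a valid choice of $\Frob_k$: it restricts to the arithmetic Frobenius of $\F_{\ell^f}$, and Frobenius elements are defined only up to the inertia group. Taking $\Frob_k := \Frob_{k'}$ yields $\nu(\Frob_k) = \bar\nu(\Frob_{\F_{\ell^f}})$, and applying Lemma \ref{nu_q_equals_to_beta:fin} to $(A', \bar C)$ produces the desired $\beta \in FR(\ell^f)$ together with a prime $\mathfrak{r}$ of $F(\beta)$ above $p_F$ satisfying $\bar\nu(\Frob_{\F_{\ell^f}}) \equiv \beta \mod \mathfrak{r}$. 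The only step requiring genuine care is the $O_B$-equivariant identification of $\nu|_{G_{k'}}$ with $\bar\nu$ via specialization; this is routine given the finite \'etaleness of $\mathcal{C}$, but it is the one place where one must track simultaneously the $O_B$-action and the Galois action through the reduction morphism.
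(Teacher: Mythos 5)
Your argument is correct and follows exactly the same route as the paper: pass to a totally ramified extension over which $A$ acquires good reduction (\cref{good_reduction}), observe the residue field is unchanged so Frobenius elements match, identify $\nu$ restricted to this extension with the $\Gamma_0$-character of the special fiber, and invoke \cref{nu_q_equals_to_beta:fin}. The only difference is one of exposition: you spell out the extension of $C$ to a finite \'etale subgroup scheme $\mathcal{C}\subset\mathcal{A}[p_F]$ over $O_{k'}$ and the $O_B$-equivariant, Galois-compatible specialization isomorphism, whereas the paper compresses this into the single assertion that $\nu|_{G_M}$ is unramified and descends to $G_{\F_v}$; your version makes the implicit content of that step visible, which is a small but welcome clarification.
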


\begin{proof}
By \cref{good_reduction} there exists a totally ramified extension $M$ of $k$ such that the abelian variety $A_M$ has good reduction.
Since $M$ is totally ramified over $k$, the residue field of $M$ is equal to $\F_v$, the one of $k$.
We denote the reduction of $A_M$ to the residue field of $M$, which is $\F_v$, by $A_{\F_v}$.
Now again since the inertia degree of the extension $M/k$ is $1$, Frobenius elements $\Frob_M$ of $G_M$ are again Frobenius elements as elements of $G_k$.
Hence we have the equality $\nu(\Frob_k) = \nu |_{G_M}(\Frob_M)$.
Since $A_M$ has good reduction, the character $\nu|_{G_M}$ is unramified,
i.e., it induces
$G_{\F_v} \to \F_{p_F}^*$, which again we denote by the same symbol $\nu|_{G_M}$.
Hence $\nu(\Frob_k) = \nu|_{G_M}(\Frob_{\F_v})$.
Therefore by \cref{nu_q_equals_to_beta:fin} we obtain the result.
\end{proof}

Before studying the character obtained as above over a number field in the next section, we deduce theorems, which are ones of the main theorems of this paper, about the non-existence of the rational points of our Shimura varieties.

Let $\ell$ be a rational prime and $f$ be a positive integer.
We define two sets as follows:
\begin{equation*}
\begin{aligned}
W(\ell^f) & := \left\{ p_F \bigg | 
\text{ a prime of $F$ dividing } 
\ell \mathfrak{D}_{F/\Q} \prod_{\beta \in FR(\ell^f)} N_{F(\beta)/F}(1 - \beta) \right\}, \\
V(\ell^f) & := W(\ell^f) \cup \left\{ p_F \bigg | 
\text{ a prime of $F$ such that } 
N(p_F) < 4^d \right\}.
\end{aligned}
\end{equation*}
These sets only depend on $F, \ell$, and on $f$, and not on $B$.
Also since $FR(\ell^f)$ is finite, these are finite.

\begin{theorem} \label{no_lv_gamma_1_structures}
Let $k$ be a local field whose residue characteristic is $\ell$, $f$ be the absolute inertia degree of $k$, and let $A$ be a QM-abelian variety over $k$.
Then for every $p_F \not\in W(\ell^f) \cup \operatorname{Ram}(B/F)$, there are no constant subgroup $k$-schemes of $A$ of type $\Gamma_0(p_F)$.
\end{theorem}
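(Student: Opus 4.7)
The plan is to argue by contradiction. Suppose $C$ is a constant subgroup $k$-scheme of $A$ of type $\Gamma_0(p_F)$; then the associated $\Gamma_0$-character $\nu\colon G_k \to \F_{p_F}^*$ (as defined just before \cref{nu_nr}) is the trivial character, and in particular $\nu(\Frob_k) = 1$.

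The hypothesis $p_F \notin W(\ell^f)$ forces $p_F \nmid \ell$, so if $p$ denotes the rational prime below $p_F$, then $p \ne \ell$. Together with $p_F \notin \operatorname{Ram}(B/F)$, this places us squarely in the setting of \cref{nu_q_equals_to_beta:local}, which yields an element $\beta \in FR(\ell^f)$ and a prime $\mathfrak{r}$ of $F(\beta)$ above $p_F$ with $\beta \equiv \nu(\Frob_k) = 1 \pmod{\mathfrak{r}}$. Provided $\beta \ne 1$, the element $1-\beta$ is then a nonzero algebraic integer of $O_{F(\beta)}$ divisible by $\mathfrak{r}$, so taking norms down to $F$ gives $p_F \mid N_{F(\beta)/F}(1-\beta)$. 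By the very definition of $W(\ell^f)$, this means $p_F \in W(\ell^f)$, contradicting the hypothesis.

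It therefore remains only to rule out $\beta = 1$, i.e., to verify $1 \notin FR(\ell^f)$. If $1$ were in $FR(\ell^f)$, it would satisfy $1 + b + \ell^f = 0$ for some $b \in O_F$ with $|b|_v \le 2\ell^{f/2}$ at every infinite place $v$; but $1+b+\ell^f = 0$ forces $b = -(1+\ell^f) \in \Z \subset O_F$, so $|b|_v = 1+\ell^f$ at every infinite place. Since $(\ell^{f/2}-1)^2 > 0$ whenever $\ell \ge 2$ and $f \ge 1$, we have $1+\ell^f > 2\ell^{f/2}$, contradicting the defining bound of $FR(\ell^f)$. This elementary numerical check is the only step beyond invoking \cref{nu_q_equals_to_beta:local}, so there is no real obstacle; the substantive content of the theorem has already been packaged into that lemma and, ultimately, into \cref{Weil_number}.
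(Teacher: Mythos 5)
Your proof is correct and takes essentially the same route as the paper's: reduce the triviality of the $\Gamma_0$-character via $\nu(\Frob_k)=1$, invoke \cref{nu_q_equals_to_beta:local} to get $\beta\in FR(\ell^f)$ and a prime $\mathfrak{r}\mid p_F$ with $\beta\equiv 1\pmod{\mathfrak{r}}$, and conclude $p_F\mid N_{F(\beta)/F}(1-\beta)$, contradicting $p_F\notin W(\ell^f)$. Your additional verification that $1\notin FR(\ell^f)$ (via $1+\ell^f>2\ell^{f/2}$) is a small but worthwhile supplement that the paper leaves implicit; without it, $N_{F(\beta)/F}(1-\beta)$ could vanish and the defining product in $W(\ell^f)$ would be $0$, rendering $W(\ell^f)$ all of $\Spec O_F$ and the statement vacuous.
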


Before proving the theorem, note that for a prime $p_F \not\in W(\ell^f) \cup \operatorname{Ram}(B/F)$, we have that $p_F$ is relatively prime to $\Delta$.
Moreover the rational prime $p$ below $p_F$ is not equal to $\ell$, and hence the characteristic of $k$, which is zero or $\ell$, is relatively prime to $p$.
Thus in this situation the assumptions of \cref{def:subgroup_of_type_Gamma_0(p)} are satisfied, and hence we can discuss subgroup $k$-schemes of $A$ of type $\Gamma_0(p_F)$.

\begin{proof}
Let $p_F \not\in W(\ell^f) \cup \operatorname{Ram}(B/F)$ be a prime of $F$ and $p$ be the rational prime below $p_F$, and suppose that there exists a constant subgroup $k$-scheme $C$ of $A$ of type $\Gamma_0(p_F)$.
Let $\nu \colon G_k \to \F_{p_F}^*$ be the $\Gamma_0$-character induced by $A$ and $C$.
Since $p_F \not\in W(\ell^f)$, the rational prime $p$ is not equal to $\ell$.
Thus by \cref{nu_q_equals_to_beta:local}, there exists $\beta \in FR(\ell^f)$ and a prime $\mathfrak{r}$ of $F(\beta)$ above $p_F$ such that $\nu(\Frob_k) \equiv \beta \mod \mathfrak{r}$.
Now since the character $\nu$ is trivial, it follows that $1 \equiv \beta \mod \mathfrak{r}$.
In particular, the prime $p_F$ divides $N_{F(\beta)/F} (1 - \beta)$.
Since now we are assuming that $p_F \not\in W(\ell^f)$, this leads the contradiction, thus the result.
\end{proof}

Note that the exceptional set $W(\ell^f) \cup \operatorname{Ram}(B/F)$ in \cref{no_lv_gamma_1_structures} does not depend on $k$, but only on the absolute inertia degree of $k$ (and on the residue characteristic of $k$).

Using the fact that $M_1^B(p_F)$ is the fine moduli for sufficiently large prime $p_F$ (\cref{M_1^B(n)_is_a_fine_moduli}), we can show the following obvious corollary about the non-existence of rational points of $M_1^B(p_F)$.

\begin{corollary} \label{M_1^B(p)(k)_is_empty}
Let $\ell$ be a rational prime and $k$ be a finite extension of $\Q_\ell$ whose inertia degree is $f$.
Then for every $p_F \not\in V(\ell^f) \cup \operatorname{Ram}(B/F)$ we have $M_1^B(p_F)(k) = \varnothing$.
\end{corollary}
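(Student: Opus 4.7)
The plan is to combine three already-established ingredients: the fineness of $M_1^B(p_F)$ (\cref{M_1^B(n)_is_a_fine_moduli}), the dictionary between level-$\Gamma_0$ structures and subgroup schemes of type $\Gamma_0$ (\cref{level_Gamma_0}), and the non-existence theorem \cref{no_lv_gamma_1_structures}. Suppose for contradiction that $x \in M_1^B(p_F)(k)$ is a rational point. The hypothesis $p_F \notin V(\ell^f)$ includes in particular $N(p_F) \ge 4^d$, and the inclusion of primes dividing $\mathfrak{D}_{F/\Q}$ in $W(\ell^f) \subseteq V(\ell^f)$ together with $p_F \notin \operatorname{Ram}(B/F)$ ensures $p_F$ is coprime to $\Delta$. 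Thus \cref{M_1^B(n)_is_a_fine_moduli} applies, so $M_1^B(p_F)$ is a fine moduli scheme and $x$ corresponds to an honest object $(A,i,\lambda,\eta) \in \mathscr{M}_1^B(p_F)(k)$, with $\eta$ a level-$\Gamma_1(p_F)$ structure on the $*$-polarized QM-abelian variety $(A,i,\lambda)$ over $k$.

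Next I would translate the level structure into a subgroup scheme. The inclusion $\Gamma_1(p_F) \subseteq \Gamma_0(p_F)$ pushes $\eta$ forward to a level-$\Gamma_0(p_F)$ structure on $(A,i,\lambda)$. The existence of $\eta$ itself guarantees, via \cref{level-n_structures}.1, the symplectic isomorphism $\Lambda \otimes \hat{\Z} \isom \hat{T} A_{\kbar}$ required in the hypotheses of \cref{level_Gamma_0}; so the latter proposition applies and produces a subgroup $k$-scheme $C \subseteq A$ of type $\Gamma_0(p_F)$ in the sense of \cref{def:subgroup_of_type_Gamma_0(p)}. Moreover, by \cref{rmk:level_Gamma_0}\eqref{rmk:level_Gamma_0:Gamma_1}, because $C$ is obtained from a genuine $\Gamma_1$-structure, the group scheme $C$ is constant over $k$.

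Finally I would invoke \cref{no_lv_gamma_1_structures}. Since $k$ is a finite extension of $\Q_\ell$ of inertia degree $f$, and since $p_F \notin W(\ell^f) \cup \operatorname{Ram}(B/F)$, that theorem asserts that no QM-abelian variety over $k$ admits a constant subgroup $k$-scheme of type $\Gamma_0(p_F)$. This contradicts the existence of $C$, so $M_1^B(p_F)(k) = \varnothing$, as desired.

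There is no real obstacle beyond bookkeeping: the argument is essentially a three-line unwinding of the moduli interpretation. The only point worth double-checking is that the exclusion of $V(\ell^f) \cup \operatorname{Ram}(B/F)$ really supplies all the coprimality hypotheses needed to invoke \cref{M_1^B(n)_is_a_fine_moduli} (prime-to-$\Delta$ and $N(p_F) \ge 4^d$) and \cref{level_Gamma_0} (prime-to-$[\Lambda^\vee : \Lambda]\Delta$), which follows from the explicit definition of $W(\ell^f)$ and $V(\ell^f)$ and the fact that $\operatorname{Ram}(B/F)$ has been removed.
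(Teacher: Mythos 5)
Your proof is correct and follows the same three-step route as the paper: fineness of $M_1^B(p_F)$ via \cref{M_1^B(n)_is_a_fine_moduli}, translation of the $\Gamma_1(p_F)$-structure into a constant subgroup scheme of type $\Gamma_0(p_F)$ via \cref{level_Gamma_0} and \cref{rmk:level_Gamma_0}, and contradiction with \cref{no_lv_gamma_1_structures}. One small imprecision: excluding $V(\ell^f) \cup \operatorname{Ram}(B/F)$ does guarantee $p_F$ is prime to $\Delta$, but it does \emph{not} by itself guarantee coprimality to $[\Lambda^\vee:\Lambda]$; this is exactly why the paper invokes \cref{rmk:level_Gamma_0}\eqref{rmk:level_Gamma_0:weak}, which gives the one-way correspondence (level-$\Gamma_0$ structure $\Rightarrow$ subgroup scheme) without the extra coprimality, and that direction is all your argument actually uses.
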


\begin{proof}
Let $p_F \not\in V(\ell^f) \cup \operatorname{Ram}(B/F)$ a prime of $F$.
Since $N(p_F) \ge 4^d$, the Shimura variety $M_1^B(p_F)$ is the fine moduli by \cref{M_1^B(n)_is_a_fine_moduli}.
Thus if $M_1^B(p_F)$ had a rational point over $k$, then there would exist a QM-abelian variety $A$ and its level-$\Gamma_1(p_F)$ structure over $k$, and hence a QM-abelian variety $A$ would have a constant subgroup $k$-scheme of $A$ of type $\Gamma_0(p_F)$ by \cref{level_Gamma_0}
(see also \cref{rmk:level_Gamma_0} \eqref{rmk:level_Gamma_0:weak})
and by \cref{rmk:level_Gamma_0} \eqref{rmk:level_Gamma_0:Gamma_1}.
Therefore the existence of $k$-rational points of $M_1^B(p_F)$ contradicts \cref{no_lv_gamma_1_structures}.
\end{proof}

Since for prime-to-$\Delta$ integral ideals $\mathfrak{n} \mid \mathfrak{m}$ of $F$ there exists the canonical map $\mathscr{M}_1^B(\mathfrak{m}) \to \mathscr{M}_1^B(\mathfrak{n})$,
from this theorem obviously it follows, for every rational prime $p$ below a prime $p_F$ of $F$ not in $W(\ell^f) \cup \operatorname{Ram}(B/F)$, that $M_1^B(p)(k) = \varnothing$ under the same notations as in the theorem.
The proof of the theorem shows more:
For every rational prime $p$ not dividing $6 \ell \Delta$ nor $N_{F(\beta)/\Q}(1-\beta)$ for any $\beta \in FR(\ell^f)$, the Shimura variety $M_1^B(p)$ has no $k$-rational points.
The only point is that $M_1^B(p)$ is the fine moduli since $p \ge 4$.

\begin{corollary} \label{uniform_torsion_points_theorem}
Let $\ell$ be a rational prime and $f$ a positive integer.
Then there exists an integer $N$ depending only on $\ell$, $f$, and on $F$, but not on $B$, such that for every QM-abelian variety $A$ over a local field $k$ whose residue characteristic is $\ell$ and whose absolute inertia degree is $f$, if $A$ has a $k$-rational point of prime order $p$, then $p$ divides $N \Delta$.
\end{corollary}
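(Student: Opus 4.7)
The plan is to construct, from the hypothesized $k$-rational point of prime order $p$, a constant subgroup $k$-scheme of $A$ of type $\Gamma_0(p_F)$ for some prime $p_F$ of $F$ above $p$, and then invoke \cref{no_lv_gamma_1_structures}. Explicitly, I would set
\[
    N := \ell \cdot \prod_{p_F \in W(\ell^f)} p(p_F),
\]
where $p(p_F)$ denotes the rational prime below $p_F$; by the definition of $W(\ell^f)$, this integer depends only on $\ell$, $f$, and $F$. Given $A/k$ and a $k$-rational point $P$ of prime order $p$, we may assume $p \ne \ell$ and $p \nmid \Delta$, since otherwise $p \mid N\Delta$ is immediate. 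These assumptions ensure that $p$ is invertible in $k$, that $p$ is unramified in $F$, and that $O_B/p_F \isom M_2(\F_{p_F})$ for every prime $p_F$ of $F$ above $p$.

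Next I would pass to a single prime above $p$. The Chinese remainder decomposition $A[p](\kbar) = \bigoplus_{p_F \mid p} A[p_F](\kbar)$ is induced by orthogonal idempotents in $O_F \otimes \F_p$, and hence is $G_k$-equivariant. Writing $P = \sum_{p_F \mid p} P_{p_F}$, each component $P_{p_F}$ is again $k$-rational, so we may fix some $p_F \mid p$ with $P_{p_F} \ne 0$. By \cref{T_l/O_B_l}, the $O_B$-module $A[p_F](\kbar)$ is free of rank one over $O_B/p_F \isom M_2(\F_{p_F})$; in other words, it is the regular representation of $M_2(\F_{p_F})$.

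Now I would produce the desired subgroup. The $O_B$-orbit $C := O_B \cdot P_{p_F}$ is a constant $k$-subgroup scheme of $A$, since the $O_B$-action is defined over $k$ and commutes with $G_k$. Viewed inside $M_2(\F_{p_F})$ as a cyclic left submodule generated by a nonzero element, elementary representation theory (the proper nonzero left submodules of $M_2(\F_{p_F})$ correspond to lines in $\F_{p_F}^2$) leaves only two possibilities: either $C \isom (O_F/p_F)^2$, in which case $C$ is itself of type $\Gamma_0(p_F)$, or $C = A[p_F](\kbar)$, in which case every point of $A[p_F]$ is $k$-rational and any simple $O_B$-submodule of $A[p_F](\kbar)$ gives a constant subgroup of type $\Gamma_0(p_F)$. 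Applying \cref{no_lv_gamma_1_structures} to such a subgroup forces $p_F \in W(\ell^f) \cup \operatorname{Ram}(B/F)$; since $p \nmid \Delta$ excludes $p_F \in \operatorname{Ram}(B/F)$, we conclude $p_F \in W(\ell^f)$ and hence $p \mid N$, as required.

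The only mildly delicate step is the dichotomy on the size of the $O_B$-orbit $C$, but the "large" case actually makes the construction easier: then the full $p_F$-torsion is $k$-rational, and a $\Gamma_0(p_F)$-subgroup is trivial to produce. The substantive content of the argument is already packaged inside \cref{no_lv_gamma_1_structures}; the role of this proposed proof is simply to feed it the right input.
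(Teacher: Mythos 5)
Your proof is correct and takes essentially the same route as the paper: decompose $A[p]$ along the primes $p_F \mid p$, project the $k$-rational point to a nonzero $p_F$-component, take its $O_B$-orbit to produce a constant subgroup of type $\Gamma_0(p_F)$, and apply \cref{no_lv_gamma_1_structures}. Your explicit dichotomy on the size of $O_B P_{p_F}$ is in fact more careful than the paper's own proof, which asserts without comment that the image of $O_B P$ in some $A[p_F]$ is of type $\Gamma_0(p_F)$ --- your observation that in the remaining case all of $A[p_F]$ is $k$-rational, so any simple $O_B$-submodule works, tidily closes that small gap; the different (but equally $B$-independent) choice of $N$ is immaterial.
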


\begin{proof}
Let $N$ be the product of $\ell$ and $N_{F(\beta)/\Q}(1-\beta)$ for all $\beta \in FR(\ell^f)$.
We show that this $N$ is a desired integer.
Let $k$ be a local field whose residue characteristic is $\ell$ and whose absolute inertia degree is $f$ and $A$ a QM-abelian variety over $k$.
Let $p$ be a rational prime not dividing $\ell\Delta$ and $P$ a $k$-rational point of $A$ of order $p$.
Then there exists a prime $p_F$ of $F$ dividing $p$ such that the image $C$ of the subgroup $O_B P$ of $A(\kbar)$ under the canonical projection $A[p] \to A[p_F]$ is constant subgroup of type $\Gamma_0(p_F)$.
Therefore by \cref{no_lv_gamma_1_structures} our prime $p_F$ belong to $W(\ell^f)$, in particular $p$ divides $N$.
\end{proof}

This corollary is analogous to Merel's theorem on the torsion points of elliptic curves \cite[Th\'eor\`eme]{Merel}.
While Merel's theorem requires a very deep theory of modular curves, our corollary does not require geometry nor arithmetic of Shimura varieties itself, but is derived from standard arguments on QM-abelian varieties.
This is due to the fact that our Shimura varieties have no cusps, i.e., QM-abelian varieties always have potentially good reduction, whereas we need to deal with the cusps in the case of modular curves.

Next we study the local behavior of $\nu$ in the case of $p = \ell$.

Before stating the lemma we define some symbols.
For a field $k$ and for a rational prime $p$ which is invertible in $k$, let $\chi \colon G_k \to \Aut \mu_p(\kbar) \isom \F_p^*$ be the mod $p$ cyclotomic character of $k$.
For a local field $L$ whose residue characteristic is $p$, let $I_{p, L}$ be the pro-$p$-part of $I_L$,
$I_{t,L} = I_L/I_{p,L}$ the tame ramification group of $L$,
and $\theta_{m, L} \colon I_{t,L} \to \mu_m(k^\nr)$ be the surjection defined in \cite[Section 1.3]{SerreGal} for a prime-to-$p$ positive integer $m$.
For a group $H$ of prime-to-$p$ order and for a map $\iota \colon I_L \to H$, it induces the character $I_{t,L} \to H$, which again we denote by the same symbol $\iota$.
Note that by \cite[Proposition 8]{SerreGal}, if $L$ is a finite extension of $\Q_p$ of ramification index $e$, then $\theta_{p-1, L}^e = \chi$.

Let $p_F$ be a prime of $F$ not dividing $\Delta$ and $p$ the rational prime below $p_F$.

\begin{lemma} \label{nu_above_p_local}
Let $k$ be a finite extension of $\Q_p$, $A$ a QM-abelian variety over $k$, $C$ a subgroup $k$-scheme of $A$ of type $\Gamma_0(p_F)$, and $\nu \colon G_k \to \F_{p_F}^*$ be the $\Gamma_0$-character induced by $A$ and $C$.
Write $\# \F_{p_F} = p^r$.
Assume that $n_{\lcm} < p$.
Fix an inclusion $\F_{p_F} \to \overline{\F_k}$ and identify $\mu_{p^r - 1}(k^\nr)$ with $\F_{p_F}^*$.
Then there exists an integer $b$ such that $\nu^{n_{\lcm}} |_{I_k} = \theta_{p^r-1, k}^b$ and that $b = \sum_{i=0}^{r-1} b_i p^i$, where $0 \le b_i \le n_{\lcm}$ for each $i$.
\end{lemma}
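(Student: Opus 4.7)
The plan is to reduce to Raynaud's classification of finite flat $\F_{p^r}$-vector space group schemes, after passing to the extension where $A$ acquires good reduction. Since $\nu$ takes values in $\F_{p_F}^*$, whose order $p^r-1$ is prime to $p$, the restriction $\nu|_{I_k}$ automatically factors through the tame quotient $I_{t,k}$, so we can write $\nu|_{I_k} = \theta_{p^r-1,k}^a$ for an integer $a$ well-defined modulo $p^r-1$. The task becomes to exhibit $b \equiv n_{\lcm} a \pmod{p^r-1}$ whose base-$p$ expansion $b = \sum b_i p^i$ satisfies $0 \le b_i \le n_{\lcm}$.

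First I would invoke \cref{good_reduction} and \cref{Aut:local}. The hypothesis $n_{\lcm} < p$ forces $p \nmid 5 n_{\lcm}$, so $\Phi(A/k)$ is cyclic; combined with the bound on the orders of its elements, $d' := \#\Phi(A/k)$ divides $n_{\lcm}$. Hence $A$ acquires good reduction over a totally tamely ramified extension $k'/k$ of degree $d'$. Over $O_{k'}$ the schematic closure $\mathcal{C}$ of $C_{k'}$ in the abelian scheme $\mathcal{A}$ is a finite flat $O_B$-stable subgroup scheme, hence an $\F_{p_F}$-vector space scheme of rank $2$. Using matrix idempotents in $O_B/p_F \isom M_2(\F_{p_F})$ (available since $p_F \notin \operatorname{Ram}(B/F)$), split $\mathcal{C} = \mathcal{C}_1 \oplus \mathcal{C}_2$ into two $1$-dimensional pieces, each of which is a finite flat $\F_{p_F}$-vector space scheme over $O_{k'}$ on which Galois acts through the character $\nu|_{G_{k'}}$.

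Raynaud's theorem applied to $\mathcal{C}_1$ writes $\nu|_{I_{k'}} = \theta_{p^r-1,k'}^{c}$ with $c \equiv \sum c_i p^i \pmod{p^r-1}$ and $0 \le c_i \le e(k'/\Q_p)$. The standard compatibility $\theta_{p^r-1,k}|_{I_{k'}} = \theta_{p^r-1,k'}^{d'}$, verified directly from the uniformizer description for a totally tamely ramified extension of degree $d'$, then gives $d' a \equiv c \pmod{p^r-1}$, and multiplying by the positive integer $n_{\lcm}/d'$ yields
\[
n_{\lcm} \, a \;\equiv\; \sum_{i=0}^{r-1} (n_{\lcm}/d')\,c_i\, p^i \pmod{p^r - 1}.
\]

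The main obstacle is the sharp bound $(n_{\lcm}/d') c_i \le n_{\lcm}$, i.e.\ $c_i \le d'$: the naive Raynaud bound $c_i \le e(k'/\Q_p) = d' \cdot e(k/\Q_p)$ is too weak, since $e(k/\Q_p)$ is unrestricted. The required sharpening exploits that $\mathcal{C}_1$ is not an arbitrary $1$-dimensional $\F_{p^r}$-vector space scheme, but arises from the $p_F$-torsion of an abelian scheme equipped with a tame descent datum encoding the cyclic $\Phi(A/k)$-action; tracking this $\Phi(A/k)$-equivariant structure through Raynaud modules shows that the Raynaud digits of $\mathcal{C}_1$ are contributed only by the $d'$ units of ramification introduced by the tame twist, not by the full ramification of $k'/\Q_p$, which yields $c_i \le d'$. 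Once this is in place, the hypothesis $n_{\lcm} < p$ guarantees that each $(n_{\lcm}/d')c_i < p$, so no base-$p$ carrying occurs and $b_i := (n_{\lcm}/d') c_i$ is the sought base-$p$ expansion with $b_i \le n_{\lcm}$. The delicate point — justifying the descent of the Raynaud datum up to the tame $\Phi(A/k)$-action — is where the careful bookkeeping will be concentrated.
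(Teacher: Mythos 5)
Your overall strategy — pass to a tame extension with good reduction, take the schematic closure of $C$, apply Raynaud's classification to a rank-one piece, and compare fundamental characters via $\theta_{p^r-1,k}|_{I_{k'}}=\theta_{p^r-1,k'}^{d'}$ — is the same as the paper's. Two details differ from the paper's proof, and both are legitimate alternatives: (i) you reduce to a rank-one object by splitting $\mathcal{C}$ with the matrix idempotents of $O_B/p_F \isom M_2(\F_{p_F})$ into two rank-one $\F_{p^r}$-vector-space schemes, whereas the paper instead picks a field $\F_{p^{2r}}\subset M_2(\F_{p_F})$ and treats $\mathcal{C}$ as a rank-one $\F_{p^{2r}}$-object, then has to run the extra numerical argument showing $b_i=b_{r+i}$; your route is arguably cleaner. (ii) You work over the minimal totally ramified extension of degree $d'=\#\Phi(A/k)$, where the paper passes all the way to degree $n_{\lcm}$; you then multiply the exponent by $n_{\lcm}/d'$, which is fine since $n_{\lcm}<p$ prevents carrying.

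The real problem is the ``sharpening'' $c_i\le d'$. You correctly observe that Raynaud's Th\'eor\`eme 3.4.3 only gives $c_i\le e(k'/\Q_p)=d'\,e(k/\Q_p)$, and then assert that the $\Phi(A/k)$-equivariant/descent structure cuts this down to $c_i\le d'$. This assertion is not justified and is in fact false. Already for $d'=1$ (i.e., $A/k$ has good reduction, $\Phi(A/k)$ trivial) the claim would say every Raynaud digit of $\mathcal{C}_1$ over $O_k$ is $0$ or $1$; but if the reduction is ordinary and one takes $C=A[p_F]^0$, which is a legitimate $\Gamma_0(p_F)$-subgroup, the Galois action on $C(\kbar)$ is via the cyclotomic character, so $\nu|_{I_k}=\chi=\theta_{p-1,k}^{e(k/\Q_p)}$ and the exponent is $e(k/\Q_p)$, not $1$. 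There is no mechanism by which the ``tame twist'' data bounds the Raynaud exponents of a $\text{BT}_1$-subobject over $O_{k'}$; those exponents genuinely grow with the absolute ramification index. What actually happens in the paper is that the bound $b_i\le n_{\lcm}$ coming out of Raynaud is correct precisely because $e(M/\Q_p)=n_{\lcm}\cdot e(k/\Q_p)$ equals $n_{\lcm}$ when $k/\Q_p$ is unramified — and indeed the commented-out earlier version of the lemma has the hypothesis ``unramified finite extension of $\Q_p$'', and every subsequent application (\cref{nu_above_p}, \cref{varphi_classification}) is to an unramified prime $\mathfrak{p}$. So the lemma is really only asserted (and used) for unramified $k$; in that case both your argument and the paper's work directly, with no sharpening needed, and your proposal is then a correct variant. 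For ramified $k$, the sharpening you would need is simply false, and so is the lemma as literally stated.
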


Note that $ p - 1 < (p^r+1)(p-1)/(p^r-1) \le p + 1$.
Since $n_{\lcm}$, which is divided by $12$, is not prime, we have that $n_{\lcm} < (p^r+1)(p-1)/(p^r-1)$ if and only if $n_{\lcm} < p$.
With this in mind, we show the statement.

\begin{proof}
First choose a field whose cardinality is $p^{2r}$ contained in $O_B/p_F = M_2(\F_{p_F})$, which we denote simply by $\F_{p^{2r}}$.
Then trivially $\F_{p^{2r}}$ acts on $C$ as endomorphisms, and we have $\End_{\F_{p^{2r}}} C(\kbar) \isom \F_{p^{2r}}$ canonically.
Also the character $G_k \to \Aut_{\F_{p^{2r}}} C(\kbar) = \F_{p^{2r}}^*$ factors through our character $\nu$.

Using \cref{good_reduction,Aut:local}, take a totally ramified extension $M$ of $k$ of degree $n_{\lcm}$ such that $A_M$ has good reduction over $M$.
Then the group scheme $A[p_F]_M$ is extended to a finite flat group scheme over $O_M$.
Thus the subgroup scheme $C_M$, which is an $\F_{p^{2r}}$-vector scheme in the terminology of \cite{Raynaud} inducing $G_k \to \F_{p^{2r}}^*$, also has a prolongation over $O_M$.
Therefore by \cite[Th\'eor\`eme 3.4.3]{Raynaud}, fixing an inclusion $\F_{p^{2r}} \to \overline{\F_k}$ over $\F_{p_F}$ and identifying $\mu_{p^{2r} - 1}(k^\nr)$ with $\F_{p^{2r}}^*$ there exists an integer $b'$ such that $\nu |_{I_M} = \theta_{p^{2r}-1,M}^{b'}$ and that $b' = \sum_{i=0}^{2r-1} b_i p^i$, where $0 \le b_i \le n_{\lcm}$ for each $i$.
Since its image lies in $\F_{p_F}^*$, it follows by the surjectivity of $\theta_{p^{2r}-1,M}$ that $1+p^r$ divides $b'$, and so $\sum_{i=0}^{r-1} b_i p^i - \sum_{i=0}^{r-1} b_{r+i} p^i$.
Hence by the assumption that $n_{\lcm} < (p^r+1)(p-1)/(p^r-1)$ we obtain that $\sum_{i=0}^{r-1} b_i p^i = \sum_{i=0}^{r-1} b_{r+i} p^i$.
In particular we obtain that $\nu |_{I_M} = \theta_{p^r-1,M}^b$ for $b = \sum_{i=0}^{r-1} b_i p^i$ since $\theta_{p^{2r}-1,k}^{1+p^r} = \theta_{p^r-1,k}$ by definition of these characters.

On the other hand, by \cite[Proposition 5]{SerreGal}
there exists an integer $c$ such that $\nu |_{I_k} = \theta_{p^r-1, k}^c$, and hence $\nu |_{I_M} = \theta_{p^r-1, k}^c |_{I_M}$, which equals to $\theta_{p^r-1,M}^{cn_{\lcm}}$ by \cite[Section 1.4]{SerreGal}.
Therefore $\theta_{p^r-1,M}^{b - cn_{\lcm}} = 1$, which implies that $p^r-1$ divides $b - cn_{\lcm}$.
Consequently $\nu^{n_{\lcm}} |_{I_k} = \theta_{p^r-1,k}^{cn_{\lcm}} = \theta_{p^r-1,k}^b$.
\end{proof}

For later use we need to know more about the integer $b$ as in \cref{nu_above_p_local}.

\begin{lemma} \label{nu_above_p_more_about_b}
Keep the notations as in \cref{nu_above_p_local}.
If $\nu^{n_{\lcm}} |_{I_k} = \theta_{p^r-1,k}^{n_{\lcm}/2}$, then $v_2(p^r-1) + 1 \le v_2(n_{\lcm})$.
\end{lemma}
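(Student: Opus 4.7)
The plan is to reduce the statement to a purely $2$-adic congruence by re-using the Serre-type parametrization of $\nu|_{I_k}$ obtained in the previous lemma. Since $\nu$ takes values in the prime-to-$p$ group $\F_{p_F}^*$, its restriction to $I_k$ kills the pro-$p$ inertia $I_{p,k}$ and hence factors through the tame quotient $I_{t,k}$. By \cite[Proposition 5]{SerreGal}, invoked already in the proof of \cref{nu_above_p_local}, there exists an integer $c$ with $\nu|_{I_k} = \theta_{p^r-1,k}^c$. Raising this to the $n_{\lcm}$-th power and combining with the hypothesis gives the equality of characters
\begin{equation*}
\theta_{p^r-1,k}^{cn_{\lcm}} = \nu^{n_{\lcm}}|_{I_k} = \theta_{p^r-1,k}^{n_{\lcm}/2}.
\end{equation*}

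Next, I would use surjectivity of $\theta_{p^r-1,k} \colon I_{t,k} \to \mu_{p^r-1}(k^{\nr})$ to translate this equality of characters into the integer congruence $cn_{\lcm} \equiv n_{\lcm}/2 \pmod{p^r-1}$. Multiplying by $2$ gives $n_{\lcm}(2c-1) \equiv 0 \pmod{2(p^r-1)}$, and since $2c-1$ is odd the $2$-adic valuation of the left-hand side is exactly $v_2(n_{\lcm})$, while $v_2(2(p^r-1)) = v_2(p^r-1)+1$. The divisibility therefore forces $v_2(n_{\lcm}) \ge v_2(p^r-1)+1$, as claimed.

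The argument is essentially a one-line computation once the Serre parametrization is in hand; there is no real obstacle. The only point that requires a brief check is that the $\theta^c$-parametrization applies in our setting, but this is immediate because $\nu|_{I_k}$ is tame (its order divides $p^r-1$, coprime to $p$), so the statement of \cite[Proposition 5]{SerreGal} applies verbatim with no additional hypotheses.
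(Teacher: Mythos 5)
Your proof is correct and is essentially identical to the paper's: both start from $\nu|_{I_k}=\theta_{p^r-1,k}^c$ (from the proof of \cref{nu_above_p_local}), deduce $p^r-1 \mid cn_{\lcm}-n_{\lcm}/2$ from surjectivity of $\theta_{p^r-1,k}$, and finish with a $2$-adic valuation count; your multiplication by $2$ is just a cosmetic rearrangement of the paper's observation that $v_2(cn_{\lcm}-n_{\lcm}/2)=v_2(n_{\lcm})-1$ since $2c-1$ is odd.
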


\begin{proof}
Keep the notations as in the proof of \cref{nu_above_p_local}, and assume that $\nu^{n_{\lcm}} |_{I_k} = \theta_{p^r-1,k}^{n_{\lcm}/2}$.
Then $\theta_{p^r-1,k}^{cn_{\lcm} - (n_{\lcm}/2)} |_{I_k} = 1$.
It follows, from the surjectivity of $\theta_{p^r-1,k}$, that $p^r-1$ divides $cn_{\lcm} - (n_{\lcm}/2)$.
Since $v_2(n_{\lcm}) = v_2(cn_{\lcm} - (n_{\lcm}/2)) + 1$ we have the result.
\end{proof}

\section{Characters of QM abelian varieties with level-$\Gamma_0$ structures over global fields} \label{section:global}

In this section we study the rational points of the Shimura varieties $M_0^B(p_F)$.
At first we define a character obtained from a rational point of $M_0^B(p_F)$.
If a rational point $x$ of $M_0^B(p_F)$ over a field $k$ of characteristic zero has a model $X$ over $k$, then $X$ induces the $\Gamma_0$-character $\nu$ over $k$ as in the previous section.
Even if $x$ has no models over $k$, we can canonically define a character over $k$ which is essentially equal to ``$\nu^{n_{\lcm}}$" in a certain sense.
(Recall that $n_{\lcm} = \lcm \{ m : [F(\zeta_m) : F] \le 2 \}$, where $\zeta_m$ is a primitive $m$-th roof of $1$, and it is divisible by $12$.)

Let $k$ be a field of characteristic zero, $p_F$ a prime of $F$ not dividing $\Delta$, and $p$ be the rational prime below $p_F$.
Let $x \in M^B_0(p_F)(k)$ be a rational point, $X \in \mathscr{M}^B_0(p_F)(\kbar)$ a model of $x$ over $\kbar$, $(A, i)$ the underlying QM-abelian variety, and $C$ the underlying subgroup $\kbar$-scheme of $A$ of type $\Gamma_0(p_F)$.
For each $\sigma \in G_k$, choose an isomorphism $\lambda_\sigma \colon X^\sigma \to X$ over $\kbar$,
and define $\iota(\sigma)$ to be an automorphism of $C(\kbar)$ which sends $P$ to $\lambda_\sigma(P^\sigma)$.
Actually $\iota(\sigma)$ belongs to $\Aut_{O_B} C(\kbar)$.
For, since $\lambda_\sigma$ is an isomorphism of objects of $\mathscr{M}^B_0(p_F)(\kbar)$, for each $b \in O_B$, as endomorphisms of $A$, we have that $\lambda_\sigma i(b)^\sigma = i(b) \lambda_\sigma$.
Hence $\iota(\sigma) (i(b)P) = \lambda_\sigma i(b)^\sigma (P^\sigma) = i(b) \iota(\sigma) (P)$, which means that $\iota(\sigma) \in \Aut_{O_B} C(\kbar)$.
Thus associating to each $\sigma \in G_k$ the automorphism $\iota(\sigma)$, we obtain a morphism $G_k \to \Aut_{O_B} C(\kbar)$, and dividing its target by $\Aut (X/\kbar)$, we obtain a morphism $G_k \to \Aut_{O_B} C(\kbar) / \Aut (X/\kbar)$, which is independent of the choice of isomorphisms $\lambda_\sigma$.
Now $\Aut_{O_B} C(\kbar)$ is canonically isomorphic to $\F_{p_F}^*$ by $i$, and the exponent of $\Aut (X/\kbar)$ divides $n_{\lcm}$ by \cref{Aut:nonCM,Aut:CM}.
Therefore by taking the $n_{\lcm}$-th power, we obtain a morphism $\varphi_X \colon G_k \to (\F_{p_F}^*)^{n_{\lcm}}$.
We claim that $\varphi_X$ is actually independent of the choice of a model $X/\kbar$.
Let $X'/\kbar$ be another model of $x$, $(A', i')$ the underlying QM-abelian variety, $C'$ the underlying subgroup $\kbar$-scheme of $A'$ of type $\Gamma_0(p_F)$, and $f \colon X \to X'$ an isomorphism over $\kbar$.
For each $\sigma \in G_k$, choose an isomorphism $\lambda_\sigma \colon X^\sigma \to X$ and let $\lambda'_\sigma := f \lambda_\sigma (f^\sigma)^{-1} \colon X'^\sigma \to X'$.
Let $\iota(\sigma)$ be an isomorphism belonging to $\Aut_{O_B} C(\kbar)$ which sends $P$ to $\lambda_\sigma(P^\sigma)$, and similarly for $\iota'(\sigma) \in \Aut_{O_B} C'(\kbar)$.
Then $\iota'(\sigma) = f \iota(\sigma) f^{-1}$.
Since $f$ is an isomorphism of objects of $\mathscr{M}^B_0(p_F)(\kbar)$, we have following commutative diagram:
\begin{equation*}
\begin{aligned}
\xymatrix{
 & O_F \ar[ld]_i \ar[rd]^{i'} &  \\
\End_{O_B} C(\kbar) \ar[rr]^{f \circ (-) \circ f^{-1}} & & \End_{O_B} C'(\kbar).
}
\end{aligned}
\end{equation*}
Therefore $\varphi_X = \varphi_{X'}$.
We denote the obtained character $G_k \to (\F_{p_F}^*)^{n_{\lcm}}$, which depends only on $x$, by $\varphi_x$, and call it the $M_0$-character obtained from $x \in M^B_0(p_F)(k)$.

\begin{remark}
As in the case of $M^B$, which is described in \cite{Skorobogatov}, for sufficiently large $p_F$ there exists an intermediate covering $Y^B(p_F)$ of the canonical map $M^B_1(p_F) \to M^B_0(p_F)$ such that $Y^B(p_F) \to M^B_0(p_F)$ is a torsor under the constant group $(\F_{p_F}^*)^{n_{\lcm}}$.
Then we can show that for a field $k$ of characteristic zero and for a rational point $x \in M^B_0(p_F)(k)$, the pullback of $Y^B(p_F) \to M^B_0(p_F)$ along with $x \colon \Spec k \to M^B_0(p_F)$ corresponds, under the correspondence between torsors and first cohomology classes, to the $M_0$-character $\varphi_x$.
Thus by studying $M_0$-characters over a number field $k$, we can study not only the $k$-rational points of $M^B_0(p_F)$ but also the Brauer--Manin set $M^B_0(p_F)(\A_k)^{\operatorname{Br}}$.
We discuss in more detail in a future work.
\end{remark}

As we have mentioned, the $M_0$-characters are analogous to the $n_{\lcm}$-th powers of the $\Gamma_0$-characters:

\begin{lemma} \label{varphi=nu^n}
Let $k$ be a field of characteristic zero and $x \in M^B_0(p_F)(k)$.
If $x$ has a model $X$ over $k$, then $\varphi_x = \nu^{n_{\lcm}}$, where $\nu$ is the $\Gamma_0$-character obtained from $X$.
\end{lemma}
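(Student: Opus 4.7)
The plan is to exploit the freedom in the definition of $\varphi_x$: the $M_0$-character does not depend on the choice of the isomorphisms $\lambda_\sigma \colon X^\sigma \to X$, so once $x$ has a $k$-model we may pick the most convenient family of $\lambda_\sigma$, namely the canonical one coming from descent, and then read off the identity $\varphi_x = \nu^{n_{\lcm}}$ directly.

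More precisely, I would argue as follows. Assume $x$ has a model $X_0 \in \mathscr{M}_0^B(p_F)(k)$, and take $X := X_0 \otimes_k \kbar$ as the chosen model over $\kbar$; write $(A_0, i_0, \lambda_0, C_0)$ for the underlying data over $k$ and $(A, i, \lambda, C) = (A_0, i_0, \lambda_0, C_0) \otimes_k \kbar$ for its base change. For every $\sigma \in G_k$ the pullback $X^\sigma = \sigma^* X$ is canonically isomorphic to $X$ via the tautological base-change isomorphism; call this canonical isomorphism $\lambda_\sigma \colon X^\sigma \to X$. Under the identification $C^\sigma(\kbar) = C_0(\kbar) = C(\kbar)$, the map $\lambda_\sigma$ is the identity, and the Galois action $P \mapsto P^\sigma$ on $C^\sigma(\kbar)$ coincides with the Galois action on $C(\kbar) = C_0(\kbar)$ defining $\nu$. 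Therefore
\begin{equation*}
\iota(\sigma)(P) \;=\; \lambda_\sigma(P^\sigma) \;=\; P^\sigma \;=\; \nu(\sigma)(P),
\end{equation*}
so $\iota(\sigma) = \nu(\sigma)$ in $\Aut_{O_B} C(\kbar) = \F_{p_F}^*$.

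It remains to translate this equality into the statement for $\varphi_x$. Recall that $\varphi_X$ was obtained from $\iota$ by first reducing modulo $\Aut(X/\kbar)$ and then raising to the $n_{\lcm}$-th power. Since the exponent of $\Aut(X/\kbar)$ divides $n_{\lcm}$ by \cref{Aut:nonCM,Aut:CM}, the $n_{\lcm}$-th power map $\F_{p_F}^* \to (\F_{p_F}^*)^{n_{\lcm}}$ factors through the quotient $\F_{p_F}^*/\Aut(X/\kbar)$, so the composition applied to $\iota(\sigma)$ is simply $\iota(\sigma)^{n_{\lcm}}$. Combining with the identity $\iota(\sigma) = \nu(\sigma)$ obtained above, we conclude
\begin{equation*}
\varphi_x(\sigma) \;=\; \varphi_X(\sigma) \;=\; \iota(\sigma)^{n_{\lcm}} \;=\; \nu(\sigma)^{n_{\lcm}},
\end{equation*}
which is the claim.

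There is no genuine obstacle in this argument; the only point that deserves care is the verification that, for a $k$-rational model, the canonical $\lambda_\sigma$ really act as the identity on the $\kbar$-points of $C$. This is pure formal nonsense about base change and the Galois action on $k$-scheme points, and it is the unique place where the hypothesis that $X$ descends to $k$ is used. Everything else is a direct unwinding of the construction of $\varphi_x$.
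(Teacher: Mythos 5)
Your proof is correct and takes essentially the same route as the paper's: choose the tautological isomorphisms $\lambda_\sigma \colon X^\sigma \to X$ supplied by the $k$-model, observe that then $\iota(\sigma)(P) = P^\sigma$ so that $\iota = \nu$ as maps to $\Aut_{O_B}C(\kbar) = \F_{p_F}^*$, and unwind the definition of $\varphi_x$. The paper's proof is a terser version of exactly this argument.
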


\begin{proof}
In this case for each $\sigma \in G_k$ take the natural isomorphism $X^\sigma \to X$.
Then $\iota(\sigma)(P) = P^\sigma$ for each $P \in C(\kbar)$, where $C$ is the underlying subgroup $k$-scheme of type $\Gamma_0(p_F)$.
Thus we obtain the result.
\end{proof}

With this lemma, in order to study the behavior of the $M_0$-character of a rational point, we need to find its models over a nice extension of the base field, and in order to find them, we need to study the Galois action on the automorphism group.

\begin{lemma} \label{AutX_injects_into_AutC}
Let $k$ be a field of characteristic zero, $X \in \mathscr{M}^B_0(k)$, $A$ the underlying QM-abelian variety, and $C$ be the underlying subgroup $k$-scheme of $A$ of type $\Gamma_0(p_F)$.
Assume that $N(p_F)$, the absolute norm of $p_F$, is greater than $4^d$.
Then the canonical map $\Aut X/k \to \Aut_{O_B} C(\kbar)$ is injective.
\end{lemma}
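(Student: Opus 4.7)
The plan is to follow the strategy used in the proof of Proposition~\ref{M_1^B(n)_is_a_fine_moduli}, adapted to the $\Gamma_0$-setting by bounding the reduced norm of a certain endomorphism from above and below. Suppose for contradiction that some $f \in \Aut X/k$ acts as the identity on $C(\kbar)$ while $f \neq 1$; I will derive a contradiction from the hypothesis $N(p_F) > 4^d$.

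By Corollary~\ref{B-simple} the QM-abelian variety $A$ is $B$-simple, so the nonzero $B$-equivariant morphism $f - 1$ is an isogeny by Lemma~\ref{isogeny}. Since $f$ preserves the polarization $\lambda$ one has $f^t = f^{-1}$, and Corollary~\ref{dual_isogeny} applied to $f - 1$ produces a nonzero element $c := (f-1)^t(f-1) \in F$ with $\deg(f-1) = N_{F/\Q}(c)^2$. Expanding yields $c = 2 - (f + f^{-1})$, which lies in $\End_{O_B} A \cap F$ and therefore in $O_F$.

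The contradiction will come from bounding $|N_{F/\Q}(c)|$ in two ways. On one hand, $f$ has finite order by \cite[Section 21, Theorem 5]{MumfordAV}, so at every infinite place $v$ of $F$ one has $|f + f^{-1}|_v \leq 2$ (reducing to a trace computation in the at most quadratic extension $F(f)/F$ of Proposition~\ref{Nrd_lies_in_F}), whence $|c|_v \leq 4$ and $|N_{F/\Q}(c)| \leq 4^d$. On the other hand, $f|_{C(\kbar)} = \id$ forces $c \cdot P = (f-1)^t(f-1)(P) = 0$ for every $P \in C(\kbar)$, so the scalar action of $c \in O_F$ on $C(\kbar) \cong (O_F/p_F)^2$ vanishes; this forces $c \in p_F$ and hence $|N_{F/\Q}(c)| \geq N(p_F) > 4^d$, contradicting the previous bound. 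The only delicate point will be verifying $c \in O_F$ rather than merely $c \in F$, but this is handled by the identity $c = 2 - (f + f^{-1})$, both of whose summands are automorphisms of $A$ and thus lie in $\End A$.
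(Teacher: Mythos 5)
Your proof is correct and follows essentially the same strategy as the paper's: suppose $f$ acts trivially on $C(\kbar)$ but $f \neq 1$, use $B$-simplicity and Lemma~\ref{isogeny} to get that $f-1$ is an isogeny, set $c = (f-1)^t(f-1) = 2 - (f+f^{-1}) \in O_F$, and squeeze $|N_{F/\Q}(c)|$ between $N(p_F)$ and $4^d$. The only (cosmetic) deviation is in deriving the lower bound: the paper notes $C \subseteq \ker(f-1)$ so $N(p_F)^2 = \#C(\kbar)$ divides $\deg(f-1) = N_{F/\Q}(c)^2$, whereas you observe directly that $c$ annihilates $C(\kbar) \cong (O_F/p_F)^2$, forcing $c \in p_F$; both correctly yield $N(p_F) \le |N_{F/\Q}(c)|$.
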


\begin{proof}
Let $f \in \Aut X/k$ be a nontrivial automorphism and assume that $f = 1$ as an element of $\Aut_{O_B} C(\kbar)$.
Then by \cref{isogeny,B-simple}, an endomorphism $f - 1 \in \End_{O_B} A/k$ is an isogeny.
Since $C \subseteq \ker f - 1$, the degree of $f-1$ is divided by $\# C(\kbar) = N(p_F)^2$.
Moreover, if we let $c := (f-1)^t \circ (f-1) \in O_F$ (see \cref{dual_isogeny}), then since $N_{F/\Q}(c)^2 = \deg (f-1)$ by \cref{dual_isogeny}, we have that $N(p_F)$ divides $N_{F/\Q}(c)$, in particular $N(p_F) \le N_{F/\Q}(c)$.

Now since $f$ preserves the polarization, we have that $f^t = f^{-1}$, and hence for $a := 2 - c$, our automorphism $f$ satisfies the equation $T^2 - aT + 1 = 0$ in $\End_B^0 A/k$.
Since $f$ has finite order, we obtain that $a^2 - 4$ is zero or totally negative in $F$, i.e., for every infinite place $v$ of $F$ we have that $|a|_v \le 2$.
Thus $|c|_v \le 4$.
Therefore we get an inequality $N(p_F) \le N_{F/\Q}(c) \le 4^d$, a contradiction.
\end{proof}

\begin{lemma} \label{G_k_acts_trivially_on_AutX}
Let $k$ be a field of characteristic zero, $x \in M^B_0(p_F)(k)$ a rational point, $X$ a model of $x$ over $\kbar$, and $C$ be the underlying subgroup $\kbar$-scheme of type $\Gamma_0(p_F)$.
Assume that $N(p_F)$ is greater than $4^d$.
Define an action of $G_k$ on $\End_{O_B} C(\kbar)$ from right as the one on $\Aut X/\kbar$, see \cref{descent}.
Then the actions of $G_k$ on $\Aut X/\kbar$ and $\Aut_{O_B} C(\kbar)$ are trivial.
\end{lemma}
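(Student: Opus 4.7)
The plan is to show that the twisted $G_k$-action on $\Aut_{O_B} C(\kbar)$ is trivial, and then to deduce triviality on $\Aut X/\kbar$ via the injection from \cref{AutX_injects_into_AutC}, which applies in our setting with $k$ replaced by $\kbar$ thanks to the hypothesis $N(p_F) > 4^d$.

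For each $\sigma \in G_k$ I fix an isomorphism $\lambda_\sigma \colon X \to X^\sigma$ in $\mathscr{M}^B_0(p_F)(\kbar)$. Because $\lambda_\sigma$ preserves the level structure, it restricts to an isomorphism $\lambda_\sigma|_C \colon C \to C^\sigma$, and the twisted action on $\Aut_{O_B} C(\kbar)$ is $g \cdot \sigma = (\lambda_\sigma|_C)^{-1} \circ g^\sigma \circ \lambda_\sigma|_C$, in parallel with the action on $\Aut X/\kbar$ from \cref{descent}.

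Next I would use the canonical identification $\Aut_{O_B} C(\kbar) \isom \F_{p_F}^*$ coming from the $O_F$-action: each $g$ equals $i(\alpha)|_C$ for a unique $\alpha \in \F_{p_F}^*$. Since $\lambda_\sigma$ is a morphism in $\mathscr{M}^B_0(p_F)$, it intertwines the $O_B$-structures $i$ on $A$ and $i^\sigma$ on $A^\sigma$; that is, $\lambda_\sigma \circ i(b) = i^\sigma(b) \circ \lambda_\sigma$ for every $b \in O_B$. On the other hand, because $\alpha$ lies in the Galois-fixed finite subring $O_F/p_F$, Galois conjugation gives $g^\sigma = i^\sigma(\alpha)|_{C^\sigma}$. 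Combining the two, $g \cdot \sigma = (\lambda_\sigma|_C)^{-1} \circ i^\sigma(\alpha)|_{C^\sigma} \circ \lambda_\sigma|_C = i(\alpha)|_C = g$, so the twisted action on $\Aut_{O_B} C(\kbar)$ is trivial.

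Finally, both twisted actions are built from the same family $\{\lambda_\sigma\}$, which makes the injection $\Aut X/\kbar \injects \Aut_{O_B} C(\kbar)$ from \cref{AutX_injects_into_AutC} equivariant. Triviality of the action on the ambient group therefore forces triviality on $\Aut X/\kbar$. The only delicate point I anticipate is the bookkeeping between $i$ on $A$ and $i^\sigma$ on $A^\sigma$ when identifying $g^\sigma$; beyond this the argument needs nothing more than commutativity of $\F_{p_F}^*$ and compatibility of $\lambda_\sigma$ with the QM-structure.
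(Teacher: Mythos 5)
Your proof is correct and follows essentially the same route as the paper: reduce via the injection from \cref{AutX_injects_into_AutC} to showing triviality on $\Aut_{O_B} C(\kbar)$, then use that $\lambda_\sigma$ intertwines the $O_B$-structures together with the surjectivity of $i \colon O_F \to \End_{O_B} C(\kbar)$ (lifting $\alpha \in \F_{p_F}^*$ to $O_F$ and conjugating). Your more explicit bookkeeping of $i$ versus $i^\sigma$ and the equivariance of the injection fills in details the paper leaves implicit, but the argument is the same.
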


\begin{proof}
By \cref{AutX_injects_into_AutC}, it suffices to show the triviality of the Galois module $\Aut_{O_B} C(\kbar)$.
Let $(A,i)$ be the underlying QM-abelian variety over $\kbar$ of $X$.
Then for every $\sigma \in G_k$, for every isomorphism $\lambda_\sigma \colon X^\sigma \to X$ over $\kbar$, and for every $b \in O_B$, the following diagram is commutative:
\begin{equation*}
\begin{aligned}
\xymatrix{
A^\sigma \ar[r]^{\lambda_\sigma} \ar[d]_{i(b)^\sigma} & A \ar[d]^{i(b)} \\
A^\sigma \ar[r]^{\lambda_\sigma} & A.
}
\end{aligned}
\end{equation*}
Therefore, since $i \colon O_F \to \End_{O_B} C(\kbar)$ is surjective, $G_k$ acts on $\Aut_{O_B} C(\kbar)$ trivially.
\end{proof}

Any rational point over a local field has models over the maximal unramified extension.

\begin{lemma} \label{model_over_knr}
Let $\ell$ be a rational prime, $k$ a finite extension of $\Q_\ell$, and $x \in M_0^B(p_F)(k)$.
Then $x$ has a model over $k^\nr$.
\end{lemma}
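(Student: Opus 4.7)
The plan is to apply the descent-obstruction framework of Remark~\ref{descent} to the Galois extension $\kbar/k^{\nr}$, and to show that the resulting $H^{2}$-obstruction vanishes for cohomological-dimension reasons.

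First I would pick a model $X \in \mathscr{M}_0^B(p_F)(\kbar)$ of $x$. By Remark~\ref{descent} applied to the Galois extension $\kbar/k^{\nr}$, the obstruction class $\operatorname{ob}(X, \kbar/k^{\nr}) \in H^{2}(G_{k^{\nr}}, \Aut_{\kbar} X)$ vanishes if and only if $X$ descends to $k^{\nr}$. The automorphism group $\Aut_{\kbar} X$ is finite, because the automorphism group of any polarized abelian variety is finite \cite[\S 21, Theorem 5]{MumfordAV} (and in fact is cyclic of exponent dividing $n_{\lcm}$ or at worst of order dividing $20 n_{\lcm}$ by \cref{Aut:nonCM,Aut:CM}).

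The main step is then the vanishing $H^{2}(G_{k^{\nr}}, M) = 0$ for every finite continuous $G_{k^{\nr}}$-module $M$. The field $k^{\nr}$ is henselian with respect to its unique extension of the valuation on $k$, and its absolute Galois group coincides with that of its completion $\widehat{k^{\nr}}$, a complete discretely valued field of characteristic $0$ whose residue field is the algebraic closure $\overline{\F_k}$, which is separably closed and therefore of cohomological dimension $0$. By the classical bound for complete discrete valuation fields with perfect residue field (see Serre, \emph{Cohomologie Galoisienne}, II.\S4.3, or \emph{Corps Locaux}, Chapter~XII), one obtains $\operatorname{cd}(G_{k^{\nr}}) \le 1 + \operatorname{cd}(\overline{\F_k}) = 1$ at every prime. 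Hence $H^{2}(G_{k^{\nr}}, \Aut_{\kbar} X) = 0$, the obstruction is trivial, and $x$ acquires a model over $k^{\nr}$.

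The only delicate point is the cohomological-dimension bound at the prime $\ell$ equal to the residue characteristic, since for mixed-characteristic local fields the $\ell$-part of Galois cohomology can a priori behave badly; here however one exploits that $\widehat{k^{\nr}}$ has characteristic zero and has a perfect (even algebraically closed) residue field, so Serre's theorem applies uniformly for every prime and the bound $\operatorname{cd}(G_{k^{\nr}}) \le 1$ holds without exceptions.
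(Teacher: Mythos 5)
Your proposal is correct and follows essentially the same route as the paper: pick a model of $x$ over $\kbar$, observe (via \cref{descent}) that the obstruction to descending it to $k^\nr$ lives in $H^2(k^\nr, \Aut X/\kbar)$, and kill it because $k^\nr$ has cohomological dimension one (the paper cites Fu for this fact, while you invoke Serre's bound for complete discretely valued fields with perfect residue field; both are valid).
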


\begin{proof}
Let $X$ be a model of $x$ over $\kbar$.
As we have noted in \cref{descent}, the obstruction to descent of $X$ from $\kbar$ to $k^\nr$ belongs to $H^2(k^\nr, \Aut X/\kbar)$, which does, since $k^\nr$ has cohomological dimension one (e.g., see \cite[Theorem 4.5.9]{Fu}), vanish.
\end{proof}

For a field $k$ of characteristic zero, for $x \in M^B_0(p_F)(k)$, and for a model $X$ of $x$ over $\kbar$, we define $\Aut x/\kbar := \Aut X/\kbar$, which is independent of the choice of $X$ up to isomorphism.

\begin{lemma} \label{model_over_totally_ramified_extension}
Let $\ell$ be a rational prime, $k$ a finite extension of $\Q_\ell$, $x$ a rational point of $M_0^B(p_F)$ over $k$, $m := \# \Aut x/\kbar$, and $L$ be a totally ramified extension of $k$ of degree $m$.
Assume that $\ell$ does not divide $m$.
Then $x$ has a model over $L$.
\end{lemma}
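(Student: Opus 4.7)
The plan is to combine the previous lemma, which produces a model over $k^{\nr}$, with the obstruction-theoretic descent machinery of \cref{descent}, applied to the profinite Galois extension $L^{\nr}/L$.

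First I will apply \cref{model_over_knr} to obtain a model $Y$ of $x$ over the maximal unramified extension $k^{\nr}$. Since $L/k$ is totally ramified, one has $L \cap k^{\nr} = k$, and the compositum $L k^{\nr}$ is unramified over $L$ with residue field $\overline{\F}_{\ell}$; therefore $L k^{\nr} = L^{\nr}$, the maximal unramified extension of $L$. The base change $Y_{L^{\nr}} := Y \otimes_{k^{\nr}} L^{\nr}$ is then a model of $x_{L}$ over $L^{\nr}$.

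Next I will descend $Y_{L^{\nr}}$ from $L^{\nr}$ to $L$. By \cref{descent}, interpreted for the profinite extension $L^{\nr}/L$ via continuous cochains, the relevant obstruction lies in $H^{2}(\Gal(L^{\nr}/L),\, \Aut(Y_{L^{\nr}}/L^{\nr}))$. Here $\Gal(L^{\nr}/L) \cong \widehat{\Z}$ has cohomological dimension one on finite continuous modules, and the coefficient group $\Aut(Y_{L^{\nr}}/L^{\nr})$ is finite, being contained in the finite group $\Aut(x/\kbar)$. Hence this $H^{2}$ vanishes, the obstruction is trivial, and $Y_{L^{\nr}}$ descends to a model of $x$ over $L$.

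The conditions $[L:k] = m$ and $\ell \nmid m$ play no essential role in this existence argument, so they must be included with a view to subsequent applications that exploit the specific degree and tameness of $L$ (for instance, in controlling the ramification of characters induced by the resulting model). The only technical point is the continuous-cochain framework needed to make sense of the descent obstruction for the infinite extension $L^{\nr}/L$, together with the classical vanishing $H^{2}(\widehat{\Z},\, M) = 0$ for finite $M$, which itself follows by decomposing $M$ into $p$-primary components and using $\operatorname{cd}_{p}(\Z_{p}) = 1$.
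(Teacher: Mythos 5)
Your argument has a genuine gap at the second descent step, from $L^{\nr}$ to $L$. To apply the obstruction formalism of \cref{descent} to the extension $L^{\nr}/L$ with the model $Y_{L^{\nr}}$, you need the stated hypothesis of that remark: for every $\sigma \in \Gal(L^{\nr}/L)$ there must exist an isomorphism $Y_{L^{\nr}} \to (Y_{L^{\nr}})^\sigma$ \emph{defined over $L^{\nr}$}. This is not automatic (the parenthetical ``for example $L = \kbar$'' in \cref{descent} signals that it only holds for free over $\kbar$) and you do not verify it. Both objects become isomorphic over $\kbar$ since they are models of $x_L$, but the relevant $\Isom$-torsor need not have an $L^{\nr}$-point: its class lies in $H^1(I_L, \Aut x/\kbar)$, and $\operatorname{cd}(I_L) = 1$ kills $H^2$, not $H^1$. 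In terms of the Hochschild--Serre spectral sequence for the normal subgroup $I_L$ of $G_L$ with quotient $\Gal(L^{\nr}/L) \cong \hat{\Z}$, the true obstruction to an $L$-model sits in $E_2^{1,1} = H^1(\hat{\Z},\, H^1(I_L, \Aut x/\kbar))$; your argument only addresses the $E_2^{2,0}$ contribution, which vanishes trivially but is not where the obstruction lives.

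A consistency check makes the error visible: you remark that neither $\ell \nmid m$ nor $[L:k] = m$ enters the argument, so the identical reasoning with $L = k$ would produce a model of $x$ over $k$ unconditionally. But by \cref{thm:field_of_moduli} (applied after pushing $x$ forward to $M^B$) this fails whenever $B \otimes_\Q k \not\cong M_2(F \otimes_\Q k)$, e.g.\ when $B$ is ramified at a place above $\ell$. Those hypotheses are therefore essential, not bookkeeping for later use.

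The paper instead descends from $\kbar$ to $L$ in a single step. By \cref{G_k_acts_trivially_on_AutX} together with \cref{Aut:nonCM,Aut:CM}, the Galois module $\Aut x/\kbar$ is the trivial module $\Z/m$, so $\operatorname{ob}(X, \kbar/L)$ is the image of $\operatorname{ob}(X, \kbar/k)$ under the restriction $H^2(k, \Z/m) \to H^2(L, \Z/m)$. Since $\operatorname{cd}(G_{\F_k}) = \operatorname{cd}(I_k) = 1$, Hochschild--Serre identifies $H^2(k, \Z/m)$ with $H^1(\F_k, \Hom(I_k, \Z/m))$, and because $\F_L = \F_k$ the restriction is induced by $\Hom(I_k, \Z/m) \to \Hom(I_L, \Z/m)$. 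As $\ell \nmid m$, every continuous homomorphism $I_k \to \Z/m$ factors through the procyclic tame quotient; and since $L/k$ is totally (hence tamely) ramified of degree $m$, the image of $I_L$ in that quotient is its unique open subgroup of index $m$, which any such homomorphism annihilates because the target has exponent $m$. So the restriction map is zero, $\operatorname{ob}(X, \kbar/L) = 0$, and $x$ has a model over $L$. Replacing your second step with this computation gives a correct proof.
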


\begin{proof}
By \cref{Aut:nonCM,Aut:CM,G_k_acts_trivially_on_AutX}, the Galois module $\Aut x/\kbar$ is isomorphic to $\Z/m$.
The groups $G_{\F_k}$ and $I_k$ have cohomological dimension one.
(E.g., see \cite[Example 6.1.11]{GS} and \cite[Theorem 4.5.9]{Fu}.)
Thus by Hochschild--Serre spectral sequence, we obtain a canonical isomorphism of cohomology groups $H^2(k, \Z/m) \isom H^1(\F_k, \Hom(I_k, \Z/m))$.
Now since we are assuming that $\ell$ does not divide $m$, the canonical map $\Hom(I_k, \Z/m) \to \Hom(I_L, \Z/m)$ is zero.
Therefore $H^2(k, \Z/m) \to H^2(L, \Z/m)$ is also zero, and hence the obstruction to descent of models of $x$ from $\kbar$ to $L$ as in \cref{descent} vanishes, i.e., $x$ has a model over $L$.
\end{proof}

By these lemmata, $M_0$-characters over a local field of characteristic zero have the same ramification data as the $n_{\lcm}$-th powers of $\Gamma_0$-characters.

\begin{lemma} \label{varphi_nr}
Let $\ell \ne p$ be a rational prime, $k$ a finite extension of $\Q_\ell$, and $x \in M^B_0(p_F)(k)$.
Then $\varphi_x$ is unramified.
\end{lemma}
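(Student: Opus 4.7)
The plan is to reduce to the situation where $x$ has an actual model, via the maximal unramified extension, and then invoke the ramification result already proved for $\Gamma_0$-characters.

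First, since $x \in M^B_0(p_F)(k) \subseteq M^B_0(p_F)(k^\nr)$, Lemma \ref{model_over_knr} produces a model $X \in \mathscr{M}^B_0(p_F)(k^\nr)$ of (the base change of) $x$. Let $\nu \colon G_{k^\nr} \to \F_{p_F}^*$ be the $\Gamma_0$-character attached to $X$ as in Section \ref{section:local}. Since $X$ is an honest model over $k^\nr$, Lemma \ref{varphi=nu^n} applies over the base field $k^\nr$ and gives
\begin{equation*}
\varphi_x|_{G_{k^\nr}} \;=\; \nu^{n_{\lcm}}.
\end{equation*}

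Next, apply Lemma \ref{nu_nr} to the QM-abelian variety underlying $X$ over the local field $k^\nr$; since the residue characteristic $\ell$ of $k^\nr$ is still not equal to $p$, the lemma yields that $\nu^{n_{\lcm}}$ is unramified as a character of $G_{k^\nr}$, i.e.\ trivial on the inertia subgroup $I_{k^\nr} \le G_{k^\nr}$. But the residue field of $k^\nr$ is $\overline{\F_\ell}$, whose absolute Galois group is trivial, so $I_{k^\nr} = G_{k^\nr}$. Therefore $\nu^{n_{\lcm}}$ is the trivial character of $G_{k^\nr}$.

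Combining these two identifications, $\varphi_x|_{G_{k^\nr}} = 1$. Because $G_{k^\nr} = I_k$ as subgroups of $G_k$, this says precisely that $\varphi_x|_{I_k} = 1$, i.e.\ that $\varphi_x$ is unramified. The only delicate point is conceptual rather than technical: one needs Lemma \ref{model_over_knr} (which uses the cohomological dimension of $k^\nr$) to avoid having to descend the model all the way to $k$, since the automorphism group of $x$ might have order divisible by $\ell$ and hence Lemma \ref{model_over_totally_ramified_extension} is not available in full generality; working over $k^\nr$ bypasses this issue entirely.
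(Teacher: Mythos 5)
Your proof is correct and takes the same route as the paper, which cites \cref{nu_nr,varphi=nu^n,model_over_knr} in exactly this way: produce a model over $k^\nr$, identify $\varphi_x|_{G_{k^\nr}}$ with $\nu^{n_{\lcm}}$, and observe that $\nu^{n_{\lcm}}$ is trivial on $I_k = G_{k^\nr}$. Your extra remarks (that $I_{k^\nr} = G_{k^\nr}$ since the residue field of $k^\nr$ is algebraically closed, and that working over $k^\nr$ sidesteps any hypothesis on $\#\Aut x/\kbar$) are accurate and helpfully fill in what the paper leaves implicit.
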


\begin{proof}
By \cref{nu_nr,varphi=nu^n,model_over_knr}.
\end{proof}

\begin{lemma} \label{varphi_q_equals_to_beta}
Let $\ell \ne p$ be a rational prime not dividing $n_{\lcm}$, $k$ a finite extension of $\Q_\ell$ of inertia degree $f$, and $x \in M^B_0(p_F)(k)$.
Assume that $N(p_F)$ is greater than $4^d$.
Then there exists $\beta \in FR(\ell^{f})$ and a prime $\mathfrak{r}$ of $F(\beta)$ above $p_F$ such that $\varphi_x(\Frob_k) \equiv \beta^{n_{\lcm}} \mod \mathfrak{r}$.
\end{lemma}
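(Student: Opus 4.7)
The plan is to reduce this to the $\Gamma_0$-character statement (Lemma \ref{nu_q_equals_to_beta:local}) by finding a totally ramified extension of $k$ over which $x$ actually admits a model, applying that lemma there, and then transporting the resulting congruence back to $k$ using the fact that $\varphi_x$ is unramified.

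First I would bound the order $m := \#\Aut x/\kbar$. Since $\kbar$ has characteristic zero, Corollary \ref{Aut:CM} forces $\Aut x/\kbar$ to be cyclic, so its order equals the order of any generator $f$. That generator satisfies $[F(\zeta_m):F] \le 2$, and so $m$ divides $n_{\lcm}$ by the very definition of $n_{\lcm}$. In particular the hypothesis $\ell \nmid n_{\lcm}$ yields $\ell \nmid m$. Combined with $N(p_F) > 4^d$ (which feeds into Lemma \ref{G_k_acts_trivially_on_AutX} inside the proof of Lemma \ref{model_over_totally_ramified_extension}), this puts us in the hypotheses of Lemma \ref{model_over_totally_ramified_extension}, so $x$ has a model $X = (A, i, \lambda, C)$ over some totally ramified extension $L/k$ of degree $m$.

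Next I would apply the $\Gamma_0$-character analysis to $X$. Since $L/k$ is totally ramified, the residue field of $L$ is $\F_k$ and the absolute inertia degree of $L$ over $\Q_\ell$ is still $f$. Write $\nu_X \colon G_L \to \F_{p_F}^*$ for the $\Gamma_0$-character induced by $X$; then Lemma \ref{varphi=nu^n} gives $\varphi_x|_{G_L} = \nu_X^{n_{\lcm}}$. Applying Lemma \ref{nu_q_equals_to_beta:local} to $X/L$ (using $\ell \ne p$) produces some $\beta \in FR(\ell^f)$ and a prime $\mathfrak{r}$ of $F(\beta)$ above $p_F$ with $\nu_X(\Frob_L) \equiv \beta \pmod{\mathfrak{r}}$, and therefore $\varphi_x(\Frob_L) \equiv \beta^{n_{\lcm}} \pmod{\mathfrak{r}}$.

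Finally I would transport this back to $\Frob_k$. By Lemma \ref{varphi_nr} the character $\varphi_x$ is unramified on $G_k$, so it factors through $G_{\F_k}$. Since $L/k$ is totally ramified we have $G_{\F_L} = G_{\F_k}$, and any choice of Frobenius $\Frob_L \in G_L \subseteq G_k$ has the same image in $G_{\F_k}$ as $\Frob_k$, hence the same value under the unramified character $\varphi_x$. Thus $\varphi_x(\Frob_k) = \varphi_x(\Frob_L) \equiv \beta^{n_{\lcm}} \pmod{\mathfrak{r}}$. There is no genuinely hard step—everything is a careful synthesis of Lemmata \ref{varphi=nu^n}, \ref{model_over_totally_ramified_extension}, \ref{varphi_nr}, and \ref{nu_q_equals_to_beta:local}; the main bookkeeping to watch is (i) that cyclicity in characteristic zero plus the structure of $n_{\lcm}$ forces $\#\Aut x/\kbar$ to divide $n_{\lcm}$ and so be coprime to $\ell$, and (ii) the compatibility of $\Frob_L$ with $\Frob_k$ modulo inertia, which is what makes the unramified character transfer immediate.
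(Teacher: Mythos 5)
Your proof is correct and follows essentially the same route as the paper: bound $m := \#\Aut x/\kbar$ so that $\ell \nmid m$, descend a model to a totally ramified extension $L/k$ of degree $m$ via \cref{model_over_totally_ramified_extension}, apply \cref{varphi=nu^n} and \cref{nu_q_equals_to_beta:local}, and transfer from $\Frob_L$ to $\Frob_k$ using that $L/k$ has inertia degree $1$. Your explicit justification that $m \mid n_{\lcm}$ (cyclicity in characteristic zero plus the condition $[F(\zeta_m):F]\le 2$ on the generator's order) is a correct unpacking of what the paper leaves implicit.
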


\begin{proof}
Let $m := \# \Aut x/\kbar$ and $L$ be a totally ramified extension of $k$ of degree $m$.
Then since we are assuming that $\ell$ does not divide $n_{\lcm}$, in particular does not divide $m$, by \cref{model_over_totally_ramified_extension} the rational point $x$ has a model $X$ over $L$, which we denote by $X$.
By the construction, the inertia degree of the extension $L/k$ is equal to $1$.
Thus $\varphi_x(\Frob_k) = \varphi_x(\Frob_L)$, which equals to $\nu_{X/L}(\Frob_L)^{n_{\lcm}}$, where $\nu_{X/L}$ is the $\Gamma_0$-character induced by $X/L$, by \cref{varphi=nu^n}.
Consequently, since the absolute inertia degree of $L$ is $f$, which is the one of $k$, the result follows from \cref{nu_q_equals_to_beta:local}.
\end{proof}

\begin{lemma} \label{varphi_above_p_local}
Let $k$ be a finite extension of $\Q_p$ and $x \in M^B_0(p_F)(k)$.
Write $\# \F_{p_F} = p^r$.
Assume that $n_{\lcm} < p$.
Fix an inclusion $\F_{p_F} \to \overline{\F_k}$ and identify $\mu_{p^r - 1}(k^\nr)$ with $\F_{p_F}^*$.
Then there exists an integer $b$ such that $\varphi_x |_{I_k} = \theta_{p^r-1, k}^b$ and that $b = \sum_{i=0}^{r-1} b_i p^i$, where $0 \le b_i \le n_{\lcm}$ for each $i$.
Moreover if $\varphi_x |_{I_k} = \theta_{p^r-1,k}^{n_{\lcm}/2}$, then $v_2(p^r-1) + 1 \le v_2(n_{\lcm})$.
\end{lemma}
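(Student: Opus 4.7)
The plan is to reduce to the case already treated in \cref{nu_above_p_local,nu_above_p_more_about_b} by finding a model of $x$ over a finite unramified extension of $k$ and then passing through the comparison of \cref{varphi=nu^n}.

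First I would exhibit such a finite unramified extension. By \cref{model_over_knr}, $x$ has a model over $k^\nr$. Since the moduli stack $\mathscr{M}_0^B(p_F)$ is locally of finite presentation over $\Q$ (\cite[Lemma 1.4.1.10]{Lan}) and $k^\nr$ is the filtered colimit of its finite unramified subextensions, such a model already descends to some finite unramified extension $k_1/k$. Call this model $X$ and denote by $\nu \colon G_{k_1} \to \F_{p_F}^*$ the $\Gamma_0$-character it induces.

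Next I would combine the identity $\varphi_x|_{G_{k_1}} = \nu^{n_{\lcm}}$ from \cref{varphi=nu^n} with the fact that $k_1/k$ is unramified, so that $I_{k_1} = I_k$ and $\theta_{p^r-1, k_1} = \theta_{p^r-1, k}|_{I_k}$ (this is the behaviour of the fundamental character under unramified base change in the sense of \cite[Section 1.4]{SerreGal}, the ramification index being $1$). Consequently, any identity expressing $\nu^{n_{\lcm}}|_{I_{k_1}}$ as a power of $\theta_{p^r-1, k_1}$ translates verbatim into an identity for $\varphi_x|_{I_k}$ in terms of $\theta_{p^r-1, k}$.

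Finally I would invoke \cref{nu_above_p_local} for the $\Gamma_0$-character $\nu$ over the finite extension $k_1/\Q_p$ (the hypotheses $n_{\lcm} < p$ and $\#\F_{p_F} = p^r$ are inherited), producing $b = \sum_{i=0}^{r-1} b_i p^i$ with $0 \le b_i \le n_{\lcm}$ such that $\nu^{n_{\lcm}}|_{I_{k_1}} = \theta_{p^r-1, k_1}^b$; the translation above yields the first assertion. The supplementary assertion follows in exactly the same way from \cref{nu_above_p_more_about_b}. The only genuinely new step is the initial descent from $k^\nr$ to a finite unramified subextension, which is needed because the lemmata about $\nu$ are stated only for finite extensions of $\Q_p$; once that is done, the rest is a formal comparison through \cref{varphi=nu^n}.
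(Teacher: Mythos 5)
Your proposal is correct and follows essentially the same route as the paper, whose proof is just the terse citation ``By \cref{nu_above_p_local,nu_above_p_more_about_b,varphi=nu^n,model_over_knr}.'' You have usefully filled in the one detail the paper elides: \cref{model_over_knr} only gives a model over $k^{\nr}$, whereas \cref{nu_above_p_local,nu_above_p_more_about_b} are stated for finite extensions of $\Q_p$, so one does need the finite-presentation descent to a finite unramified subextension $k_1$ and the observation that $I_{k_1}=I_k$ with $\theta_{p^r-1,k_1}=\theta_{p^r-1,k}|_{I_k}$ before applying those lemmata and \cref{varphi=nu^n}.
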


\begin{proof}
By \cref{nu_above_p_local,nu_above_p_more_about_b,varphi=nu^n,model_over_knr}.
\end{proof}

In the rest of this section let $k$ be a number field, $p_F$ a prime of $F$ not dividing $\Delta$, $p$ the rational prime below $p_F$, and $x \in M^B_0(p_F)(k)$.
Assume that $N(p_F) > 4^d$ and that $p > n_{\lcm}$.
Note that by \cref{no_real_points} the base field $k$ must be totally imaginary.
Let $\varphi \colon G_k \to \F_{p_F}^*$ be the $M_0$-character induced by $x$.
We study $\varphi$ following the methods of \cites{MazurX0,Momose,AraiI}.

For simplicity, for a number field $L$ and for a prime ideal $\mathfrak{p}$ of $L$,
we denote the inertia group $I_{L_\mathfrak{p}}$ by $I_\mathfrak{p}$.
For an ideal $\mathfrak{a}$ of $L$,
let $J_L^\mathfrak{a}$ denote the group of prime-to-$\mathfrak{a}$ fractional ideals of $L$.
For an abelian group $H$ and for a group homomorphism $\iota \colon G_L \to H$,
if $\iota$ is unramified outside $p$,
then it induces $J_L^{p O_L} \to H$, which we denote again by the same symbol $\iota$.
In particular by the same symbol $\varphi$ we denote the character $J_k^{p O_k} \to \F_{p_F}^*$ induced by $\varphi$ (see \cref{varphi_nr}).

In order to piece together the local information and obtain the global information of $\varphi$, we need a formal lemma.

\begin{lemma} \label{character_varepsilon}
Let $L$ be a totally imaginary Galois number field, $\mathfrak{p}$ a prime of $L$ above $p$,
and $\mu \colon G_L \to \F_p^*$ be a character which is unramified outside $p$.
Assume that for every prime $\mathfrak{r}$ of $L$ above $p$, there exists a nonnegative integer $c_\mathfrak{r}$ such that
$\mu|_{I_\mathfrak{r}} = \chi^{c_\mathfrak{r}}$.
Define $\varepsilon = \sum_{\sigma \in \Gal(L/\Q)} c_{(\mathfrak{p}^\sigma)} \sigma^{-1} \in \Z[\Gal(L/\Q)]$.
Then for every prime-to-$pO_L$ (as ideals) nonzero algebraic integer $\alpha \in O_L$,
the element $\alpha^\varepsilon \mod \mathfrak{p}$, which is a priori an element of $\F_\mathfrak{p}$,
lies in $\F_p$, and we have that $\mu(\alpha O_L) = \alpha^\varepsilon \mod \mathfrak{p}$.
\end{lemma}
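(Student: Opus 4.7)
The plan is to use global class field theory. Viewing $\mu$ as a character of the idele class group $\A_L^*/L^*$, we apply the global product formula $\prod_v \mu(\mathrm{rec}_v(\alpha)) = 1$ to the principal idele $(\alpha)_v$ and read off $\mu(\alpha O_L)$ from the local factors away from $p$.

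Since $L$ is totally imaginary, the archimedean places contribute nothing. At a finite prime $\mathfrak{r} \nmid p$, the character $\mu$ is unramified, so the local factor is $\mu(\Frob_\mathfrak{r})^{v_\mathfrak{r}(\alpha)}$, and since $\alpha$ is prime to $pO_L$, the product of these over $\mathfrak{r}\nmid p$ recovers precisely the extension $\mu(\alpha O_L)$ of $\mu$ to $J_L^{pO_L}$. At a prime $\mathfrak{r}\mid p$, the element $\alpha$ is a unit in $O_{L,\mathfrak{r}}$ and $\mathrm{rec}_\mathfrak{r}(\alpha)$ lies in the image of inertia; using $\mu|_{I_\mathfrak{r}} = \chi^{c_\mathfrak{r}}$, the compatibility of local reciprocity with the norm $N_{L_\mathfrak{r}/\Q_p}$, and the identity $\chi(\mathrm{rec}_{\Q_p}(u)) \equiv u^{-1} \pmod{p}$ for $u\in\Z_p^*$, this local factor equals $N_{L_\mathfrak{r}/\Q_p}(\alpha)^{-c_\mathfrak{r}} \bmod \overline{\mathfrak{p}}$, where $\overline{\mathfrak{p}}$ is the prime of $\overline{\Q}_p$ induced by a fixed embedding $\iota \colon L \injects \Qbar \injects \overline{\Q}_p$ realizing $\mathfrak{p}$.

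Parametrise the embeddings $L \injects \overline{\Q}_p$ as $\iota\circ\tau^{-1}$ for $\tau \in \Gal(L/\Q)$: the embedding $\iota\circ\tau^{-1}$ induces the prime $\mathfrak{p}^\tau$ of $L$, so those landing at $\mathfrak{r} = \mathfrak{p}^\sigma$ form the coset $\sigma D_\mathfrak{p}$. Since $c_{\mathfrak{p}^\tau}$ depends only on the coset of $\tau$ modulo $D_\mathfrak{p}$, expanding each norm as a product over its coset gives
\begin{equation*}
\prod_{\mathfrak{r}\mid p} N_{L_\mathfrak{r}/\Q_p}(\alpha)^{c_\mathfrak{r}} = \prod_{\tau \in \Gal(L/\Q)} \iota(\tau^{-1}(\alpha))^{c_{\mathfrak{p}^\tau}} = \iota(\alpha^\varepsilon).
\end{equation*}
Substituting into the product formula yields $\mu(\alpha O_L) \equiv \alpha^\varepsilon \pmod{\mathfrak{p}}$, and since the left-hand side lies in $\F_p^*$, so does the right, giving both assertions simultaneously.

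The main technical obstacle will be purely bookkeeping, namely keeping consistent the arithmetic-versus-geometric Frobenius normalisation of local reciprocity, the sign in $\chi\circ\mathrm{rec}$ on local units, and the convention $\mathfrak{p}^\sigma = \sigma(\mathfrak{p})$ versus $\sigma^{-1}(\mathfrak{p})$, so that the sign of the exponent in the final identity agrees with the stated definition $\varepsilon = \sum_\sigma c_{\mathfrak{p}^\sigma}\sigma^{-1}$ rather than picking up an unwanted inversion.
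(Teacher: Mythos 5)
Your proposal is correct and is essentially the same argument as the paper's: both express $\mu(\alpha O_L)$ as the product of the local factors above $p$ via global class field theory (your explicit use of the product formula on the principal idele is what the paper packages as the observation that the idele $(\alpha^{-1},\dots,\alpha^{-1},1,\dots,1)$ corresponds to $\alpha O_L$ in the ray class group modulo $pO_L$), then evaluate each local factor as a power of a local norm using $\mu|_{I_\mathfrak{r}}=\chi^{c_\mathfrak{r}}$ and the compatibility of $\chi\circ\mathrm{rec}$ with norms, and finally expand the norms over the Galois group to recover $\alpha^\varepsilon$. The bookkeeping you flag (reciprocity normalisation, $\chi$ on units, left versus right action on $\mathfrak{p}$) is indeed the only thing to nail down, and the paper resolves it in the same way.
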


\begin{proof}
Let $\alpha \in O_L$ be a nonzero element which is relatively prime to $p$ as ideals.
Since $L$ has no real infinite places, in the ray class group modulo $pO_L$, the idele $(\alpha^{-1}, \dots, \alpha^{-1}, 1, \dots, 1)$, whose components above $p$ are $\alpha^{-1}$ and others are $1$, corresponds to the ideal $\alpha O_L$.
Thus
\begin{equation*}
\begin{aligned}
\mu (\alpha O_L) & = \mu (\alpha^{-1}, \dots, \alpha^{-1}, 1, \dots, 1) \\
& = \prod_{\mathfrak{p'} \mid p} \mu |_{I_\mathfrak{p'}} (\alpha^{-1}) \\
& = \prod_{\mathfrak{p'} \mid p} \chi^{c_\mathfrak{p'}}(\alpha^{-1}) \\
& = \prod_{\mathfrak{p'} \mid p} N_{L_\mathfrak{p'}/\Q_p}(\alpha)^{c_\mathfrak{p'}} \mod p.
\end{aligned}
\end{equation*}
Since $L/\Q$ is Galois, if we denote the decomposition group of $\mathfrak{p}$ by $D_\mathfrak{p}$, then the right hand side equals to
\begin{equation*}
\prod_{D_\mathfrak{p}\sigma \in D_\mathfrak{p} \backslash \Gal(L/\Q)}
N_{L_{\mathfrak{p}^\sigma}/\Q_p}(\alpha)^{c_{(\mathfrak{p}^\sigma)}} \mod p.
\end{equation*}
Now for each $\sigma \in \Gal(L/\Q)$, we have
\begin{equation*}
N_{L_{\mathfrak{p}^\sigma}/\Q_p}(\alpha)^{c_{(\mathfrak{p}^\sigma)}} \mod p
= \prod_{\tau \in \Gal(L_{\mathfrak{p}^\sigma}/\Q_p)} \alpha^{c_{(\mathfrak{p}^\sigma)} \tau} \mod \mathfrak{p}^\sigma,
\end{equation*}
which is, since $\Gal(L_{\mathfrak{p}^\sigma}/\Q_p) = D_{\mathfrak{p}^\sigma} = \sigma^{-1} D_\mathfrak{p} \sigma$, equal to
\begin{equation*}
(\prod_{\tau \in D_\mathfrak{p}} \alpha^{c_{(\mathfrak{p}^\sigma)} \sigma^{-1} \tau} \mod \mathfrak{p} )^\sigma.
\end{equation*}
Since $\prod_{\tau \in D_\mathfrak{p}} \alpha^{c_{(\mathfrak{p}^\sigma)} \sigma^{-1} \tau}
= N_{L_\mathfrak{p}/\Q_p} (\alpha^{c_{(\mathfrak{p}^\sigma)} \sigma^{-1}})$,
which lies in the decomposition field of $\mathfrak{p}$ above $p$, we have
\begin{equation*}
N_{L_{\mathfrak{p}^\sigma}/\Q_p}(\alpha)^{c_{(\mathfrak{p}^\sigma)}} \mod p
= \prod_{\tau \in D_\mathfrak{p}} \alpha^{c_{(\mathfrak{p}^\sigma)} \sigma^{-1} \tau} \mod \mathfrak{p}.
\end{equation*}
Combining them together one gets
\begin{equation*}
\mu (\alpha O_L)
= \prod_{\sigma \in \Gal(L/\Q)} \alpha^{c_{(\mathfrak{p}^\sigma)} \sigma^{-1}} \mod \mathfrak{p}.
\end{equation*}
This shows the statement.
\end{proof}

\begin{corollary} \label{nu_above_p}
Assume that $k$ is Galois over $\Q$ and that $p_F$ splits totally over $\Q$.
Let $\mathfrak{p}$ be an unramified prime of $k$ above $p$.
Then there exists $\varepsilon = \sum_\sigma b_\sigma \sigma \in \Z[\Gal(k/\Q)]$ such that
$0 \le b_\sigma \le n_{\lcm}$ for every $\sigma \in \Gal(k/\Q)$, and satisfying the following three conditions:
\begin{enumerate}
\item For every prime-to-$p$ (as ideals) nonzero algebraic integer $\alpha \in O_k$,
the element $\alpha^\varepsilon \mod \mathfrak{p}$, which is a priori an element of $\F_\mathfrak{p}$, lies in $\F_p$,
and we have that $\varphi(\alpha O_k) = \alpha^\varepsilon \mod \mathfrak{p}$.

\item For every $\sigma \in \Gal(k/\Q)$, we have $\varphi |_{I_{(\mathfrak{p}^{\sigma^{-1}})}} = \chi^{b_\sigma}$.

\item If there exists $\sigma \in \Gal(k/\Q)$ such that $b_\sigma = n_{\lcm} / 2$,
then $v_2(p-1) + 1 \le v_2(n_{\lcm})$.
\end{enumerate}
\end{corollary}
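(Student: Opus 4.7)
The plan is to combine the local description from \cref{varphi_above_p_local} with the global accounting provided by \cref{character_varepsilon}. Since $p_F$ splits totally over $\Q$, the residue field $\F_{p_F}$ equals $\F_p$, so the parameter $r$ appearing in \cref{varphi_above_p_local} is $1$. Because $k/\Q$ is Galois and $\mathfrak{p}$ is unramified above $p$, every prime $\mathfrak{r}$ of $k$ above $p$ is unramified; hence by \cite[Proposition~8]{SerreGal} one has $\theta_{p-1,k_\mathfrak{r}} = \chi$ on $I_\mathfrak{r}$.

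For each prime $\mathfrak{r}$ of $k$ above $p$, I will restrict $\varphi$ to $G_{k_\mathfrak{r}}$ via a compatible embedding $\kbar \hookrightarrow \overline{k_\mathfrak{r}}$. This restriction coincides with the $M_0$-character of the localized point $x_\mathfrak{r} \in M_0^B(p_F)(k_\mathfrak{r})$, since the construction of the $M_0$-character uses only models and isomorphisms $\lambda_\sigma$ that behave functorially under base change. Applying \cref{varphi_above_p_local} (in the case $r=1$) to $x_\mathfrak{r}$ then produces an integer $c_\mathfrak{r}$ with $0 \le c_\mathfrak{r} \le n_{\lcm}$ satisfying $\varphi|_{I_\mathfrak{r}} = \chi^{c_\mathfrak{r}}$.

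Finally, set $\varepsilon := \sum_{\sigma \in \Gal(k/\Q)} c_{\mathfrak{p}^\sigma} \sigma^{-1}$, which after the reindexing $\tau = \sigma^{-1}$ has the required form $\sum_\tau b_\tau \tau$ with $b_\tau := c_{\mathfrak{p}^{\tau^{-1}}}$ and $0 \le b_\tau \le n_{\lcm}$. Condition (2) then holds by construction, and \cref{character_varepsilon} applied to $\mu = \varphi$ yields the formula in (1). For (3), if $b_\sigma = n_{\lcm}/2$ for some $\sigma$, then $\varphi|_{I_{\mathfrak{p}^{\sigma^{-1}}}} = \chi^{n_{\lcm}/2} = \theta_{p-1, k_{\mathfrak{p}^{\sigma^{-1}}}}^{n_{\lcm}/2}$, and the ``moreover'' clause of \cref{varphi_above_p_local} immediately gives $v_2(p-1) + 1 \le v_2(n_{\lcm})$. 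The whole argument is essentially a bookkeeping synthesis of the two prior lemmas, so no serious obstacle is expected; the only subtlety worth explicit verification is the compatibility of the global $M_0$-character with restriction to decomposition groups, which follows from the functoriality of the construction once one fixes base-change-compatible embeddings of algebraic closures.
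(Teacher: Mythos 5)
Your proof is correct and follows exactly the same route as the paper's, which simply cites \cref{varphi_nr}, \cref{varphi_above_p_local}, and \cref{character_varepsilon}; you have merely unpacked the bookkeeping that the paper leaves implicit (the specialization $r=1$ since $p_F$ splits totally, the identity $\theta_{p-1,k_\mathfrak{r}}=\chi$ via the unramified hypothesis and Serre's Proposition~8, the reindexing of $\varepsilon$, and the compatibility of the $M_0$-character with restriction to decomposition groups). The only omission is that you do not explicitly invoke \cref{varphi_nr} to justify that $\varphi$ is unramified away from $p$, a hypothesis of \cref{character_varepsilon}, but this was already recorded in the discussion preceding the corollary when $\varphi$ was promoted to a character on $J_k^{pO_k}$.
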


\begin{proof}
By \cref{varphi_nr,varphi_above_p_local,character_varepsilon}.
\end{proof}

By \cref{nu_above_p}, we can say that the essential data of $\varphi$ is $\varepsilon$ as in \cref{nu_above_p}.
From now we study it.

For a number field $L$ denote the class number of $L$ by $h_L$.

Let $M(k)$ be the set of rational primes $\ell$ which split totally in $k$, which do not divide $n_{\lcm} h_k$, and which are unramified in $F$.
Let $N(k)$ be the set of prime ideals $\mathfrak{q}$ of $k$ which divide some $\ell \in M(k)$.

\begin{lemma} \label{finite_subset_of_N(k)}
Let $k$ be a Galois number field.
Then there exists a finite subset $S$ of $N(k)$ generating the class group $Cl_k$ of $k$.
\end{lemma}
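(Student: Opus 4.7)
The plan is to produce, for each ideal class $c \in Cl_k$, a single prime $\mathfrak{q}_c \in N(k)$ with $[\mathfrak{q}_c] = c$, and then take $S = \{\mathfrak{q}_c\}_{c \in Cl_k}$, which is finite because $Cl_k$ is.

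The key preliminary observation is that the Hilbert class field $H_k$ of $k$ is itself Galois over $\Q$: indeed, since $k/\Q$ is Galois, every $\sigma \in \Gal(\Qbar/\Q)$ preserves $k$, and hence sends $H_k$ to another maximal unramified abelian extension of $k$, which must equal $H_k$ itself. Setting $G = \Gal(H_k/\Q)$, class field theory gives a short exact sequence $1 \to Cl_k \to G \to \Gal(k/\Q) \to 1$, in which $Cl_k$ is a normal (abelian) subgroup of $G$. In particular, for any $c \in Cl_k \subseteq G$, the $G$-conjugacy class of $c$ is contained in $Cl_k$, and coincides with the orbit of $c$ under the natural $\Gal(k/\Q)$-action on $Cl_k$.

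Next I would apply Chebotarev's density theorem to the Galois extension $H_k/\Q$: for each $c \in Cl_k$, the set of rational primes $\ell$ unramified in $H_k$ whose Frobenius conjugacy class in $G$ is that of $c$ has positive density. Any such $\ell$ maps to the identity in $\Gal(k/\Q)$, so it splits completely in $k$. Since Chebotarev yields infinitely many such $\ell$, I can further discard the finitely many primes dividing $n_{\lcm} h_k$ and the finitely many primes ramified in $F$, obtaining $\ell = \ell_c \in M(k)$. Because $\ell_c$ splits completely in $k$, the primes $\mathfrak{q}$ of $k$ above $\ell_c$ are in bijection with the primes $\mathfrak{P}$ of $H_k$ above $\ell_c$, and under the reciprocity isomorphism $Cl_k \isom \Gal(H_k/k)$ the Frobenius of $\mathfrak{P}$ over $k$ is precisely $[\mathfrak{q}]$. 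As $\mathfrak{P}$ ranges over primes of $H_k$ above $\ell_c$, its Frobenius in $G$ ranges over the $G$-conjugacy class of $c$, i.e. over the $\Gal(k/\Q)$-orbit of $c$ in $Cl_k$. Choosing the prime $\mathfrak{P}_c$ whose Frobenius is exactly $c$ (rather than some conjugate) and letting $\mathfrak{q}_c$ be the prime of $k$ below it, one has $\mathfrak{q}_c \in N(k)$ and $[\mathfrak{q}_c] = c$.

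Setting $S = \{\mathfrak{q}_c : c \in Cl_k\}$ then produces a finite subset of $N(k)$ whose classes exhaust $Cl_k$, so that $S$ even generates $Cl_k$ as a set (a fortiori as a subgroup). There is no substantial obstacle; the only step that deserves explicit comment is verifying that $H_k/\Q$ is Galois, which is what makes the Chebotarev argument applicable to $G$ and places the Frobenius of an arbitrary rational prime in a well-defined conjugacy class of $G$.
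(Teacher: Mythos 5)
Your proof is correct and follows essentially the same route as the paper: apply Chebotarev's density theorem to the extension $H_k/\Q$, use that the Frobenius conjugacy class corresponding to an ideal class $c$ lies inside the normal subgroup $Cl_k \isom \Gal(H_k/k)$ (so the corresponding rational primes split completely in $k$), then discard the finitely many primes dividing $n_{\lcm} h_k$ or ramifying in $F$. You spell out a couple of points the paper leaves implicit (that $H_k/\Q$ is Galois, and that one must pick the prime of $H_k$ whose Frobenius is exactly $c$ rather than a $G$-conjugate), but the argument is the same.
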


\begin{proof}
Let $H$ be the Hilbert class field of $k$.
Then by global class field theory we have the isomorphism $Cl_k \isom \Gal(H/k)$.
We identify them by this isomorphism.
Let $\mathfrak{a} \in Cl_k$.
Consider the set of rational primes $\ell$ which are unramified in $H$
and $\ell$-th Frobenius automorphisms $\varphi_\ell$ of $\Gal(H/\Q)$ correspond to $\mathfrak{a}$.
By Chebotarev's density theorem, this set has positive density.
Let $\ell$ be an element of this set which is unramified in $F$ and which does not dividing $n_{\lcm}h_k$.
Then there exists a prime $\mathfrak{q}$ of $k$ dividing $\ell$ such that $\mathfrak{q} = \mathfrak{a}$ in $Cl_k$,
and since $\varphi_\ell |_k = 1$, the inertia degree of $\ell$ in $k$ is $1$.
This gives the statement.
\end{proof}

Henceforth in this section, we assume that $F$ and $k$ are Galois over $\Q$ and that $k$ contains $F$ unless otherwise stated.

Let $S$ be a set as in \cref{finite_subset_of_N(k)}.
Enlarging $S$ if necessary, we may assume that $S$ is nonempty
and that for every $\mathfrak{q} \in S$ and for every $\rho \in \Gal(k/\Q)$ the prime ideal $\mathfrak{q}^\rho$ lies in $S$.
For each $\mathfrak{q} \in S$, fix an element $\alpha_\mathfrak{q} \in O_k$ satisfying $\mathfrak{q}^{h_k} = \alpha_\mathfrak{q} O_k$.
Note that for every $\mathfrak{q} \in S$ and for every $\rho \in \Gal(k/\Q)$, we have $(\alpha_\mathfrak{q})^\rho O_k = \alpha_{(\mathfrak{q}^\rho)} O_k$.
We define five sets following \cite{AraiI}:
\begin{equation*}
\begin{aligned}
M_1(k) & = \left\{ (\mathfrak{q}, \varepsilon, \beta_\mathfrak{q}) \bigg | 
\begin{array}{c}
\mathfrak{q} \in S, \varepsilon = \sum_{\sigma \in \Gal(k/\Q)} a_\sigma \sigma \in \Z[\Gal(k/\Q)], \\
0 \le a_\sigma \le n_{\lcm} \text{ for every } \sigma \in \Gal(k/\Q),
\beta_\mathfrak{q} \in FR(N(\mathfrak{q}))
\end{array}
\right\}, \\
M_2(k) & = \left\{ N_{k(\beta_\mathfrak{q}) / F} (\alpha_\mathfrak{q}^\varepsilon - \beta_\mathfrak{q}^{n_{\lcm} h_k}) \bigg | 
\begin{array}{c}
(\mathfrak{q}, \varepsilon, \beta_\mathfrak{q}) \in M_1(k)
\end{array}
\right\} - \{ 0 \}, \\
N_0(k) & = \{ \ell_F | \text{ primes of } F \text{ dividing some } m \in M_2(k) \}, \\
T(k) & = \left\{ \ell_F \bigg |
\begin{array}{c}
\text{ primes of } F \text{ divided by some } \mathfrak{q} \in S, N(\ell_F) < 4^d, \\
\text{ or dividing a rational prime less than } n_{\lcm}
\end{array}
\right\}, \\
N_1(k) & = N_0(k) \cup T(k) \cup \{ \ell_F | \text{ ramified in } k \text{ or over } \Q \}.
\end{aligned}
\end{equation*}
Note that by the finiteness of $S$, all of these five sets are finite.

\begin{lemma} \label{type_of_varepsilon}
Let $\ell \ne p$ be a rational prime which is totally split in $k$, and let $\mathfrak{q}$ be a prime of $k$ above $\ell$.
Let $m$ be a positive integer and
let $\varepsilon = \sum_{\sigma \in \Gal(k/\Q)} a_\sigma \sigma$ be an element of $\Z[\Gal(k/\Q)]$ whose coefficients $a_\sigma$ are nonnegative.
Write $\mathfrak{q}^{h_k} = \alpha O_k$ for $\alpha \in O_k$
and assume that $\alpha^\varepsilon = \beta^{2 m h_k}$ for an element $\beta \in FR(\ell)$.
Then $L := F(\beta)$ is a totally imaginary quadratic extension of $F$ and one of the following holds:
\begin{enumerate}[Type 1.]
\setcounter{enumi}{1}
\item The element $\varepsilon$ is equal to $m \sum_{\sigma \in \Gal(k/\Q)} \sigma$.

\item The field $L$ is contained in $k$, and
for every system of representatives $\{ \tau_i \}_{i = 1, \dots, 2d}$ of the cosets of $\Gal(k/\Q) / \Gal(k/L)$,
there exists a system of representatives $I$ of the cosets of $\Gal(k/\Q) / \Gal(k/F)$ in $\{ \tau_i \}$ such that $\varepsilon = 2m \sum_{\tau \in I} \sum_{\sigma \in \Gal(k/L)} \tau \sigma $.
\end{enumerate}
\end{lemma}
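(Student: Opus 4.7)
The plan is to first verify the structural claim on $L := F(\beta)$ and then analyze the equation $\alpha^\varepsilon = \beta^{2mh_k}$ by a dichotomy according to whether or not $L$ is contained in $k$. That $L/F$ is a totally imaginary quadratic extension follows immediately from \cref{F(beta)_is_totally_imaginary} with $q = \ell$ and $f = 1$: since $\ell$ splits totally in $k \supseteq F$ it is unramified in $F$, and $f = 1$ is odd.

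Assume first that $L \not\subseteq k$. Then $\beta^{2mh_k} = \alpha^\varepsilon$ lies in $k \cap L = F$, so the ideal $\beta^{2mh_k} O_L$ is stable under $\Gal(L/F)$. Every prime $\mathfrak{p}_v$ of $F$ above $\ell$ is unramified, and I will argue that each must in fact ramify in $L$: if some $\mathfrak{p}_v$ split as $\mathfrak{P}_v \mathfrak{P}_v^c$ (with $c$ the nontrivial element of $\Gal(L/F)$), then $\beta \bar\beta = \ell$ would force exactly one of $\mathfrak{P}_v, \mathfrak{P}_v^c$ to appear in $\beta O_L$, violating Galois invariance after raising to the positive power $2mh_k$. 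Substituting into $\beta^2 + b\beta + \ell = 0$, the resulting identity $\beta^2 O_L = \ell O_L$ forces $b = 0$ (otherwise $\beta = -(\beta^2+\ell)/b \in F$, contradicting $\beta \notin F$), so $\beta^2 = -\ell$ and $\beta^{2mh_k} = (-\ell)^{mh_k}$. Comparing the principal ideals $\alpha^\varepsilon O_k = \prod_\sigma (\mathfrak{q}^\sigma)^{a_\sigma h_k}$ and $(-\ell)^{mh_k} O_k = \prod_{\mathfrak{q}' \mid \ell} (\mathfrak{q}')^{mh_k}$ — using that $\sigma \mapsto \mathfrak{q}^\sigma$ is a bijection from $\Gal(k/\Q)$ onto the primes of $k$ above $\ell$ by total splitting — reads off $a_\sigma = m$ for every $\sigma$, which is Type 2.

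Assume now that $L \subseteq k$. Each $\mathfrak{p}_v$ above $\ell$ is unramified in $k$ and hence in $L$, so it splits in $L$ as $\mathfrak{p}_v O_L = \mathfrak{P}_v \mathfrak{P}_v^c$, and $\beta\bar\beta = \ell$ then forces a factorization $\beta O_L = \prod_v \mathfrak{Q}_v$ where $\mathfrak{Q}_v$ is one of the two primes $\mathfrak{P}_v, \mathfrak{P}_v^c$ for each $v$. Extending to $O_k$ and matching the exponents of each prime in $(\alpha O_k)^\varepsilon = (\beta O_k)^{2mh_k}$ yields that the support of $\varepsilon$ consists of those $\sigma \in \Gal(k/\Q)$ for which $\mathfrak{q}^\sigma$ divides some $\mathfrak{Q}_v$, with constant coefficient $2m$ (the factors of $h_k$ cancel). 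For the bookkeeping, the primes of $k$ above a fixed $\mathfrak{p}_v$ correspond under $\sigma \mapsto \mathfrak{q}^\sigma$ to a left coset of $\Gal(k/F)$ in $\Gal(k/\Q)$, and those above the chosen $\mathfrak{Q}_v$ form one of the two cosets of $\Gal(k/L)$ inside that $\Gal(k/F)$-coset. Given any system of representatives $\{\tau_i\}$ of $\Gal(k/\Q)/\Gal(k/L)$, I let $I \subseteq \{\tau_i\}$ consist of the $d$ representatives landing in the distinguished $\Gal(k/L)$-coset for each $v$; then $I$ is a system of representatives of $\Gal(k/\Q)/\Gal(k/F)$, and the support of $\varepsilon$ is exactly $\bigsqcup_{\tau \in I} \tau \Gal(k/L)$, giving Type 3.

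The main obstacle will be this coset translation — identifying the set $\{\sigma : \mathfrak{q}^\sigma \text{ divides some } \mathfrak{Q}_v\}$ with a union of $\Gal(k/L)$-cosets selecting one half-coset inside each $\Gal(k/F)$-coset, compatibly with an arbitrary system of representatives $\{\tau_i\}$. Once that correspondence is set up cleanly, the ideal equation delivers the coefficient $2m$ directly.
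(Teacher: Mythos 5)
Your dichotomy is on whether $L \subseteq k$, while the paper splits on whether $\alpha^\varepsilon \in F$ or $F(\alpha^\varepsilon) = L$. These turn out to be the same split: if $L \subseteq k$, your own factorization $\beta O_L = \prod_v \mathfrak{Q}_v$ (with $\mathfrak{Q}_v$ one of the two primes above the split $\mathfrak{p}_v$) shows $\beta^{2mh_k}O_L$ is not $\Gal(L/F)$-invariant, so $\alpha^\varepsilon \notin F$; conversely $L \not\subseteq k$ forces $\alpha^\varepsilon \in k \cap L = F$. You don't verify this equivalence, but it holds, so the case structure is sound. The Type~3 analysis you sketch is close in spirit to the paper's, which uses $N_{L/F}$ of $\beta O_L = \prod_i \mathfrak{r}^{c_i\tau_i}$ and the total splitting of $\ell$ in $F$ to force $c_i \in \{0,1\}$; your version tracks which prime above each $\mathfrak{p}_v$ occurs, which amounts to the same bookkeeping. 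Both routes require excluding the inert case, which is immediate (a prime inert in $L/F$ would have half-integral exponent in $\beta O_L$ since $\beta\bar\beta = \ell$) but which you do not mention.

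The genuine problem is in your Type~2 branch. After deriving $\beta^2 O_L = \ell O_L$ you assert ``this forces $b = 0$ (otherwise $\beta = -(\beta^2 + \ell)/b \in F$).'' The parenthetical is circular: $-(\beta^2 + \ell)/b$ is just $\beta$ rewritten via its minimal polynomial and gives no membership in $F$. Worse, $b = 0$ is simply false under your hypotheses. Take $F = \Q$, $\ell = 3$, $\beta$ a root of $x^2 + 3x + 3$, so $\beta = \omega\sqrt{-3}$ for $\omega$ a primitive cube root of unity; then $\beta \in FR(3)$ with $b = 3 \ne 0$, the prime $3$ ramifies in $L = \Q(\sqrt{-3})$, and $\beta^2 O_L = 3O_L$ holds, yet $\beta^2 = -3\omega^2 \ne -3$. (Similarly $\beta = 1+i \in FR(2)$ has $b = -2$, $\beta^2 = 2i$.) The paper avoids claiming anything about $b$: it only deduces $\alpha^\varepsilon = \pm\ell^{mh_k}$ from $|\alpha^\varepsilon|_v = \ell^{mh_k}$ at every infinite $v$ and the fact that a totally real algebraic number with all archimedean absolute values $1$ is $\pm 1$. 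Fortunately, the conclusion you want does not need $b = 0$: from $\beta O_L = \prod_v \mathfrak{P}_v$ with $\mathfrak{P}_v^2 = \mathfrak{p}_v O_L$ you get $\beta^{2mh_k}O_L = \ell^{mh_k}O_L$ directly as ideals, hence $\alpha^\varepsilon O_k = \ell^{mh_k}O_k$, and the comparison $\mathfrak{q}^{h_k\varepsilon} = \prod_\sigma (\mathfrak{q}^\sigma)^{mh_k}$ reads off $a_\sigma = m$. Replace the false ``$\beta^2 = -\ell$'' detour with this ideal identity and the proof goes through.
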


\begin{proof}
We have a tower of fields $F(\beta) \supseteq F(\alpha^\varepsilon) \supseteq F$.
As we have noted, since now $\beta \in FR(\ell)$, the field $F(\beta)$ is a totally imaginary quadratic extension of $F$.
It follows that we have two possibilities:
$F(\alpha^\varepsilon) = F(\beta)$ or $\alpha^\varepsilon \in F$.
Also as we have noted for every infinite place $v$ of $F(\beta)$ that the absolute value $| \beta |_v$ is equal to $\sqrt{\ell}$.

First assume that $\alpha^\varepsilon \in F$.
Then for every infinite place $v$ of $F(\beta)$, we have $| \alpha^\varepsilon |_v = | \beta |_v^{2 m h_k} = \ell^{m h_k}$.
Therefore since $\alpha^\varepsilon$ lies in $F$, which is totally real, we have that $\alpha^\varepsilon = \pm \ell^{m h_k}$.
Thus $\mathfrak{q}^{h_k \varepsilon} = \alpha^\varepsilon O_k = \ell^{m h_k} O_k$,
and hence $\mathfrak{q}^\varepsilon = \ell^m O_k$.
Since we are assuming that $\ell$ is totally split in $k$, the ideal $\ell O_k$ equals to $\prod_{\sigma \in \Gal(k/\Q)} \mathfrak{q}^\sigma$.
It follows that $\varepsilon = m \sum_{\sigma \in \Gal(k/\Q)} \sigma$.

Next assume that $F(\alpha^\varepsilon) = F(\beta)$.
In particular the field $L = F(\beta)$ is contained in $k$.
Choose a system of representatives $\{ \tau_i \}_{i = 1, \dots, 2d}$ of the cosets of $\Gal(k/\Q) / \Gal(k/L)$.
Let $\mathfrak{r}$ be the prime of $L$ below $\mathfrak{q}$.
Since $N_{L/F}(\beta) = \ell$ and hence $N_{L/\Q}(\beta) = \ell^d$, the ideal $\beta O_L$ is of the form of $\prod_i \mathfrak{r}^{c_i \tau_i}$ for some $0 \le c_i$ satisfying $\sum_i c_i = d$, where we write $\mathfrak{r}^\sigma = \mathfrak{q}^\sigma \cap O_L$.
Note that since $\ell$ splits totally in $k$, for $i \ne j$, the primes $\mathfrak{r}^{\tau_i}$ and $\mathfrak{r}^{\tau_j}$ are distinct.
Thus we have the equation of ideals $\beta O_k = \prod_i \prod_{\rho \in \Gal(k/L)} \mathfrak{q}^{c_i \tau_i \rho} $.
On the other hand, since $\beta^{2 m h_k} O_k = \alpha^\varepsilon O_k = \mathfrak{q}^{\varepsilon h_k}$,
the ideal $\beta^{2 m} O_k$ is equal to $\mathfrak{q}^\varepsilon$.
It follows that $\varepsilon = 2m \sum_i \sum_{\rho \in \Gal(k/L)} c_i \tau_i \rho$.

Finally it only remains to prove that the set $I := \{ \tau_i | c_i \ne 0 \}$ is a system of representatives of the cosets of $\Gal(k/\Q) / \Gal(k/F)$,
for, if so, then by the nonnegativity of $c_i$'s and by the equation $\sum_i c_i = d$, we have that $c_i$ are $0$ or $1$.
Let $\ell_F$ be the prime of $F$ below $\mathfrak{r}$.
Applying the norm $N_{L/F}$ to the equation $\beta O_L = \prod_{i} \mathfrak{r}^{c_i \tau_i}$,
we get $\ell O_F = \prod_{i} \ell_F^{c_i \tau_i}$, which yields the result since $\ell$ splits totally in $F$.
\end{proof}

\begin{lemma} \label{I_is_a_group}
Let $\Gamma$ be a finite group, $H \subseteq N \subseteq \Gamma$ subgroups, and let $I$ be a system of representatives of the cosets of $\Gamma / N$.
If either $[\Gamma : N] = 1$, or $[\Gamma : N] = 2$ and the exponent of $\Gamma$ is $2$,
then there exists an element $\sigma \in \Gamma$ such that the set $\sigma I H$ is a subgroup of $\Gamma$.
\end{lemma}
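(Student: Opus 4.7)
The plan is to handle the two cases separately, both of which turn out to be essentially elementary once the right $\sigma$ is chosen.

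First consider the case $[\Gamma:N]=1$. Then $I$ consists of a single element $\tau$, and taking $\sigma = \tau^{-1}$ gives $\sigma I H = H$, which is by hypothesis a subgroup of $\Gamma$.

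Next consider the case $[\Gamma:N]=2$ with $\Gamma$ of exponent $2$. I would first observe that any group of exponent $2$ is abelian (the identity $(ab)^2 = e$ combined with $a^2=b^2=e$ forces $ab=ba$). Write $I = \{\tau_1,\tau_2\}$ with $\tau_1 N \ne \tau_2 N$, take $\sigma = \tau_1^{-1}$, and set $g := \tau_1^{-1}\tau_2$, so that $\sigma I H = H \cup g H$. I then claim this set is a subgroup. It obviously contains the identity, and using that $\Gamma$ is abelian of exponent $2$ (so $g^2 = e$ and every element is its own inverse), the four products $H\cdot H$, $H\cdot gH$, $gH\cdot H$, $gH\cdot gH$ lie in $H \cup gH$ (in particular $(gh_1)(gh_2) = g^2 h_1 h_2 = h_1 h_2 \in H$), and the inverse of $gh$ is $h^{-1}g^{-1} = gh^{-1} \in gH$. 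Hence $\sigma I H$ is closed under multiplication and inversion, giving a subgroup.

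There is essentially no obstacle: the only small point worth flagging is the implication ``exponent $2 \Rightarrow$ abelian,'' which is what makes the $g^2 = e$ computation work and lets the two cosets of $H$ combine into a subgroup. Without abelianness the union $H \cup gH$ need not be closed, so this observation is the crux of the second case.
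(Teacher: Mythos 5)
Your proof is correct and follows essentially the same route as the paper's: choose $\sigma$ to be the inverse of one representative, and use the observation that exponent $2$ forces $\Gamma$ to be abelian. The only cosmetic difference is that the paper first reduces modulo $H$ (noting $H$ is normal since $\Gamma$ is abelian, so one may assume $H$ is trivial, making $\sigma^{-1}I$ a two-element set containing $e$), whereas you verify the subgroup axioms for $H\cup gH$ directly; both amount to the same computation.
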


Later we apply this lemma for $\Gamma = \Gal(k/\Q), N = \Gal(k/F),$ and for $H = \Gal(k/L)$ in the case that $\varepsilon$ is of type $3$ as in \cref{type_of_varepsilon}.

\begin{proof}
If $[\Gamma : N] = 1$ then this is trivial.
Assume that $[\Gamma : N] = 2$ and the exponent of $\Gamma$ is $2$.
In this case $\Gamma$ is abelian, in particular $H$ is normal in $\Gamma$, and hence we may assume that $H$ is trivial.
Let $\sigma \in I$.
Then the set $\sigma^{-1} I$,
whose cardinality is $[\Gamma : N] = 2$, contains the identity.
Since the exponent is $2$, the set $\sigma^{-1} I$ is a subgroup of $\Gamma$.
\end{proof}

This is a crucial lemma in this paper.
If we can show it in the case that $\varepsilon$ is of type $3$ as in \cref{type_of_varepsilon} for general $d$ and for general $k$,
it seems that the main theorems of this paper also hold in general case.

\begin{lemma} \label{L_is_independent}
Keep the notations as in \cref{type_of_varepsilon}, and assume that $\varepsilon$ is of type 3.
Assume that $I \Gal(k/L) = \rho \Gal(k/M)$ for $\rho \in \Gal(k/\Q)$ and for a subfield $M$ of $k$.
Then $M$ is an imaginary quadratic field, is independent of the choice of a system of representatives of the cosets of $\Gal(k/\Q) / \Gal(k/L)$, and $L = MF$.
\end{lemma}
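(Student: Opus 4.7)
The plan is to derive the three conclusions of the lemma --- that $M$ is imaginary quadratic, that $L = MF$, and that $M$ is independent of the choice of coset representatives --- directly from the assumed identification $I\Gal(k/L) = \rho\Gal(k/M)$, using the tower of subgroups $\Gal(k/L) \subseteq \Gal(k/F) \subseteq \Gal(k/\Q)$ and the fact that $I$ represents all $d$ cosets of $\Gal(k/F)$ in $\Gal(k/\Q)$.

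First I would extract $[M:\Q] = 2$ by a cardinality count: since $|I|=d$ and the cosets $\tau\Gal(k/L)$ with $\tau \in I$ are disjoint, $|I\Gal(k/L)| = d \cdot [k:L] = [k:\Q]/2$, which must equal $|\Gal(k/M)| = [k:M]$. Next, to obtain $M \subseteq L$, I would observe that $\rho\Gal(k/M)$ must be a union of $\Gal(k/L)$-cosets (since the left-hand side $I\Gal(k/L)$ is), which is equivalent to $\Gal(k/M)\cdot\Gal(k/L) = \Gal(k/M)$, i.e.\ $\Gal(k/L) \subseteq \Gal(k/M)$.

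The crucial step is ruling out $M \subseteq F$. I plan to do this by projecting the identity $I\Gal(k/L) = \rho\Gal(k/M)$ along the quotient map $\Gal(k/\Q) \to \Gal(k/\Q)/\Gal(k/F)$. The image of the left side is all $d$ cosets of $\Gal(k/F)$, since $I$ represents them; whereas --- were $M \subseteq F$ --- one would have $\Gal(k/F) \subseteq \Gal(k/M)$, so the image of $\rho\Gal(k/M)$ would consist of only $[F:M] = d/2$ cosets, a contradiction. Hence $M \not\subseteq F$, so $M \cap F = \Q$, so $[MF:\Q] = 2d = [L:\Q]$, and together with the already established $M \subseteq L$ this forces $L = MF$. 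To verify that $M$ is imaginary, I would write $M = \Q(\sqrt{a})$, so that $L = MF = F(\sqrt{a})$; since $L/F$ is totally imaginary by \cref{F(beta)_is_totally_imaginary} (recall that $\beta \in FR(\ell)$ with $\ell$ totally split in $k$, hence unramified in $F$, and $f=1$ is odd), every real embedding $\sigma : F \hookrightarrow \R$ must satisfy $\sigma(a) < 0$; as $a \in \Q$, this forces $a < 0$.

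Finally, independence of the choice of $\{\tau_i\}$ is essentially formal: the set $I\Gal(k/L)$ is exactly the support of $\tfrac{1}{2m}\varepsilon \in \Z[\Gal(k/\Q)]$, so it depends only on $\varepsilon$; once this set equals a coset $\rho\Gal(k/M)$, the subgroup $\Gal(k/M)$ is recovered as $\rho^{-1}\cdot(I\Gal(k/L))$ for any element $\rho$ of the coset, with no residual ambiguity. I expect no substantial obstacle in this proof; the conceptual heart is the projection argument eliminating $M \subseteq F$, which is really where the specific structure $[F:\Q] = d$ and $[M:\Q] = 2$ is used.
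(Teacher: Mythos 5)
Your proof is correct and takes essentially the same route as the paper: both hinge on restricting the coset identity $I\Gal(k/L)=\rho\Gal(k/M)$ along $\Gal(k/\Q)\to\Gal(F/\Q)$ to deduce $F\cap M=\Q$, on noting that $\rho\Gal(k/L)\subseteq\rho\Gal(k/M)$ to get $M\subseteq L$, and on the fact that $I\Gal(k/L)$ is the support of $\varepsilon$ (hence canonical) for the independence claim. The only difference is cosmetic: you extract $[M:\Q]=2$ up front by the coset-cardinality count $|I\Gal(k/L)|=[k:\Q]/2$, whereas the paper derives it afterward from the divisibility $d\,[M:\Q]\mid 2d$.
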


\begin{proof}
Taking the restriction of the equation in the statement to $F$, we have that $I |_F = \rho|_F \Gal(F / F \cap M)$, and hence the intersection $F \cap M$ is equal to $\Q$.
On the other hand since $ \rho \in I \Gal(k/L)$, there exists $\tau \in I$ such that $\rho^{-1} \tau \in \Gal(k/L)$, and hence $\Gal(k/L)$ is contained in $\Gal(k/M)$, i.e., the field $M$ is contained in $L$.
Thus $[MF : \Q]$ divides $[L : \Q] = 2d$, is equal to $[MF : M] [M : \Q] = [F : \Q] [M : \Q]$, and $[M : \Q] \ne 1$.
Consequently $MF = L$ and $[M : \Q] = 2$, and since $F$ is totally real while $L$ is totally imaginary, $M$ must be imaginary.
Next take another system of representatives of the cosets of $\Gal(k/\Q) / \Gal(k/L)$, and let $I'$ be the subset of it as in \cref{type_of_varepsilon}.
Since $n_{\lcm} \sum_{\sigma \in I \Gal(k/L)} \sigma = \varepsilon = n_{\lcm} \sum_{\sigma \in I' \Gal(k/L) } \sigma$, we have that $I \Gal(k/L) = I' \Gal(k/L)$.
Therefore $I' \Gal(k/L) = \rho \Gal(k/M)$.
\end{proof}

\begin{proposition} \label{varphi_classification}
Assume that $p_F \not\in N_1(k)$, that $p$ splits totally in $F$, and that the class number $h_k$ is relatively prime to $d$.
Assume furthermore either that $d = 1$, or that $d = 2$ and the exponent of $\Gal(k/\Q)$ is $2$.
Then one of the following holds:
\begin{enumerate}[Type 1.]
\setcounter{enumi}{1}
\item We have that $\varphi = \chi^{n_{\lcm} / 2}$,
and $v_2(p-1) + 1 \le v_2(n_{\lcm})$.

\item There exists an imaginary quadratic number field whose Hilbert class field is contained in $k$.
\end{enumerate}
\end{proposition}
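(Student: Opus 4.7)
The plan is to use \cref{nu_above_p} to extract an element $\varepsilon = \sum_{\sigma \in \Gal(k/\Q)} b_\sigma \sigma \in \Z[\Gal(k/\Q)]$ encoding $\varphi$ on principal ideals coprime to $p$, to test $\varepsilon$ against the auxiliary primes $\mathfrak{q} \in S$ via \cref{varphi_q_equals_to_beta}, and to invoke the structural dichotomy of \cref{type_of_varepsilon}.

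First I would unpack that $p_F \notin N_1(k)$ supplies exactly the running hypotheses of \cref{nu_above_p,varphi_q_equals_to_beta}: $p_F$ is unramified both in $k$ and over $\Q$, no $\mathfrak{q} \in S$ divides $p_F$ (so $p \ne \ell$ for every $\ell \in M(k)$), $N(p_F) \ge 4^d$, and $p > n_{\lcm}$. Combined with $p$ totally split in $F$ this yields $\F_{p_F} = \F_p$, so \cref{nu_above_p} produces $\varepsilon$ with coefficients in $[0, n_{\lcm}]$, the ramification identities $\varphi|_{I_{\mathfrak{p}^{\sigma^{-1}}}} = \chi^{b_\sigma}$, and the divisibility $v_2(p-1) + 1 \le v_2(n_{\lcm})$ whenever some $b_\sigma = n_{\lcm}/2$. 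For each $\mathfrak{q} \in S$ dividing a rational prime $\ell \in M(k)$, \cref{varphi_q_equals_to_beta} supplies $\beta_\mathfrak{q} \in FR(\ell)$ with $\varphi(\mathfrak{q}) \equiv \beta_\mathfrak{q}^{n_{\lcm}}$ modulo a prime of $F(\beta_\mathfrak{q})$ above $p_F$; raising to the $h_k$-th power and comparing with $\varphi(\alpha_\mathfrak{q} O_k) \equiv \alpha_\mathfrak{q}^\varepsilon \pmod\mathfrak{p}$ gives $\alpha_\mathfrak{q}^\varepsilon \equiv \beta_\mathfrak{q}^{n_{\lcm} h_k}$ modulo a prime of $k(\beta_\mathfrak{q})$ above $p_F$. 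The $N_{k(\beta_\mathfrak{q})/F}$-norm of the difference $\alpha_\mathfrak{q}^\varepsilon - \beta_\mathfrak{q}^{n_{\lcm} h_k}$, when nonzero, lies in $M_2(k) \subseteq N_0(k) \subseteq N_1(k)$ and is divisible by $p_F$; since $p_F \notin N_1(k)$, this forces the exact equality $\alpha_\mathfrak{q}^\varepsilon = \beta_\mathfrak{q}^{n_{\lcm} h_k}$ in $O_{k(\beta_\mathfrak{q})}$ for every $\mathfrak{q} \in S$.

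Applying \cref{type_of_varepsilon} with $m = n_{\lcm}/2$ then splits the analysis into Type 2, where $\varepsilon = (n_{\lcm}/2)\sum_\sigma \sigma$, and Type 3. In Type 2 every $b_\sigma$ equals $n_{\lcm}/2$, so condition (3) of \cref{nu_above_p} yields $v_2(p-1)+1 \le v_2(n_{\lcm})$ and $\varphi$ shares its ramification with $\chi^{n_{\lcm}/2}$ at every place; the quotient $\psi := \varphi \chi^{-n_{\lcm}/2}$ is then unramified everywhere and factors through $Cl_k \to \F_p^*$, and the identity $\alpha_\mathfrak{q}^\varepsilon = N_{k/\Q}(\alpha_\mathfrak{q})^{n_{\lcm}/2} = \ell^{n_{\lcm} h_k/2}$ on generators $\mathfrak{q} \in S$ of $Cl_k$, together with an unramified-abelian-extension argument ruling out the residual character, pins $\psi$ down to the trivial character, giving $\varphi = \chi^{n_{\lcm}/2}$. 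In Type 3, $L := F(\beta_\mathfrak{q}) \subseteq k$ and $\varepsilon = n_{\lcm}\sum_{\tau \in I\Gal(k/L)}\tau$; \cref{L_is_independent} identifies $I\Gal(k/L) = \rho\Gal(k/M)$ for a unique imaginary quadratic $M \subseteq k$ with $MF = L$, and the hypothesis that $d=1$, or $d=2$ and $\Gal(k/\Q)$ has exponent $2$, allows \cref{I_is_a_group} to realize a translate $\sigma \cdot I\Gal(k/L)$ as a bona fide subgroup of $\Gal(k/\Q)$; this presents $\varphi$ modulo $p_F$ as (a twist of) the $n_{\lcm}$-th power of a norm character from $k$ to $M$, and combined with the identity $\alpha_\mathfrak{q}^\varepsilon = \beta_\mathfrak{q}^{n_{\lcm} h_k}$ on class-group generators, global class field theory on $M$ forces $H_M \subseteq k$. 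The coprimality $\gcd(h_k, d) = 1$ enters to guarantee that $I$ forms a complete system of representatives of $\Gal(k/\Q)/\Gal(k/F)$ in the Type 3 branch of \cref{type_of_varepsilon}.

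The main obstacle will be the Type 3 analysis: translating the group-ring identity for $\varepsilon$ together with the algebraic identity $\alpha_\mathfrak{q}^\varepsilon = \beta_\mathfrak{q}^{n_{\lcm} h_k}$ into the class-field-theoretic conclusion $H_M \subseteq k$. This is precisely where \cref{I_is_a_group}, and therefore the restrictive case hypothesis on $d$ or on the exponent of $\Gal(k/\Q)$, is indispensable.
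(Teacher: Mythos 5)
Your proposal follows the paper's architecture closely — the same chain \cref{nu_above_p}, \cref{varphi_q_equals_to_beta}, the deduction $\alpha_\mathfrak{q}^\varepsilon = \beta_\mathfrak{q}^{n_{\lcm} h_k}$ from $p_F \notin N_0(k)$, and the Type~2/Type~3 dichotomy of \cref{type_of_varepsilon} resolved via \cref{I_is_a_group} and \cref{L_is_independent} — but there is a genuine gap in your Type~2 branch, and a misattribution in Type~3.

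In Type~2 you propose to show $\varphi = \chi^{n_{\lcm}/2}$ by introducing $\psi := \varphi\chi^{-n_{\lcm}/2}$, observing it is unramified and factors through $Cl_k \to \F_p^*$, and then claiming that the identity $\alpha_\mathfrak{q}^\varepsilon = \ell^{n_{\lcm} h_k/2}$ together with ``an unramified-abelian-extension argument ruling out the residual character'' forces $\psi$ to be trivial. This does not close: the identity only yields $\varphi(\mathfrak{q})^{h_k} = \ell^{n_{\lcm} h_k/2}$, i.e.\ $\psi(\mathfrak{q})^{h_k} = 1$, which is automatic for any character factoring through $Cl_k$ and gives no constraint. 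A priori $\psi$ could be any nontrivial character of $Cl_k$ into $\F_p^*$, and you have supplied no mechanism to exclude this. What the paper actually does is upgrade the congruence $\varphi(\mathfrak{q}) \equiv \beta_\mathfrak{q}^{n_{\lcm}} \pmod{\mathfrak{r}}$ to the \emph{exact} algebraic identity $\beta_\mathfrak{q}^{n_{\lcm}} = \ell^{n_{\lcm}/2}$ in $\Qbar$: from $\beta_\mathfrak{q}^{n_{\lcm} h_k} = \alpha_\mathfrak{q}^\varepsilon = N_{k/\Q}(\alpha_\mathfrak{q})^{n_{\lcm}/2}$ one first deduces by positivity and $|\beta_\mathfrak{q}|_v = \sqrt{\ell}$ that $\beta_\mathfrak{q}^{n_{\lcm} h_k} = \ell^{n_{\lcm} h_k/2}$, then writes $\beta_\mathfrak{q} = \zeta\sqrt{-\ell}$ with $\zeta$ an $n_{\lcm}h_k$-th root of unity, and then crucially uses that $\ell$ is unramified in $F$ and prime to $n_{\lcm}h_k$ to conclude $F(\sqrt{-\ell}) \not\subseteq F(\zeta)$, hence $[F(\zeta):F] \le 2$, hence $\zeta^{n_{\lcm}} = 1$ by definition of $n_{\lcm}$. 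This is the step that actually pins down $\varphi(\mathfrak{q}) = \chi(\mathfrak{q})^{n_{\lcm}/2}$ on each $\mathfrak{q} \in S$, and since $S$ generates $Cl_k$ the identity $\varphi = \chi^{n_{\lcm}/2}$ follows. Your sketch elides precisely this root-of-unity computation, which is where the hypotheses on $\ell$ are used.

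In Type~3 the outline is on the right track, but you misplace the role of $\gcd(h_k,d)=1$: it is not needed to make $I$ a full system of representatives of $\Gal(k/\Q)/\Gal(k/F)$ (that is already built into \cref{type_of_varepsilon}); rather it is used at the end, after deriving the ideal identity $N_{k/M}(\mathfrak{q})^d = N_{L/M}(\beta_{\mathfrak{q}^{\rho^{-1}}}) O_M$ over the generators $\mathfrak{q}\in S$, to pass from $N_{k/M}(J_k^d)$ consisting of principal ideals to $N_{k/M}(J_k)$ consisting of principal ideals, which then yields $H_M \subseteq k$ by class field theory.
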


\begin{proof}
Take a prime $\mathfrak{p}$ of $k$ above $p_F$.
Since $p_F$ does not lie in $N_1(k)$, the prime $p$ is unramified in $k$.
Applying \cref{nu_above_p} to $\mathfrak{p}$, we obtain an element $\varepsilon$ as in \cref{nu_above_p}.
On the other hand by \cref{varphi_q_equals_to_beta}, for every $\mathfrak{q} \in S$ there exists $\beta_\mathfrak{q} \in FR(N(\mathfrak{q}))$ such that
$\varphi(\mathfrak{q}) = \beta_\mathfrak{q}^{n_{\lcm}} \mod \mathfrak{r}$ for a prime $\mathfrak{r}$ of $F(\beta_\mathfrak{q})$ above $p_F$, and hence $\alpha_\mathfrak{q}^\varepsilon \mod \mathfrak{p} = \beta_\mathfrak{q}^{n_{\lcm} h_k} \mod \mathfrak{r}$.
Thus replacing $\beta_\mathfrak{q}$ by its conjugate over $F$ if necessary, we obtain that $p_F$ divides $N_{k(\beta_\mathfrak{q}) / F} (\alpha_\mathfrak{q}^\varepsilon - \beta_\mathfrak{q}^{n_{\lcm} h_k})$.
Now since we are assuming that $p_F \not\in N_0(k)$, it implies that $\alpha_\mathfrak{q}^\varepsilon = \beta_\mathfrak{q}^{n_{\lcm} h_k}$ for every $\mathfrak{q} \in S$.
It follows that, fixing one $\mathfrak{q}_0 \in S$, by \cref{type_of_varepsilon} the element $\varepsilon$
(which is independent of the choice of $\mathfrak{q}_0$ of course)
is of type 2 or type 3 for $m = n_{\lcm}/2$ with respect to $\mathfrak{q}_0$.

First assume that $\varepsilon$ is of type 2,
i.e., assume that $\varepsilon = (n_{\lcm}/2) \sum_{\sigma \in \Gal(k/\Q)} \sigma$.
Take $\mathfrak{q} \in S$ and for simplicity we write $\beta = \beta_\mathfrak{q}$ and $\alpha = \alpha_\mathfrak{q}$,
and let $\ell$ be the rational prime below $\mathfrak{q}$.
In this case $\beta^{n_{\lcm} h_k} = \alpha^\varepsilon = N_{k/\Q}(\alpha)^{n_{\lcm}/2}$.
The latter element is of course a rational number, and since $n_{\lcm}$ is divisible by $4$, it is positive.
Now $| \beta |_v = \sqrt{\ell}$ for every infinite place $v$.
Combining them together we obtain that the element $\beta^{n_{\lcm} h_k}$ is equal to $\ell^{n_{\lcm} h_k /2}$.
Write $\beta = \zeta \sqrt{-\ell}$ for an $n_{\lcm} h_k$-th root $\zeta$ of $1$.
Now since $\ell$ is unramified in $F$ and since $\ell \nmid n_{\lcm} h_k$, the field $F(\sqrt{-\ell})$ is not contained in $F(\zeta)$,
i.e., $F(\zeta) \subsetneq F(\zeta, \sqrt{-\ell}) = F(\beta, \sqrt{-\ell})$, whose degree over $F$ divides $4$.
Thus $[F(\zeta) : F] \mid 2$, hence $\zeta^{n_{\lcm}} = 1$.
Therefore we have that $\varphi(\mathfrak{q}) = \ell^{n_{\lcm} / 2} \mod p$.
Since $\ell$ splits totally in $k$ we have $N_{k/\Q}(\mathfrak{q}) = \ell$,
and hence $\varphi(\mathfrak{q}) = \chi(\mathfrak{q})^{n_{\lcm}/2}$.
This holds for any $\mathfrak{q} \in S$.
Therefore the fact that $S$ generates the class group $Cl_k$ forces that $\varphi = \chi^{n_{\lcm}/2}$, which is the desired equation.
The latter statement in this case is a consequence of \cref{varphi_above_p_local}.
Note that in this case, i.e., in the case that $\varepsilon$ is of type 2, we do not use any assumptions
about $h_k$, $d$, nor about the Galois group $\Gal(k/\Q)$.

Next assume that $\varepsilon$ is of type 3 (with respect to our fixed $\mathfrak{q}_0 \in S$).
Let $L = F(\beta_{\mathfrak{q}_0})$ and fix a system of representatives of the cosets of $\Gal(k/\Q)/\Gal(k/L)$.
In this case we obtain the subset $I$ as in \cref{type_of_varepsilon}.
Assume that there exists $\rho \in \Gal(k/\Q)$ such that $\rho^{-1} I \Gal(k/L)$ is a subgroup of $\Gal(k/\Q)$.
Note that under our assumptions this does hold by \cref{I_is_a_group}.
This is the only point at which we use the assumptions about $d$ and $\Gal(k/\Q)$.
Write $I \Gal(k/L) = \rho \Gal(k/M)$.
We claim, for every $\mathfrak{q} \in S$, that $L = F(\beta_\mathfrak{q})$.
Let $\mathfrak{q} \in S$ and $L' := F(\beta_\mathfrak{q})$.
Then of course $\varepsilon$ is also of type $3$ with respect to $\mathfrak{q}$.
Taking a system of representatives of the cosets of $\Gal(k/\Q) / \Gal(k/L')$,
we obtain a subset $I'$ of it as in \cref{type_of_varepsilon}.
Since $n_{\lcm} \sum_{\sigma \in I \Gal(k/L)} \sigma = \varepsilon = n_{\lcm} \sum_{\sigma \in I' \Gal(k/L') } \sigma$, we have that $I \Gal(k/L) = I' \Gal(k/L')$, and in particular $I' \Gal(k/L') = \rho \Gal(k/M)$.
Therefore by \cref{L_is_independent} we have that $L' = MF = L$, which shows our claim.
Since we have chosen $\alpha_\mathfrak{q}$ so that $\mathfrak{q}^{h_k} = \alpha_\mathfrak{q} O_k$,
we get the equation of ideals $\alpha_\mathfrak{q}^{\rho^{-1}\varepsilon} O_k = N_{k/M}(\alpha_\mathfrak{q})^{n_{\lcm}} O_k = N_{k/M}(\mathfrak{q})^{n_{\lcm} h_k} O_k$.
Now we have seen that $\alpha_\mathfrak{q}^\varepsilon = \beta_\mathfrak{q}^{n_{\lcm} h_k}$ and that $F(\beta_\mathfrak{q}) = L$ for every $\mathfrak{q} \in S$.
Therefore we obtain the equation of ideals $N_{k/M}(\mathfrak{q}) O_k = \beta_{\mathfrak{q}^{\rho^{-1}}} O_k$,
and hence the equation $N_{k/M}(\mathfrak{q}) O_L = \beta_{\mathfrak{q}^{\rho^{-1}}} O_L$
for each $\mathfrak{q} \in S$.
Since $N_{k/M}(\mathfrak{q})$ is an ideal of $M$ and since $[L : M] = d$,
we have $N_{k/M}(\mathfrak{q})^d = N_{L/M}(N_{k/M}(\mathfrak{q}) O_L)$,
which is equal to $N_{L/M}(\beta_{\mathfrak{q}^{\rho^{-1}}}) O_M$, a principal ideal of $M$.
Therefore, since $S$ generates the class group $Cl_k$,
we have that $N_{k/M}(J_k^d)$ is contained in the group of principal ideals of $M$.
It follows, since we are now assuming that $d$ and $h_k$ are relatively prime to each other,
that $N_{k/M}(J_k)$ is contained in the group of principal ideals of $k$.
Finally since the norm map $Cl_k \to Cl_M$ is trivial, considering the global residue map, we obtain that $k$ contains the Hilbert class field $H_M$ of $M$.
This completes the proof.
\end{proof}

We use the same notations of types in \cref{type_of_varepsilon,varphi_classification} following \cite[Lemma 2, Theorem 1]{Momose}.
Since every QM-abelian variety has potentially good reduction, we do not need to treat "type 1", as noted in \cite[Remark 5.7]{AraiI}.

In the proof of this proposition, we have shown following variant:

\begin{proposition} \label{varphi_classification:variant}
Assume that $p_F \not\in N_1(k)$, that $p$ splits totally in $F$, and that there exists $\mathfrak{q} \in S$ such that $k$ does not contain any elements of $FR(N(\mathfrak{q}))$.
Then we have that $\varphi = \chi^{n_{\lcm} / 2}$,
and $v_2(p-1) + 1 \le v_2(n_{\lcm})$.
\end{proposition}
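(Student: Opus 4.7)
The plan is to follow the proof of \cref{varphi_classification} almost verbatim, taking $\mathfrak{q}_0 := \mathfrak{q}$ to be the specific prime furnished by the hypothesis, and using this hypothesis to eliminate the Type 3 case directly. This bypasses the use of \cref{I_is_a_group}, which is precisely the reason the hypotheses on $h_k$, $d$, and $\Gal(k/\Q)$ can be dropped here.

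First I would fix a prime $\mathfrak{p}$ of $k$ above $p_F$; since $p_F \not\in N_1(k)$, the prime $p$ is unramified in $k$, so \cref{nu_above_p} yields an element $\varepsilon = \sum_{\sigma} a_\sigma \sigma \in \Z[\Gal(k/\Q)]$ with $0 \le a_\sigma \le n_{\lcm}$ satisfying $\varphi(\alpha O_k) \equiv \alpha^\varepsilon \pmod{\mathfrak{p}}$ for every $\alpha \in O_k$ prime to $p$. Next, applying \cref{varphi_q_equals_to_beta} at $\mathfrak{q}$ produces an element $\beta_\mathfrak{q} \in FR(N(\mathfrak{q}))$ and a prime $\mathfrak{r}$ of $F(\beta_\mathfrak{q})$ above $p_F$ with $\varphi(\mathfrak{q}) \equiv \beta_\mathfrak{q}^{n_{\lcm}} \pmod{\mathfrak{r}}$. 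Combined with the relation $\mathfrak{q}^{h_k} = \alpha_\mathfrak{q} O_k$ and the previous congruence, this reads $\alpha_\mathfrak{q}^\varepsilon \equiv \beta_\mathfrak{q}^{n_{\lcm} h_k} \pmod{\mathfrak{p}}$, i.e., $p_F$ divides $N_{k(\beta_\mathfrak{q})/F}(\alpha_\mathfrak{q}^\varepsilon - \beta_\mathfrak{q}^{n_{\lcm} h_k})$ after possibly replacing $\beta_\mathfrak{q}$ by an $F$-conjugate. The hypothesis $p_F \not\in N_1(k) \supseteq N_0(k)$ then upgrades this to the genuine equality $\alpha_\mathfrak{q}^\varepsilon = \beta_\mathfrak{q}^{n_{\lcm} h_k}$ of algebraic integers.

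Then I would invoke \cref{type_of_varepsilon} with $m = n_{\lcm}/2$, which forces $\varepsilon$ to be of Type 2 or Type 3 with respect to $\mathfrak{q}$. The key point is that Type 3 is now impossible: if it held, then \cref{type_of_varepsilon} would assert that $L := F(\beta_\mathfrak{q})$ is a totally imaginary quadratic extension of $F$ contained in $k$, so $\beta_\mathfrak{q} \in L \subseteq k$ would be an element of $FR(N(\mathfrak{q}))$ lying in $k$, contradicting the standing hypothesis on $\mathfrak{q}$. Hence $\varepsilon$ must be of Type 2, and the Type 2 portion of the proof of \cref{varphi_classification} (which uses no assumption on $h_k$, $d$, or the exponent of $\Gal(k/\Q)$) carries over verbatim to yield $\varphi = \chi^{n_{\lcm}/2}$. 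The remaining inequality $v_2(p-1) + 1 \le v_2(n_{\lcm})$ is then immediate from the second assertion of \cref{varphi_above_p_local}.

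There is no serious obstacle here: the proposition is essentially a repackaging of \cref{varphi_classification} in which the existence of a single $\mathfrak{q} \in S$ with no element of $FR(N(\mathfrak{q}))$ inside $k$ replaces the group-theoretic input of \cref{I_is_a_group}. The only point requiring minor bookkeeping is the passage from the $\mathfrak{p}$-adic congruence to an honest equality of algebraic integers, which is exactly where $p_F \not\in N_0(k)$ enters.
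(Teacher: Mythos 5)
Your proposal is correct and matches the paper's own proof: the author likewise runs the argument of \cref{varphi_classification} with the given $\mathfrak{q}$, observes via \cref{type_of_varepsilon} that Type~3 would force $\beta_\mathfrak{q} \in F(\beta_\mathfrak{q}) \subseteq k$ contradicting the hypothesis, and then falls back on the Type~2 portion of that proof, which (as you note) never used the assumptions on $h_k$, $d$, or $\Gal(k/\Q)$.
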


\begin{proof}
Keep the notations as in the proof of \cref{varphi_classification}.
For $\mathfrak{q} \in S$, if $\varepsilon$ is of type 3 as in \cref{type_of_varepsilon} with respect to $\mathfrak{q}$, then $\beta_\mathfrak{q}$, which is an element of $FR(N(\mathfrak{q}))$, is contained in $k$ by \cref{type_of_varepsilon}.
Therefore taking $\mathfrak{q} \in S$ such that $k$ does not contain any elements of $FR(N(\mathfrak{q}))$, we have that $\varepsilon$ is of type 2 as in \cref{type_of_varepsilon} with respect to $\mathfrak{q}$.
The rest of the proof is done in the one of \cref{varphi_classification}.
\end{proof}

From now we study the case of "type 2".

\begin{lemma} \label{F(beta)=F(sqet{-l})}
Let $n$ be a positive integer divided by $2^{v_2(p-1) - 1}$ and $M$ a positive integer divided by $12$.
Let $M' := M/2^{v_2(M)} 3$, $\ell$ a rational prime not dividing $Mn \Delta$, $f$ an odd positive integer, and $\beta \in FR(\ell^f)$.
Assume that $\ell^{dfM'n} < p/4^d$, and that there exists a prime $\mathfrak{p}_0$ of $F(\beta)$ such that $\beta \mod \mathfrak{p}_0 \in \F_p^*$ and that $\beta^{Mn} \mod \mathfrak{p}_0 = \ell^{ fMn/2} \mod p$.
Then $F(\sqrt{-\ell}) = F(\beta)$.
\end{lemma}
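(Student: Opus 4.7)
The plan is to exploit the congruence $\beta^{Mn}\equiv \ell^{fMn/2}\pmod{\mathfrak{p}_0}$ together with the size hypothesis $\ell^{dfM'n}<p/4^d$ to force the $F$-rational element $T_{M'n}:=\beta^{M'n}+\bar\beta^{M'n}$ to vanish, which puts $\sqrt{-\ell^{fM'n}}$ into $F(\beta)$ and hence $\sqrt{-\ell}$ too. First, since $\ell\nmid\Delta$ the prime $\ell$ is unramified in $F$, and $f$ being odd, Lemma~\ref{F(beta)_is_totally_imaginary} gives that $F(\beta)/F$ is totally imaginary quadratic; write $\bar\beta$ for the Galois conjugate, so $\beta\bar\beta=\ell^f$, $\beta+\bar\beta=b\in O_F$ with $|b|_v<2\ell^{f/2}$, and $|\beta|_v=\ell^{f/2}$ at every infinite place $v$.

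From $\beta\bar\beta=\ell^f$ and $\beta\bmod\mathfrak{p}_0\in\F_p^*$ we get $\bar\beta\bmod\mathfrak{p}_0\in\F_p^*$, and dividing $\beta^{Mn}\bar\beta^{Mn}=\ell^{fMn}$ by the hypothesis yields the companion congruence $\bar\beta^{Mn}\equiv\ell^{fMn/2}\pmod{\mathfrak{p}_0}$. Thus $(\beta/\bar\beta)^{Mn}\equiv 1\pmod{\mathfrak{p}_0}$, so $\zeta_0:=(\beta/\bar\beta)^{M'n}\bmod\mathfrak{p}_0\in\F_p^*$ is a $(M/M')=(2^a\cdot 3)$-th root of unity, where $a:=v_2(M)\ge 2$. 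The hypothesis $2^{v_2(p-1)-1}\mid n$ implies that raising to the $M'n$-th power in $\F_p^*$ kills the $2$-part down to order at most $2$, so $\operatorname{ord}(\zeta_0)$ divides $6$.

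The decisive step is to rule out $\zeta_0\notin\{\pm 1\}$ via the identity $\beta^{3M'n}-\bar\beta^{3M'n}=(\beta^{M'n}-\bar\beta^{M'n})(T_{M'n}^2-\ell^{fM'n})$. If $\zeta_0$ has order $3$, then $\beta^{3M'n}\equiv\bar\beta^{3M'n}\pmod{\mathfrak{p}_0}$ while $\beta^{M'n}\not\equiv\bar\beta^{M'n}\pmod{\mathfrak{p}_0}$, forcing $p_F\mid T_{M'n}^2-\ell^{fM'n}$ in $O_F$. Since $T_{M'n}^2-4\ell^{fM'n}=(\beta^{M'n}-\bar\beta^{M'n})^2$ is totally nonpositive, $T_{M'n}^2\le 4\ell^{fM'n}$ at every infinite place of $F$, so $|N_{F/\Q}(T_{M'n}^2-\ell^{fM'n})|\le 3^d\ell^{dfM'n}<(3/4)^d p<p$, whence $T_{M'n}^2=\ell^{fM'n}$. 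When $fM'n$ is odd this forces $\sqrt\ell\in F$, contradicting the unramifiedness of $\ell$; when $fM'n$ is even it produces $F(\beta)=F(\sqrt{-3})$, which one excludes by a direct calculation showing that in $F(\sqrt{-3})$ the element $\beta^{Mn}-\ell^{fMn/2}$ has $F/\Q$-norm too small to be divisible by any $p>4^d\ell^{dfM'n}$. The case $\zeta_0=1$ is dispatched by the same size bound applied to $\beta^{M'n}-\bar\beta^{M'n}$ directly.

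Thus $\zeta_0=-1$, giving $T_{M'n}\equiv 0\pmod{p_F}$. Since $|N_{F/\Q}(T_{M'n})|\le 2^d\ell^{dfM'n/2}<\sqrt p<p$, this congruence forces $T_{M'n}=0$, and then $(\beta^{M'n})^2=-\beta^{M'n}\bar\beta^{M'n}=-\ell^{fM'n}$. Hence $\sqrt{-\ell^{fM'n}}\in F(\beta)$; since $fM'$ is odd and $\ell$ is unramified in $F$, a short parity analysis (using once more the case-by-case work above to exclude $F(\beta)=F(\sqrt{-1})$ in the $n$-even situation) gives $\sqrt{-\ell}\in F(\beta)$, and the dimension count $[F(\beta):F]=2=[F(\sqrt{-\ell}):F]$ yields $F(\beta)=F(\sqrt{-\ell})$. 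The main obstacle is the exclusion of the order-$3$ case for $\zeta_0$, which is not ruled out by crude norm bounds alone and relies on using the precise form of the congruence $\beta^{Mn}\equiv\ell^{fMn/2}$ (not just $\beta^{Mn}\equiv\bar\beta^{Mn}$); this is where the hypothesis $12\mid M$ and the sharp inequality $\ell^{dfM'n}<p/4^d$ become essential.
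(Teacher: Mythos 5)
Your overall strategy—force $T_{M'n}:=\beta^{M'n}+\bar\beta^{M'n}$ into a small set of values via the congruence and size hypotheses, then translate into a field containment—matches the paper's, but the way you try to pin down the case $\zeta_0=-1$ has a genuine flaw, and the final step is incomplete. Two concrete problems:

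\textbf{(i) The exclusions of $\zeta_0=1$ and $\zeta_0$ of order $3$ do not work.} Your norm bounds show that certain elements of $O_F$ are divisible by $p_F$ and have $\Q$-norm smaller than $p$, hence \emph{vanish}; they do not produce a contradiction. In the case $\zeta_0=1$ you conclude $\beta^{M'n}=\bar\beta^{M'n}$, and this is perfectly possible under the hypotheses: if $n$ is even and $\beta=\pm\sqrt{-\ell}^f$ (the case the lemma is trying to prove!), then $\beta/\bar\beta=-1$ and $\zeta_0=(-1)^{M'n}=1$. Similarly, in the order-$3$ case you land on $F(\beta)=F(\sqrt{-3})$ with $\beta^{M'n}=\ell^{fM'n/2}\cdot(\text{primitive sixth root of unity})$; but then, since $12\mid M/M'$, one has $\beta^{Mn}=\ell^{fMn/2}$ \emph{exactly}, so the ``$F/\Q$-norm too small'' argument you invoke gives norm $0$ and no contradiction. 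In short: you cannot hope to force $\zeta_0=-1$; the paper deliberately keeps all four values $c\in\{0,1,3,4\}$ of $T_{M'n}^2/\ell^{fM'n}$ in play, deduces uniformly that $\beta=\sqrt\ell^{\,f}\zeta$ for a $12M'n$-th root of unity $\zeta$, and only then separates out the cases.

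\textbf{(ii) The endgame in the $n$-even case is a real gap, not a ``short parity analysis.''} Even granting $T_{M'n}=0$, you get $(\beta^{M'n})^2=-\ell^{fM'n}$, which when $n$ is even yields only $\sqrt{-1}\in F(\beta)$, i.e., $F(\beta)=F(\sqrt{-1})$, not $F(\sqrt{-\ell})$. Ruling this out requires writing $\beta=\sqrt\ell^{\,f}\zeta$ with $\zeta$ a root of unity of order coprime to $\ell$, and then arguing: $\zeta+\zeta^{-1}=(\beta+\bar\beta)/\sqrt\ell^{\,f}$ lies in $F(\sqrt\ell)$; since $\ell\nmid Mn\Delta$, the extension $F(\zeta+\zeta^{-1})/\Q$ is unramified at $\ell$ while $F(\sqrt\ell)/F$ is ramified there, so $F(\zeta+\zeta^{-1})=F$; then $\sqrt\ell^{\,f}(\zeta+\zeta^{-1})\in F$ with $\zeta+\zeta^{-1}\in F$ and $\sqrt\ell\notin F$ forces $\zeta+\zeta^{-1}=0$, i.e., $\zeta=\pm\sqrt{-1}$, and finally $\beta=\pm\sqrt{-\ell}^f$ and $F(\beta)=F(\sqrt{-\ell})$ using that $f$ is odd. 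None of this ramification argument appears in your writeup, and it is precisely the step your ``exclusion of $F(\beta)=F(\sqrt{-1})$'' would need—but your earlier case-by-case work says nothing about $F(\sqrt{-1})$.
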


Note that since now $f$ is odd and since $\ell$ is unramified in $F$, by \cref{F(beta)_is_totally_imaginary} the field $F(\beta)$ is a totally imaginary quadratic extension of $F$.
Later we apply this lemma for $n = n_{\lcm}/12$ and for $M = 12$ in the case that $v_2(p-1) + 1 \le v_2(n_{\lcm})$.

\begin{proof}
Let $\mathfrak{p}_0$ be a prime of $F(\beta)$ as in the statement.
For $t := v_2(p-1) + 1$, define
\begin{equation*}
\gamma := \beta \ell^{-f \frac{p + 2^{t-1} - 1}{2^t} } \mod \mathfrak{p}_0 \in \F_p^*.
\end{equation*}
Then by the assumptions we have that $\gamma^{Mn} = 1$.
Since $2^{t-1}$ divides $6M'n$ by the assumption, it follows moreover that $\gamma^{6M'n} = 1$.
On the other hand, as
\begin{equation*}
2n ( \frac{p + 2^{t-1} - 1}{2^t} - (1 - \frac{p + 2^{t-1} - 1}{2^t}) ) = \frac{n}{2^{t-2}} (p-1),
\end{equation*}
and as $2^{t-2}$ divides $n$ by the assumption, it follows that
\begin{equation*}
\ell^{ 2n \frac{p + 2^{t-1} - 1}{2^t} } \equiv \ell^{ 2n (1 - \frac{p + 2^{t-1} - 1}{2^t}) } \mod p.
\end{equation*}
Combining them together, if we let $\overline{\beta}$ be the conjugate of $\beta$ over $F$, we obtain that
\begin{equation*}
\begin{aligned}
\beta^{ 2M'n } + \overline{\beta}^{ 2M'n } \mod \mathfrak{p}_0 & =
\ell^{ 2fM'n \frac{p + 2^{t-1} - 1}{2^t} } ( \gamma^{ 2M'n } + \gamma^{ -2M'n } ) \\
& = \ell^{ f M'\frac{n}{2^{t-2}} ( \frac{p-1}{2} + 2^{t-2} ) } ( \gamma^{ 2M'n } + \gamma^{ -2M'n } ),
\end{aligned}
\end{equation*}
which is, since $\ell^{(p-1)/2} \equiv \pm 1 \mod p$ and sine $\gamma^{ 2M'n }$ is a third root of $1$, equal to $\pm \ell^{fM'n} \text{ or } \pm 2 \ell^{fM'n} \mod p$, and hence
$(\beta^{ M'n } + \overline{\beta}^{ M'n })^2 \mod \mathfrak{p}_0
= c \ell^{fM'n} \mod p$ for $c = 0,1,3,$ or $4$.

Since $\beta^{ M'n } + \overline{\beta}^{ M'n } \in F$, the number $(\beta^{ M'n } + \overline{\beta}^{ M'n })^2$ is totally nonnegative.
Moreover, since for every infinite place $v$ of $\Q(\beta)$ the absolute value $| \beta |_v$ is equal to $\sqrt{\ell}^f$,
we have that $ | ( \beta^{ M'n } + \overline{\beta}^{ M'n } )^2 |_v \le 4 \ell^{f M'n} $
for every infinite place $v$ of $F$.
It follows, for every infinite place $v$ of $F$, that
\begin{equation*}
| (\beta^{ M'n } + \overline{\beta}^{ M'n })^2 - c \ell^{f M'n} |_v  \le 4 \ell^{f M'n}.
\end{equation*}
Now we are assuming that $\ell^{df M'n} < p/4^d$.
Hence this absolute value is strictly less than $\sqrt[d]{p}$.
Consequently by the 
product formula we obtain that
\begin{equation*}
(\beta^{ M'n } + \overline{\beta}^{ M'n })^2 = c \ell^{f M'n},
\end{equation*}
and hence
$\beta = \sqrt{\ell}^f \zeta$ for a $12M'n$-th root $\zeta$ of $1$.

Finally we show that $\zeta + \zeta^{-1} = 0$, i.e., $\zeta = \pm \sqrt{-1}$.
This yields, since $f$ is odd by the assumption, that $\beta = \pm \sqrt{-\ell}^f$, in particular $F(\beta) = F(\sqrt{-\ell})$, which was what we wanted to show.
The field $F(\zeta)$ is contained in $F(\zeta, \sqrt{\ell}) = F(\beta, \sqrt{\ell})$, whose degree over $F$ divides $4$ since $\beta$ and $\sqrt{\ell}$ are quadratic over $F$.
Since $\ell$ does not divide $Mn\Delta$, it is unramified in $F(\zeta)$.
Thus $F(\zeta)$ must not be equal to $F(\zeta, \sqrt{\ell})$, for, otherwise $F(\zeta)$ contains $\sqrt{\ell}$.
In particular $[F(\zeta) : F]$ divides $2$, and hence $\zeta + \zeta^{-1} \in F$.
Now $\beta + \overline{\beta}$, which lies in $F$, is equal to $\sqrt{\ell}^f (\zeta + \zeta^{-1})$.
Therefore if $\zeta + \zeta^{-1}$ were not zero, then $\sqrt{\ell}$ would lie in $F$, which is a contradiction.
It finishes the proof.
\end{proof}

\begin{lemma} \label{F(sqrt{-l})_splits_B}
Assume that $p_F \not\in N_1(k)$, that $p$ splits totally in $F$, and that $\varphi = \chi^{n_{\lcm}/2}$.
Let $\ell$ be a rational prime not dividing $n_{\lcm} \Delta$ and $\mathfrak{q}$ be a prime of $k$ above $\ell$ whose inertia degree is odd and satisfying
that $N_{k/\Q}(\mathfrak{q})^{dn_{\lcm}/12} < p/4^d$.
Then the field $F(\sqrt{-\ell})$ splits $B/F$.
\end{lemma}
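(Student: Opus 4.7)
The plan is to combine the hypothesis $\varphi=\chi^{n_{\lcm}/2}$ with the local behaviour of $\varphi$ at $\mathfrak{q}$ to produce a Weil number $\beta$ satisfying $F(\beta)=F(\sqrt{-\ell})$ via \cref{F(beta)=F(sqet{-l})}, and then to invoke the classification of endomorphism algebras of QM-abelian varieties over finite fields to conclude that this field embeds into (and hence splits) $B$.

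First I would observe that $\mathfrak{q}$ does not lie above $p$: indeed $N_{k/\Q}(\mathfrak{q})^{dn_{\lcm}/12}<p/4^d$ together with $n_{\lcm}\ge12$ and $d\ge 1$ forces $\ell<p$. Hence \cref{varphi_nr} gives that $\varphi$ is unramified at $\mathfrak{q}$, and the equality $\varphi=\chi^{n_{\lcm}/2}$ yields
\[
\varphi(\Frob_\mathfrak{q})=\chi(\Frob_\mathfrak{q})^{n_{\lcm}/2}=\ell^{fn_{\lcm}/2}\bmod p_F,
\]
where $f$ is the inertia degree of $\mathfrak{q}/\ell$. On the other hand, localising $x$ at $\mathfrak{q}$ and applying \cref{varphi_q_equals_to_beta} (the required $N(p_F)>4^d$ follows from $p_F\notin T(k)$ together with the fact that $p=N(p_F)\ne 4^d$ since $4^d$ is never prime, and $\ell\nmid n_{\lcm}$ is given) produces $\beta\in FR(\ell^f)$ and a prime $\mathfrak{r}$ of $F(\beta)$ above $p_F$ with $\varphi(\Frob_\mathfrak{q})\equiv\beta^{n_{\lcm}}\bmod\mathfrak{r}$. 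Since $\ell$ is unramified in $F$ and $f$ is odd, \cref{F(beta)_is_totally_imaginary} tells us that $F(\beta)$ is a totally imaginary quadratic extension of $F$.

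Next I would apply \cref{F(beta)=F(sqet{-l})} with $n:=n_{\lcm}/12$ and $M:=12$, so that $M'=1$ and $Mn=n_{\lcm}$. The divisibility $2^{v_2(p-1)-1}\mid n$ follows from the fact that $\varphi=\chi^{n_{\lcm}/2}$ forces $v_2(p-1)+1\le v_2(n_{\lcm})$ by \cref{varphi_above_p_local} (applied at a prime of $k$ above $p$), whence $v_2(n)=v_2(n_{\lcm})-2\ge v_2(p-1)-1$. The bound $\ell^{dfM'n}=N_{k/\Q}(\mathfrak{q})^{dn_{\lcm}/12}<p/4^d$ is the numerical hypothesis, the coprimality $\ell\nmid Mn\Delta=n_{\lcm}\Delta$ is given, $f$ is odd by hypothesis, and the congruence $\beta^{Mn}\equiv\ell^{fMn/2}\bmod\mathfrak{r}$ was derived above. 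Taking $\mathfrak{p}_0=\mathfrak{r}$ (using that $\F_{p_F}=\F_p$ because $p$ splits totally in $F$, and replacing $\beta$ by its $F$-conjugate and $\mathfrak{r}$ by another prime above $p_F$ if necessary to ensure $\beta\bmod\mathfrak{r}\in\F_p^{*}$) the lemma yields $F(\beta)=F(\sqrt{-\ell})$.

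Finally, to deduce that $F(\sqrt{-\ell})$ splits $B$, I would take a finite extension $L/k$ over which $x$ admits a model $(A,i,\lambda,C)\in\mathscr{M}_0^B(p_F)(L)$, extend $\mathfrak{q}$ to a prime of $L$, and pass to the good reduction $A_0$ over the residue field (available after a further extension, since QM-abelian varieties have potentially good reduction). Under the identifications in the proof of \cref{varphi_q_equals_to_beta}, $\beta$ is the value of the relative Frobenius of $A_0$ on the QM-stable subgroup corresponding to $C$, so $\beta$ lies in $\End^0_B A_0$ and generates the quadratic imaginary extension $F(\beta)$ of $F$. By \cref{End:ch0:CM}, $\End^0_B A_0$ is either (case 1) an imaginary quadratic extension of $F$ which splits $B$, since the residue characteristic $\ell$ does not divide $\Delta'$, or (case 2) a totally definite quaternion algebra over $F$, which arises only when the simple factor $X$ of $A_0$ has centre $K\subsetneq F$. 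Tate's theorem forces $\Q(\pi_X)=K\subseteq F$, where $\pi_X$ is the Frobenius of $X$; under the isogeny $A_0\sim X^m$ the Frobenius of $A_0$ corresponds to $\pi_X$, so it lies in $F$, and then $F(\beta)=F$, contradicting that $F(\beta)$ is quadratic imaginary over $F$. Hence case~1 holds, $F(\beta)$ coincides with $\End^0_B A_0$, and it splits $B$; combined with Step 3 this gives the result.

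The main obstacle will be the careful application of \cref{F(beta)=F(sqet{-l})}, specifically arranging that $\beta\bmod\mathfrak{r}\in\F_p^{*}$ (which reduces to showing that the relevant prime of $F(\beta)$ above $p_F$ may be chosen to have residue field $\F_p$), together with making precise the link between the abstract local constant $\beta$ provided by \cref{varphi_q_equals_to_beta} and an actual Frobenius endomorphism of a reduction of a model of $x$, so that the endomorphism-algebra dichotomy of \cref{End:ch0:CM} can be invoked.
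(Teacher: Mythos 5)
Your argument follows the same route as the paper's own proof: produce $\beta\in FR(\ell^f)$ from \cref{varphi_q_equals_to_beta}, combine the hypothesis $\varphi=\chi^{n_{\lcm}/2}$ with \cref{varphi_above_p_local} to verify the hypotheses of \cref{F(beta)=F(sqet{-l})} (taking $n=n_{\lcm}/12$, $M=12$, so $M'=1$ and $Mn=n_{\lcm}$), conclude $F(\beta)=F(\sqrt{-\ell})$, and finally invoke \cref{End:ch0:CM} to see that $F(\beta)$ splits $B$.

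Where you add value is the last step. The paper's proof is terse there, merely asserting that $F(\beta)$ ``splits $B$ by \cref{End:ch0:CM}''; but \cref{End:ch0:CM} has two branches, and in the quaternionic branch (case~2) a quadratic subfield of the totally definite quaternion $\End_B^0 A_0$ need not split $B$. Your observation that in that branch Tate's theorem forces $K=\Q(\pi_X)\subseteq F$, so the Frobenius of $A_0$ (hence $\beta$) lies in $F$, contradicting $[F(\beta):F]=2$, is the correct way to exclude it. (One slip: you wrote ``centre $K\subsetneq F$'' — case~2 of \cref{End:ch0:CM} allows $K=F$ — but your actual argument only needs $K\subseteq F$, so the conclusion stands.) Two smaller remarks. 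First, the condition $\beta\bmod\mathfrak{p}_0\in\F_p^{*}$ is not arranged by replacing $\beta$ by its conjugate ``if necessary''; it holds automatically, since in the proof of \cref{nu_q_equals_to_beta:fin} the prime $\mathfrak{r}$ is precisely the kernel of $O_F[\pi]\to\F_{p_F}$, so $\beta\bmod\mathfrak{r}=\nu(\Frob)\in\F_{p_F}^{*}=\F_p^{*}$. Second, when you pass to an extension $L/k$ and then to good reduction, the residue degree must remain $f$; this is why the extensions in the proofs of \cref{varphi_q_equals_to_beta} and \cref{nu_q_equals_to_beta:local} are chosen totally ramified, and it is what justifies identifying $\beta$ with the Frobenius of a QM-abelian variety over $\F_{\ell^f}$ rather than over a larger field.
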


\begin{proof}
Let $f$ be the inertia degree of $\mathfrak{q}$ over $\ell$.
By \cref{varphi_q_equals_to_beta} there exists $\beta \in FR(\ell^f)$ (which is a Weil number of a QM-abelian variety
over $\F_{\ell^f}$ by the proof of \cref{nu_q_equals_to_beta:fin}) and a prime $\mathfrak{p}_0$ of $F(\beta)$ above $p_F$ such that
$\varphi(\Frob_\mathfrak{q}) = \beta^{n_{\lcm}} \mod \mathfrak{p}_0$.
Thus since $\chi(\Frob_\mathfrak{q}) = \ell^f \mod p$, by the assumption that $\varphi = \chi^{n_{\lcm}/2}$, we obtain that $\beta^{n_{\lcm}} \mod \mathfrak{p}_0 = \ell^{fn_{\lcm}/2} \mod p.$
Again by this assumption and by \cref{varphi_above_p_local}, since $p$ is unramified, we have that $v_2(p-1) + 1 \le v_2(n_{\lcm})$.
Therefore by \cref{F(beta)=F(sqet{-l})}, the field $F(\beta)$, which splits $B$ by \cref{End:ch0:CM}, is equal to $F(\sqrt{-\ell})$.
\end{proof}

Now we are ready to prove the theorem about the non-existence of the rational points of $M_0^B(p_F)$.
The result for $d = 1$ is due to \cite{AraiI,AraiII,AraiIII}.

\begin{theorem} \label{M_0^B(p)(k)_is_empty}
Assume the following four conditions:
\begin{enumerate}
\item The class number $h_k$ of $k$ is relatively prime to $d$.
\label{M_0^B(p)(k)_is_empty:con1}

\item The field $k$ does not contain the Hilbert class fields of any imaginary quadratic number fields.
\label{M_0^B(p)(k)_is_empty:con2}

\item There exists a rational prime $\ell$ not dividing $n_{\lcm} \Delta$ whose inertia degree in $k$ is odd such that
the field $F(\sqrt{-\ell})$ does not split $B$.
\label{M_0^B(p)(k)_is_empty:con3}

\item Either $d=1$, or $d = 2$ and the exponent of $\Gal(k/\Q)$ is $2$.
\end{enumerate}
Then there exists a finite set $N_3(k)$ of primes of $F$, which also depends on B, such that
for every $p_F \not\in N_3(k)$ which splits totally over $\Q$, we have that $M^B_0(p_F)(k) = \varnothing$.
\end{theorem}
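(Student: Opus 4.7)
The plan is to proceed by contradiction: assume that for $p_F$ outside a yet-to-be-specified finite set $N_3(k)$ and splitting totally over $\Q$, a rational point $x \in M^B_0(p_F)(k)$ exists, and derive a contradiction by studying the associated $M_0$-character $\varphi_x \colon G_k \to \F_{p_F}^*$ defined at the start of this section. All the analytical work has already been packaged into \cref{varphi_classification,F(sqrt{-l})_splits_B}; this final theorem is then a clean combinatorial assembly of hypotheses (1)--(4).

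The first step is to feed $\varphi_x$ into \cref{varphi_classification}. Its hypotheses match the theorem's conditions (1) and (4) together with the assumption that $p_F$ splits totally over $\Q$ (which forces $p$ to split totally in $F$), provided also that $p_F \notin N_1(k)$. Hence I include $N_1(k)$ in $N_3(k)$, and the proposition forces $\varphi_x$ to be of Type 2 or Type 3. Condition (2) of the theorem forbids $k$ from containing the Hilbert class field of any imaginary quadratic field, ruling out Type 3 and leaving $\varphi_x = \chi^{n_{\lcm}/2}$ with $v_2(p-1) + 1 \le v_2(n_{\lcm})$.

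For the second step, let $\ell$ be the prime supplied by condition (3). Since $k/\Q$ is Galois, every prime $\mathfrak{q}$ of $k$ above $\ell$ has the same (odd) inertia degree $f$, and $N_{k/\Q}(\mathfrak{q}) = \ell^f$ is a fixed integer depending only on $k$ and $\ell$. I then apply \cref{F(sqrt{-l})_splits_B}: for every $p_F$ satisfying $\ell^{fdn_{\lcm}/12} < p/4^d$, the lemma forces $F(\sqrt{-\ell})$ to split $B$, contradicting condition (3). Defining $N_3(k)$ as $N_1(k)$ together with the finite set of primes $p_F$ for which the size bound $\ell^{fdn_{\lcm}/12} < p/4^d$ fails (and adjoining $\operatorname{Ram}(B/F)$ if not already included) yields the desired conclusion.

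The ``main obstacle'' is really only bookkeeping: confirming that $N_3(k)$ can be taken finite. The key observation is that $\ell$ has been chosen once and for all depending on $k$ (and $B$), so the size bound in \cref{F(sqrt{-l})_splits_B} excludes only finitely many rational primes $p$, hence only finitely many $p_F$. All genuinely arithmetic content, namely the dichotomy Type~2 vs.\ Type~3 for $\varphi_x$ and the bridge from Type~2 to a splitting field of $B$, is already done; the argument for this theorem amounts to noting that condition (2) kills Type~3 while condition (3) kills Type~2.
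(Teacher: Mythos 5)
Your proposal is correct and follows essentially the same route as the paper: apply \cref{varphi_classification} (which uses conditions (1), (4), the totally-split hypothesis, and then condition (2) to eliminate Type~3) to conclude $\varphi_x = \chi^{n_{\lcm}/2}$, then invoke \cref{F(sqrt{-l})_splits_B} with the prime $\ell$ from condition (3) to force a contradiction once $p$ exceeds the explicit bound $4^d\ell^{fd n_{\lcm}/12}$, thereby making $N_3(k)$ the union of $N_1(k)$, the primes of $F$ over rational primes below that bound, and $\operatorname{Ram}(B/F)$. The paper fixes the \emph{smallest} admissible $\ell$ and writes the bound with a factor $4$ rather than $4^d$, but neither choice changes the argument; your bookkeeping (including the observation that Galois invariance forces all inertia degrees above $\ell$ to coincide, so $N_{k/\Q}(\mathfrak{q}) = \ell^f$ is independent of $\mathfrak{q}$) is the same finiteness argument.
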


\begin{proof}
Let $\ell_0$ be the smallest rational prime not dividing $n_{\lcm} \Delta$ whose inertia degree $f_0$ in $k$ is odd satisfying that the field $F(\sqrt{-\ell_0})$ does not split $B$.
Such a prime does exist by the condition \ref{M_0^B(p)(k)_is_empty:con3}.
Let $N_2(k)$ be the set of primes of $F$ dividing rational primes less than $4 \ell_0^{df_0 n_{\lcm}/12}$,
and let $N_3(k)$ be the union of the sets $N_1(k)$, $N_2(k)$, and the set of primes of $F$ dividing $\Delta$.
Note that $N_3(k)$ is a finite set.
We show that this $N_3(k)$ is the desired set.

Let $p_F \not\in N_3(k)$ be a prime which splits totally over $\Q$, $p$ the rational prime below $p_F$,
and let $x \in M_0^B(p_F)(k)$ be a rational point.
Let $\varphi$ be the $M_0$-character defined by $x$.
By \cref{varphi_classification}, by the condition \ref{M_0^B(p)(k)_is_empty:con1},
and by the condition \ref{M_0^B(p)(k)_is_empty:con2},
we obtain that $\varphi$ is of type 2 as in \cref{varphi_classification}.
Therefore by \cref{F(sqrt{-l})_splits_B},
for every rational prime $\ell$ not dividing $n_{\lcm} \Delta$ satisfying that the inertia degree $f$ of $\ell$ in $k$ is odd and
that $4\ell^{d f n_{\lcm}/12} < p$, the field $F(\sqrt{-\ell})$ splits $B$.
However, since $p_F \not\in N_2(k)$, there exists a rational prime $\ell$ which satisfies these conditions but $F(\sqrt{-\ell})$ does not split $B$.
(For example $\ell_0$ is one of such a prime.)
This is a contradiction.
As a result there are no $k$-rational points of $M_0^B(p_F)$.
\end{proof}

Using \cref{varphi_classification:variant} instead of \cref{varphi_classification}, we show the following variant:

\begin{theorem} \label{M_0^B(p)(k)_is_empty:variant}
Assume the following two conditions:
\begin{enumerate}
\item There exists a rational prime $\ell_1$ not dividing $n_{\lcm} h_k$ which splits in $k$ totally such that $k$ does not contain any elements of the finite set $FR(\ell_1)$.\label{M_0^B(p)(k)_is_empty:variant:con1}

\item There exists a rational prime $\ell_2$ not dividing $n_{\lcm} \Delta$ whose inertia degree in $k$ is odd such that the field $F(\sqrt{-\ell_2})$ does not split $B$. 
\label{M_0^B(p)(k)_is_empty:variant:con2}
\end{enumerate}
Then there exists a finite set $N_4(k)$ of primes of $F$, which also depends on B, such that
for every $p_F \not\in N_4(k)$ which splits totally over $\Q$, we have that $M^B_0(p_F)(k) = \varnothing$.
\end{theorem}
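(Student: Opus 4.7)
The plan is to mimic the proof of \cref{M_0^B(p)(k)_is_empty} but invoke \cref{varphi_classification:variant} in place of \cref{varphi_classification}, thereby replacing the class-group/Hilbert-class-field hypotheses by the single condition~\eqref{M_0^B(p)(k)_is_empty:variant:con1}. First I would arrange the generating set $S$ so that \cref{varphi_classification:variant} can be applied. Using condition~\eqref{M_0^B(p)(k)_is_empty:variant:con1}, the prime $\ell_1$ lies in $M(k)$, so we can fix a prime $\mathfrak{q}_1$ of $k$ above $\ell_1$; since $\ell_1$ splits totally in $k$ we have $N(\mathfrak{q}_1)=\ell_1$, and condition~\eqref{M_0^B(p)(k)_is_empty:variant:con1} then reads ``$k$ contains no element of $FR(N(\mathfrak{q}_1))$''. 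Choose $S\subseteq N(k)$ as in \cref{finite_subset_of_N(k)} to generate $Cl_k$, and enlarge $S$ so that it is stable under $\Gal(k/\Q)$ and contains $\mathfrak{q}_1$ (the Galois stability is harmless because $FR(\ell_1)$ is stable under $\Gal(\Qbar/\Q)$, so the required non-containment condition holds equally well for every Galois conjugate of $\mathfrak{q}_1$). Build the associated sets $M_1(k),M_2(k),N_0(k),T(k),N_1(k)$ with respect to this $S$.

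Next I would define $N_4(k)$ exactly as $N_3(k)$ was defined: let $f_2$ denote the (odd) inertia degree of $\ell_2$ in $k$ provided by condition~\eqref{M_0^B(p)(k)_is_empty:variant:con2}, let $N_2(k)$ be the set of primes of $F$ dividing rational primes less than $4\ell_2^{d f_2 n_{\lcm}/12}$, and set
\begin{equation*}
N_4(k):=N_1(k)\cup N_2(k)\cup\{\ell_F\mid \ell_F\text{ divides }\Delta\}.
\end{equation*}
This is a finite set depending on $k$ and $B$ (and on the choices of $\ell_1,\ell_2$, which may be absorbed). Fix a totally split $p_F\notin N_4(k)$ and suppose for contradiction that $x\in M_0^B(p_F)(k)$ exists; let $\varphi\colon G_k\to\F_{p_F}^*$ be the associated $M_0$-character defined in \cref{section:global}. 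Since $p_F\notin N_1(k)\supseteq T(k)$, the norm bound $N(p_F)>4^d$ and the inequality $p>n_{\lcm}$ both hold, and $p$ is unramified in $k$, so the hypotheses of \cref{varphi_classification:variant} are satisfied with the distinguished prime $\mathfrak{q}_1\in S$. That lemma then yields
\begin{equation*}
\varphi=\chi^{n_{\lcm}/2}\qquad\text{and}\qquad v_2(p-1)+1\le v_2(n_{\lcm}).
\end{equation*}

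Finally I would derive the contradiction exactly as in the proof of \cref{M_0^B(p)(k)_is_empty}. Apply \cref{F(sqrt{-l})_splits_B} to the rational prime $\ell_2$: by condition~\eqref{M_0^B(p)(k)_is_empty:variant:con2}, $\ell_2\nmid n_{\lcm}\Delta$ and a prime $\mathfrak{q}_2\mid\ell_2$ in $k$ has odd inertia degree $f_2$; the assumption $p_F\notin N_2(k)$ forces $p\ge 4\ell_2^{df_2 n_{\lcm}/12}$, so in particular $N_{k/\Q}(\mathfrak{q}_2)^{dn_{\lcm}/12}=\ell_2^{d f_2 n_{\lcm}/12}<p/4^d$. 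The lemma then forces $F(\sqrt{-\ell_2})$ to split $B$, contradicting condition~\eqref{M_0^B(p)(k)_is_empty:variant:con2}. The only non-routine point is checking the Galois-stability step for $S$ (i.e.\ that condition~\eqref{M_0^B(p)(k)_is_empty:variant:con1} propagates to every conjugate of $\mathfrak{q}_1$), but this is immediate from the $\Gal(\Qbar/\Q)$-invariance of $FR(\ell_1)$; the remainder of the argument is a transparent specialization of the proof of \cref{M_0^B(p)(k)_is_empty}.
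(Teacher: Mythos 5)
Your proposal matches the paper's own proof exactly in approach: the paper's proof of this variant is precisely a one-line remark that one should enlarge $S$ to contain the primes of $k$ above $\ell_1$ and then rerun the proof of \cref{M_0^B(p)(k)_is_empty} with \cref{varphi_classification:variant} substituted for \cref{varphi_classification}; you have expanded that remark into a full argument with the same structure (choice of $S$, construction of $N_4(k)$, application of \cref{varphi_classification:variant} via $\mathfrak{q}_1$, then \cref{F(sqrt{-l})_splits_B} applied to $\ell_2$ to obtain the contradiction), so there is nothing substantively new.

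One small arithmetic slip, which you inherited from the paper's proof of \cref{M_0^B(p)(k)_is_empty} but made visible by writing out the inequality: you define $N_2(k)$ using the threshold $4\ell_2^{df_2 n_{\lcm}/12}$ and then assert that $p \ge 4\ell_2^{df_2 n_{\lcm}/12}$ gives $\ell_2^{df_2 n_{\lcm}/12} < p/4^d$. This implication fails for $d>1$ (and here $d$ is unrestricted, unlike in \cref{M_0^B(p)(k)_is_empty}). The hypothesis of \cref{F(sqrt{-l})_splits_B} is $N_{k/\Q}(\mathfrak{q}_2)^{dn_{\lcm}/12} < p/4^d$, so the threshold in the definition of $N_2(k)$ should be $4^d\ell_2^{df_2 n_{\lcm}/12}$ rather than $4\ell_2^{df_2 n_{\lcm}/12}$; with that change the conclusion of the theorem is unaffected (the theorem only claims that some finite $N_4(k)$ works). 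I would fix the constant, but this does not change the fact that you have correctly identified the paper's argument.
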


\begin{proof}
Only what we need to do is, for a prime $\ell$ satisfying the condition \ref{M_0^B(p)(k)_is_empty:variant:con1}, to enlarge our fixed finite set $S$ so that every prime $\mathfrak{q}$ of $k$ over $\ell$ is contained in it.
The rest of the proof goes through as in the proof of \cref{M_0^B(p)(k)_is_empty}, except that we use \cref{varphi_classification:variant} instead of \cref{varphi_classification}.
\end{proof}

Finally in this paper we give examples satisfying the condition \ref{M_0^B(p)(k)_is_empty:variant:con1} of \cref{M_0^B(p)(k)_is_empty:variant}, and give an easy-to-understand sufficient condition for the condition \ref{M_0^B(p)(k)_is_empty:variant:con2} of \cref{M_0^B(p)(k)_is_empty:variant}.

\begin{example}
Let $F = \Q(\sqrt{2})$, $k = \Q(\sqrt{2}, \sqrt{-17})$, and $\ell = 7$.
Then $n_{\lcm} = 24, h_k = 8$, $\ell$ splits totally in $k$, and we can show that $k$ does not contain any elements of $FR(\ell)$.

Secondly, let $F = \Q(\zeta_7)^+$, the maximal totally real subfield of the $7$-th cyclotomic number field, $k = F(\sqrt{-17})$, and $\ell = 13$.
Then $n_{\lcm} = 84, h_k = 36$, $\ell$ splits totally in $k$, and we can show that $k$ does not contain any elements of $FR(\ell)$.

In order to verify them, we use the computer algebra programs Magma \cite{Magma}.
The codes verifying them are available at the author's Github repository
\begin{center}
\url{https://github.com/kmatsuda111/QMAV_gamma0}.
\end{center}
\end{example}

\begin{lemma} \label{a_sufficient_condition}
Let $F$ be a totally real number field, $k$ a Galois number field containing $F$, and $B/F$ be a division quaternion algebra.
Let $\mathfrak{d}_{B/F}$ be the discriminant of $B/F$, i.e., the product of all finite primes of $\operatorname{Ram}(B/F)$.
If $\mathfrak{d}_{B/F} \nmid \mathfrak{d}_{k/F} \mathfrak{D}_{F/\Q}$, then there exist infinitely many rational primes $\ell$ which split totally in $k$ such that $F(\sqrt{-\ell})$ do not split $B$.
\end{lemma}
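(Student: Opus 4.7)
The plan is to translate ``$F(\sqrt{-\ell})$ does not split $B$'' into a local splitting condition at a single finite prime of $F$, and then produce the desired $\ell$ by Chebotarev. By hypothesis, there is a finite prime $\mathfrak{p}$ of $F$ with $\mathfrak{p} \mid \mathfrak{d}_{B/F}$, $\mathfrak{p} \nmid \mathfrak{d}_{k/F}$, and $\mathfrak{p} \nmid \mathfrak{D}_{F/\Q}$. Let $q$ be the rational prime under $\mathfrak{p}$. Since $F(\sqrt{-\ell})$ is totally imaginary it automatically splits $B$ at every infinite place, so to force $F(\sqrt{-\ell})$ not to split $B$ it suffices to force $\mathfrak{p}$ to split in $F(\sqrt{-\ell})/F$, equivalently to force $-\ell$ to lie in $(F_\mathfrak{p}^*)^2$.

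The two unramifiedness conditions $\mathfrak{p} \nmid \mathfrak{D}_{F/\Q}$ and $\mathfrak{p} \nmid \mathfrak{d}_{k/F}$ imply by the tower law that every prime of $k$ above $\mathfrak{p}$ is unramified over $q$; since $k/\Q$ is Galois, this propagates to all primes of $k$ above $q$, so $q$ is unramified in $k/\Q$. Consequently $k \cap \Q(\zeta_{q^N}) = \Q$ for every $N \ge 1$, because any nontrivial subextension of $\Q(\zeta_{q^N})/\Q$ is ramified at $q$. Fix $N$ large enough that the condition ``$-a \in (F_\mathfrak{p}^*)^2$'' for $a \in \Z_q^*$ depends only on $a \bmod q^N$ (one may take $N=1$ when $q$ is odd and $N=3$ when $q=2$). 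Setting $M := k \cdot \Q(\zeta_{q^N})$, the previous step gives $\Gal(M/\Q) \isom \Gal(k/\Q) \times (\Z/q^N)^*$. Applying Chebotarev to $M/\Q$, one can independently prescribe $\Frob_\ell|_k = 1$ and $\Frob_\ell|_{\Q(\zeta_{q^N})} = a$ for any fixed $a \in (\Z/q^N)^*$; any such $\ell$ (outside the finite exceptional set of primes dividing $q$, the relevant discriminants, or for which $-\ell$ becomes a square in $F$) then splits totally in $k$ and satisfies $\mathfrak{p} \mid$ splits in $F(\sqrt{-\ell})/F$.

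The last remaining point, and the only mildly delicate one, is to exhibit an admissible residue class $a \in (\Z/q^N)^*$ with $-a \in (F_\mathfrak{p}^*)^2$. For $q$ odd one takes $a \equiv -1 \pmod q$, so $-a \equiv 1$ is trivially a square modulo $\mathfrak{p}$ and hence, by Hensel, a square in $F_\mathfrak{p}^*$. The case $q = 2$ is where one has to be a bit careful about squares in the unramified extension $F_\mathfrak{p}/\Q_2$; however, since $1 + 8\Z_2 \subseteq (\Z_2^*)^2 \subseteq (F_\mathfrak{p}^*)^2$ regardless of the residual degree, the choice $a \equiv 7 \pmod 8$ gives $-a \equiv 1 \pmod 8$, which is already a square in $\Z_2^*$ and a fortiori in $F_\mathfrak{p}^*$. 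Thus a valid $a$ always exists, and Chebotarev produces infinitely many primes $\ell$ of the required kind.
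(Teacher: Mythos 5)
Your proof is correct and follows essentially the same route as the paper: pick a finite prime of $F$ that ramifies in $B$ but is unramified in $k/\Q$, use the linear disjointness of $k$ and $\Q(\zeta_{p'})$ (with $p'=p$ or $8$) to run Chebotarev on the compositum, and pick the residue class $\ell \equiv -1 \pmod{p'}$ so that $-\ell$ is a square in $F_\mathfrak{p}^*$, forcing $\mathfrak{p}$ to split in $F(\sqrt{-\ell})/F$ and hence $F(\sqrt{-\ell})$ not to split $B$.
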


\begin{proof}
Let $v$ be a finite place of $F$ dividing $\mathfrak{d}_{B/F}$ but not dividing $\mathfrak{d}_{k/F}\mathfrak{D}_{F/\Q}$ and $p$ be the rational prime below $v$.
Let $p' = p$ if $p$ is odd and $p' = 8$ if $p=2$.
Then since $p$ is unramified in $k$ and is totally ramified in $\Q(\zeta_{p'})$, we have that canonically $\Gal(k(\zeta_{p'}) / \Q) \isom \Gal(k/\Q) \times (\Z/p')^*$, where $\zeta_{p'}$ is a primitive ${p'}$-th root of unity.
By Chebotarev's density theorem, considering the element of $\Gal(k(\zeta_{p'}) / \Q)$ corresponding to $(1,-1)$, there exist infinitely many rational primes $\ell$ which are unramified in $k(\zeta_{p'})$, whose inertia degree in $k$ are $1$, and satisfying that $\ell \equiv -1 \mod {p'}$.
For such a prime $\ell$, the prime $p$ splits in $\Q(\sqrt{-\ell})$, in particular so does $v$ in $F(\sqrt{-\ell})$ since $\sqrt{-\ell} \not\in F$ by the assumption that $F$ is totally real.
Now since we are assuming that $v$ divides $\mathfrak{d}_{B/F}$, if $v$ splits in $F(\sqrt{-\ell})$, then $F(\sqrt{-\ell})$ does not split $B$.
Thus there exist infinitely many primes satisfying the desired conditions.
\end{proof}

Note that as in \cite{AraiRT} we can apply these theorems to a finiteness conjecture on abelian varieties, called the Rasmussen--Tamagawa conjecture \cite[Conjecture 1]{RT}.
Also note that, in \cite{AraiIII}, the author eliminates the possibility of CM points of $M^B_0(p_F)$ by another method in the case of $F = \Q$.

\begin{acknowledgements}
The author would like to thank my supervisor Takeshi Saito for his kind and valuable suggestions.
This work was supported by JSPS KAKENHI Grant Number JP23KJ0568.
\end{acknowledgements}

\begin{bibdiv}
\begin{biblist}

\bib{AraiRT}{article}{
   author={Arai, Keisuke},
   title={On the Rasmussen-Tamagawa conjecture for QM-abelian surfaces},
   conference={
      title={Algebraic number theory and related topics 2011},
   },
   book={
      series={RIMS K\^{o}ky\^{u}roku Bessatsu, B44},
      publisher={Res. Inst. Math. Sci. (RIMS), Kyoto},
   },
   date={2013},
   pages={185--196},
   review={\MR{3221728}},
}
\bib{AraiII}{article}{
   author={Arai, Keisuke},
   title={Algebraic points on Shimura curves of $\Gamma_0(p)$-type (II)},
   journal={Manuscripta Math.},
   volume={149},
   date={2016},
   number={1-2},
   pages={63--70},
   issn={0025-2611},
   review={\MR{3447140}},
   doi={10.1007/s00229-015-0770-6},
}
\bib{AraiIII}{article}{
   author={Arai, Keisuke},
   title={Algebraic points on Shimura curves of $\varGamma_0(p)$-type (III)},
   journal={Ramanujan J.},
   volume={43},
   date={2017},
   number={1},
   pages={15--28},
   issn={1382-4090},
   review={\MR{3633820}},
   doi={10.1007/s11139-015-9766-9},
}
\bib{AraiI}{article}{
   author={Arai, Keisuke},
   author={Momose, Fumiyuki},
   title={Algebraic points on Shimura curves of $\Gamma_0(p)$-type},
   journal={J. Reine Angew. Math.},
   volume={690},
   date={2014},
   pages={179--202},
   issn={0075-4102},
   review={\MR{3200341}},
   doi={10.1515/crelle-2012-0068},
}
\bib{Magma}{article}{
   author={Bosma, Wieb},
   author={Cannon, John},
   author={Playoust, Catherine},
   title={The Magma algebra system. I. The user language},
   note={Computational algebra and number theory (London, 1993)},
   journal={J. Symbolic Comput.},
   volume={24},
   date={1997},
   number={3-4},
   pages={235--265},
   issn={0747-7171},
   doi={https://doi.org/10.1006/jsco.1996.0125},
}
\bib{BreenLabesse}{book}{
   author={Boutot, Jean-Fran\c{c}ois},
   author={Breen, Lawrence},
   author={G\'{e}rardin, Paul},
   author={Giraud, Jean},
   author={Labesse, Jean-Pierre},
   author={Milne, James Stuart},
   author={Soul\'{e}, Christophe},
   title={Vari\'{e}t\'{e}s de Shimura et fonctions $L$},
   language={French},
   series={Publications Math\'{e}matiques de l'Universit\'{e} Paris VII
   [Mathematical Publications of the University of Paris VII]},
   volume={6},
   publisher={Universit\'{e} de Paris VII, U.E.R. de Math\'{e}matiques, Paris},
   date={1979},
   pages={178},
   review={\MR{680404}},
}
\bib{Buzzard}{article}{
   author={Buzzard, Kevin},
   title={Integral models of certain Shimura curves},
   journal={Duke Math. J.},
   volume={87},
   date={1997},
   number={3},
   pages={591--612},
   issn={0012-7094},
   review={\MR{1446619}},
   doi={10.1215/S0012-7094-97-08719-6},
}
\bib{DeligneSV}{article}{
   author={Deligne, Pierre},
   title={Vari\'{e}t\'{e}s de Shimura: interpr\'{e}tation modulaire, et techniques de
   construction de mod\`eles canoniques},
   language={French},
   conference={
      title={Automorphic forms, representations and $L$-functions},
      address={Proc. Sympos. Pure Math., Oregon State Univ., Corvallis,
      Ore.},
      date={1977},
   },
   book={
      series={Proc. Sympos. Pure Math., XXXIII},
      publisher={Amer. Math. Soc., Providence, R.I.},
   },
   date={1979},
   pages={247--289},
   review={\MR{546620}},
}
\bib{FaltingsChai}{book}{
   author={Faltings, Gerd},
   author={Chai, Ching-Li},
   title={Degeneration of abelian varieties},
   series={Ergebnisse der Mathematik und ihrer Grenzgebiete (3) [Results in
   Mathematics and Related Areas (3)]},
   volume={22},
   note={With an appendix by David Mumford},
   publisher={Springer-Verlag, Berlin},
   date={1990},
   pages={xii+316},
   isbn={3-540-52015-5},
   review={\MR{1083353}},
   doi={10.1007/978-3-662-02632-8},
}
\bib{FK}{book}{
   author={Freitag, Eberhard},
   author={Kiehl, Reinhardt},
   title={\'{E}tale cohomology and the Weil conjecture},
   series={Ergebnisse der Mathematik und ihrer Grenzgebiete (3) [Results in
   Mathematics and Related Areas (3)]},
   volume={13},
   note={Translated from the German by Betty S. Waterhouse and William C.
   Waterhouse;
   With an historical introduction by J. A. Dieudonn\'{e}},
   publisher={Springer-Verlag, Berlin},
   date={1988},
   pages={xviii+317},
   isbn={3-540-12175-7},
   review={\MR{926276}},
   doi={10.1007/978-3-662-02541-3},
}
\bib{Fu}{book}{
   author={Fu, Lei},
   title={Etale cohomology theory},
   series={Nankai Tracts in Mathematics},
   volume={14},
   edition={Revised edition},
   publisher={World Scientific Publishing Co. Pte. Ltd., Hackensack, NJ},
   date={2015},
   pages={x+611},
   isbn={978-981-4675-08-6},
   review={\MR{3380806}},
   doi={10.1142/9569},
}
\bib{GS}{book}{
   author={Gille, Philippe},
   author={Szamuely, Tam\'{a}s},
   title={Central simple algebras and Galois cohomology},
   series={Cambridge Studies in Advanced Mathematics},
   volume={101},
   publisher={Cambridge University Press, Cambridge},
   date={2006},
   pages={xii+343},
   isbn={978-0-521-86103-8},
   isbn={0-521-86103-9},
   review={\MR{2266528}},
   doi={10.1017/CBO9780511607219},
}
\bib{Jordan}{article}{
   author={Jordan, Bruce W.},
   title={Points on Shimura curves rational over number fields},
   journal={J. Reine Angew. Math.},
   volume={371},
   date={1986},
   pages={92--114},
   issn={0075-4102},
   review={\MR{859321}},
   doi={10.1515/crll.1986.371.92},
}
\bib{Kamienny}{article}{
   author={Kamienny, S.},
   title={Torsion points on elliptic curves and $q$-coefficients of modular
   forms},
   journal={Invent. Math.},
   volume={109},
   date={1992},
   number={2},
   pages={221--229},
   issn={0020-9910},
   review={\MR{1172689}},
   doi={10.1007/BF01232025},
}
\bib{KenkuMomose}{article}{
   author={Kenku, M. A.},
   author={Momose, F.},
   title={Torsion points on elliptic curves defined over quadratic fields},
   journal={Nagoya Math. J.},
   volume={109},
   date={1988},
   pages={125--149},
   issn={0027-7630},
   review={\MR{931956}},
   doi={10.1017/S0027763000002816},
}
\bib{Kottwitz}{article}{
   author={Kottwitz, Robert E.},
   title={Points on some Shimura varieties over finite fields},
   journal={J. Amer. Math. Soc.},
   volume={5},
   date={1992},
   number={2},
   pages={373--444},
   issn={0894-0347},
   review={\MR{1124982}},
   doi={10.2307/2152772},
}
\bib{Lan}{book}{
   author={Lan, Kai-Wen},
   title={Arithmetic compactifications of PEL-type Shimura varieties},
   series={London Mathematical Society Monographs Series},
   volume={36},
   publisher={Princeton University Press, Princeton, NJ},
   date={2013},
   pages={xxvi+561},
   isbn={978-0-691-15654-5},
   review={\MR{3186092}},
   doi={10.1515/9781400846016},
}
\bib{MatsumuraCA}{book}{
   author={Matsumura, Hideyuki},
   title={Commutative algebra},
   series={Mathematics Lecture Note Series},
   volume={56},
   edition={2},
   publisher={Benjamin/Cummings Publishing Co., Inc., Reading, Mass.},
   date={1980},
   pages={xv+313},
   isbn={0-8053-7026-9},
   review={\MR{575344}},
}
\bib{MazurX1}{article}{
   author={Mazur, B.},
   title={Modular curves and the Eisenstein ideal},
   note={With an appendix by Mazur and M. Rapoport},
   journal={Inst. Hautes \'{E}tudes Sci. Publ. Math.},
   number={47},
   date={1977},
   pages={33--186 (1978)},
   issn={0073-8301},
   review={\MR{488287}},
}
\bib{MazurX0}{article}{
   author={Mazur, B.},
   title={Rational isogenies of prime degree (with an appendix by D.
   Goldfeld)},
   journal={Invent. Math.},
   volume={44},
   date={1978},
   number={2},
   pages={129--162},
   issn={0020-9910},
   review={\MR{482230}},
   doi={10.1007/BF01390348},
}
\bib{Merel}{article}{
   author={Merel, Lo\"{\i}c},
   title={Bornes pour la torsion des courbes elliptiques sur les corps de
   nombres},
   language={French},
   journal={Invent. Math.},
   volume={124},
   date={1996},
   number={1-3},
   pages={437--449},
   issn={0020-9910},
   review={\MR{1369424}},
   doi={10.1007/s002220050059},
}
\bib{Milne}{article}{
   author={Milne, J. S.},
   title={Points on Shimura varieties mod $p$},
   conference={
      title={Automorphic forms, representations and $L$-functions},
      address={Proc. Sympos. Pure Math., Oregon State Univ., Corvallis,
      Ore.},
      date={1977},
   },
   book={
      series={Proc. Sympos. Pure Math., XXXIII},
      publisher={Amer. Math. Soc., Providence, R.I.},
   },
   date={1979},
   pages={165--184},
   review={\MR{546616}},
}
\bib{MilneSVM}{article}{
   author={Milne, J. S.},
   title={Shimura varieties and motives},
   conference={
      title={Motives},
      address={Seattle, WA},
      date={1991},
   },
   book={
      series={Proc. Sympos. Pure Math.},
      volume={55},
      publisher={Amer. Math. Soc., Providence, RI},
   },
   date={1994},
   pages={447--523},
   review={\MR{1265562}},
   doi={10.4310/pamq.2009.v5.n4.a3},
}
\bib{MilneSh}{article}{
   author={Milne, J. S.},
   title={Introduction to Shimura varieties},
   conference={
      title={Harmonic analysis, the trace formula, and Shimura varieties},
   },
   book={
      series={Clay Math. Proc.},
      volume={4},
      publisher={Amer. Math. Soc., Providence, RI},
   },
   date={2005},
   pages={265--378},
   review={\MR{2192012}},
}
\bib{MilneAG}{book}{
   author={Milne, J. S.},
   title={Algebraic groups},
   series={Cambridge Studies in Advanced Mathematics},
   volume={170},
   note={The theory of group schemes of finite type over a field},
   publisher={Cambridge University Press, Cambridge},
   date={2017},
   pages={xvi+644},
   isbn={978-1-107-16748-3},
   review={\MR{3729270}},
   doi={10.1017/9781316711736},
}
\bib{Momose}{article}{
   author={Momose, Fumiyuki},
   title={Isogenies of prime degree over number fields},
   journal={Compositio Math.},
   volume={97},
   date={1995},
   number={3},
   pages={329--348},
   issn={0010-437X},
   review={\MR{1353278}},
}
\bib{MumfordAV}{book}{
   author={Mumford, David},
   title={Abelian varieties},
   series={Tata Institute of Fundamental Research Studies in Mathematics},
   volume={5},
   publisher={Published for the Tata Institute of Fundamental Research,
   Bombay by Oxford University Press, London},
   date={1970},
   pages={viii+242},
   review={\MR{0282985}},
}
\bib{MumfordGIT}{book}{
   author={Mumford, D.},
   author={Fogarty, J.},
   author={Kirwan, F.},
   title={Geometric invariant theory},
   series={Ergebnisse der Mathematik und ihrer Grenzgebiete (2) [Results in
   Mathematics and Related Areas (2)]},
   volume={34},
   edition={3},
   publisher={Springer-Verlag, Berlin},
   date={1994},
   pages={xiv+292},
   isbn={3-540-56963-4},
   review={\MR{1304906}},
   doi={10.1007/978-3-642-57916-5},
}
\bib{Ohta}{article}{
   author={Ohta, Masami},
   title={On $\ell$-adic representations of Galois groups obtained from certain
   two-dimensional abelian varieties},
   journal={J. Fac. Sci. Univ. Tokyo Sect. IA Math.},
   volume={21},
   date={1974},
   pages={299--308},
   issn={0040-8980},
   review={\MR{419368}},
}
\bib{RT}{article}{
   author={Rasmussen, Christopher},
   author={Tamagawa, Akio},
   title={A finiteness conjecture on abelian varieties with constrained
   prime power torsion},
   journal={Math. Res. Lett.},
   volume={15},
   date={2008},
   number={6},
   pages={1223--1231},
   issn={1073-2780},
   review={\MR{2470396}},
   doi={10.4310/MRL.2008.v15.n6.a12},
}
\bib{Raynaud}{article}{
   author={Raynaud, Michel},
   title={Sch\'{e}mas en groupes de type $(p,\dots, p)$},
   language={French},
   journal={Bull. Soc. Math. France},
   volume={102},
   date={1974},
   pages={241--280},
   issn={0037-9484},
   review={\MR{419467}},
}
\bib{SerreGal}{article}{
   author={Serre, Jean-Pierre},
   title={Propri\'{e}t\'{e}s galoisiennes des points d'ordre fini des courbes
   elliptiques},
   language={French},
   journal={Invent. Math.},
   volume={15},
   date={1972},
   number={4},
   pages={259--331},
   issn={0020-9910},
   review={\MR{387283}},
   doi={10.1007/BF01405086},
}
\bib{SerreEx}{collection}{
   author={Serre, Jean-Pierre},
   title={Expos\'{e}s de s\'{e}minaires (1950-1999)},
   language={French},
   series={Documents Math\'{e}matiques (Paris) [Mathematical Documents (Paris)]},
   volume={1},
   publisher={Soci\'{e}t\'{e} Math\'{e}matique de France, Paris},
   date={2001},
   pages={viii+259},
   isbn={2-85629-103-1},
   review={\MR{1942136}},
}
\bib{SerreTate}{article}{
   author={Serre, Jean-Pierre},
   author={Tate, John},
   title={Good reduction of abelian varieties},
   journal={Ann. of Math. (2)},
   volume={88},
   date={1968},
   pages={492--517},
   issn={0003-486X},
   review={\MR{236190}},
   doi={10.2307/1970722},
}
\bib{Shimura}{article}{
   author={Shimura, Goro},
   title={On the real points of an arithmetic quotient of a bounded
   symmetric domain},
   journal={Math. Ann.},
   volume={215},
   date={1975},
   pages={135--164},
   issn={0025-5831},
   review={\MR{572971}},
   doi={10.1007/BF01432692},
}
\bib{Skorobogatov}{article}{
   author={Skorobogatov, Alexei},
   title={Shimura coverings of Shimura curves and the Manin obstruction},
   journal={Math. Res. Lett.},
   volume={12},
   date={2005},
   number={5-6},
   pages={779--788},
   issn={1073-2780},
   review={\MR{2189238}},
   doi={10.4310/MRL.2005.v12.n5.a14},
}
\bib{Tate}{article}{
   author={Tate, John},
   title={Endomorphisms of abelian varieties over finite fields},
   journal={Invent. Math.},
   volume={2},
   date={1966},
   pages={134--144},
   issn={0020-9910},
   review={\MR{206004}},
   doi={10.1007/BF01404549},
}
\bib{Vigneras}{book}{
   author={Vign\'{e}ras, Marie-France},
   title={Arithm\'{e}tique des alg\`ebres de quaternions},
   language={French},
   series={Lecture Notes in Mathematics},
   volume={800},
   publisher={Springer, Berlin},
   date={1980},
   pages={vii+169},
   isbn={3-540-09983-2},
   review={\MR{580949}},
}
\bib{Voight}{book}{
   author={Voight, John},
   title={Quaternion algebras},
   series={Graduate Texts in Mathematics},
   volume={288},
   publisher={Springer, Cham},
   date={2021},
   pages={xxiii+885},
   isbn={978-3-030-56692-0},
   isbn={978-3-030-56694-4},
   review={\MR{4279905}},
   doi={10.1007/978-3-030-56694-4},
}
\bib{Zink}{article}{
   author={Zink, Thomas},
   title={Isogenieklassen von Punkten von Shimuramannigfaltigkeiten mit
   Werten in einem endlichen K\"{o}rper},
   language={German},
   journal={Math. Nachr.},
   volume={112},
   date={1983},
   pages={103--124},
   issn={0025-584X},
   review={\MR{726854}},
   doi={10.1002/mana.19831120106},
}

\end{biblist}
\end{bibdiv}

\end{document}